\def\RaymondContactDetails{
  \author[van Bommel]{Raymond van Bommel}
  \address{Raymond van Bommel, Department of Mathematics, Massachusetts
    Institute of Technology, 77 Massachusetts Avenue, Cambridge, MA 02139,
    USA}
  \email{bommel@mit.edu}
}
\def\JordanContactDetails{
  \author[Docking]{Jordan Docking}
  \address{Jordan Docking, Department of Mathematics, University College London, London WC1H 0AY, UK}
  \email{jordan.docking.18@ucl.ac.uk}
}
\def\VladimirContactDetails{
  \author[Dokchitser]{Vladimir Dokchitser}
  \address{Vladimir Dokchitser, University College London, London WC1H 0AY, UK}
  \email{v.dokchitser@ucl.ac.uk}
}
\def\ReynaldContactDetails{
  \author[Lercier]{Reynald Lercier}
  \address{%
    Reynald Lercier,
    DGA \& Univ Rennes, %
    CNRS, IRMAR - UMR 6625, F-35000
    Rennes, %
    France. %
  }
  \email{reynald.lercier@m4x.org}
}
\def\ElisaContactDetails{
  \author[Lorenzo Garc\'ia]{Elisa Lorenzo Garc\'ia}
  \address{
    Elisa Lorenzo Garc\'ia,
    Institut de Math\'ematiques,  Universit\'e de Neuch\^atel, rue Emile-Argand 11, 2000, Neuch\^atel,
    Switzerland \&
    Univ Rennes, CNRS, IRMAR - UMR 6625, F-35000
    Rennes, %
    France. %
  }
  \email{elisa.lorenzo@unine.ch}
}
\def\gettexliveversion#1(#2 #3 #4#5#6#7#8)#9\relax{#4#5#6#7}
\edef\texliveversion{\expandafter\gettexliveversion\pdftexbanner\relax}
\DeclareRobustCommand{\SkipTocEntry}[9]{}
\DeclareRobustCommand{\SkipTocEntry}[5]{}
\newtheorem{theorem}{Theorem}[section]
\newtheorem*{theorem*}{Theorem}
\newtheorem*{corollary*}{Corollary}
\newtheorem{conjecture}[theorem]{Conjecture}
\newtheorem*{conjecture*}{Conjecture}
\newtheorem{lemma}[theorem]{Lemma}
\newtheorem{proposition}[theorem]{Proposition}
\newtheorem*{proposition*}{Proposition}
\theoremstyle{definition}
\newtheorem{definition}[theorem]{Definition}
\newtheorem*{definition*}{Definition}
\newtheorem{example}[theorem]{Example}
\newtheorem*{example*}{Example}
\newtheorem{remark}[theorem]{Remark}
\newtheorem{notation}[theorem]{Notation}
\newtheorem*{notation*}{Notation}
\definecolor{mygray}{gray}{0.92}
\definecolor{mylinkcolor}{rgb}{0.5,0.0,0.0}
\definecolor{myurlcolor}{rgb}{0.0,0.0,0.75}
\tikzset{
  g3lattice/.style={inner sep=1pt,norm/.style={red!50!blue},char/.style={blue!50!black},
    lin/.style={black!50}},cnj/.style={black!50,yshift=-2.5pt,left=-1pt of #1,scale=0.5,fill=white},
  sml/.style={scale=0.55},
  typ/.style={scale=1.0,inner sep=0.2em},
  lrg/.style={scale=0.9,inner sep=0.2em},
  fname/.style={scale=0.55},
  lin/.style={-,shorten <=-0.07em,shorten >=-0.07em},
  rem/.style={black!20,thin},
  lname/.style={scale=0.55,sloped,red,above=-0.07em,near end},
  every loop/.style={}
}
\font\tencyr=wncyr10
\def\sha{\text{\tencyr\cyracc{Sh}}}
\DeclareMathOperator{\Jac}{Jac}
\DeclareMathOperator{\diag}{diag}
\def\Q{\mathbb{Q}}
\def\Z{\mathbb{Z}}
\def\F{\mathbb{F}}
\def\P{\mathbb{P}}
\def\O{\mathcal{O}}
\def\Gm2{\mathbb{G}_m^2}
\def\PP{\mathbb{P}}
\newcommand{\oA}{\mathtt{A}}
\newcommand{\oB}{\mathtt{B}}
\newcommand{\oC}{\mathtt{C}}
\newcommand{\oD}{\mathtt{D}}
\newcommand{\oE}{\mathtt{E}}
\newcommand{\oF}{\mathtt{F}}
\newcommand{\oG}{\mathtt{G}}
\newcommand{\oH}{\mathtt{H}}
\newcommand{\pt}{\textrm{pt}}
\newcommand{\pl}{\textrm{pln}}
\renewcommand{\ln}{\textrm{ln}}
\newcommand{\tc}{{\mathrm{tc}}}
\newcommand{\CAApts}{\oA\oB|\oC\oD\|\oE\oF|\oG\oH}
\newcommand{\PGL}{\textup{PGL}_3(\overline{K})}
\newcommand*{\MAGMA}{\textsc{magma} \cite{magma}\xspace}
\renewcommand{\mod}[1]{\,\,\, (\textup{mod } #1)}
\newcommand{\stbtype}[2]{$\substack{\scalebox{0.7}{#1}\\\mathtt{(#2)}}$}
\newcommand{\stbtypehyp}[2]{$\substack{\scalebox{0.7}{#1}\\\mathtt{(#2)_{_{\mathtt{H}}}}}$}
\newcommand{\StbType}[2]{$\substack{\scalebox{0.9}{#1}\\\mathtt{(#2)}}$}
\newcommand{\StbTypeHyp}[2]{$\substack{\scalebox{0.7}{#1}\\\mathtt{(#2)_{_{\mathtt{H}}}}}$}
\newcommand{\subsnamechoice}{merotropic}
\newcommand{\AG}{\mathbf{A}}
\newcommand{\CG}{\mathbf{C}}
\newcommand{\Gg}{\mathbf{G}}
\newcommand{\SG}{\mathbf{S}}
\newcommand{\ifcolored}{false}
\newcommand\Aone{\ifthenelse{\equal{\ifcolored}{true}}
  {\fcolorbox{white}{SkyBlue}}{\fcolorbox{white}{white}}}             %
\newcommand\Aoneptwo{\ifthenelse{\equal{\ifcolored}{true}}
  {\fcolorbox{white}{Cyan}}{\fcolorbox{white}{white}}}            %
\newcommand\Aonepthree{\ifthenelse{\equal{\ifcolored}{true}}
  {\fcolorbox{white}{NavyBlue}}{\fcolorbox{white}{white}}}      %
\newcommand\RAonepthree{\ifthenelse{\equal{\ifcolored}{true}}
  {\fcolorbox{white}{MidnightBlue}}{\fcolorbox{white}{white}}} %
\newcommand\RAonepfoura{\ifthenelse{\equal{\ifcolored}{true}}
  {\fcolorbox{white}{Blue}}{\fcolorbox{white}{white}}}         %
\newcommand\RAonepfourb{\ifthenelse{\equal{\ifcolored}{true}}
  {\fcolorbox{white}{Periwinkle}}{\fcolorbox{white}{white}}}   %
\newcommand\RAonepfive{\ifthenelse{\equal{\ifcolored}{true}}
  {\fcolorbox{white}{Orchid}}{\fcolorbox{white}{white}}}        %
\newcommand\RAonepsix{\ifthenelse{\equal{\ifcolored}{true}}
  {\fcolorbox{white}{Fuchsia}}{\fcolorbox{white}{white}}}        %
\newcommand\Atwo{\ifthenelse{\equal{\ifcolored}{true}}
  {\fcolorbox{white}{pink}}{\fcolorbox{white}{white}}}    %
\newcommand\AoneAtwo{\ifthenelse{\equal{\ifcolored}{true}}
  {\fcolorbox{white}{Lavender}}{\fcolorbox{white}{white}}} %
\newcommand\AoneptwoAtwo{\ifthenelse{\equal{\ifcolored}{true}}
  {\fcolorbox{white}{Magenta}}{\fcolorbox{white}{white}}} %
\newcommand\AonepthreeAtwo{\ifthenelse{\equal{\ifcolored}{true}}
  {\fcolorbox{white}{Red}}{\fcolorbox{white}{white}}} %
\newcommand\Atwoptwo{\ifthenelse{\equal{\ifcolored}{true}}
  {\fcolorbox{white}{Goldenrod}}{\fcolorbox{white}{white}}} %
\newcommand\AoneAtwoptwo{\ifthenelse{\equal{\ifcolored}{true}}
  {\fcolorbox{white}{Apricot}}{\fcolorbox{white}{white}}} %
\newcommand\Athree{\ifthenelse{\equal{\ifcolored}{true}}
  {\fcolorbox{white}{Green}}{\fcolorbox{white}{white}}} %
\newcommand\RAoneAthree{\ifthenelse{\equal{\ifcolored}{true}}
  {\fcolorbox{white}{Green}}{\fcolorbox{white}{white}}} %
\newcommand\Atwopthree{\ifthenelse{\equal{\ifcolored}{true}}
  {\fcolorbox{white}{orange}}{\fcolorbox{white}{white}}} %
\colorlet{coltypeI} {black}
\colorlet{coltypeII}{black}
\newcommand{\comm}[1]{#1}
\newcommand{\rey}[1]{{\color{Green} \noindent Reynald: #1}}
\newcommand{\reynald}[1]{\rey{#1}}
\newcommand{\raymond}[1]{{\color{blue} \noindent Raymond: #1}}
\newcommand{\jordan}[1]{{\color{purple} \noindent Jordan: #1}}
\newcommand{\elisa}[1]{{\color{red} \noindent Elisa: #1}}
\newcommand{\comm}[1]{}
\newcommand{\rey}[1]{}\newcommand{\reynald}[1]{}
\newcommand{\raymond}[1]{}
\newcommand{\jordan}[1]{}
\newcommand{\elisa}[1]{}
\subjclass[2020]{}
\def\pts{
    \foreach \x in {0,...,7}
        \draw ({3*cos(45*\x + 22.5)}, {3*sin(45*\x + 22.5)}) circle (0.1cm);}
\def\specialpts{
    \foreach \x in {-25,25,95,145,215,265}
        \draw ({3*cos(\x)}, {3*sin(\x)}) circle (0.1cm);
    \draw (0.4,1.2) circle (0.1cm);
    \draw (0.4,-1.2) circle (0.1cm);}
\def\ptslabel{
     \node[draw,circle,scale = 0.5] (CircleNode) at ({3*cos(45*0 + 22.5)}, {3*sin(45*0 + 22.5)})  {C};
     \node[draw,circle,scale=0.5] (CircleNode) at ({3*cos(45*1 + 22.5)}, {3*sin(45*1 + 22.5)})  {B};
     \node[draw,circle,scale=0.5] (CircleNode) at ({3*cos(45*2 + 22.5)}, {3*sin(45*2 + 22.5)})  {A};
     \node[draw,circle,scale=0.5] (CircleNode) at ({3*cos(45*3 + 22.5)}, {3*sin(45*3 + 22.5)})  {H};
     \node[draw,circle,scale=0.5] (CircleNode) at ({3*cos(45*4 + 22.5)}, {3*sin(45*4 + 22.5)})  {G};
     \node[draw,circle,scale=0.5] (CircleNode) at ({3*cos(45*5 + 22.5)}, {3*sin(45*5 + 22.5)})  {F};
     \node[draw,circle,scale=0.5] (CircleNode) at ({3*cos(45*6 + 22.5)}, {3*sin(45*6 + 22.5)})  {E};
     \node[draw,circle,scale=0.5] (CircleNode) at ({3*cos(45*7 + 22.5)}, {3*sin(45*7 + 22.5)})  {D};
}
\def\ptslabelcross{
     \node[draw,circle,scale = 0.5] (CircleNode) at ({3*cos(45*0 + 22.5)}, {3*sin(45*0 + 22.5)})  {F};
     \node[draw,circle,scale=0.5] (CircleNode) at ({3*cos(45*1 + 22.5)}, {3*sin(45*1 + 22.5)})  {E};
     \node[draw,circle,scale=0.5] (CircleNode) at ({3*cos(45*2 + 22.5)}, {3*sin(45*2 + 22.5)})  {B};
     \node[draw,circle,scale=0.5] (CircleNode) at ({3*cos(45*3 + 22.5)}, {3*sin(45*3 + 22.5)})  {A};
     \node[draw,circle,scale=0.5] (CircleNode) at ({3*cos(45*4 + 22.5)}, {3*sin(45*4 + 22.5)})  {H};
     \node[draw,circle,scale=0.5] (CircleNode) at ({3*cos(45*5 + 22.5)}, {3*sin(45*5 + 22.5)})  {G};
     \node[draw,circle,scale=0.5] (CircleNode) at ({3*cos(45*6 + 22.5)}, {3*sin(45*6 + 22.5)})  {D};
     \node[draw,circle,scale=0.5] (CircleNode) at ({3*cos(45*7 + 22.5)}, {3*sin(45*7 + 22.5)})  {C};
}
\newcommand{\ptscustomlabel}[8]{
     \node[draw,circle,scale = 0.5] (CircleNode) at ({3*cos(45*0 + 22.5)}, {3*sin(45*0 + 22.5)})  {#3};
     \node[draw,circle,scale=0.5] (CircleNode) at ({3*cos(45*1 + 22.5)}, {3*sin(45*1 + 22.5)})  {#2};
     \node[draw,circle,scale=0.5] (CircleNode) at ({3*cos(45*2 + 22.5)}, {3*sin(45*2 + 22.5)})  {#1};
     \node[draw,circle,scale=0.5] (CircleNode) at ({3*cos(45*3 + 22.5)}, {3*sin(45*3 + 22.5)})  {#8};
     \node[draw,circle,scale=0.5] (CircleNode) at ({3*cos(45*4 + 22.5)}, {3*sin(45*4 + 22.5)})  {#7};
     \node[draw,circle,scale=0.5] (CircleNode) at ({3*cos(45*5 + 22.5)}, {3*sin(45*5 + 22.5)})  {#6};
     \node[draw,circle,scale=0.5] (CircleNode) at ({3*cos(45*6 + 22.5)}, {3*sin(45*6 + 22.5)})  {#5};
     \node[draw,circle,scale=0.5] (CircleNode) at ({3*cos(45*7 + 22.5)}, {3*sin(45*7 + 22.5)})  {#4};}
\newcommand{\twin}[1]{\draw[thick,cyan,rotate={-45*#1}] (0,{3*sin(45+22.5)}) ellipse (1.5cm and 0.5cm);}
\newcommand{\twinbig}[1]{\draw[thick,cyan,rotate={-45*#1}] (0,{3*sin(45+22.5)}) ellipse (2cm and 0.8cm);}
\newcommand{\plane}[1]{\draw[thick,LimeGreen,rotate={-45*#1-45}] (-3.6,0.4) rectangle (3.6,3.6);}
\newcommand{\midplane}[1]{\draw[thick,LimeGreen,rotate={-45*#1}] (-1.7,-3.6) rectangle (1.7,3.6);}
\newcommand{\canplane}[1]{\draw[thick,LimeGreen,rotate={-45*#1}] (-1.7,3.3)--(1.7,3.3)--(3.3,-1.7)--(-3.3,-1.7)--(-1.7,3.3);}
\newcommand{\planedot}[4]{\node[#1,LimeGreen,fill,draw,scale=#4] (d) at (#2,#3) {};}
\newcommand{\TA}[1]{\draw[thick,red,rotate={-45*#1}] (0,{3*sin(45+22.5)}) ellipse (1.7cm and 0.7cm);\draw[thick,red,rotate={-45*#1}] (0,2.07) -- (0,-3.6);}
\newcommand{\TAL}[1]{\draw[thick,red,rotate={-45*#1}] (0,{3*sin(45+22.5)}) ellipse (1.7cm and 0.7cm);\draw[thick,red,rotate={-45*#1}] (0,2.07) .. controls (-5,0.18) and (4,0) .. (0,-3.6);}
\newcommand{\TAR}[1]{\draw[thick,red,rotate={-45*#1}] (0,{3*sin(45+22.5)}) ellipse (1.7cm and 0.7cm);\draw[thick,red,rotate={-45*#1}] (0,2.07) .. controls (5,0.18) and (-4,0) .. (0,-3.6);}
\newcommand{\TB}[1]{\draw[thick,YellowOrange,rotate=-22.5-45*#1] plot [smooth cycle] coordinates {(0,3.5) (2.5,2.5) (2.5,1.5) (0,2.2) (-2.5,1.5) (-2.5,2.5)};}
\newcommand{\CA}{\draw[very thick,Goldenrod] (-4,0) -- (4,0);\draw[very thick,Goldenrod] (0,-4) -- (0,4);}
\newcommand{\CB}[1]{\draw[thick,RoyalPurple,rotate={-45*#1}] (-2,2) -- (2,2);\draw[thick,RoyalPurple,rotate={-45*#1}] (0,-3) -- (0,2);}
\newcommand{\CC}[1]{\draw[thick,magenta,rotate={-45*#1-45},dashed] (-3.75,0.0) rectangle (3.75,0.0);}
\newcommand{\TCu}{\draw[thick,brown] (0,0) circle (4.5);}
\newcommand{\HE}[1]{\draw[thick,brown,rotate={-45*#1+225}] plot [smooth] coordinates {(0.3,4) (0,3.1) (-0.3,2.2) (0,1.3) (0.3,0.4) (0,-0.5) (-0.3,-1.4) (0,-2.3) (0.3,-3.2) (0,-4)};}
\newcommand{\skewHE}[1]{\draw[thick,brown,rotate={-45*#1}] (-2,3.5)--(1.6,3.5)--(3.2,1.2)--(1.06,-3.46)--(-2,3.5);}
\newcommand{\oindex}[1]{\node at (0,-6) {\scriptsize #1}; \node at (0,5) {};}
\def\T
\def\Dn
\def\Unn
\def\Znnn
\def\DNA
\def\UeU
\def\CAVE{
\begin{tikzpicture}[scale= .15]
		\draw (0,0) to[quick curve through={(2,3)}]
		(6,4);
		\draw (-1,1) to[quick curve through={(2,0.5)}]
		(6,0);
		\draw (3,-1) to[out angle = 90, in angle = 90, curve through={(4,5)}]
		(5,-1);
\end{tikzpicture}}
\def\UeZn
\def\ZZeU
\def\ZZeZZ
\def\ZZeUn
\def\BRAID
\def\ZneZn
\def\DeU
\def\DeZn
\def\UeUn
\def\UeUeU
\def\UeUeUeZ
\def\UeeeZ
\def\UeeeUeZ
\def\UeeeUn
\def\ZeUnn
\def\ZneUn
\def\UeZeU
\def\UneZeZ
\def\ZeUneZ
\def\UnneUn
\def\UeZeZn
\def\ZneZeU
\def\ZneUeZn
\def\ZneZneU
\def\UeZZeU
\def\ZneUeUeZ
\def\ZeeeZeZn
\def\ZneZeZn
\def\ZneZneZn
\def\ZZeZeU
\def\UeZZeZn
\def\ZneZneUeZ
\def\ZZeZeZn
\def\ZneZZeZn
\def\ZneZneZneZ
\begin{document}

\title{Reduction of Plane Quartics and Cayley Octads}

\begin{abstract}
  We give a conjectural
  characterisation of the stable reduction of plane quartics over local fields
  in terms of their Cayley
  octads.
  This results in $p$-adic criteria that efficiently give the stable reduction type
  amongst the 42 possible types, and whether the reduction is hyperelliptic or not.
  These criteria are in the vein of the machinery of ``cluster pictures'' for hyperelliptic
  curves.
  We also construct explicit families of quartic curves that
  realise all possible stable types, against which we test these criteria. We give numerical examples that illustrate how to use these criteria in practice.
\end{abstract}

\subjclass[2023]{
  11G20 (primary),
    11Y99, 14H10, 14H45, 14Q05;
}
\maketitle

\ \vspace*{-1.5cm}
\setcounter{tocdepth}{2}
\tableofcontents

\section{Introduction}

Deligne and Mumford's proof of the irreducibility of the moduli space of
smooth projective curves of genus $g \geq 2$ over an algebraically closed field
consists of compacting this space by adding the curves with mild
singularities, the \emph{stable curves}~\cite{DM69}. Singularities of a stable
curve are ordinary double points, and its irreducible components of geometric
genus 0 have at least three such double points, counted with multiplicity.
A consequence of the Deligne-Mumford construction is the stable reduction
theorem: any curve over a local field acquires stable reduction after a finite
extension of the base field.
Stable models of a curve give access to much information of arithmetic
nature about the curve and its Jacobian (genus, conductor, \textit{etc.}),
and their actual calculation is a motivating question.
A commonly used way to determine it is to repeatedly blow up the singular
points and components of the special fibre and take
normalizations~\cite{HM98}.  However, this method can also be a difficult task
from a computational point of view, even for genus 3 curves that are the focus
of this work.

One of the first results that goes in the direction of greater effectivity is
due to Liu, for the case of curves of genus 2~\cite{Liu93}. Liu gives, in terms
of the Igusa invariants of a curve, $C$, not only the stable type,
\textit{i.e.}\ the graph of the irreducible components of $C$ as well as the
genera of their normalisations, but also the $j$-invariant of the irreducible
components of genus 1 when the special fibre is not smooth.

While there are only 7 possibilities for the type of stable reduction in genus
2, the situation is considerably more involved for curves of genus 3.  %
The multiplicity of cases and also the complexity of invariant
algebras complicate the generalisation of Liu's approach to genus 3. Partial
results exist, however, characterising potentially good
reduction of a quartic, or determining when a plane quartic reduces to a
hyperelliptic curve~\cite{lllr21}.

One way to extend the locus of curves for which the stable reduction can be
determined efficiently is to consider families given by Galois covers of
curves of smaller genus.
Modulo certain moderate conditions on the genus and degree of the cover, this
approach has been successfully developed by Bouw and Wewers for the case of
cyclic covers of $\P^1$, \textit{i.e.}\ superelliptic curves~\cite{bouwwew} and
the important sub-loci of hyperelliptic curves (covers of degree 2) and genus~3
Picard curves (covers of degree 3)~\cite{BKSW20}.
The method can sometimes be generalised to curves whose automorphism group is
non-trivial. In genus 3, for example, we have in~\cite{BCKLS20} a complete
answer for Ciani curves, which admit a faithful action of the Klein group
$\CG_2 \times \CG_2$ and which can therefore be realised as a degree 2 cover of an
elliptic curve.

One can also consider models of curves that lend themselves to computational
analysis.
There is a remarkable result in this direction due to
Tim Dokchitser~\cite{Dokchitser21}. By associating a fan to the Newton
polytope of a plane curve, Dokchitser makes explicit how to deduce, among many
other arithmetic invariants, a minimal regular model with normal crossings,
and thus a stable model, so long as the curve satisfies a
condition known as ``$\Delta_v$-regularity''.
This approach is fairly well adapted to the case of plane quartics, %
however complications arise when trying to construct a $\Delta_v$-regular quartic from one given in an arbitrary way. We note that this method is particularly unlikely to succeed when the reduction type of the quartic is more complicated.

The aim of the present paper is to describe a new approach to understanding the stable reduction of curves of genus 3, which most closely resembles \cite{m2d2}. The latter characterises the stable reduction of hyperelliptic curves $y^2=f(x)$ using ``cluster pictures'', combinatorial objects that encode $p$-adic distances between the roots of the polynomial $f(x)$. An important advantage of this description is that it is well-suited for the study of curves over global fields, where one often needs to control the local arithmetic of curves over all the completions of the field simultaneously. We believe to have identified the correct replacement for the roots of the polynomial $f(x)$ in the context of non-hyperelliptic genus 3 curves.

Let us briefly recall some of the cluster picture machinery. %
Consider the genus 3 hyperelliptic curve $C/\Q_p$ given by
$$
y^2=f(x) = (x-p^2 \alpha_1)(x-p^2 \alpha_2)(x - p^2 \alpha_3)(x-\beta_1)(x-\beta_2)(x-\beta_3)(x-\beta_4)(x-\beta_5),
$$
where $p$ is an odd prime, and $\alpha_i, \beta_i \in \Z_p \backslash \{ 0 \}$ have distinct images in the residue field. Over $\Q_p$ the roots of $f$ are not equidistant, rather they contain a ``cluster'' of size $3$, \textit{i.e.}\ there are 3 roots that coincide modulo $p$ (and even modulo $p^2$) while the others remain distinct. This is reflected in the stable reduction as follows. The naive reduction of this equation is the curve $y^2 = x(x-\beta_1)(x-\beta_2)(x-\beta_3)(x-\beta_4)(x-\beta_5)$, a genus $2$ curve with a cusp. The change of variable $x\mapsto p^2x'$ and $y\mapsto p^{3}y'$, provides a model whose reduction is the elliptic curve $y'^2= -\beta_1 \beta_2 \beta_3 \beta_4 \beta_5 (x' - \alpha_1)(x'- \alpha_2)(x'- \alpha_3)$, with a cusp at infinity. The special fibre of the stable model consists of this genus $2$ curve and elliptic curve intersecting at one point. The key point is that the purely combinatorial information about how roots cluster together (the ``cluster picture'') completely determines the stable reduction type. Indeed, any genus 3 hyperelliptic curve with a cluster of size 3 and no other clusters will have a potential stable model whose special fibre consists of a genus 2 curve and an elliptic curve meeting at one point.
More generally, clusters of size 2 or 6 give rise to nodes. Clusters of size 3
or 5 correspond, as we have just seen, to a decomposition of the special fibre
into two connected parts, one of arithmetic genus 1 and one of arithmetic
genus 2, intersecting in one point. Clusters of size 4 correspond to a
decomposition of the special fibre into two connected parts, both of
arithmetic genus 1, intersecting at two points. All cluster pictures are made
by combining these elementary blocks. For the complete dictionary, see
\cite[Tab.~9.1]{m2d2}.

\begin{figure}[htbp]
  \centering
  \tikzsetnextfilename{IntroCluster}
  \begin{tikzpicture}
    \node at (-4,0) (26) { {\clusterpicture\Root[A]{2}{first}{r1};\Root[A]{2}{r1}{r2};\ClusterLD c1[][] = (r1)(r2);\Root[A]{2}{c1}{r3};\Root[A]{2}{r3}{r4};\Root[A]{2}{r4}{r5};\Root[A]{2}{r5}{r6};\Root[A]{2}{r6}{r7};\Root[A]{2}{r7}{r8};\ClusterLD c3[][] = (c1)(r3)(r4)(r5)(r6)(r7)(r8);\endclusterpicture} / {\clusterpicture\Root[A]{2}{first}{r1};\Root[A]{2}{r1}{r2};\Root[A]{2}{r2}{r3};\Root[A]{2}{r3}{r4};\Root[A]{2}{r4}{r5};\Root[A]{2}{r5}{r6};\ClusterLD c1[][] = (r1)(r2)(r3)(r4)(r5)(r6);\Root[A]{2}{c1}{r7};\Root[A]{2}{r7}{r8};\ClusterLD c3[][] = (c1)(r7)(r8);\endclusterpicture}  };
    \node[scale=0.7] at (2,0) (2n) { \Dn };

    \draw[->, shorten <= 1em, shorten >= 1.7em] (26) to (2n);

    \node at (-4,-0.75) (35) { {\clusterpicture\Root[A]{}{first}{r1};\Root[A]{2}{r1}{r2};\Root[A]{2}{r2}{r3};\ClusterLD c1[][] = (r1)(r2)(r3);\Root[A]{2}{c1}{r4};\Root[A]{2}{r4}{r5};\Root[A]{2}{r5}{r6};\Root[A]{2}{r6}{r7};\Root[A]{2}{r7}{r8};\ClusterLD c3[][] = (c1)(r4)(r5)(r6)(r7)(r8);\endclusterpicture} / {\clusterpicture\Root[A]{2}{first}{r1};\Root[A]{2}{r1}{r2};\Root[A]{2}{r2}{r3};\Root[A]{2}{r3}{r4};\Root[A]{2}{r4}{r5};;\ClusterLD c1[][] = (r1)(r2)(r3)(r4)(r5);\Root[A]{2}{c1}{r6};\Root[A]{2}{r6}{r7};\Root[A]{2}{r7}{r8}\ClusterLD c3[][] = (c1)(r6)(r7)(r8);\endclusterpicture} };
    \node[scale=0.9] at (2,-0.75) (2e) { \DeU };

    \draw[->, shorten <= 1em, shorten >= 1em] (35) to (2e);

    \node at (-4,-1.5) (4) { \clusterpicture\Root[A]{2}{first}{r1};\Root[A]{2}{r1}{r2};\Root[A]{2}{r2}{r3};\Root[A]{2}{r3}{r4};\ClusterLD c1[][] = (r1)(r2)(r3)(r4);\Root[A]{2}{c1}{r5};\Root[A]{2}{r5}{r6};\Root[A]{2}{r6}{r7};\Root[A]{2}{r7}{r8};\ClusterLD c3[][] = (c1)(r5)(r6)(r7)(r8);\endclusterpicture };
    \node[scale=0.9,rotate=90] at (2,-1.5) (1=1) { \UeU };

    \draw[->, shorten <= 3em, shorten >= 1em] (4) to (1=1);

  \end{tikzpicture}%
\end{figure}

\noindent

We now turn to our setting of plane quartics (equivalently, non-hyperelliptic genus $3$ curves). Here, we propose to replace the eight Weierstrass points in the hyperelliptic case by a {\em Cayley octad}.
Fixing one of the 36 even theta characteristics $\theta$ (\textit{i.e.}\ a divisor on the curve such that $2\theta$ lies in the canonical divisor class and the Riemann-Roch space of $\theta$ has even dimension) gives rise to both an embedding of the plane quartic into $\P^3$, and $8$ points in $\P^3$ which form the Cayley octad. %
Analogous to the Weierstra\ss\ points of a hyperelliptic curve, these 8 points determine the curve. We conjecture that combinatorial data about the configuration of these eight points fully determines the stable reduction type of the curve, analogously to the cluster picture in the hyperelliptic case.

There are two natural complications compared to the hyperelliptic case. First, our eight points live in $\P^3$ rather than in $\P^1$. In particular, this means that there are new possible degenerations to consider when looking at the points over the residue field. Specifically, there are the following four basic degenerations: (i) several points  coinciding (the direct analogue of a cluster), (ii) four points lying on a plane, (iii) three points lying on a line, (iv) seven points lying on a twisted cubic curve (as with cluster pictures, the case of no degenerations occurring corresponds to good reduction).

Second, unlike the case of the Weierstra\ss\ points of a hyperelliptic curve, the 8 points of a Cayley octad are not independent and satisfy an algebraic relation, under which any point is determined by the other seven, see Rem.~\ref{rmk:7determine8}.
Because of this relation, the degenerations that can actually occur are harder to describe.
For example, under this relation, a degeneration containing four coplanar points forces the other four points of the octad to also be coplanar as depicted in Fig.~\ref{fig:introexamplealpha2singularity}. Similarly, if seven points lie on a twisted cubic, then so does the eighth.

To see how the degenerations of the Cayley octad affect the reduction type in practice, consider the following example.

\begin{example}
\label{ex:introalpha2singularity}
    Consider the plane quartic $C/\Q_{17}$ given by
    \footnotesize
\begin{align*}
C \colon \, &32626944 x^4 + 47032608 x^3 y - 57820391 x^2 y^2 - 53891530 x y^3 +
 42837025 y^4 \\ + &23956128 x^3 z + 117108918 x^2 y z -
 103647330 x y^2 z + 6832980 y^3 z - 3176703 x^2 z^2 \\ +
 &46151898 x y z^2 - 8406186 y^2 z^2 - 2780622 x z^3 - 692172 y z^3 +
 439569 z^4 = 0.
\end{align*}
\normalsize
Then $C$ has a Cayley octad, $O$, whose 8 points are given by the columns of the following matrix:
\begin{displaymath}
      \arraycolsep=0.4\arraycolsep%
      \mbox{\scriptsize
        $\begin{bmatrix}
            u \\
            v \\
            r \\
            s \\
        \end{bmatrix}
        =
        \begin{bmatrix}
          1 & 0 & 0 &  -475783999 & 0 & 1 &  13 & 4\\
          0 & 1 & 0 & 2635587848 & 0 & 1 & -6 & -7 \\
          0 & 0 & 1 & -5915152420 & 0 & 1 & 5 & 11 \\
          0 & 0 & 0 &  3698228144 & 1 & 1 & -8 & -10
        \end{bmatrix}$}
    \end{displaymath}
    Over $\F_{17}$, the first four points lie on the plane $s = 0$, while the final four lie on the plane $2r + 6u + 9v = 0$. If one considers $C$ under the embedding into $\P^3$ given by $O$  (as in Rem.~\ref{rmk:construction-cayley}), $C$ now contains a line, which is the intersection of these two planes (see Figure \ref{fig:introexamplealpha2singularity}). This line contracts when considering the stable model of $C$, which we thus observe to have a singular point (and is in fact a genus $2$ curve with a node).

    \begin{figure}[htbp]
        \centering
        \includegraphics[scale=0.2]{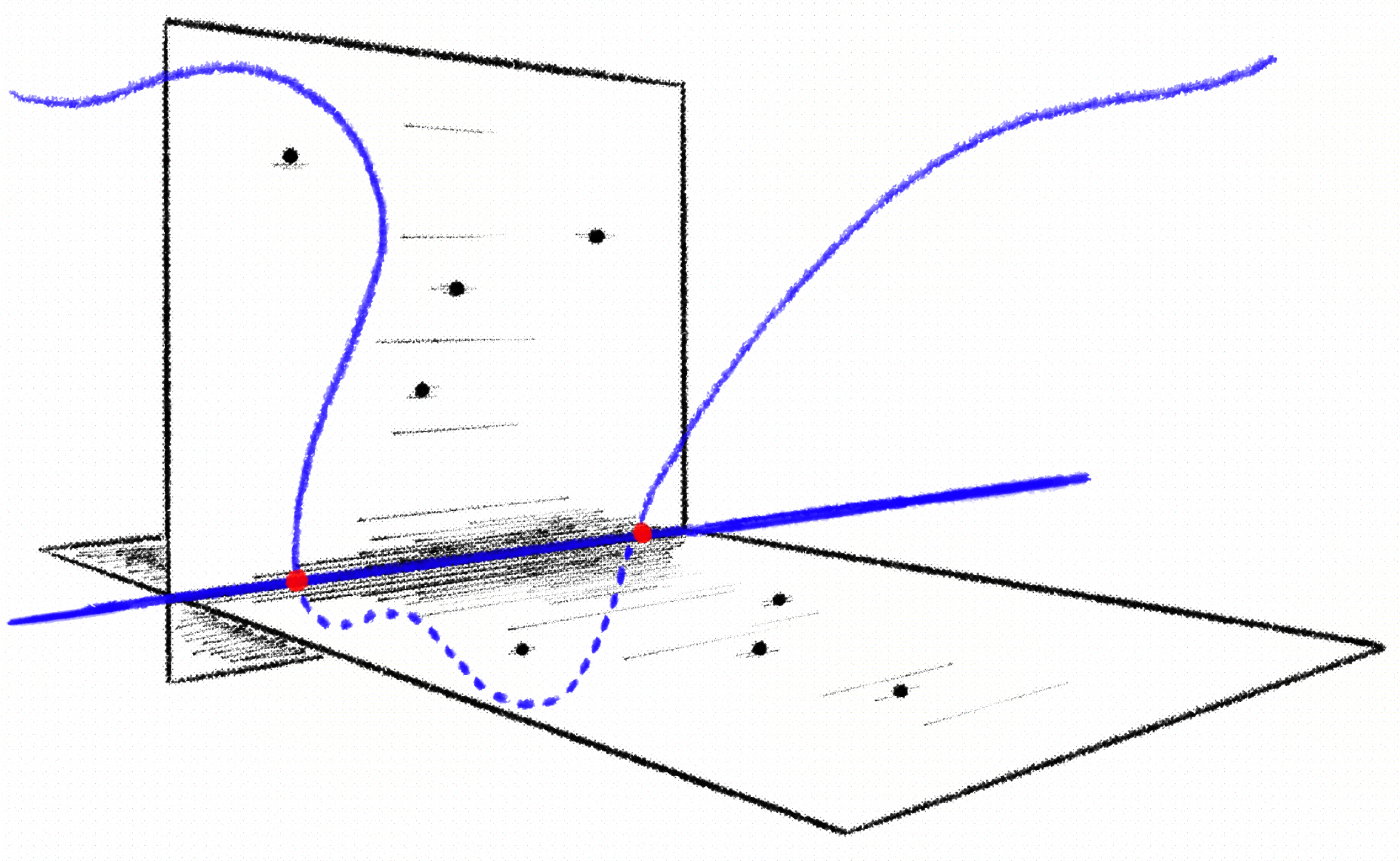}
        \caption{$C$ (blue) over $\F_{17}$, with line component, which is simultaneously the intersection of the two planes $s = 0$ and $2r + 6u + 9v = 0$ upon which the 8 points of the Cayley octad (indicated by dots) lie.}
        \label{fig:introexamplealpha2singularity}
    \end{figure}
\end{example}

The central point of this work is to understand all possible degenerations (leading us to the notion of \textit{octad} pictures, see Def.~\ref{def:blockdecompositiondiagram}) and relate them to stable reduction types. We will not, however, attempt to produce explicit models like in the above example.  Indeed, unlike the hyperelliptic case where one can easily see how clusters give rise to components of the special fibre, this process appears to be rather mysterious for plane quartics. We will, however, give a conjectural recipe for the reduction type. Just as cluster pictures are made up of a collection of clusters, so our octad pictures are made up of certain {\em building blocks}. The so-called ${\bm \alpha}$-, ${\bm \chi}$- and ${\bm \phi}$-blocks, which will be defined later in Tab.~\ref{tab:introblocksdefinition}, conjecturally correspond to clusters of size 2 or 6, 3 or 5, and 4 respectively.
The degeneration in which two points of the Cayley octad collide, and the degeneration in Ex.~\ref{ex:introalpha2singularity}, are examples of ${\bm \alpha}$-blocks.
The last type of degeneration, the so-called hyperelliptic blocks, includes the degeneration in which the 8 points lie on a twisted cubic, and correspond to plane quartics that become hyperelliptic after reduction.

We expect these degenerations to determine the stable reduction of the plane quartic curve, akin to the situation of cluster pictures for hyperelliptic curves. In fact, we produce an explicit map
\begin{equation}\label{eq:explicit-map}
\textup{Octad Pictures} \to \textup{Stable Reduction Types}
\end{equation}
\begin{conjecture}[see Conj.~\ref{conj:SpecialFibreOfTheStableModel} and~\ref{conj:SpecialFibreOfTheStableModelDetailed}]
    For any plane quartic $C$ over a non-archimedean local field $K$ of characteristic $p \neq 2$, applying this explicit map to any of its Cayley octads gives the stable reduction type of $C/K$.
\end{conjecture}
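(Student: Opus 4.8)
\medskip\noindent\textbf{Proof strategy (proposal).}
The plan is to reduce the conjecture to a finite verification, organised by the block decomposition, and to establish the correspondence on the level of individual building blocks before assembling the general case. Fix a plane quartic $C/K$; after a finite base extension (which changes neither the octad picture nor the stable type, both being insensitive to unramified twist and compatible with the relevant base changes) assume $C$ acquires semistable reduction over $\mathcal{O}_K$, and choose one of its $36$ Cayley octads $O$. The first step is to pin down the space in which $O$ lives: the parameter space of self-associated $8$-tuples in $\P^3$ modulo $\mathrm{PGL}_4$, together with its stratification by the combinatorial type of the reduction $\overline{O}$ over the residue field — which of the points collide, become coplanar, become collinear, or come to lie on a twisted cubic. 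By Def.~\ref{def:blockdecompositiondiagram} this is exactly the stratification by octad picture, and the relation of Rem.~\ref{rmk:7determine8} is what cuts out the self-association locus and constrains which strata occur. On the other side, $\overline{M}_3$ carries the stratification by stable reduction type. The heart of the matter is the commutativity of the square relating these two stratifications through the Torelli/period map and the Cayley-octad construction; equivalently, one must show that the tropicalisation of $O$ determines, and is determined by, the dual graph of the special fibre of the stable model of $C$ together with the genera of its components.

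To make the verification finite, I would exploit the block decomposition on both sides: an octad picture is built by superimposing $\bm\alpha$-, $\bm\chi$-, $\bm\phi$- and hyperelliptic blocks (Tab.~\ref{tab:introblocksdefinition}), while a stable reduction type is built by gluing nodes, genus-$(1,2)$ splittings at one point, genus-$(1,1)$ splittings at two points, and hyperelliptic degenerations, as recalled in the introduction. First I would prove a \emph{gluing principle}: if the conjectural correspondence holds for each one-parameter degeneration realising a single block, then it holds for an arbitrary octad picture obtained by superimposing blocks in general position. This is a deformation-theoretic statement — one deforms a maximally degenerate octad transversally to each block and checks, using semicontinuity of the stable type along the base and additivity of the dual graph under such deformations, that the combinatorics are additive. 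Granting this, it remains to treat each block in isolation, a concrete and finite task: for each block type one writes down the explicit family of quartics realising it constructed in this paper, computes the Cayley octad directly, computes the stable model by an independent method — repeated blow-up and normalisation, Dokchitser's $\Delta_v$-regular machinery \cite{Dokchitser21} for a well-chosen representative in each family, and Liu's Igusa-invariant criteria \cite{Liu93} for the genus-$2$ components that arise — and confirms the two agree. Since the families realise all $42$ stable types, the conjecture then follows for all of them at once.

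A cleaner, more structural route, which I would develop in parallel, passes through the $2$-torsion of the Jacobian. The $36$ even (and $28$ odd) theta characteristics, the Cayley octad, and the bitangents are all governed by the action of $W(E_7)$ on $J[2]$; under semistable reduction $J[2]$ degenerates in a way controlled by the tropical Jacobian, i.e.\ by $H_1$ of the dual graph with its monodromy pairing. Translating the Cayley-octad construction into this language should exhibit the valuations of the bracket (Dolgachev--Ortland, $\DO$-) functions defining the self-association locus as explicit linear functionals of the edge lengths of the dual graph, with the linear map depending only on the combinatorial type; reading off the octad picture then amounts to reading off which of these functionals vanish, i.e.\ which edges are contracted — which is precisely the stable type. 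This would yield the conjecture uniformly, at the cost of a delicate limit argument for the canonical embedding (and hence for the octad) when the reduction is hyperelliptic, where the chosen even theta characteristic must be tracked through the degeneration of the canonical map.

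The main obstacle is exactly the phenomenon the paper flags as ``mysterious'': unlike clusters, which visibly cut the special fibre into pieces, there is no known direct construction producing the components and their gluing from the blocks of an octad picture, so the correspondence cannot simply be read off geometrically and must be forced indirectly — either by the gluing/deformation argument, whose transversality hypotheses require genuine work (in particular a dimension count showing each explicit family is generic within its stratum), or by the $W(E_7)$/tropical-Jacobian computation, whose bookkeeping across the $42$ types is substantial. A second, largely independent difficulty is the hyperelliptic-reduction locus, where $C$ limits to a hyperelliptic curve and the even theta characteristic, hence the octad, behaves qualitatively differently; here I expect the argument to borrow directly from, and dovetail with, the cluster-picture machinery of \cite{m2d2}.
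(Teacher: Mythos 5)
The first thing to say is that the paper does not prove this statement: it is the central conjecture of the work (Conj.~\ref{conj:SpecialFibreOfTheStableModel} and~\ref{conj:SpecialFibreOfTheStableModelDetailed}), and the authors explicitly defer a proof to future work~\cite{BDDLL23}. What the paper supplies is evidence: a perfect combinatorial matching between $\textup{Sp}(6,2)$-orbits of octad pictures and the $42$ stable types, invariance of the octad picture under $\PGL$ (Thm.~\ref{thm:pgl-action-on-block-decomposition}), equivariance under $\textup{Sp}(6,2)$ for single-block pictures (Prop.~\ref{CreTransTwins},~\ref{CreTransTypes},~\ref{prop:CreTransCans},~\ref{prop:CreTransHE}), the good-reduction case via Thm.~\ref{thm:regularoctad}, explicit families realising all $42$ types (Sec.~\ref{sec:expl-constr}), and numerical verification on roughly $170{,}000$ curve/prime pairs (Sec.~\ref{subsec:evidence}). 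So there is no proof in the paper to compare yours against; your proposal has to be judged as a standalone strategy.

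Judged that way, it has a genuine gap at its centre. Your ``gluing principle'' --- that if the correspondence holds for each single-block degeneration then it holds for arbitrary superpositions of blocks, by a transversality/additivity argument --- is not a reduction of the conjecture but essentially a restatement of its hardest part. The single-block cases are, in effect, what the paper already controls; the difficulty is precisely that no construction is known that produces the components of the special fibre and their gluing from the blocks (the paper calls this ``mysterious''), so ``additivity of the dual graph under such deformations'' is exactly what needs to be proved, and semicontinuity of the stable type along a family only yields specialisation relations between strata of $\overline{M_3}$, not equality of types. Relatedly, verifying the explicit families of Sec.~\ref{sec:expl-constr} by independent means (Dokchitser's $\Delta_v$-machinery~\cite{Dokchitser21}, blow-ups) establishes the correspondence only for those families; to conclude for an arbitrary quartic with the same octad picture you need that the locus of octads with a fixed picture maps into a single stratum of $\overline{M_3}$, which is again the conjecture, and the genericity/dimension count you acknowledge is needed is not supplied. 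Your second route, via $J[2]$ and the tropical Jacobian, is closer in spirit to what the authors say they intend (a Thomae-like formula for theta constants in terms of Pl\"ucker coordinates of the octad, see~\cite{BDDLL23}), and seems the more promising direction; but as written it is a programme, with the key claim --- that the valuations of the self-association brackets are linear functionals of the edge lengths of the dual graph depending only on its combinatorial type --- asserted rather than established, and with the hyperelliptic-reduction locus, where the canonical map and hence the octad degenerate, left open.
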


There is substantial evidence supporting this conjecture. Along with extensive numerical data (see Sec.~\ref{subsec:evidence}), we have a perfect combinatorial matching between the degenerations of Cayley octads and stable reduction types, which will occupy the bulk of this paper. We also have the following theoretical checks, demonstrating that our conjecture is compatible with natural structures on Cayley octads.

Consider, first, that we may perform a change of coordinates on $\P^3$. Naturally this does not change the stable reduction type of a non-hyperelliptic genus $3$ curve, but can change the degenerations in the Cayley octad. For instance, if $O$ is a Cayley octad over a local field $K$ with two points coinciding over the residue field $k$ of $K$ (and no further degenerations), then there is a $\PGL$-transformation of $\P^3$ which, over $k$, separates the two coincident points, but renders the remaining $6$ points coplanar.

\begin{theorem}[see Thm.~\ref{thm:pgl-action-on-block-decomposition}]
    The octad picture produced by degenerations of a Cayley octad is independent of the choice of coordinates on $\P^3$.
\end{theorem}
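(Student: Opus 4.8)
The plan is to reduce to \emph{elementary} changes of coordinates and then argue combinatorially. Two coordinate systems on $\P^3$ differ by some $g\in\textup{PGL}_4(K)$, and by the Cartan decomposition (equivalently the elementary divisor theorem) we may write $g=h_1\,d\,h_2$ with $h_1,h_2\in\textup{PGL}_4(\O_K)$ and $d=\mathrm{diag}(\pi^{a_1},\dots,\pi^{a_4})$ for a uniformiser $\pi$ and $a_i\in\Z$. The reduction map $\P^3(K)\to\P^3(k)$ (valuative criterion) is equivariant for the action of $\textup{PGL}_4(\O_K)$ on the integral model, and any element of $\textup{PGL}_4(\O_K)$ reduces to one of $\textup{PGL}_4(k)$; since a $\textup{PGL}_4(k)$-transformation preserves every incidence relation that enters an octad picture --- coincidences of points, four coplanar points, three collinear points, seven points on a twisted cubic --- the factors $h_1,h_2$ do not change the octad picture, and we may assume $g=d$. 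After reordering and rescaling the $a_i$ (again absorbed into $\textup{PGL}_4(\O_K)$-factors) we may take $0=a_1\le a_2\le a_3\le a_4$, and then $d$ is a product of the elementary diagonal matrices $E_j=\mathrm{diag}(\underbrace{1,\dots,1}_{j},\underbrace{\pi,\dots,\pi}_{4-j})$, $j=1,2,3$. So it suffices to treat a single $E_j$. Equivalently: an integral coordinate system up to $\textup{PGL}_4(\O_K)$ is a vertex of the Bruhat--Tits building of $\textup{PGL}_4(K)$ (a homothety class of $\O_K$-lattice in $K^4$), adjacent vertices $L\supsetneq L'\supsetneq\pi L$ are related by such an $E_j$, and the building is connected; thus one must show that the octad picture is constant along each edge.

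For a single $E_j$ one analyses the modification relating the two reductions $\overline O$ and $\overline{E_j\cdot O}$ of the octad. Two facts control this. First, the self-association relation that makes an octad a Cayley octad --- the fact (Rem.~\ref{rmk:7determine8}) that the eight points are the base locus of a net of quadrics, any seven of them determining the eighth --- is $\textup{PGL}_4(K)$-equivariant, so $\overline O$ and $\overline{E_j\cdot O}$ are both degenerate Cayley octads; in particular their degenerations decompose into the $\bm{\alpha}$-, $\bm{\chi}$-, $\bm{\phi}$- and hyperelliptic building blocks of Tab.~\ref{tab:introblocksdefinition}. Second --- the combinatorial core --- one checks that $E_j$ acts on the block decomposition diagram one block at a time: it either fixes a block or interchanges its two dual realisations (for an $\bm{\alpha}$-block, $m$ coincident points $\leftrightarrow$ the complementary $8-m$ points becoming coplanar, as in Ex.~\ref{ex:introalpha2singularity}; for $\bm{\chi}$- and $\bm{\phi}$-blocks the analogous exchanges), always preserving the underlying partition and nesting data, which is exactly what the octad picture records (Def.~\ref{def:blockdecompositiondiagram}). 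Concatenating over the $E_j$ making up $g$ gives the statement.

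The main obstacle is precisely this last case analysis: one must show that an elementary transformation neither creates, destroys, nor merges blocks, and does not disturb blocks at which it is not ``aimed''. Concretely this means determining, for each block type, which coordinate systems exhibit which realisation, and checking that these loci tile the building compatibly with the block structure --- a local computation around each block, but one resting on the full classification carried out in the body of the paper. A more computational but conceptually cleaner alternative would be to express the octad picture as a function of the valuations of a fixed finite list of multihomogeneous invariants and covariants of the octad: the bracket determinants $[ijkl]$ (detecting coplanarities), together with the invariants cutting out the $\bm{\chi}$-, $\bm{\phi}$- and hyperelliptic loci. Each such invariant transforms under the $\textup{GL}_4\times T$-action on $4\times 8$ representing matrices (row operations together with rescaling the eight columns) by a fixed character, so the vector of their valuations, taken modulo the lattice spanned by those characters, is manifestly unchanged by a change of coordinates or by rescaling of the points; it then remains to verify that the recipe of Def.~\ref{def:blockdecompositiondiagram} factors through this reduced datum, which I expect to be where most of the work actually lies.
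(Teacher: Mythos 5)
Your overall skeleton matches the paper's: reduce a general $\PGL$-transformation to integral transformations (which do not change any valuation) composed with elementary diagonal matrices $\diag(\pi^{a_1},\dots,\pi^{a_4})$ --- this is exactly Lem.~\ref{lem:PGL-generated} and Prop.~\ref{prop:coord-transform} --- and then invoke the single-block equivalences of Prop.~\ref{thm:equiv-objects}, \ref{PGLTypes}, \ref{prop:PGLphiblocks} and \ref{prop:TCuequiv}. The problem is that the step you yourself flag as ``the main obstacle'' is the entire content of the theorem, and you assert rather than prove it. Concretely: after a coordinate change putting five points in standard position, the new valuation data is $v^{\#}=v+w$ for a \emph{single} correction $w\in W_8$ determined by the whole of $v$ (Prop.~\ref{prop:equivalence-of-valuation-data}), whereas transforming each block $B_i$ separately produces corrections $w_i$ determined by each $B_i$ alone. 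Your claim that the transformation ``acts on the block decomposition one block at a time'' is precisely the assertion that $w=\sum_i w_i$, i.e.\ that the normalising correction is additive over the blocks. This is \emph{false} in general: the paper's remark following Thm.~\ref{thm:pgl-action-on-block-decomposition} exhibits ${\bm{\phi}}_{1\mathrm{a}}={\bm{\alpha}}_{2\mathrm{a}}+\textbf{Line}+\textbf{Line}$ transforming to something only $\PGL$-equivalent to, but not equal to, the sum of the individually transformed pieces, because all three normalisation minima become simultaneously positive. So ``blocks do not interfere'' is exactly the statement that can fail and must be verified.

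The paper's resolution is to prove that additivity holds whenever the summands form a \emph{compatible} set of blocks, by checking that the failure condition (all three minima $\min(v_{\oA\oB\oC\oF},\dots)$, $\min(v_{\oA\oB\oC\oG},\dots)$, $\min(v_{\oA\oB\oC\oH},\dots)$ becoming positive) never occurs for any of the $2\,626\,648$ compatible sets of building blocks --- a large explicit computation, not a soft local argument. Your proposed alternative via characters of multihomogeneous invariants would handle the Pl\"ucker valuations modulo the rescaling lattice cleanly, but as you note it still requires showing that the block decomposition factors through that reduced datum, which is the same additivity question in different clothing. To complete your proof you would need either to carry out the case analysis over all compatible configurations (in effect reproducing the paper's computation) or to find a structural reason why compatibility forbids the simultaneous vanishing of the three minima; neither is supplied.
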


Second, non-hyperelliptic genus $3$ curves have $36$ Cayley octads; our conjecture should be independent of whichever we choose. We have proved that this is indeed the case when the degeneration of a Cayley octad consists of a single building block. Of course we anticipate this to hold no matter the number of building blocks, see Conj.~\ref{conj:symplecticgroupactioncommutespictures}.

\begin{theorem}[see Prop.~\ref{CreTransTwins},~\ref{CreTransTypes},~\ref{prop:CreTransCans} and~\ref{prop:CreTransHE}]
    The image of an octad picture associated to a Cayley octad of a non-hyperelliptic genus $3$ curve under the explicit map \eqref{eq:explicit-map} is independent of choice of Cayley octad, in the case that its degenerations are given by a single building block.
\end{theorem}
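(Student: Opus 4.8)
The plan is to reduce the statement to a finite, explicit computation governed by the classical description of how the $36$ Cayley octads of a fixed non-hyperelliptic genus $3$ curve transform into one another. Recall that these $36$ octads form a single orbit under $\mathrm{Sp}_6(\mathbb{F}_2)$ (acting, through the $2$-torsion of the Jacobian, on even theta characteristics), that the stabiliser of a given octad is the symmetric group $S_8$ relabelling its eight points, and that $\mathrm{Sp}_6(\mathbb{F}_2)$ is generated by this $S_8$ together with one further element which, on the level of $\P^3$, is realised by a fixed ``standard'' Cremona transformation $\sigma$ — the classical cubic one whose base locus is the tetrahedron (vertices and six edges) spanned by four of the eight octad points. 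Relabelling points manifestly does not change the octad picture, and by Thm.~\ref{thm:pgl-action-on-block-decomposition} neither does a change of coordinates on $\P^3$; so it suffices to prove: if a Cayley octad $O$ has octad picture equal to a single building block $B$, then $\sigma(O)$ again has octad picture equal to a single building block $B'$, and the explicit map \eqref{eq:explicit-map} sends $B$ and $B'$ to the same stable reduction type. Since ``single block $\mapsto$ single block'' is then verified at each step, iterating along words in $S_8$ and $\sigma$ propagates the equality of \eqref{eq:explicit-map}-values to all $36$ octads, the relevant Cayley graph being connected by transitivity of the $\mathrm{Sp}_6(\mathbb{F}_2)$-action.

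Carrying this out requires first making $\sigma$ behave well with respect to reduction. One works over $K$ (enlarging it if needed, which is harmless for stable reduction), uses the freedom to change coordinates on $\P^3$ to place the four base points of $\sigma$ at the coordinate vertices, and writes $\sigma$ in the usual cubic ``inverse-of-coordinates'' form. The image of each of the remaining four octad points is then given by the explicit cubic formulae, while the images of the four base points are read off from the blow-up description of $\sigma$; reducing all eight image coordinates modulo the maximal ideal, one recognises the octad picture of $\sigma(O)$ from the incidences (coincidences, coplanarities, collinearities, and twisted-cubic conditions) among the reduced image points, exactly as in Def.~\ref{def:blockdecompositiondiagram} and Rem.~\ref{rmk:construction-cayley}.

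The bulk of the work is then a case analysis on the type of $B$, organised as four propositions. When $B$ is an ${\bm\alpha}$-block (the ``Twins'' case, Prop.~\ref{CreTransTwins}) one splits into sub-cases according to how the block meets the base locus of $\sigma$ — for instance whether the two coincident points, or the six coplanar points of the alternative presentation, lie among the four base points — and checks in each that $\sigma(O)$ is again an ${\bm\alpha}$-block in one of its admissible presentations; since \eqref{eq:explicit-map} sends every ${\bm\alpha}$-block to the same (nodal) reduction type, this closes the case. The ${\bm\chi}$-blocks (``Types'', Prop.~\ref{CreTransTypes}) and ${\bm\phi}$-blocks (``Cans'', Prop.~\ref{prop:CreTransCans}) are treated in the same spirit: $\sigma(O)$ stays within the corresponding family of block presentations, and \eqref{eq:explicit-map} is constant on each family. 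Throughout, the reason no \emph{additional} degeneration can appear in the image — so that one genuinely lands on a single block and not a more intricate picture — is that a Cayley octad is self-associated (Gale self-dual) and $\sigma$ respects this structure; keeping careful track of the genericity of the non-degenerate part of the octad under $\sigma$ is the pervasive technical cost in all four cases.

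I expect the hyperelliptic blocks (``HE'', Prop.~\ref{prop:CreTransHE}) to be the main obstacle. Here the relevant incidence is that seven (hence all eight) reduced points lie on a twisted cubic, and one must understand how this transports under $\sigma$: a twisted cubic through the four base vertices is sent by $\sigma$ to a line, the four base points are sent to the ``opposite'' configuration, and one has to recognise the resulting picture as another hyperelliptic-block presentation and match up exactly which hyperelliptic stable type — and, crucially, the hyperelliptic-versus-non-hyperelliptic dichotomy — it encodes. Both the geometry of the twisted-cubic condition under Cremona and the finer bookkeeping of hyperelliptic reduction sub-types make this the least transparent step; the ${\bm\alpha}$-, ${\bm\chi}$- and ${\bm\phi}$-cases should, by contrast, reduce to direct if lengthy coordinate computations once the framework above is in place.
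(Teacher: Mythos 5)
Your proposal follows essentially the same route as the paper: reduce to the action of a single Cremona transformation on an octad normalised so that the base points are in standard position, verify case by case (${\bm\alpha}$, ${\bm\chi}$, ${\bm\phi}$, hyperelliptic) that a single block maps to a single block of the same family, and rule out extra degenerations using the Cayley octad relations (your ``Gale self-duality''), which the paper does by explicit symbolic computation, the only exceptions being octads whose reduction lies on a twisted cubic. One remark: the hyperelliptic case you anticipate as the main obstacle is actually the easiest step in the paper (a twisted cubic through the four base points maps to a line, quoted from Dolgachev--Ortland, and the remaining sub-cases follow by applying this twice), whereas the genuinely laborious part is the verification that no unexpected Pl\"ucker valuations increase under the coordinate-wise inversion.
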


Again we caution the reader that the octad picture \textit{does} depend on the choice of Cayley octad; it is only the image under the explicit map which does not. For example, if $C$ is a plane quartic with a Cayley octad whose degenerations consist of two coplanar quadruples in the residue field (as in Ex. \ref{ex:introalpha2singularity}), then, up to $\PGL$, $20$ of the octad pictures of $C$ will come from two coplanar quadruples in the residue field, and $16$ will come from two coincident points in the residue field.

Our explicit map obeys the expected structure as well. Modulo questions of labelling (see the discussion immediately preceding Conjecture~\ref{conj:SpecialFibreOfTheStableModel}) and a natural notion of multiplicity (see Remark~\ref{rmk:octadpictureindices}), our explicit map is a 36 to 1 covering. That is, all stable reduction types are achieved by some collection of degenerations, and the only other degenerations achieving the same stable reduction type are accounted for precisely by the $36$ choices of Cayley octad.

\begin{remark}\label{rmk:hyperell-comparison}
The Cayley octad is a natural geometric analogue of the Weierstra\ss\ points of a hyperelliptic curve as follows.
There exists a well known connection between the Weierstra\ss\ points and the theta characteristics of a hyperelliptic curve.
If $P$ is a Weierstra\ss\ point of a hyperelliptic curve, the divisor $2\,[P]$ is for instance one of them, and the others can be obtained by adding a 2-torsion class.
In fact, it even turns out that in the case of the Cayley octad degenerating into 8 points on a twisted cubic, the 8 points on this twisted cubic are the Weierstra\ss\ points of the hyperelliptic curve obtained after reduction, see~\cite[Sec.~IX.3]{DolgachevOrtland}.
\end{remark}

\begin{remark}\label{rmk:construction-cayley}
Given a plane quartic curve $C$ with even theta characteristic $\theta$, the accompanying Cayley octad can be constructed as follows.
Explicitly, $\theta$ allows $C$ to be written in the form $\det( x L + y M + z N) = 0$, where $L, M, N$ are symmetric $4 \times 4$-matrices (such a
representation has been a subject of study since the 19th century~\cite{Hesse1844,Hesse,Dixon1902}).
The collection of matrices $ x L + y M + z N$, for $(x : y : z) \in \P^2$, gives a net, $\mathcal{N}$, of quadrics in $\P^3$, where, generically, the points of $C$ (embedded in $\P^2$) come from quadrics that are degenerate, \textit{i.e.}\ of rank 3 or lower.
The eight points of the Cayley octad form the base locus of $\mathcal{N}$, \textit{i.e.}\ they are precisely the points common to all quadrics in $\mathcal{N}$.
The totality of singular points of quadrics of $\mathcal{N}$ gives the image of $C$ under the embedding induced by $3\theta$.
Moreover, the 28 lines that these eight points define are bi-secants to the curve $C$. The pairs of points where these lines meet $C$ are exactly the 28 odd theta characteristics of $C$, see \cite[Sec.~6]{GrossHarris}.
\end{remark}

\begin{example}
   Consider the Cayley octad $O/\Q_{7}$, whose points are given by the columns of the following matrix:
   $$\begin{bmatrix}u\\ v\\ r\\ s\end{bmatrix} =
   \begin{bmatrix}
   1  &0  &0  &0  &1  &-1 &-1 &2 \\
   0  &1  &0  &0  &1  &6  &2  &12 \\
   0  &0  &1  &0  &1  &13 &1  &10 \\
   0  &0  &0  &1  &1  &1  &7  &7
   \end{bmatrix}$$
   The underlying plane quartic here is
   \begin{align*}
   C \, \colon \, &676x^4 + 468x^3y + 36504x^3z - 179x^2y^2 + 32452x^2yz + 607044x^2z^2 - 90xy^3 + 3444xy^2z\\ &+ 464712xyz^2 + 3916080xz^3 + 25y^4 - 1480y^3z + 68104y^2z^2 + 1716960yz^3 + 8643600z^4.\end{align*}
   Observe that, modulo $7$, the fourth, fifth, and sixth point lie on the line $u = v = r$. Meanwhile, the other five points lie on the plane $s = 0$, and hence on a conic, which in this case is $3uv + 3ur + vr$. Note that this conic and the line intersect.

   This degeneration is an example of a ${\bm \chi}_{2\textrm{c}}$-block. The stable reduction type of $C$ is a genus $2$ curve and an elliptic curve meeting at a point. Moreover, the genus $2$ curve is ramified over the final five points of the octad, with the conic forming the quotient under the hyperelliptic involution.
    \begin{figure}[htbp]
        \centering
        \includegraphics[scale=0.2]{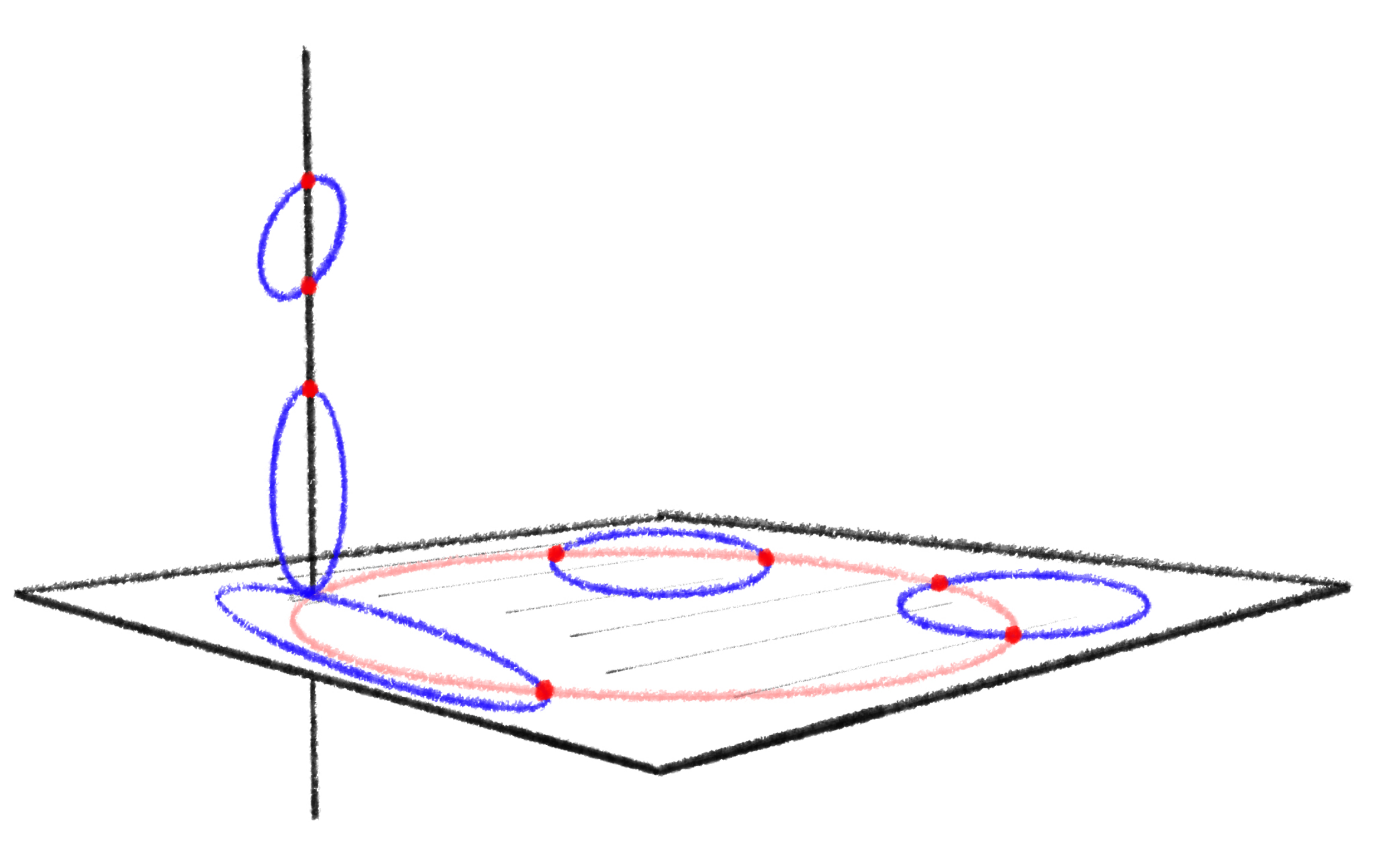}
        \caption{$C$ (blue), with the $8$ points of $O$ (red). The 5 coplanar points lie on a conic (light red).}
    \end{figure}
\end{example}

\begin{remark}
    Just as we have given a general relationship between clusters appearing in a cluster picture and the corresponding stable reduction type, so too can we give a general relationship between our building blocks and the corresponding stable reduction type. The naming scheme is intended to reinforce this relationship.

    \begin{itemize}
        \item  $\bm{\alpha}$-blocks give rise to nodal intersections in the stable reduction type (graphically akin to the glyph $\bm{\alpha}$).
        \item $\bm{\chi}$-blocks correspond to a decomposition of the stable reduction type into two connected parts of arithmetic genus $1$ and $2$, respectively, intersecting once.
        \item $\bm{\phi}$-blocks correspond to a decomposition of the stable reduction type into two connected parts both of arithmetic genus $1$, intersecting twice.
        \item The hyperelliptic blocks correspond to a hyperelliptic stable reduction type. In the case of a {\bf TCu}-block, the $8$ points of the Cayley octad become the $8$ Weierstrass points of the reduced curve. This follows essentially from \cite[Sec.~IX.3]{DolgachevOrtland}.
    \end{itemize}
\end{remark}

The remainder of this introduction will make the definitions and conjectures above precise, with the aim of being largely self-contained. Subsec.~\ref{subsec:mainresults} gives a detailed description of the degenerations in play, and how they decompose into building blocks (see Tab.~\ref{tab:introblocksdefinition}). Subsec.~\ref{subsec:modelsandoctads} discusses how changing coordinates on $\P^3$ and changing choice of Cayley octad impacts both the resulting degenerations and octad picture. Subsec.~\ref{subsec:introcombinatorialanalysis} gives substance to the explicit map~\eqref{eq:explicit-map}, with a detailed recipe that produces a stable reduction type from a given octad picture. This subsection also discusses the perfect combinatorial matching between octad pictures and stable reduction types.

\subsection{Octad Pictures}
\label{subsec:mainresults}
For the purposes of the introduction, let $K$ be a non-archimedean local field of residue characteristic $p \neq 2$. Suppose that $C/K$ is a smooth plane quartic and $O$ is
one of its Cayley octads. We note that the points of $O$ need not be defined over $K$, and our valuation is on $\bar{K}$ (normalised with respect to $K$).

To analyse the $p$-adic configuration of the points of $O$ in $\P^3$, we consider
valuations of the Pl\"ucker coordinates they define. Recall in particular that the Pl\"ucker coordinate of four points vanishes if and only if the four points are themselves coplanar, and so this measurement exhibits the extent to which each of the 70 possible subsets of 4 points are $p$-adically `nearly' coplanar (and thus lie on a plane over the residue field). The analogous construction for the cluster picture of a hyperelliptic curve $y^2 = f(x)$ is measuring the pairwise distances between the roots of $f$.

There are, however, other special arrangements of points in the Cayley octad we would like to measure; two or more points colliding, three or more points becoming collinear, and all eight points lying on a twisted cubic curve. The first two of these can in fact be measured from the Pl\"ucker coordinates alone (see Ex.~\ref{ex:introexample}, for instance). The third of these requires another $p$-adic measurement, the \textit{twisted cubic index}.

\begin{definition*}[see Def.~\ref{def:valuation-data}]
    Let $O = ( O_\oA, O_\oB, \ldots, O_\oH )$ be an $8$-tuple (an
    \textit{octad}) of points in $\P^3$ over $\overline{K}$.
    Affine representatives for the four coordinates of each of the 8 points are
    chosen so that their minimum valuation is 0 (recall that the
    valuation is normalised with respect to $K$).
    Let $P = \{ S \subset \{ \oA, \ldots, \oH \} \, \colon \, |S| = 4 \}$, and
    introduce the formal symbol $\dagger$.

    The \textit{valuation data} of $O$ over $K$ is a function $v \colon P \cup \{ \dagger \} \to \Q \colon S \mapsto v_S$, where:
    \begin{itemize}
    \item For each $S = \{i,j,k,l\} \in P$, the value $v_S$ is the valuation
      of the Pl\"ucker coordinate $\det(O_i | O_j | O_k | O_l)$.
    \item The value $v_\dagger$ is the \textit{twisted cubic index}, a non-negative number indicating with what multiplicity the points in $O$ lie on a twisted cubic curve.
\end{itemize}
\end{definition*}

Where convenient we also consider $v$ as a pair $(w,t)$, where $w \in \Q^P$ and $t = v_\dagger$.

\begin{example}
\label{ex:introexample}
Let $K = \Q_7$, and let $O = ( O_\oA, O_\oB, \ldots, O_\oH )$ be the Cayley octad with coordinates
\begin{displaymath}
      \arraycolsep=0.4\arraycolsep%
      \mbox{\scriptsize
        $\begin{bmatrix}
          1 & 0 & 0 & 0 & 1 & 1 & 7 & 5 \\
          0 & 1 & 0 & 0 & 1 & 7 & 10 & -34  \\
          0 & 0 & 1 & 0 & 1 & 14 & 8 & 24  \\
          0 & 0 & 0 & 1 & 1 & -7 & 9 & 11
        \end{bmatrix}$}
    \end{displaymath}
The corresponding plane quartic is given by
\footnotesize
\begin{align*}
C \colon \, 115600 & x^4 + 544000 x^3y + 1031680 x^2y^2 + 921600 xy^3 + 331776y^4 + 151640x^3z + 526816x^2yz + 1122432xy^2z \\
&+ 870912y^3z - 152231x^2z^2 + 118680xyz^2 + 913680y^2z^2 - 132462xz^3 + 449064yz^3 + 88209z^4.
\end{align*}
\normalsize
 There are sixteen quadruples of points of the Cayley octad that become coplanar over $\F_7$. Fifteen of these quadruples are of the form $O_\oA, O_\oF, O_\star, O_\star$; this is explained by $O_\oA$ and $O_\oF$ coinciding over $\F_7$. The sixteenth of these coplanar quadruples over $\F_7$ consists of the points with indices $\oB,\oC,\oD,$ and $\oG$; the valuation of its Pl\"ucker coordinate is $1$. Its complementary quadruple, the points with indices $\oA,\oE,\oF,$ and $\oH$, is coplanar with valuation $2$. The points do not lie on a twisted cubic over $\F_7$. Thus this valuation data is explained by the points $O_\oA$ and $O_\oF$ coinciding over $\F_7$ (to order $1$), and the complementary quadruples of points indexed by $\oA\oE\oF\oH$ and $\oB\oC\oD\oG$ lying on planes (also to order $1$).
\end{example}

\begin{conjecture*}[see Conj.~\ref{conj:VDUnieuqOctadDiagram} and~\ref{conj:SpecialFibreOfTheStableModel}]
    Let $C/K$ be a smooth plane quartic over a non-archimedean local field of residue characteristic $p \neq 2$, with Cayley octad $O$. Then the valuation data of $O$ determines the stable reduction type of $C/K$.
\end{conjecture*}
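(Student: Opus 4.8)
The plan is to factor the claimed implication through two intermediate objects — the \emph{octad picture} of $O$ (Def.~\ref{def:blockdecompositiondiagram}) and a semistable model of the pair $(\P^3, O)$ over $\mathcal{O}_{\overline{K}}$ — and to prove the three resulting implications in turn. First one reduces to the case where $K = \overline{K}$ is complete and algebraically closed: the stable reduction type is unchanged by finite base extension, while the valuation data, being normalised with respect to $K$ throughout, is literally unaffected, so one may pass to a finite extension over which $C$ attains stable reduction. By Thm.~\ref{thm:pgl-action-on-block-decomposition} the octad picture extracted from the valuation data is invariant under $\mathrm{PGL}_4(\overline{K})$, so one is free to change coordinates on $\P^3$. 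The first implication is essentially Conj.~\ref{conj:VDUnieuqOctadDiagram}: one must show the valuation data pins down a well-defined octad picture, and, more, that one can choose $g \in \mathrm{PGL}_4(\overline{K})$ and a semistable model $\mathcal{P}$ of $\P^3$ over $\mathcal{O}_{\overline{K}}$ — built by iteratively blowing up integral points and integral linear subschemes — so that the eight sections $g \cdot O$ extend to an octad over $\mathcal{O}_{\overline{K}}$ inside $\mathcal{P}$ whose reduction $\overline{O} \subset \mathcal{P}_{\overline{k}}$ is ``as non-degenerate as the Cayley relation (Rem.~\ref{rmk:7determine8}) and the valuation data permit''; the bookkeeping of which points land on which component of $\mathcal{P}_{\overline{k}}$, and with which coplanarity, collinearity and twisted-cubic incidences, is exactly the octad picture.

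The second and third implications, which form the heart of the matter, track the net of quadrics through this degeneration. Over $\mathcal{O}_{\overline{K}}$ the integral octad cuts out the net $\mathcal{N}$ of quadrics through the eight points, which extends to a rank-$3$ subbundle of quadratic forms on $\mathcal{P}$, and by Rem.~\ref{rmk:construction-cayley} the $3\theta$-embedded curve is the locus of vertices of the singular members of $\mathcal{N}$ (equivalently $C \subset \P(\mathcal{N}) \cong \P^2$ is the discriminant quartic). I would take the closure $\mathcal{C} \subset \mathcal{P}$ of this vertex locus and analyse its special fibre $\mathcal{C}_{\overline{k}}$ component by component against $\overline{O}$: each basic degeneration of Tab.~\ref{tab:introblocksdefinition} should contribute a predictable rational or positive-genus piece — for instance a coplanar quadruple (which, by the Cayley relation, forces its complement to be coplanar) produces a line in $\mathcal{C}_{\overline{k}}$ that contracts to a node, exactly as in Ex.~\ref{ex:introalpha2singularity}, whereas eight points on a twisted cubic make the limiting discriminant the branch locus of a hyperelliptic curve, cf.~\cite[Sec.~IX.3]{DolgachevOrtland} and Rem.~\ref{rmk:hyperell-comparison}. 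Contracting the rational components of $\mathcal{C}_{\overline{k}}$ that meet the rest in at most two points then yields a semistable, hence stable, curve whose weighted dual graph is the asserted stable reduction type; the explicit map~\eqref{eq:explicit-map} is precisely the record of running this contraction block by block, and its correctness when the octad picture consists of a single block is what Prop.~\ref{CreTransTwins},~\ref{CreTransTypes},~\ref{prop:CreTransCans} and~\ref{prop:CreTransHE} already establish.

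The main obstacle, I expect, is the third implication for octad pictures assembled from several interacting building blocks. When blocks overlap, $\mathcal{C}_{\overline{k}}$ can fail to be reduced or can acquire non-nodal singularities, the net $\mathcal{N}$ can degenerate so that some singular members acquire positive-dimensional vertex loci, and the model $\mathcal{P}$ chosen to spread out the octad may require further modification before $\mathcal{C}$ becomes flat with semistable special fibre. Controlling all of this uniformly — rather than verifying the finite list of octad pictures one at a time — appears to demand a clean functorial comparison between a semistable model of the moduli of eight points in $\P^3$ (together with its universal octad) and the Deligne--Mumford boundary of $\overline{M}_3$, via the Cayley-octad correspondence of \cite{DolgachevOrtland}, and one would moreover want this comparison to be compatible with the $36$-fold symplectic symmetry relating the Cayley octads of a fixed curve (Conj.~\ref{conj:symplecticgroupactioncommutespictures}). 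Establishing such a comparison is the crux that separates the present conjectural status from a theorem; absent it, one is reduced to the block-by-block case analysis carried out in the body of the paper.
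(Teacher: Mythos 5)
The statement you are asked to prove is stated in the paper as a \emph{conjecture} (Conj.~\ref{conj:VDUnieuqOctadDiagram} together with Conj.~\ref{conj:SpecialFibreOfTheStableModel}); the paper contains no proof of it, only supporting evidence of a different character: the uniqueness (but not existence) of a compatible block decomposition (Thm.~\ref{thm:UniqueDecomposition}), invariance under $\PGL$ (Thm.~\ref{thm:pgl-action-on-block-decomposition}), compatibility of the $36$ Cayley octads for single-block degenerations (Prop.~\ref{CreTransTwins},~\ref{CreTransTypes},~\ref{prop:CreTransCans},~\ref{prop:CreTransHE}), the exact combinatorial bijection of Thm.~\ref{thm:introsubspaceoctadpictureagreement}, and large-scale numerical verification. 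Your proposal is therefore not comparable to a proof in the paper; it is an alternative, more geometric strategy (degenerating the net of quadrics over a semistable model of $(\P^3,O)$ and reading off the special fibre of the vertex locus), which the paper explicitly declines to pursue on the grounds that the passage from octad degenerations to components of the special fibre ``appears to be rather mysterious.'' You correctly identify the crux you cannot supply --- a functorial comparison between a compactified moduli of octads and the boundary of $\overline{M_3}$ --- so the proposal is an honest sketch with an acknowledged gap rather than a proof.

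Two concrete points of inaccuracy in the sketch. First, you write that the correctness of the explicit map~\eqref{eq:explicit-map} for single-block octad pictures ``is what Prop.~\ref{CreTransTwins},~\ref{CreTransTypes},~\ref{prop:CreTransCans} and~\ref{prop:CreTransHE} already establish.'' Those propositions establish only that the single-block valuation data is permuted consistently by Cremona transformations, i.e.\ that the image under the map is independent of the choice of Cayley octad; they say nothing about whether that image is the actual stable reduction type. The paper proves correctness only for the trivial picture (good reduction, via Thm.~\ref{thm:regularoctad}) and defers further partial results to future work. Second, your ``first implication'' quietly needs the \emph{existence} of a compatible block decomposition for an arbitrary Cayley octad, which is exactly the open half of Conj.~\ref{conj:VDUnieuqOctadDiagram}: Thm.~\ref{thm:UniqueDecomposition} gives uniqueness assuming existence, and Thm.~\ref{thm:pgl-action-on-block-decomposition} is likewise proved under that assumption. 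Any proof along your lines would have to show that the cone of valuation data of Cayley octads is covered by the cones spanned by compatible block sets, a step your construction of the model $\mathcal{P}$ gestures at (``as non-degenerate as the Cayley relation and the valuation data permit'') but does not carry out.
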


\begin{example}
    Let $C/\Q_7$ be the curve of Ex.~\ref{ex:introexample}. The stable
    reduction type of $C$ is a genus $1$ curve with two-self intersections. If
    the above conjecture is true, then any other octad over any other non-archimedean local field of residue characteristic other than $2$ with same valuation data gives rise to the same stable reduction type of the underlying curve.
\end{example}

Observe that in Ex.~\ref{ex:introexample}, the valuation data admits a natural, combinatorial description, being the result of two geometric degenerations; two points coinciding over $\F_7$, and two complementary quadruples of points becoming coplanar over $\F_7$. Below, we generalise this phenomenon. For simplicity, however, we restrict our attention to Cayley octads having five points in general position assumed to be $(1:0:0:0)$, $\ldots$ $(0:0:0:1)$ and $(1:1:1:1)$. This is without loss of generality, in the sense that all regular Cayley octads can be normalised by a $\PGL$-transformation (\textit{normalised} octads).

The valuation data of normalised Cayley octads can be written as a sum of certain \textit{building blocks}. These blocks can be interpreted as geometric degenerations in the Cayley octad over the residue field of $K$, with the simplest ones (as seen in Ex.~\ref{ex:introexample}) corresponding to two points coinciding ($\bm{\alpha}_{1a}$-blocks) or to four
points becoming coplanar ($\bm{\alpha}_{2a}$-blocks). We introduce $19$ such building blocks (up to choice of labelling) in Tab.~\ref{tab:introblocksdefinition}. In order to define these blocks, we introduce the nomenclature of Def.~\ref{def:standard-valuation-data}. We emphasise that building blocks are to octad diagrams what clusters are to cluster pictures.

\begin{definition}\label{def:standard-valuation-data}
Let $T \subset \{ \oA, \ldots, \oH \}$, and let $P = \{ S \subset \{ \oA, \ldots, \oH \} \, \colon \, |S| = 4 \}$. We define the following vectors:
\begin{itemize}
\item The point coincidence vector $v_{\pt}^T$ with values $v_{\pt, S}^T= \max(0, |S \cap T|-1)$ for $S \in P$, and twisted cubic index $v_{\pt,\dagger}^T = 0$. This is the valuation data for an octad in which the points of $T$ coincide up to order 1, and no other coincidences occur.
\item The line coincidence vector $v_{\ln}^T$ with values $v_{\ln, S}^T = \max(0, |S \cap T| - 2)$ for $S \in P$, and twisted cubic index $v_{\pt,\dagger}^T = 0$. This is the valuation data for an octad in which the points of $T$ lie on a line with order 1, and no other coincidences occur.
\item The plane coincidence vector $v_{\pl}^T$ with values $v_{\pl, S}^T = \max(0, |S \cap T| - 3)$ for $S \in P$, and twisted cubic index $v_{\pt,\dagger}^T = 0$. This is the valuation data for an octad in which the points of $T$ lie on a plane with order 1, and no other coincidences occur.
\item The twisted cubic coincidence vector $v_{\tc}$ with values $v_{\tc,S} = 0$ for $S \in P$ and twisted cubic index $v_{\tc,\dagger} = 1$. This is the valuation data for a Cayley octad in which the 8 points lie on a twisted cubic with order 1, and no other coincidences occur.
\end{itemize}
The building blocks in Tab.~\ref{tab:introblocksdefinition} are formed by combining some of these vectors. In particular, given two valuation data $v$ and $v'$, we define
\begin{itemize}
\item their \textit{sum}, which we denote by $v+v'$, by taking the componentwise sum, and
\item their {\em maximum}, which we denote by $\max(v,v')$, by taking the componentwise maximum.
\end{itemize}
\end{definition}

\newcolumntype{P}[1]{>{\centering\arraybackslash}p{#1}}
\newcommand\BBlockLine[1]{
  \multicolumn{5}{c}{
    \begin{tabular}{p{0.3\hsize}P{0.24\hsize}p{0.45\hsize}}
      \hrule width \hsize \kern 2pt \hrule width \hsize
      & {\centering #1}
      &\hrule width \hsize \kern 2pt \hrule width \hsize
    \end{tabular}
  }\\[-1cm]
}

\begin{table}[htbp]
  \centering
  \resizebox{!}{10cm}{%
  {\renewcommand{\arraystretch}{1.8}
    \begin{tabular}{lcccc}
      Block
      & Valuation Data
      & Subspace/Subset
      & Picture\\[-0.5cm]
      \BBlockLine{$\bm{\alpha}$-blocks}
      ${\bm{\alpha}}_{1{\mathrm{a}}}^{\oA\oB}$
      &   $v_{\pt}^{\oA\oB}$
      & \multirow{2}{*}{$\langle \oA\oB \rangle $}
      & \multirow{2}{*}{\resizebox{1.8cm}{!}{
        \tikzsetnextfilename{TypeTWintro}
        \begin{tikzpicture}[scale=0.25]
          \ptslabel \twinbig{0}
        \end{tikzpicture}
        }}\\
      ${\bm{\alpha}}_{1{\mathrm{b}}}^{\oA\oB}$
      &    $v_\pl^{\oC\oD\oE\oF\oG\oH}$
      &
      &                   \\ \hline
      ${\bm{\alpha}}_{2{\mathrm{a}}}^{\oA\oB\oC\oD}$
      &   $v_\pl^{\oA\oB\oC\oD} + v_\pl^{\oE\oF\oG\oH}$
      & \multirow{2}{*}{$\langle \oA\oB\oC\oD \rangle$}
      & \multirow{2}{*}{\resizebox{2cm}{!}{
        \tikzsetnextfilename{TypePLintro}
        \begin{tikzpicture}[scale=0.25]
          \ptslabel \plane{0}
        \end{tikzpicture}
        }}\\
      ${\bm{\alpha}}_{2{\mathrm{b}}}^{\oA\oB\oC\oD}$
      &     $v_\pl^{\oA\oB\oC\oD} + v_{\pt}^{\oA\oB\oC\oD}$
      &
      & \\[-0.3cm]
      \BBlockLine{$\bm{\chi}$-blocks}

      ${\bm{\chi}}_{1\mathrm{a}}^{\oA\oB|\oC\oD\oE|\oF\oG\oH}$
      & $\max(v_\ln^{\oC\oD\oE} + v_\ln^{\oF\oG\oH},
        v_\pl^{\oC\oD\oE\oF\oG\oH})$
      & \multirow{3}{*}{$\langle \oA\oB, \oA\oC\oD\oE \rangle $}
      & \multirow{3}{*}{\tikzsetnextfilename{TAintro} \begin{tikzpicture}[scale=0.25] \ptslabel \TA{0} \end{tikzpicture}} \\
      ${\bm{\chi}}_{1\mathrm{b}}^{\oA\oB|\oC\oD\oE|\oF\oG\oH}$
      &    $\max(v_\pt^{\oA\oB}, v_\pl^{\oA\oB\oC\oD\oE} +
        v_\pl^{\oA\oB\oF\oG\oH})$
      &
      &                   \\
      ${\bm{\chi}}_{1\mathrm{c}}^{\oA\oB|\oC\oD\oE|\oF\oG\oH}$
      &    $\max(v_\ln^{\oA\oB\oC\oD\oE} + v_\ln^{\oC\oD\oE},
        v_\pt^{\oC\oD\oE})$
      &
      &                   \\\hline
      ${\bm{\chi}}_{2\mathrm{a}}^{\oA\oB\oC}$
      &   $v_\ln^{\oD\oE\oF\oG\oH} + v_\pl^{\oD\oE\oF\oG\oH}$
      & \multirow{3}{*}{$\langle \oA\oB, \oA\oC \rangle$}
      & \multirow{3}{*}{\tikzsetnextfilename{TBintro} \begin{tikzpicture}[scale=0.25] \ptslabel \TB{0} \end{tikzpicture}} \\
      ${\bm{\chi}}_{2\mathrm{b}}^{\oA\oB\oC}$
      &     $v_\ln^{\oA\oB\oC} + v_\pt^{\oA\oB\oC}$
      &
      &                  \\
      ${\bm{\chi}}_{2\mathrm{c}}^{\oA\oB\oC}$
      &     $v_\ln^{\oA\oB\oC} + v_\pl^{\oD\oE\oF\oG\oH}$
      &
      & \\[-0.3cm]
      \BBlockLine{$\bm{\phi}$-blocks}
       ${\bm{\phi}}_{1\mathrm{a}}^{\CAApts}$
      &   $v_{\ln}^{\oA\oB\oE\oF} + v_{\ln}^{\oA\oB\oG\oH} +
        v_{\pl}^{\oA\oB\oC\oD} + v_{\pl}^{\oE\oF\oG\oH}$
      & \multirow{2}{*}{
\begin{tabular}{l}
$\langle \oA\oB, \oC\oD, \oA\oC\oE\oF \rangle $ or\\[-0.5cm]\hfill $\langle \oE\oF, \oG\oH, \oA\oB\oE\oG \rangle$
\end{tabular}}

      & \multirow{2}{*}{\tikzsetnextfilename{CandyAintro} \begin{tikzpicture}[scale=0.23] \ptslabelcross \CA \end{tikzpicture}} \\
      ${\bm{\phi}}_{1\mathrm{b}}^{\CAApts}$
      &    $\max(v_{\pt}^{\oA\oB} + v_{\pt}^{\oC\oD},
        v_{\pl}^{\oA\oB\oC\oD\oE\oF} + v_{\pl}^{\oA\oB\oC\oD\oG\oH})$
      &
      & \\\hline
      $\bm{\phi}_{2\mathrm{a}}^{\oA\oB\oC|\oF\oG\oH}$
      &   $\max(v_\ln^{\oA\oB\oC} + v_\ln^{\oF\oG\oH}, v_\pl^{\oA\oB\oC\oD\oE}
        + v_\pl^{\oD\oE\oF\oG\oH})$
      & \multirow{3}{*}{\begin{tabular}{l}
$\langle \oA\oB, \oB\oC, \oD\oE \rangle$ or\\[-0.5cm]\hfill $\langle \oF\oG, \oF\oH, \oD\oE \rangle$\end{tabular}
      }
      & \multirow{3}{*}{ \tikzsetnextfilename{CandyBintro} \begin{tikzpicture}[scale=0.23] \ptslabel \CB{3} \end{tikzpicture}}  \\
      $\bm{\phi}_{2\mathrm{b}}^{\oA\oB\oC|\oF\oG\oH}$
      &     $v_\pt^{\oD\oE} + v_\ln^{\oA\oB\oC\oD} + v_\ln^{\oA\oB\oC\oE}$
      &
      & \\
      $\bm{\phi}_{2\mathrm{c}}^{\oA\oB\oC|\oF\oG\oH}$
      &     $v_\ln^{\oA\oB\oC\oD} + v_\ln^{\oA\oB\oC\oE} +
        v_\pl^{\oA\oB\oC\oF\oG\oH}$
      &
      & \\ \hline
      $\bm{\phi}_{3\mathrm{a}}^{\oA\oB\oC\oD}$
      &   $v_\ln^{\oA\oB\oC\oD} + v_\pl^{\oA\oB\oC\oD} + v_\pl^{\oE\oF\oG\oH}$
      & \multirow{2}{*}{\begin{tabular}{l}
$\langle \oA\oB, \oA\oC, \oA\oD \rangle$ or\\[-0.5cm]\hfill  $\langle \oE\oF, \oE\oG, \oE\oH \rangle$\end{tabular}
      }
      & \multirow{2}{*}{\tikzsetnextfilename{CandyCintro} \begin{tikzpicture}[scale=0.23] \ptslabel \CC{0} \end{tikzpicture}} \\
      $\bm{\phi}_{3\mathrm{b}}^{\oA\oB\oC\oD}$
      &   $v_\pt^{\oA\oB\oC\oD} + v_\ln^{\oA\oB\oC\oD} + v_\pl^{\oA\oB\oC\oD}$
      & \\[-0.3cm]
      \BBlockLine{hyperelliptic blocks}
      \begin{tabular}{l}
      $\mathrm{{\bf TCu}}$\\[2cm]
      \end{tabular}
      &
        \begin{tabular}{l}
          $v_{\tc}$\\[2cm]
        \end{tabular}
      &
        \begin{tabular}{l}
          $\{\,0\,\} \,\cup\, \{\, S \subset \{ \oA, \ldots \oH \} \,:\, |S| = 2\}$\\[2cm]
        \end{tabular}

      & \tikzsetnextfilename{TCuintro}
        \begin{tikzpicture}[scale=0.18]  \node at (0,5) {}; \ptslabel \TCu \end{tikzpicture} \\[-1cm] \hline
      \begin{tabular}{l}
      $\mathrm{{\bf Line}}^{\oA\oB\oC\oD}$\\[1.5cm]
      \end{tabular}

      &
        \begin{tabular}{l}
          $v_\ln^{\oA\oB\oC\oD}$\\[1.5cm]
        \end{tabular}
      & $\begin{array}{l}\{\,0\,\} \ \cup
          \{\ S \cup T \, \colon \, S \subset \{\oA, \oB, \oC, \oD\},
           \,\\[-0.5cm] \hspace*{0.5cm} T \subset \{ \oE, \oF, \oG, \oH \}, \, |S| - |T| = \pm 2 \}\\[1.5cm]
          \end{array}$
      & \tikzsetnextfilename{Lineintro}
        \begin{tikzpicture}[scale=0.2] \node at (0,5) {}; \ptslabel
          \HE{0} \end{tikzpicture}\\[-1cm]

    \end{tabular}}%
    }
  \caption{Building blocks, their associated subspaces and pictures (see Sec.~\ref{sec:formalanalysisoctadpictures})}
  \label{tab:introblocksdefinition}
\end{table}

\begin{example}
\label{ex:introexampleblockdecomp}
Continuing Ex.~\ref{ex:introexample}, the valuation data of $O/\Q_7$ decomposes as $(v_{\pt}^{\oA\oF}) + (v_{\pl}^{\oA\oE\oF\oH} + v_{\pl}^{\oB\oC\oD\oG}) = \bm{\alpha}_{1a}^{\oA\oF} + \bm{\alpha}_{2a}^{\oA\oE\oF\oH}$. We use the pictures of Tab.~\ref{tab:introblocksdefinition} (in this case the two pictures given in Fig.~\ref{fig:introexamplepictures}), to associate a picture to the entire valuation data, simply by overlaying the two (Fig.~\ref{fig:introexampleoctadpicture}).

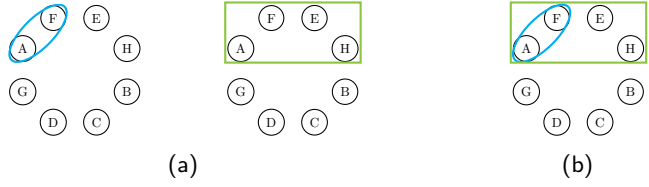
\begin{figure}[htbp]
    \centering
    \begin{subfigure}[b]{0.35\linewidth}
    \centering
         \tikzsetnextfilename{IntroOctPicA1v2}
        \begin{tikzpicture}[scale=0.25]
            \ptscustomlabel{F}{E}{H}{B}{C}{D}{G}{A}
            \twinbig{7}
        \end{tikzpicture}
        \qquad\,
        \tikzsetnextfilename{IntroOctPicA2}
        \begin{tikzpicture}[scale=0.25]
             \ptscustomlabel{F}{E}{H}{B}{C}{D}{G}{A}
             \plane{7}
        \end{tikzpicture}
        \caption{}
        \label{fig:introexamplepictures}
    \end{subfigure}
    \begin{subfigure}[b]{0.35\linewidth}
    \centering
        \tikzsetnextfilename{IntroOctPicB}
        \begin{tikzpicture}[scale=0.25]
             \ptscustomlabel{F}{E}{H}{B}{C}{D}{G}{A}
             \twinbig{7}
             \plane{7}
        \end{tikzpicture}
        \caption{}
        \label{fig:introexampleoctadpicture}
      \end{subfigure}
      \caption{Valuation data decomposition}
\end{figure}

\end{example}

Each building block is represented by a picture on the $8$ points $\oA, \ldots, \oH$, and this is typically an easy way of rendering valuation data. Additionally, each building block is associated with a subspace or subset of the vector space $E_3$ (see Rem.~\ref{rmk:cremonatransformations}). These subspaces are significant ingredients in the explicit formation of the stable reduction type, see Sec.~\ref{subsec:introcombinatorialanalysis}.

\begin{figure}[htbp]
\begin{subfigure}[c]{0.35\linewidth}
  \centering
  \resizebox{0.9\linewidth}{!}{%
  {\renewcommand{\arraystretch}{2}
    \begin{tabular}{lclcc}
      Block
      & Auxiliary Blocks\\
      \hline\hline
       ${\bm{\phi}}_{1\mathrm{a}}^{\CAApts}$
      & \multirow{2}{*}{ \begin{tabular}{c}${\bm{\alpha}}_{2 \star}^{\oA\oB\oC\oD}, \textbf{Line}^{\oA\oB\oE\oF}, \textbf{Line}^{\oA\oB\oG\oH}$,\\ [-0.5cm]$\textbf{Line}^{\oC\oD\oE\oF}, \textbf{Line}^{\oC\oD\oG\oH}$\end{tabular}}
      \\
      ${\bm{\phi}}_{1\mathrm{b}}^{\CAApts}$
      & \\\hline
      $\bm{\phi}_{2\mathrm{a}}^{\oA\oB\oC|\oF\oG\oH}$
      & \multirow{3}{*}{ \begin{tabular}{c}${\bm{\alpha}}_{1 \star }^{\oD\oE}, \textbf{Line}^{\oA\oB\oC\oD}, \textbf{Line}^{\oA\oB\oC\oE}$,\\[-0.5cm] $\textbf{Line}^{\oD\oF\oG\oH}, \textbf{Line}^{\oE\oF\oG\oH}$\end{tabular}}
 \\
      $\bm{\phi}_{2\mathrm{b}}^{\oA\oB\oC|\oF\oG\oH}$
      & \\
      $\bm{\phi}_{2\mathrm{c}}^{\oA\oB\oC|\oF\oG\oH}$
      & \\ \hline
      $\bm{\phi}_{3\mathrm{a}}^{\oA\oB\oC\oD}$
      & \multirow{2}{*}{\hspace{0.3cm}\begin{tabular}{c}${\bm{\alpha}}_{2 \star}^{\oA\oB\oC\oD}$, $\textbf{TCu}$,\\[-0.5cm] $\textbf{Line}^{\oA\oB\oC\oD}, \textbf{Line}^{\oE\oF\oG\oH}$\end{tabular}}
      \\
      $\bm{\phi}_{3\mathrm{b}}^{\oA\oB\oC\oD}$
      & \\[-0.3cm]
    \end{tabular}}%
    }
  \caption{Definitions}
  \label{fig:auxdefinition}
\end{subfigure}
\begin{subfigure}[c]{0.6\linewidth}
    \centering
    \resizebox{1.0\linewidth}{!}{\tikzsetnextfilename{AuxiliaryBlocks}
    \begin{tikzpicture}
      \node at (-7.5,0) (a10) {\begin{tikzpicture}[scale=0.15]
          \pts\HE{1}\CA
        \end{tikzpicture}};
      `   \node at (-5.5,0) (a11) {\begin{tikzpicture}[scale=0.15]
          \pts\midplane{7}\CA
        \end{tikzpicture}};
      \node at (-2.6,0) (a12) {\begin{tikzpicture}[scale=0.15]
          \pts\CA
        \end{tikzpicture}};
      \draw[->,shorten <=0.2cm,shorten >=0.2cm,line width=0.4pt]
      (a11) to (a12);
      \draw[line width=0.4pt] (-6.7,-0.5) to (-6.3,0.5);
      \node at (0,0) (a20) {\begin{tikzpicture}[scale=0.15]
          \pts\twin{0}\CB{0}
        \end{tikzpicture}};
      \node at (2,0) (a21) {\begin{tikzpicture}[scale=0.15]
          \pts\HE{1}\CB{0}
        \end{tikzpicture}};
      \node at (4.5,0) (a22) {\begin{tikzpicture}[scale=0.15]
          \pts\CB{0}
        \end{tikzpicture}};
      \draw[->,shorten <=0.2cm,shorten >=0.2cm,line width=0.4pt]
      (a21) to (a22);
      \draw[line width=0.4pt] (0.8,-0.5) to (1.2,0.5);
      \node at (-5.4,-2) (a3) {\begin{tikzpicture}[scale=0.15]
          \pts\plane{0}\CC{0}
        \end{tikzpicture}};
      \node at (-3.0,-2) (a30) {\begin{tikzpicture}[scale=0.15]
          \pts\TCu\CC{0}
        \end{tikzpicture}};
      \node at (-0.6,-2) (a31) {\begin{tikzpicture}[scale=0.15]
          \pts\HE{0}\CC{0}
        \end{tikzpicture}};
      \node at (2.2,-2) (a32) {\begin{tikzpicture}[scale=0.15]
          \pts\CC{0}
        \end{tikzpicture}};
      \draw[->,shorten <=0.2cm,shorten >=0.2cm,line width=0.4pt]
      (a31) to (a32);
      \draw[line width=0.4pt] (-4.4,-2.5) to (-4.0,-1.5);
      \draw[line width=0.4pt] (-2.0,-2.5) to (-1.6,-1.5);
    \end{tikzpicture}
  }
    \caption{Equivalences (modulo labelling)}
    \label{fig:introauxiliarysuppression}
\end{subfigure}
\caption{Auxiliary blocks}
\label{fig:introaux}
\end{figure}

The building blocks carry a pairwise compatibility condition that one may think of as equivalent to the condition, on the level of cluster pictures, that two clusters must either be contained one inside the other, or be mutually disjoint (see Tab.~\ref{tab:twoblockssubspaces} for the condition given pairwise in terms of pictures, but note that, on the level of the corresponding subspaces, this amounts to simple inclusion conditions, see Def.~\ref{def:admissible}).

\begin{remark}
  For technical reasons, $\bm{\phi}$-blocks are associated in
  Section~\ref{sec:phi-blocks} with a class of
  \textit{auxiliary} $\bm{\alpha}$-blocks and two classes of
  \textit{auxiliary} hyperelliptic blocks (see Fig.~\ref{fig:introaux}).
  Valuation data containing a $\bm{\phi}$-block may also contain these
  auxiliary blocks, however these do not have any bearing on the stable
  reduction type, and we omit them from the induced octad picture
  (Fig.~\ref{fig:introauxiliarysuppression}).

  By convention we say that auxiliary blocks are compatible with their
  associated $\bm{\phi}$-blocks, but in line with this remark we omit them
  from the table. Note also that two blocks inducing the same picture are
  considered compatible.

  We expect that in the semistable case, these auxiliary blocks contain
  information regarding chains of $\P^1$s between components, but this exceeds
  the scope of the present work. In a way, these technical details can be
  overlooked on first readings.
\end{remark}

A set $\mathcal{B} = \{ B_1, \ldots, B_n \}$ of building blocks is said to be compatible if all pairs $B_i, B_j$ meet this condition. If {the} valuation data
is written as a sum of compatible building blocks, then this is said to be a \textit{compatible block decomposition} (see Def.~\ref{def:compatibleblocks}). We view this as the `correct' way to decompose valuation data, as in Fig.~\ref{fig:introexampleoctadpicture}. As a (non-)example, valuation data cannot decompose compatibly to give the picture in Fig.~\ref{fig:intrononcompatibleexample}.

\begin{table}[htbp]
  \centering
  \begin{tabular}{l>{\centering\arraybackslash}p{0.6\textwidth}}
    Blocks                               & Compatible configurations \\
    \hline\hline
    \rule{0pt}{0.8cm}Two $\bm{\alpha}$-blocks &
    \parbox[m]{0.6\textwidth}{\centering
    \tikzsetnextfilename{AdmisTwoObjA}
    \begin{tikzpicture}[scale=0.15] \pts \twin{7} \twin{1} \end{tikzpicture}
    \quad
    \tikzsetnextfilename{AdmisTwoObjB}
    \begin{tikzpicture}[scale=0.15] \pts \twin{7} \plane{7} \end{tikzpicture}
    \quad
    \tikzsetnextfilename{AdmisTwoObjC}
    \begin{tikzpicture}[scale=0.15] \pts \plane{5} \plane{7} \end{tikzpicture}
    } \\[0.6cm] \hline
    \rule{0pt}{1.4cm} $\bm{\alpha}$-block and $\bm{\chi}$-block  &
    \parbox[m]{0.4\textwidth}{\centering
      \tikzsetnextfilename{AdmisObjTypA}
      \begin{tikzpicture}[scale=0.15] \pts \TA{0} \twin{0} \end{tikzpicture}
      \quad
      \tikzsetnextfilename{AdmisObjTypB}
      \begin{tikzpicture}[scale=0.15] \pts \twin{1} \TB{1} \end{tikzpicture}
      \quad
      \tikzsetnextfilename{AdmisObjTypC}
      \begin{tikzpicture}[scale=0.15] \pts \TA{0} \plane{1} \end{tikzpicture}
      \quad
      \tikzsetnextfilename{AdmisObjTypD}
      \begin{tikzpicture}[scale=0.15] \pts \TA{0} \twin{2} \end{tikzpicture}
      \quad
      \tikzsetnextfilename{AdmisObjTypE}
      \begin{tikzpicture}[scale=0.15] \pts \TA{0} \plane{7} \end{tikzpicture}
      \quad
      \tikzsetnextfilename{AdmisObjTypF}
      \begin{tikzpicture}[scale=0.15] \pts \TB{1} \twin{7} \end{tikzpicture}
      \quad
      \tikzsetnextfilename{AdmisObjTypG}
      \begin{tikzpicture}[scale=0.15] \pts \TB{1} \plane{1} \end{tikzpicture}
    } \\[1.1cm] \hline
    \rule{0pt}{0.8cm}Two $\bm{\chi}$-blocks        &
    \parbox[m]{0.6\textwidth}{\centering
    \tikzsetnextfilename{AdmisTwoTypA}
    \begin{tikzpicture}[scale=0.15] \pts \TA{7} \TA{1} \end{tikzpicture}
    \quad
    \tikzsetnextfilename{AdmisTwoTypB}
    \begin{tikzpicture}[scale=0.15] \pts \TA{7} \TB{1} \end{tikzpicture}
    \quad
    \tikzsetnextfilename{AdmisTwoTypC}
    \begin{tikzpicture}[scale=0.15] \pts \TB{2} \TB{5} \end{tikzpicture}
    }
    \\[0.6cm] \hline
    \rule{0pt}{0.9cm} $\bm{\phi}$-block and $\bm{\alpha}$-block   &
    \parbox[m]{0.6\textwidth}{\centering
    \tikzsetnextfilename{AdmisCandyObjectA}
    \begin{tikzpicture}[scale=0.15] \pts \CA \twin{7} \end{tikzpicture}
    \quad
    \tikzsetnextfilename{AdmisCandyObjectB}
    \begin{tikzpicture}[scale=0.15] \pts \CA \plane{6} \end{tikzpicture}
    \quad
    \tikzsetnextfilename{AdmisCandyObjectC}
    \begin{tikzpicture}[scale=0.15] \pts \CB{0} \twin{2} \end{tikzpicture}
    \quad
    \tikzsetnextfilename{AdmisCandyObjectD}
    \begin{tikzpicture}[scale=0.15] \pts \CB{0} \plane{4} \end{tikzpicture}
    \quad
    \tikzsetnextfilename{AdmisCandyObjectE}
    \begin{tikzpicture}[scale=0.15] \pts \CC{7} \twin{7} \end{tikzpicture}
    } \\[0.6cm] \hline
    \rule{0pt}{0.8cm} $\bm{\phi}$-block and $\bm{\chi}$-block &
    \parbox[m]{0.6\textwidth}{\centering
    \tikzsetnextfilename{AdmisCandyTypA}
    \begin{tikzpicture}[scale=0.15] \pts \CA \TA{7} \end{tikzpicture}
    \quad
    \tikzsetnextfilename{AdmisCandyTypB}
    \begin{tikzpicture}[scale=0.15] \pts \CB{4} \TA{7} \end{tikzpicture}
    \quad
    \tikzsetnextfilename{AdmisCandyTypC}
    \begin{tikzpicture}[scale=0.15] \pts \CB{0} \TB{5} \end{tikzpicture}
    \quad
    \tikzsetnextfilename{AdmisCandyTypD}
    \begin{tikzpicture}[scale=0.15] \pts \CC{1} \TB{1} \end{tikzpicture}
    }
    \\[0.6cm] \hline
    \rule{0pt}{0.8cm} HE block and $\bm{\alpha}$-block &
    \parbox[m]{0.6\textwidth}{\centering
    \tikzsetnextfilename{AdmisHyperAlpha1}\begin{tikzpicture}[scale=0.15] \pts \twin{0} \TCu \end{tikzpicture}
    \quad
    \tikzsetnextfilename{AdmisHyperAlpha2}\begin{tikzpicture}[scale=0.15] \pts \twin{0} \HE{0} \end{tikzpicture}
    \quad
    \tikzsetnextfilename{AdmisHyperAlpha3}\begin{tikzpicture}[scale=0.15] \pts \plane{1} \HE{0} \end{tikzpicture}
    }
    \\[0.6cm] \hline
    \rule{0pt}{0.8cm} HE block and $\bm{\chi}$-block &
    \parbox[m]{0.6\textwidth}{\centering
    \tikzsetnextfilename{AdmisHyperChi1}\begin{tikzpicture}[scale=0.15] \pts \TB{0} \TCu \end{tikzpicture}
    \quad
    \tikzsetnextfilename{AdmisHyperChi2}\begin{tikzpicture}[scale=0.15] \pts \TA{0} \HE{0} \end{tikzpicture}
    \quad
    \tikzsetnextfilename{AdmisHyperChi3}\begin{tikzpicture}[scale=0.15] \pts \TB{0} \HE{0} \end{tikzpicture}
    }
  \end{tabular}
  \caption{Pictures of any two compatible blocks (modulo labelling)}
  \label{tab:twoblockssubspaces}
\end{table}

\begin{figure}[htbp]
    \centering
    \tikzsetnextfilename{IntroNonCompatiblePicture}
    \begin{tikzpicture}[scale=0.25]
        \ptslabel
        \twinbig{0}\plane{1}
    \end{tikzpicture}
    \caption{Example of two incompatible blocks}
    \label{fig:intrononcompatibleexample}
\end{figure}
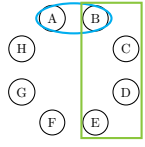

\begin{conjecture*}[see Conj.~\ref{conj:VDUnieuqOctadDiagram}]
    Let $O/K$ be a normalised Cayley octad over a non-archimedean local field of residue characteristic $p \neq 2$. Then the valuation data of $O/K$ admits a unique compatible block decomposition.
\end{conjecture*}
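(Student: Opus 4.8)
The plan is to exhibit the set $\mathcal T\subseteq\Q^{P\cup\{\dagger\}}$ of valuation data attained by normalised Cayley octads as a rational polyhedral fan whose cones are exactly the $\sigma_{\mathcal B}=\sum_{B\in\mathcal B}\Q_{\geq 0}\,B$, where $\mathcal B$ ranges over compatible sets of building blocks, and to show that each $\sigma_{\mathcal B}$ is simplicial, i.e.\ that the vectors of Tab.~\ref{tab:introblocksdefinition} occurring in a compatible family are $\Q$-linearly independent. Granting this, a point $v\in\mathcal T$ lies in the relative interior of a unique cone $\sigma_{\mathcal B}$, which yields existence of a compatible decomposition, and simpliciality of $\sigma_{\mathcal B}$ then forces the coefficients to be unique, yielding the uniqueness statement (with the multiplicity convention of Rem.~\ref{rmk:octadpictureindices}).

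\emph{Step 1: the tropical locus.} The first and most substantial task is to describe $\mathcal T$ precisely. A normalised octad is determined by the positions of its three non-standard points, subject to the Cayley (self-association) relations expressing that the eight points form the base locus of a net of quadrics; together with the Plücker relations among the $4\times4$ minors and the defining relations of the twisted cubic index, these cut out an algebraic variety whose tropicalisation is $\mathcal T$. I would compute this tropicalisation orbit by orbit, the $S_8$-symmetry permuting the octad's points reducing the bookkeeping to a finite check. The self-association relations are exactly what make the block list non-obvious: four coplanar octad points force the complementary four to be coplanar, so $v_{\pl}^{\oA\oB\oC\oD}$ never occurs alone and the genuine building block is $v_{\pl}^{\oA\oB\oC\oD}+v_{\pl}^{\oE\oF\oG\oH}$; likewise the relations governing $v_\dagger$ are responsible for a $\mathbf{TCu}$-block absorbing the whole octad at once.

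\emph{Steps 2 and 3: existence and uniqueness.} Given the description of $\mathcal T$, existence amounts to $\mathcal T=\bigcup_{\mathcal B\ \textup{compatible}}\sigma_{\mathcal B}$, which I would prove by a peeling argument: from the positivity pattern of $v$ read off the coincidences realised over the residue field (colliding points, coplanar quadruples, collinear triples, the octad lying on a twisted cubic), encode them as the subspaces or subsets of $E_3$ recorded in Tab.~\ref{tab:introblocksdefinition}, and remove the corresponding blocks in a fixed order (outermost coincidences first, and $\bm\alpha$ before $\bm\chi$ before $\bm\phi$ before hyperelliptic), using the single-block rows of Tab.~\ref{tab:introblocksdefinition}, the two-block classification of Tab.~\ref{tab:twoblockssubspaces}, and Thm.~\ref{thm:pgl-action-on-block-decomposition} to check that the residual datum again lies in $\mathcal T$ for an octad with strictly fewer coincidences, so that induction closes the argument; the auxiliary blocks of Fig.~\ref{fig:introaux} are precisely the bookkeeping needed to keep the residuals of $\bm\phi$-blocks inside $\mathcal T$. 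For uniqueness it then remains to show that the cones $\sigma_{\mathcal B}$ meet only along common faces and are simplicial, and both should follow from the laminar nature of compatibility: by Def.~\ref{def:admissible} a compatible family corresponds to a nested system of subspaces of $E_3$, the nesting ensuring on the one hand that the block vectors of the family are linearly independent (each block contributes a coordinate direction the others do not reach, the analogue of the linear independence of indicator functions of a laminar set system), and on the other hand that $\sigma_{\mathcal B}\cap\sigma_{\mathcal B'}$ is spanned by the common sub-family $\mathcal B\cap\mathcal B'$, which is again compatible.

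\emph{Main obstacle.} The crux is Step 1: proving that no ``exotic'' valuation datum arises, i.e.\ that $\mathcal T$ is \emph{exactly} the union of the $\sigma_{\mathcal B}$ and nothing more. This is genuine algebraic geometry rather than combinatorics: it requires controlling all semistable degenerations of a net of quadrics over a non-archimedean field, equivalently tropicalising the space of self-associated octads refined by the twisted cubic index. I would attack it by stratifying these degenerations --- the twisted cubic stratum being governed by \cite[Sec.~IX.3]{DolgachevOrtland} --- and matching each stratum with a compatible block configuration; the hypothesis $p\neq 2$ enters to ensure that the Cayley octad construction via symmetric matrices and nets of quadrics, and the interplay of odd and even theta characteristics, behave as in characteristic $0$. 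The delicate interaction of $v_\dagger$ with the Plücker data and the auxiliary-block accounting are where I expect the bulk of the technical effort to lie.
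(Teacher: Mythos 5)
You should first be clear that the statement you are addressing is Conj.~\ref{conj:VDUnieuqOctadDiagram}: the paper does not prove it. What the paper establishes is only the uniqueness half, Thm.~\ref{thm:UniqueDecomposition} (``\emph{if} $v$ admits a block decomposition, then it is unique up to $\PGL$-equivalence''), and your Step~1 --- that the set $\mathcal T$ of valuation data of Cayley octads is exactly $\bigcup_{\mathcal B}\sigma_{\mathcal B}$ --- is precisely the existence statement that remains open. You correctly identify it as the main obstacle, but nothing in your proposal closes it, so at best you are offering an alternative route to the uniqueness half together with a programme for the rest.

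That alternative route to uniqueness has a genuine gap. Your two key claims --- that each $\sigma_{\mathcal B}$ is simplicial because ``each block contributes a coordinate direction the others do not reach'', and that $\sigma_{\mathcal B}\cap\sigma_{\mathcal B'}$ is the cone on $\mathcal B\cap\mathcal B'$ by ``laminarity'' --- are exactly what is hard, and the heuristics you give for them fail. Compatible blocks can have nested supports: the auxiliary block $\bm{\alpha}_{2}^{\oA\oB\oC\oD}=v_\pl^{\oA\oB\oC\oD}+v_\pl^{\oE\oF\oG\oH}$ is compatible with $\bm{\phi}_{3}^{\oA\oB\oC\oD}$ and its support is contained in that of $\bm{\phi}_{3}^{\oA\oB\oC\oD}$, so no new coordinate direction is contributed. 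Worse, there are honest linear relations among building blocks, e.g.\ $\mathbf{Line}^{\oA\oB\oC\oD}=\bm{\phi}_3^{\oA\oB\oC\oD}-\bm{\alpha}_2^{\oA\oB\oC\oD}$ and $\bm{\chi}_2^{\oA\oB\oC}+\bm{\chi}_2^{\oD\oE\oF}+\bm{\alpha}_1^{\oG\oH}-\bm{\phi}_2^{\oA\oB\oC|\oD\oE\oF}=\bm{\chi}_1^{\oG\oH|\oA\oB\oC|\oD\oE\oF}$ (see the remark following Thm.~\ref{thm:UniqueDecomposition}), so the block vectors are far from being in general position and the disjointness of relative interiors of distinct cones cannot be read off from the combinatorics of Def.~\ref{def:admissible} alone. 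What must actually be excluded, for every pair of compatible families $R_1,R_2$, is a linear relation among the vectors of $R_1\cup R_2$ whose coefficients are non-negative and not all zero on $R_1\setminus R_2$ and non-positive on $R_2\setminus R_1$; moreover this has to be done in the quotient $\Q^P/W_8$, where $W_8$ is spanned by the $v_\pl^T$ with $|T|=7$, because the blocks are only well defined up to $\PGL$-equivalence (e.g.\ $\bm{\alpha}_{1\mathrm a}^{\oA\oB}=v_\pt^{\oA\oB}$ and $\bm{\alpha}_{1\mathrm b}^{\oA\oB}=v_\pl^{\oC\oD\oE\oF\oG\oH}$ are distinct vectors of $\Q^P$ representing the same block), a quotient your proposal never takes. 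The paper could only carry out this exclusion by enumerating the $2\,626\,648$ compatible families and machine-checking $507\times2\,626\,648$ pairs; if you want a conceptual replacement for that computation, you need to supply an argument that actually engages with the relations above rather than a laminar-set-system analogy.
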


    We can prove that a compatible block decomposition is essentially unique, provided that there exists such a decomposition, see Thm.~\ref{thm:UniqueDecomposition}. Thus there is a canonical collection of building blocks attached to a Cayley octad over $K$. Just as a cluster picture of a curve $y^2 = f(x)$ is the totality of the clusters of the roots of $f$, the octad picture of a Cayley octad $O$ is the totality of the building blocks in the compatible block decomposition of $O$.

\begin{definition*}[see Def.~\ref{def:blockdecompositiondiagram}~and~\ref{def:octadpictures}]
    Let $\mathcal{B}$ be a set of compatible blocks. Then the \textit{octad picture} associated to $\mathcal{B}$ is the picture achieved by simultaneously overlaying the pictures attached to each $B \in \mathcal{B}$.
    The octad picture, $d_K(O)$, of a normalised Cayley octad $O/K$ over a non-archimedean local field of residue characteristic $p \neq 2$, is the octad picture of its set of compatible blocks, given by Conj.~\ref{conj:VDUnieuqOctadDiagram}.

    More generally an \textit{octad picture} is any picture associated to a set of compatible building blocks in this way.
\end{definition*}

\begin{remark}
    In the octad picture, we do not keep track of the multiplicity of the different building blocks in the valuation data of the octad $O$, which would be the analogue of the (relative) depth of a cluster. We conjecture that this multiplicity does not affect the stable reduction, but we expect that it does affect arithmetic invariants, as in the case of cluster pictures of hyperelliptic curves.
\end{remark}

This leads us to the central conjecture of the paper.

\begin{conjecture*}[see Conj.~\ref{conj:SpecialFibreOfTheStableModel}]
  The stable reduction type, including whether or not it is hyperelliptic, of any smooth plane quartic over any non-archimedean local field of residue characteristic $p \neq 2$ is determined by the octad picture of any of its Cayley octads.
\end{conjecture*}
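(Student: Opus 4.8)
The plan is to deduce the conjecture from three ingredients prepared in the paper: the well-definedness of the octad picture as an invariant of $C/K$, a model-theoretic construction that extracts the stable reduction from the valuation data of a single Cayley octad, and a finite verification against the explicit families realising all $42$ stable types. One begins by reducing to a convenient setup: after a finite extension of $K$ we may assume that $C$ has semistable reduction, that one of its $36$ even theta characteristics is rational, and that the associated Cayley octad $O$ is regular and normalised, with integral valuation data. Thm.~\ref{thm:pgl-action-on-block-decomposition} shows the octad picture is independent of the choice of coordinates on $\mathbb{P}^3$, and Prop.~\ref{CreTransTwins},~\ref{CreTransTypes},~\ref{prop:CreTransCans} and~\ref{prop:CreTransHE}, together with Conj.~\ref{conj:symplecticgroupactioncommutespictures}, show that the image of the octad picture under \eqref{eq:explicit-map} is independent of the chosen theta characteristic. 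It therefore suffices to prove that, for a single normalised Cayley octad $O/K$, the image under \eqref{eq:explicit-map} of the octad picture $d_K(O)$ --- well-defined and essentially unique by Conj.~\ref{conj:VDUnieuqOctadDiagram} and Thm.~\ref{thm:UniqueDecomposition} --- equals the stable reduction type of $C$.

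The geometric heart is the passage from the valuation data to a semistable model. Here one works with the net-of-quadrics presentation $\det(xL+yM+zN)=0$ and its base locus $O$, defined over $\mathcal{O}_K$ up to normalisation; the compatible block decomposition records a nested system of planes, lines, points and twisted cubics --- with nesting pattern exactly the compatibility condition of Tab.~\ref{tab:twoblockssubspaces} --- through which $O$ degenerates modulo the maximal ideal. In the spirit of \cite{Dokchitser21}, but carried out in the $\mathbb{P}^3$-with-base-locus picture rather than the Newton-polytope picture, one blows up $\mathbb{P}^3_{\mathcal{O}_K}$ along these loci, takes the closure of the net and of the associated quartic, and reads off the special fibre of the relative curve together with the genera of the normalisations of its components from the special fibre of the modified family, after checking a $\Delta_v$-regularity-type genericity hypothesis that holds on a dense open subset of each octad-picture stratum. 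Identifying the output with the explicit map is then the expected local dictionary: an $\bm{\alpha}$-block contributes a node, a $\bm{\chi}$-block a genus-$1$/genus-$2$ splitting meeting once, a $\bm{\phi}$-block a genus-$1$/genus-$1$ splitting meeting twice, a hyperelliptic block a hyperelliptic special fibre, and overlaying blocks corresponds to superimposing dual graphs. For the $\mathbf{TCu}$ block the identification of the eight octad points with the eight Weierstrass points of the reduced curve is \cite[Sec.~IX.3]{DolgachevOrtland}, and the hyperelliptic versus non-hyperelliptic dichotomy is detected as in \cite{lllr21}.

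To globalise, the cleanest route is to realise both stratifications on a single object: a toroidal modification of the Dolgachev--Ortland compactification of the moduli of self-associated octads --- equivalently a level cover of $\overline{\mathcal{M}}_3$ --- whose boundary strata are precisely the octad-picture strata and each of which maps into a single stable-type stratum of $\overline{\mathcal{M}}_3$. The construction of the previous paragraph, performed over such a modification, would then simultaneously establish that the octad picture determines the stable type (the refinement statement) and compute the resulting map; the computation at the generic point of each stratum is a finite check, carried out against the explicit families of this paper.

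The genuine obstacle is the construction of this modification together with the local dictionary inside it: as noted in the introduction, the passage from octad degenerations to the components of the stable model ``appears to be rather mysterious for plane quartics'' and has no direct analogue of the cluster-to-Berkovich-skeleton correspondence available in the hyperelliptic case of \cite{m2d2}. Concretely one must produce, for an arbitrary compatible block configuration, a modification of $\mathbb{P}^3_{\mathcal{O}_K}$ rendering the net of quadrics regular enough that its special fibre is legible, and check that contracting the excess $\mathbb{P}^1$-components recorded by the auxiliary blocks of Sec.~\ref{sec:phi-blocks} leaves exactly the predicted stable curve. Once this dictionary is in place, the refinement statement and the identification with \eqref{eq:explicit-map} follow, and everything else is either already in the paper or a finite verification.
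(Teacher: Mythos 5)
The statement you are addressing is Conjecture~\ref{conj:SpecialFibreOfTheStableModel}, the central conjecture of the paper: the paper does \emph{not} prove it, and offers only evidence for it (extensive numerical verification, the perfect combinatorial matching of octad pictures with stable types via the subspace framework of Sec.~\ref{sec:formalanalysisoctadpictures}, the $\PGL$-invariance of Thm.~\ref{thm:pgl-action-on-block-decomposition}, and the single-block Cremona results of Prop.~\ref{CreTransTwins},~\ref{CreTransTypes},~\ref{prop:CreTransCans} and~\ref{prop:CreTransHE}), deferring further partial results to future work. Your proposal is likewise not a proof: it is a research programme whose decisive step is missing, and you say so yourself. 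The entire mathematical content of the conjecture is the passage from a compatible block decomposition of the valuation data to an actual semistable model of $C$ over $\mathcal{O}_K$; your second and fourth paragraphs assert that one can blow up $\P^3_{\mathcal{O}_K}$ along the degeneration loci, take closures, and read off the special fibre ``after checking a $\Delta_v$-regularity-type genericity hypothesis that holds on a dense open subset of each octad-picture stratum,'' but no such construction is carried out, no regularity criterion is formulated, and no argument is given that the contraction of the excess components leaves the predicted stable curve. Moreover, even if the generic-point computation on each stratum were done, a dense-open statement would not establish the conjecture, which must hold for \emph{every} smooth plane quartic, not merely for generic members of each stratum.

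There are also two circularities in your reductions. First, the independence of the chosen theta characteristic is obtained by invoking Conj.~\ref{conj:symplecticgroupactioncommutespictures}, which the paper proves only when the octad picture consists of a single building block; for pictures with several blocks this is itself open. Second, the well-definedness of $d_K(O)$ requires the \emph{existence} of a compatible block decomposition of the valuation data, which is the unproven half of Conj.~\ref{conj:VDUnieuqOctadDiagram} (Thm.~\ref{thm:UniqueDecomposition} gives only uniqueness). A proof of the statement would have to supply both of these, or route around them, before the model-theoretic construction could even begin. As it stands, the proposal correctly identifies where the difficulty lies --- which is valuable as a plan of attack, and consistent with the strategy the authors themselves sketch (Thomae-type formulas and spin theta constants in~\cite{BDDLL23}) --- but it does not constitute a proof, and should not be presented as one.
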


\begin{example} \label{ex:110}
  Let $C/\Q_7$ and $O$ be as in Ex.~\ref{ex:introexample}. Then
  $d_{\Q_7}(O)$ is given in Fig.~\ref{fig:introexampleoctadpicture}.
  By Tab.~\ref{tab:smoctadsc02} we predict that the stable reduction type of
  $C/\Q_7$ is a genus one curve with two self-intersections. Indeed, we have
  verified computationally that this is the case.
\end{example}

\begin{remark}
   We remind the reader that a given plane quartic has $36$ Cayley octads (up to ordering of points, and the action of $\PGL$), and so any associated octad picture may not be unique. This is discussed more fully in Sec.~\ref{subsec:modelsandoctads}.
\end{remark}

\begin{remark}
    We have been able to generate examples of Cayley octads for all octad pictures, and so if Conj.~\ref{conj:SpecialFibreOfTheStableModel} is true, the full correspondence is given (up to labelling) in Tab.~\ref{tab:smoctadsc02},~\ref{tab:smoctadsc34}, and~\ref{tab:smoctadsc56} in App.~\ref{sec:special-fibres-octad-1}, and Tab.~\ref{tab:smhoctadsc02},~\ref{tab:smhoctadsc3},~\ref{tab:smhoctadsc4}, and~\ref{tab:smhoctadsc56} in App.~\ref{sec:special-fibres-octad-2}.
\end{remark}

In fact, we expect that one can equip $d_K(O)$ with more information in order to recover finer arithmetic data than the stable model. For example, when $C$ is semistable we anticipate that $d_K(O)$ (along with the multiplicities of the corresponding building blocks) determines the minimal regular model, as is the case with cluster pictures.%

\begin{remark}
  The correspondence of Tab.~\ref{tab:smoctadsc02}--\ref{tab:smhoctadsc56} is by decree, and we reserve discussion of an explicit correspondence to Sec.~\ref{subsec:introcombinatorialanalysis}.
Typically, however, one can determine the stable reduction type from the following rules. %
\begin{itemize}
    \item There is a $\bm{\phi}$-block if and only if the stable reduction type admits a partition into two unions of irreducible components, both connected with arithmetic genus $1$, and meeting at two points.
    \item The number of $\bm{\chi}$-blocks is the number of ways the stable reduction type may be partitioned into two unions of irreducible components, one connected with arithmetic genus $1$, the other connected with arithmetic genus $2$, and meeting at one point.
    \item The number of $\bm{\alpha}$-blocks is the number of remaining intersections.
    \item The stable reduction type is hyperelliptic if and only if there is a $\bm{\phi}$-block or a hyperelliptic block.
\end{itemize}

For instance, in Ex.~\ref{ex:introexampleblockdecomp} the valuation
data for $O$ decomposes into two $\bm{\alpha}$-blocks. The stable reduction type is thus non-hyperelliptic and contains two intersections. There are $5$ types matching this description (see Fig.~\ref{fig:stablemodelgraphquartic}), but only the type which consists of a genus one curve with two self-intersections corresponds to valuation data containing neither a $\bm{\chi}$-block nor a  $\bm{\phi}$-block. Therefore, the stable reduction type of $C/K$ is a genus one curve with two self-intersections.

Note that the rules above do not provide enough information to distinguish all cases. For instance, valuation data of the form $\bm{\chi} + \bm{\alpha}$ (where the indices are to be determined) does not discriminate between the stable reduction types \texttt{(2m)} and \texttt{(1me)} (see Fig.~\ref{fig:stablemodelgraphquartic}). One consists of a genus 2 curve intersecting a genus 0 curve with a self-intersection. In the other, a genus 1 curve intersects another genus 1 curve with a self-intersection. These ambiguities are resolved in Sec.~\ref{sec:formalanalysisoctadpictures}.
\end{remark}

\subsection{Octad Pictures as Invariants}
\label{subsec:modelsandoctads}

Our discussion so far has fixed a choice of Cayley octad of a plane quartic $C$. The stable reduction type of $C$ is independent of this choice and the action of $\PGL$ possibly changing the valuation data of the Cayley octad, and thus a natural question is to consider if the constructions and conjectures of Sec.~\ref{subsec:mainresults} respect these choices.

\begin{theorem*}[see Thm.~\ref{thm:pgl-action-on-block-decomposition}]
    Suppose $O, O'/K$ are two normalised Cayley octads over a non-archimedean local field of residue characteristic $p \neq 2$ which coincide under the action of $\PGL$. Then $d_K(O) = d_K(O')$.
\end{theorem*}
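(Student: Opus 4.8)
The plan is to reduce the statement to a transformation law for the valuation data, and then to exploit the uniqueness of the compatible block decomposition. Fix a matrix representative $g$ of the transformation with $O' = gO$, and let $\sum_i B_i$ be the compatible block decomposition of the valuation data $v(O)$ provided by Conj.~\ref{conj:VDUnieuqOctadDiagram}. The aim is to produce a \emph{compatible} family of blocks $\{B_i'\}$ such that each $B_i'$ carries the same picture as $B_i$ and $\sum_i B_i' = v(O')$. Once this is done, Thm.~\ref{thm:UniqueDecomposition} identifies $\{B_i'\}$ with the compatible block decomposition of $v(O')$, and hence $d_K(O')$, being the overlay of the pictures of the $B_i'$, equals the overlay of the pictures of the $B_i$, which is $d_K(O)$.

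The first step is to record how the valuation data of Def.~\ref{def:valuation-data} changes under $g$. For $S=\{i,j,k,l\}$ the Pl\"ucker coordinate $\det(O_i | O_j | O_k | O_l)$ is multiplied by $\det g$; rescaling the four affine representatives of each point $O_m$ so that the minimum valuation of its coordinates is again $0$ introduces a further factor $\prod_{m\in S}\lambda_m^{-1}$, where $v(\lambda_m)$ is the minimum valuation among the coordinates of $gO_m$, and the twisted cubic index transforms in the parallel way dictated by its definition. Thus, on the level of valuation data, $g$ acts by a permutation of the index set $P\cup\{\dagger\}$ (coming from any relabelling of $\oA,\dots,\oH$ needed to return $O'$ to normalised form), composed with translation by the ``coboundary'' $S\mapsto \sum_{m\in S} v(\lambda_m)$ and by the global constant $v(\det g)$, which cancels after renormalisation. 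It then suffices to check the assertion on a generating set of such transformations: (i) permutations of the homogeneous coordinates on $\P^3$ together with the induced relabelling of the eight points, (ii) elements of the diagonal torus, and (iii) one ``Cremona-type'' quadratic transformation, for instance the one realising the exchange of a pair of coincident points for six coplanar points described just before the theorem. Cases (i) and (ii) are routine: on valuation data they act by a relabelling, respectively by a coboundary translation, and one checks directly from the formulas of Def.~\ref{def:standard-valuation-data} that each primitive vector $v_\pt^T$, $v_\ln^T$, $v_\pl^T$, $v_\tc$ is carried to a vector of the same kind on the relabelled set, so that every block of Tab.~\ref{tab:introblocksdefinition} goes to a block with the same picture, and compatibility --- an inclusion condition on the associated subspaces of $E_3$, see Def.~\ref{def:admissible} --- is preserved because $g$ acts on $E_3$ linearly.

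The substance of the proof, and the step I expect to be the main obstacle, is case (iii). Here $g$ genuinely alters the valuation data: as in the motivating example, an $\bm{\alpha}_{1\mathrm{a}}$-block ($v_\pt^{\oA\oB}$) is carried to an $\bm{\alpha}_{1\mathrm{b}}$-block ($v_\pl^{\oC\oD\oE\oF\oG\oH}$), and likewise $\bm{\alpha}_{2\mathrm{a}}\leftrightarrow\bm{\alpha}_{2\mathrm{b}}$, the three $\bm{\chi}_{1}$-variants among themselves, the $\bm{\chi}_2$-variants, the $\bm{\phi}$-variants, and the two hyperelliptic blocks with the various $\bm{\phi}$-blocks. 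The point is that within each such family the blocks of Tab.~\ref{tab:introblocksdefinition} were defined precisely so as to share a common picture; so what must be verified is exactly that the Cremona combinatorics permutes the families in this way. I would do this by applying the transformation law of the previous paragraph to each primitive vector and matching the outcome, term by term, against the valuation-data column of Tab.~\ref{tab:introblocksdefinition}, using that the extra coboundary terms produced by the $\lambda_m$ are absorbed into the ``$\max$'' expressions defining the blocks, so that data only moves within a same-picture family and no block is ever created or destroyed. Summing over the compatible block decomposition, and checking that the $E_3$-inclusion conditions survive --- the Cremona transformation acts on $E_3$ as a reflection and carries the relevant lattice of subspaces to itself --- yields the desired compatible family $\{B_i'\}$ with unchanged pictures.

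Two further points require care. First, the renormalisation scalars $\lambda_m$ must be tracked honestly rather than discarded, since they contribute nonzero coboundaries; the real content of case (iii) is that these coboundaries are compatible with, and do not disturb, the block formulas. Second, $g$ need not be defined over $K$; but the valuation data is intrinsically defined over $\overline K$ with the $K$-normalised valuation, and the Galois action merely permutes the eight points, so this introduces no additional difficulty. A more conceptual route would avoid an explicit generating set altogether and argue via the subspace invariant of Rem.~\ref{rmk:cremonatransformations}: a normalised octad gives a point configuration on which $\PGL$ acts, the compatible block decomposition is read off a $\PGL$-equivariant stratification of configuration space, and the octad picture is the coarser $\PGL$-invariant quotient of the subspace data --- but verifying the equivariance of the stratification ultimately reduces to the same case analysis.
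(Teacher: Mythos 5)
Your overall strategy is the same as the paper's: establish the statement block-by-block via the single-block $\PGL$-equivalences, transform each block in the compatible decomposition of $v(O)$ individually, sum, and invoke the uniqueness of compatible block decompositions (Thm.~\ref{thm:UniqueDecomposition}) to conclude. However, there is a genuine gap at exactly the step you flag as ``the main obstacle'' and then dispose of by asserting that the coboundary terms ``are absorbed into the max expressions defining the blocks.'' The renormalisation correction $w\in W_8$ of Prop.~\ref{prop:equivalence-of-valuation-data} is determined by the \emph{entire} valuation data (it is pinned down by the five vanishing conditions and the three minimum conditions on the full vector $v+w$), so the procedure is \emph{not} additive with respect to sums of blocks: transforming each $B_i$ separately and summing need not give the transformed valuation data of the sum. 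The paper makes this failure explicit in the remark following Thm.~\ref{thm:pgl-action-on-block-decomposition}: ${\bm{\phi}}_{1\mathrm{a}}$ transforms to ${\bm{\phi}}_{1\mathrm{b}}$, which is $\PGL$-equivalent to, but \emph{not equal to}, the sum of the individually transformed constituents ${\bm{\alpha}}_{2\mathrm{b}}+\mathbf{Line}+\mathbf{Line}$, precisely because all three minima become positive for that sum. The paper's proof therefore has to verify that for every one of the $2\,626\,648$ compatible sets of building blocks the three minima never all become positive, which is done by an explicit (and substantial) computation. Your argument supplies no replacement for this verification; without it the identity $\sum_i B_i' = v(O')$ is unproven, and the counterexample above shows it is not a formality.

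Two secondary inaccuracies. First, your case (iii) is not an element of $\PGL$ as described: the transformation exchanging a coincident pair for six coplanar points is the \emph{linear} diagonal map $\diag(\pi,1,1,1)$ (Prop.~\ref{prop:coord-transform}), not a quadratic Cremona transformation; genuine Cremona transformations permute the $36$ inequivalent octads and are the subject of a different (and only partially proven) statement. Second, under $\PGL$ the hyperelliptic blocks are \emph{not} exchanged with ${\bm{\phi}}$-blocks: the twisted cubic index is a $\PGL$-invariant (Prop.~\ref{prop:tci-invariant}), $\mathbf{Line}$ blocks go to $\mathbf{Line}$ blocks, and each ${\bm{\phi}}_i$ family is closed under $\PGL$. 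If hyperelliptic blocks could become ${\bm{\phi}}$-blocks the pictures would change and the theorem would be false, so this claim undermines rather than supports your conclusion.
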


In particular, the octad picture is independent of the choice of which 5 points are normalised, and the octad picture acts as an invariant for $\PGL$-orbits of normalised Cayley octads.

Classically, one considers the action of the group $\textup{Sp}(6,2)$, described by Cremona transformations, on $\PGL$-equivalence
classes of Cayley octads of a given plane quartic $C$ (see Thm.~\ref{thm:regularoctad}). Pertinently, there is a  subgroup $S_8 \subset \textup{Sp}(6,2)$ acting by permuting the points of the octad, and the full action is transitive on the Cayley octads of $C$.

There is also an $\textup{Sp}(6,2)$ action on octad pictures. To define this, we introduce the vector space $E_3$.

\begin{definition}
\label{def:introE3}
Let $\Sigma_3 = \{ \oA, \oB, \oC, \oD, \oE, \oF, \oG, \oH \}$, and let $E_3$ be the set of subsets of $\Sigma_3$ of even cardinality, modulo the equivalence relation $I \sim \Sigma_3 \backslash I$. Then $E_3$ is a $6$-dimensional $\F_2$-vector space where addition is given by the symmetric sum $I + J = ( I \cup J ) \backslash ( I \cap J)$. Moreover, $E_3$ also carries the symplectic pairing $\langle I, J \rangle = | I \cap J | \mod{2}$. For ease we write the elements of $E_3$ as strings instead of sets.
\end{definition}

If $C$ is a plane quartic, then
$\Jac C[2] \simeq E_3$ as symplectic vector spaces. The symplectic group $\textup{Sp}(6,2)$ acts on $E_3$
as the linear transformations preserving the symplectic pairing.

Each octad picture $P$ induces a collection of subspaces of $E_3$ via the
association in Tab.~\ref{tab:introblocksdefinition} %
(see Def.~\ref{def:picturesubspacecorrespondence} and~\ref{def:picturesubspacescorrespondenceHE}). As
$\textup{Sp}(6,2)$ acts on subspaces of $E_3$, there is an induced action on
octad pictures. %

\begin{conjecture*}[see Conj.~\ref{conj:symplecticgroupactioncommutespictures}]
    Let $K$ be a non-archimedean local field of residue characteristic $p \neq 2$. Then the action of $\textup{Sp}(6,2)$ on $\PGL$-equivalence classes of Cayley octads commutes with the map $d_K$.
\end{conjecture*}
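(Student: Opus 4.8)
The plan is to reduce the statement to a generating set of $\textup{Sp}(6,2)$, dispose of the permutation subgroup $S_8$ by direct inspection, and analyse a single Cremona-type generator, for which the case of a single building block is already available and only the superposition of several blocks remains. Throughout we use that $d_K$ is well defined on $\PGL$-orbits (Thm.~\ref{thm:pgl-action-on-block-decomposition}) and that $\textup{Sp}(6,2)$ acts on octad pictures through its linear action on $E_3\simeq\Jac C[2]$ together with the picture--subspace dictionary of Tab.~\ref{tab:introblocksdefinition}.

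\emph{Reduction to generators.} By Thm.~\ref{thm:regularoctad}, the group $\textup{Sp}(6,2)$ is generated by the subgroup $S_8$ permuting the eight (ordered) points of a Cayley octad together with one standard Cremona transformation $\tau$. Suppose we know, for every smooth plane quartic $C$ over every such local field $K$, every Cayley octad $O$ of $C$ and every $g\in S_8\cup\{\tau\}$, that $d_K(g\cdot[O])=g\cdot d_K([O])$. Then for arbitrary $g=g_1\cdots g_r$ a trivial induction on $r$, applying the generator identity to $g_1$ and to the octad $g_2\cdots g_r\cdot[O]$, yields the identity for all of $\textup{Sp}(6,2)$. In fact, since $\textup{Sp}(6,2)$ acts transitively on the Cayley octads of $C$, it even suffices to fix one base octad $[O_0]$ — say the one attached to a chosen even theta characteristic — and to verify $d_K(g\cdot[O_0])=g\cdot d_K([O_0])$ for all $g$ and all $C/K$; writing a general octad as $[O]=h\cdot[O_0]$, the case of $[O]$ follows by applying this to $h$ and to $gh$.

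\emph{The subgroup $S_8$ and the single-block Cremona case.} An element $\sigma\in S_8$ simultaneously relabels the eight points $\oA,\dots,\oH$, the indices occurring in the valuation data of $O$, the building blocks of Tab.~\ref{tab:introblocksdefinition} and their associated subspaces of $E_3$; under $\Jac C[2]\simeq E_3$ this relabelling is precisely the element $\sigma\in S_8\subset\textup{Sp}(6,2)$. Hence $v(\sigma\cdot O)$ is the relabelled valuation data, its essentially unique compatible block decomposition (Thm.~\ref{thm:UniqueDecomposition}) is the relabelled one, and $d_K(\sigma\cdot O)=\sigma\cdot d_K(O)$ by construction. For the Cremona generator $\tau$ restricted to octads whose valuation data is a single building block, the effect of $\tau$ is computed explicitly in Prop.~\ref{CreTransTwins} ($\bm{\alpha}$-blocks), Prop.~\ref{CreTransTypes} ($\bm{\chi}$-blocks), Prop.~\ref{prop:CreTransCans} ($\bm{\phi}$-blocks) and Prop.~\ref{prop:CreTransHE} (hyperelliptic blocks): in every case the new valuation data is again a single building block — possibly of a different type and depth — whose associated subspace of $E_3$ is exactly the image, under $\tau$ viewed as a symplectic linear map of $E_3$, of the original subspace. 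Thus $d_K(\tau\cdot[O])=\tau\cdot d_K([O])$ whenever $d_K([O])$ consists of a single block.

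\emph{The general Cremona case (the main obstacle).} Let $O$ be arbitrary, with compatible block decomposition $v(O)=\sum_i B_i$ and associated subspaces $V_1,\dots,V_n\subset E_3$. Since the admissibility conditions of Def.~\ref{def:admissible} are phrased purely through inclusions and symplectic orthogonality among the $V_i$, they are preserved by $\tau$; hence $\{\tau V_1,\dots,\tau V_n\}$ is again a compatible collection, and it defines the octad picture $\tau\cdot d_K(O)$. By uniqueness of compatible block decompositions (Thm.~\ref{thm:UniqueDecomposition}) it therefore suffices to produce \emph{some} compatible block decomposition of $v(\tau\cdot O)$ whose blocks have subspaces $\tau V_1,\dots,\tau V_n$. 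The natural route is to show that the valuation data of $\tau\cdot O$ is computed blockwise, i.e.\ $v(\tau\cdot O)=\sum_i\tau(B_i)$, where $\tau(B_i)$ is the single-block output of the previous step applied to $B_i$ with its inherited depth. Concretely, $\tau$ is given by universal polynomial formulas in the coordinates of the octad (Thm.~\ref{thm:regularoctad}), so the Pl\"ucker coordinates and the twisted-cubic index of $\tau\cdot O$ are universal polynomial, resp.\ algebraic, expressions in those of $O$; one would tropicalise these, using that the block decomposition of $v(O)$ constrains the valuations of a suitably normalised representative of $O$, and then check that every ``cross term'' between distinct blocks $B_i,B_j$ is dominated — and that no non-generic initial form forces an unexpected cancellation — so that the tropicalisation splits as the sum of the single-block contributions. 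Equivalently, in the polyhedral picture underlying Conj.~\ref{conj:VDUnieuqOctadDiagram}, the single-block propositions establish the equivariance of $d_K$ on the single-block strata of valuation data, and what remains is to propagate it to all strata, using the linearity of the $\textup{Sp}(6,2)$-action against the piecewise-linearity of tropicalised Cremona. Controlling all interactions between simultaneously present degenerations under this nonlinear map is the crux, and is the reason the statement is, for the time being, only a conjecture.
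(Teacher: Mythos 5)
The statement you are proving is, in the paper, only a \emph{conjecture} (Conj.~\ref{conj:symplecticgroupactioncommutespictures}): the authors establish it solely in the case where the octad picture consists of a single building block, via Prop.~\ref{CreTransTwins},~\ref{CreTransTypes},~\ref{prop:CreTransCans} and~\ref{prop:CreTransHE}. Your first two steps faithfully reproduce what the paper actually has: the reduction to $S_8$ together with one Cremona generator (this generation statement comes from Rem.~\ref{rmk:cremonatransformations}, not Thm.~\ref{thm:regularoctad} as you cite, but the claim itself is sound), the observation that the $S_8$-action is a pure relabelling compatible with Thm.~\ref{thm:UniqueDecomposition}, and the single-block Cremona computations. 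Your concluding paragraph then correctly identifies the multi-block Cremona case as the open crux, so to your credit you do not overclaim. But the result is a proof strategy, not a proof, and the gap you flag is precisely the gap the paper leaves open.

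Concerning that gap, one concrete warning: your ``natural route'' $v(\tau\cdot O)=\sum_i\tau(B_i)$ is almost certainly false as an identity of valuation data, and the paper itself signals why. In the remark following Thm.~\ref{thm:pgl-action-on-block-decomposition}, the authors exhibit an example (${\bm{\phi}}_{1\mathrm{a}}$ versus ${\bm{\alpha}}_{2\mathrm{b}}+\textbf{Line}+\textbf{Line}$) where transforming a sum of blocks does \emph{not} give the sum of the transformed blocks, only something $\PGL$-equivalent to it, because the per-block normalisations interact. The same phenomenon must be expected for Cremona transformations, which are likewise nonlinear in the coordinates; so any tropicalisation argument has to work modulo the lattice $W_8$ of renormalisations and must rule out unexpected cancellations among cross terms between blocks, exactly the kind of case analysis that in the single-block propositions already required machine verification and the exclusion of twisted-cubic degenerations. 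Until that interaction is controlled for all $2\,626\,648$ compatible block sets, the statement remains conjectural, and your write-up should present it as such rather than as a theorem with a sketched final step.
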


\begin{theorem*}[see Prop.~\ref{CreTransTwins},~\ref{CreTransTypes},~\ref{prop:CreTransCans} and~\ref{prop:CreTransHE}]
    Conj.~\ref{conj:symplecticgroupactioncommutespictures} is true in the case that an octad picture comes from a single building block.
\end{theorem*}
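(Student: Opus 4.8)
By Thm.~\ref{thm:regularoctad}, the action of $\textup{Sp}(6,2)$ on $\PGL$-equivalence classes of Cayley octads of a fixed plane quartic $C$ is generated by the subgroup $S_8$ permuting the eight marked points together with a single Cremona transformation $\gamma$ (made explicit below), where one first puts five of the octad points --- say $O_\oA,\dots,O_\oE$ --- into the standard projective frame of $\P^3$. Using Thm.~\ref{thm:pgl-action-on-block-decomposition} to make sense of $d_K$ on $\PGL$-classes, the asserted identity $d_K(g\cdot[O])=g\cdot d_K([O])$ is multiplicative in $g$, and each generator will be shown to send an octad with single-block valuation data to another such octad; hence it suffices to check the identity on the generators, one building block at a time.

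\textbf{The $S_8$-equivariance.}
Relabelling the octad permutes the $70$ Pl\"ucker coordinates $\det(O_i|O_j|O_k|O_l)$ by the induced action on $4$-element subsets of $\{\oA,\dots,\oH\}$ and fixes the twisted cubic index, hence permutes the vectors $v_{\pt}^T,v_{\ln}^T,v_{\pl}^T$ of Def.~\ref{def:standard-valuation-data} and fixes $v_{\tc}$ accordingly; thus the valuation data of $\sigma\cdot O$ is again a single building block, of the same type, labelled by $\sigma(T)$. On the $E_3$ side, $S_8$ acts on $\Sigma_3$ by permutation, which is the restriction of the $\textup{Sp}(6,2)$-action, while the subspaces (or subsets, for hyperelliptic blocks) attached to blocks in Tab.~\ref{tab:introblocksdefinition}, spanned by even subsets such as $\oA\oB$, $\oA\oC\oD\oE$ and $\oC\oD$, are manifestly natural under relabelling. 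So $S_8$-equivariance of $d_K$ on single blocks is a bookkeeping check, done in the first part of each of Prop.~\ref{CreTransTwins},~\ref{CreTransTypes},~\ref{prop:CreTransCans} and~\ref{prop:CreTransHE}.

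\textbf{The Cremona transformation.}
This is the substantive case. Here $\gamma$ is the transformation of ordered octads that leaves $O_\oA,\dots,O_\oE$ (in standard position) unchanged and sends each of $O_\oF,O_\oG,O_\oH$ to the point whose $i$-th homogeneous coordinate is the product of its other three coordinates, after which the columns $\oF,\oG,\oH$ of the resulting $4\times 8$ matrix are rescaled to recover a normalised octad. The first task is to record the exact transformation law of the valuation data: for each $4$-subset $S$ the Pl\"ucker coordinate of $\gamma\cdot O$ is an explicit monomial expression in the coordinates of $O_\oF,O_\oG,O_\oH$ and the Pl\"ucker coordinates of $O$ --- the cases organised by $|S\cap\{\oA,\dots,\oE\}|=4,3,2,1$ --- valid up to a common scaling that must be tracked in order to read off valuations, together with the analogous law for the twisted cubic index. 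Granting this, a single building block $B$ is treated by substituting $v(O)=B$, simplifying in $\min$/$\max$ arithmetic, recognising the output $v(\gamma\cdot O)$ as another single block $B'$ of Tab.~\ref{tab:introblocksdefinition} (possibly of a different type, with a label set depending on how $B$ meets $\{\oA,\dots,\oE\}$), and finally verifying that the element of $\textup{Sp}(6,2)\cong\mathrm{Aut}(\Jac C[2])$ induced by $\gamma$ --- computed once and for all from its effect on the bitangents and odd theta characteristics of Rem.~\ref{rmk:construction-cayley} --- carries the subspace of $E_3$ attached to $B$ onto that attached to $B'$. Organised along the families $\bm{\alpha}$ (twins and coplanar quadruples), $\bm{\chi}$ (the \textbf{TA} and \textbf{TB} types), $\bm{\phi}$ (the \textbf{CA}, \textbf{CB} and \textbf{CC} candies) and the hyperelliptic blocks \textbf{TCu} and \textbf{Line}, these are exactly Prop.~\ref{CreTransTwins},~\ref{CreTransTypes},~\ref{prop:CreTransCans} and~\ref{prop:CreTransHE}.

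\textbf{The main obstacle.}
The bulk of the work, and the principal difficulty, is establishing the Cremona transformation law and applying it correctly: one must track the global rescaling needed to return to a normalised octad --- it contributes uniformly to many valuations and can change which block appears --- and subdivide each family according to the position of the block relative to the five normalised points. The twisted cubic index is the one ingredient without a determinantal description and needs a separate argument: since ``eight points on a twisted cubic'' corresponds intrinsically to the reduced curve becoming hyperelliptic (Rem.~\ref{rmk:hyperell-comparison}, via \cite[Sec.~IX.3]{DolgachevOrtland}), a hyperelliptic block must map to a hyperelliptic block under $\gamma$, and one then distinguishes \textbf{TCu} from \textbf{Line} by tracking the relevant Pl\"ucker vanishings. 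Once the transformation law is available, each individual block is a short $\min$/$\max$ computation together with a one-line check on $E_3$.
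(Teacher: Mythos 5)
Your overall architecture --- reduce to the generators $S_8$ and a single Cremona involution, note that each generator sends a single-block octad to a single-block octad, and then treat each block family case by case by an explicit coordinate computation --- is the same as the paper's, which factors every element of $\textup{Sp}(6,2)$ as $\sigma_H\circ\rho$ with $\rho\in S_8$ and $\sigma_H$ a Cremona representative, and proves Prop.~\ref{CreTransTwins},~\ref{CreTransTypes},~\ref{prop:CreTransCans} and~\ref{prop:CreTransHE} by instantiating each block as an explicit parametrised matrix with five points in standard position. However, the step on which you rest everything does not hold as stated, and the step you dismiss as ``a short $\min$/$\max$ computation'' is where the real difficulty lies.

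First, the claimed transformation law --- that each Pl\"ucker coordinate of $\gamma\cdot O$ is a monomial in the coordinates of the moved points and the Pl\"ucker coordinates of $O$ --- fails for the $4$-subsets meeting the base tetrahedron of the Cremona involution in at most one point: the determinant of the entrywise inverse of a $3\times 3$ (or $4\times 4$) matrix is not a monomial multiple of the original determinant (it is for the $2\times 2$ minors, which is why the $|S\cap\{\oA,\oB,\oC,\oD\}|=2$ case looks clean). Recovering those coordinates for Cayley octads requires the $630$ relations of Thm.~\ref{thm:Cayley-def}, which is exactly the computation you are trying to bypass. Second, and more seriously, even a correct generic law combined with $\min$/$\max$ arithmetic only bounds the valuations of the image from below; to identify the image block one must rule out unexpected extra cancellation, i.e.\ show that any additional vanishing in the image forces additional vanishing in the source, contradicting the single-block hypothesis. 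The paper calls this ``the subtle part'': it is carried out with the Cayley relations by computer algebra, and it genuinely fails on an exceptional locus --- octads lying on a twisted cubic, cut out by the explicit quartic equations displayed in the proof of Prop.~\ref{CreTransTwins} --- which must be excluded using the hypothesis that the valuation data is exactly the given block. Your proposal does not address this. Finally, your appeal to the intrinsic hyperellipticity of the reduction to control the twisted cubic index comes close to assuming Conj.~\ref{conj:SpecialFibreOfTheStableModel}; the paper instead invokes the unconditional statement of~\cite[Sec.~IX.3]{DolgachevOrtland} that $\mathbf{TCu}$ maps to a $\mathbf{Line}$ block under a Cremona transformation and obtains the remaining hyperelliptic cases by composing that transformation with itself.
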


\begin{remark}
\label{rmk:intropictuestabletypebijection}
    Assuming both Conj.~\ref{conj:SpecialFibreOfTheStableModel} and Conj.~\ref{conj:symplecticgroupactioncommutespictures}, there is then a well-defined bijection
    $$
    \textup{Octad Pictures} / \textup{Sp}(6,2)  \,\, - \,\,  \textup{Exceptional Orbit} \, \longrightarrow \,  \textup{Stable Reduction Types} .
    $$
    Both sides of the bijection are given in Tab.~\ref{tab:smoctadsc02}--\ref{tab:smhoctadsc56}.

    The exceptional orbit of octad pictures is given by Cayley octads which can only be achieved
    over a field whose residue characteristic is $2$ (such a picture is obtained, for example, by the eight vertices of a cube). The corresponding octad picture consists of seven $\bm{\alpha}$-blocks, but there is no stable genus 3 curve with 7 nodes. See also Rem.~\ref{rmk:exceptionalpictures}.
\end{remark}

\begin{example}
Let $C/\Q_5$ be the curve
$$
x^3y + x^2y^2 - 4xy^3 + 2y^4 + x^3z + 3x^2yz - 3xy^2z +  x^2z^2 - 3xyz^2 + 3y^2z^2 - 4xz^3 + 2z^4 = 0\,.
$$
Then, of the $36$ Cayley octads of $C$, $16$ have valuation data of the form $\bm{\alpha}_{1a}$ or $\bm{\alpha}_{1b}$ and so have octad picture given in Fig.~\ref{fig:IntroOctadDiagrams} (left). Note that the valuation data $\bm{\alpha}_{1a}$ and $\bm{\alpha}_{1b}$ are $\textup{PGL}_3(\bar{\Q}_5)$-equivalent, and so cannot be distinguished from the octad picture. The valuation data of the remaining $20$ Cayley octads is either $\bm{\alpha}_{2a}$ or $\bm{\alpha}_{2b}$ (similarly $\textup{PGL}_3(\bar{\Q}_5)$-equivalent), and give the octad picture in Fig.~\ref{fig:IntroOctadDiagrams} (right).

Assuming our conjectures, any one of these
$36$ pictures is sufficient to determine the special fibre of the stable model
of $C$, whose dual graph is given in Fig.~\ref{fig:IntroDualGraph};
it is an irreducible curve of genus 2 with a self-intersection point.

\begin{figure}[htbp]
  \begin{subfigure}[b]{0.35\linewidth}
    \centering
    \tikzsetnextfilename{DualGraph3}
    \begin{tikzpicture}[scale=0.8]
        \node[shape=circle,draw=black,scale=0.6] (A) at (0,0) {2};
        \path  (A) edge [loop above,min distance = 2cm,in=135,out=45] node {} (A);
      \end{tikzpicture}
    \caption{Stable model (special fibre)}
    \label{fig:IntroDualGraph}
  \end{subfigure}
  \vspace{\floatsep}
  \begin{subfigure}[b]{0.35\linewidth}
    \centering
    \tikzsetnextfilename{TABOctadDnA}
      \begin{tikzpicture}[scale=0.12]
        \pts
        \twin{0}
        \oindex{16}
      \end{tikzpicture}
      \qquad
      \tikzsetnextfilename{TABOctadDnB}
      \begin{tikzpicture}[scale=0.12]
        \pts
        \plane{7}
        \oindex{20}
      \end{tikzpicture}
    \caption{Octad pictures}
    \label{fig:IntroOctadDiagrams}
  \end{subfigure}
  \vspace*{-0.5cm}
  \caption{Stable reduction of $C/\Q_5$}
\end{figure}
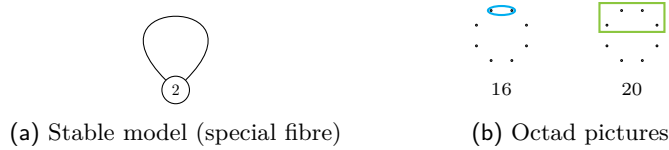

Note that, under our conjectures, any other plane quartic (over a non-archimedean local field with residue characteristic $p \neq 2$) which has the same stable type must also have the exact same set of $36$ octad pictures.
\end{example}

\subsection{Combinatorial Analysis}
\label{subsec:introcombinatorialanalysis}

The purpose of this section is to make the correspondence of Tab.~\ref{tab:smoctadsc02}--\ref{tab:smhoctadsc56} and the compatibility conditions in Tab.~\ref{tab:twoblockssubspaces} less ad hoc. The content of these tables is in fact much more natural in terms of the vector space $E_3$ (see Def.~\ref{def:introE3}), and a classification of octad pictures is readily achieved through this paradigm. We are lead to a framework that accounts explicitly and completely for the correspondence between octad pictures and stable reduction types.

\begin{definition*}[see Def.~\ref{def:admissible}]
Suppose $\mathcal{L}$ is a set whose elements are pairs $(X, X^\bot)$ of orthogonally complement subspaces of $E_3$, ordered so that $\dim(X) \leqslant \dim(X^\bot)$. Then, $\mathcal{L}$ is said to be \textit{compatible} when
\begin{itemize}
    \item $(\mathbf{0}, E_3) \in \mathcal{L}$,
    \item for all $(X, X^\bot) \in \mathcal{L}$, we have either $\dim X \leq 1$, or $X$ is not isotropic, and
    \item for all distinct pairs $(X, X^\bot), (Y, Y^\bot) \in \mathcal{L}$, we have $X \subset Y, \, Y \subset X$ or $X \subset Y^\bot$, and
    \item for all distinct pairs $(X, X^\bot), (Y, Y^\bot) \in \mathcal{L} \backslash \{ (\mathbf{0}, E_3) \}$, neither $X \subset Y \cap Y^\bot$, nor $Y \subset X \cap X^\bot$.
\end{itemize}
\end{definition*}

\begin{example}
\label{ex:introsubspaceexample}
    Let $X_1$ be the $1$-dimensional subspace of $E_3$ generated by $\oA\oF$, and let $X_2$ be the subspace generated by $\oA\oE\oF\oH$. Then $\mathcal{L} = \{ (\mathbf{0}, E_3), (X_1, X_1^\bot), (X_2, X_2^\bot) \}$ is compatible.

    For a non-example, let $X'_1$ be generated by $\oA\oB$ and $X'_2$ generated by $\oB\oC\oD\oE$, and observe that $\langle \oA\oB, \oB\oC\oD\oE \rangle = 1$. Then $\mathcal{L}' = \{ (\mathbf{0}, E_3), (X'_1, {X'_1}^\bot), (X'_2, {X'_2}^\bot) \}$ is  not compatible, as $X_1'$ and $X_2'$ do not meet the third bullet point.
\end{example}

\begin{proposition*}[see Prop.~\ref{prop:picturesubspacecorrespondence}]
    There are $43$ $\textup{Sp}(6,2)$-orbits of compatible collections of subspaces of $E_3$.
\end{proposition*}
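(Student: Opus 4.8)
This is a finite computation; the plan is to organise it around the inclusion-and-orthogonality structure of a compatible collection $\mathcal L$.

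Step one is to classify the $\textup{Sp}(6,2)$-orbits of a single subspace of $E_3$. Since $(E_3,\langle\,,\,\rangle)$ is the standard $6$-dimensional symplectic space over $\F_2$ and $\textup{Sp}(6,2)$ is its full isometry group, Witt's extension theorem shows that a subspace $X$ is determined up to $\textup{Sp}(6,2)$ by the pair $(\dim X,\dim\operatorname{rad}X)$, where $\operatorname{rad}X=X\cap X^{\bot}$. Of these, the subspaces allowed to occur as the smaller member $X$ of a pair in a compatible $\mathcal L$ — namely $X=\mathbf 0$, or $\dim X=1$, or $X$ non-isotropic with $2\leqslant\dim X\leqslant 3$ — fall into exactly four orbits: $\mathbf 0$; the lines (all isotropic); the non-degenerate $2$-spaces; and the $3$-spaces with $1$-dimensional radical. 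The dual $X^{\bot}$ then has dimension $6,5,4,3$, and in the last case $\operatorname{rad}X^{\bot}=\operatorname{rad}X$ is a line. For each I would record the stabiliser in $\textup{Sp}(6,2)$ (a maximal parabolic in the line case, the stabiliser of an orthogonal splitting $E_3=X\perp X^{\bot}$ in the non-degenerate $2$-space case, and so on), since these control how further members can be attached.

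Step two is to enumerate the collections $\mathcal L$ by adjoining members one at a time. Recall that compatibility forces any two chosen subspaces $X,Y$ to satisfy $X\subseteq Y$, $Y\subseteq X$, or $X\subseteq Y^{\bot}$, and additionally forbids a nonzero member from sitting inside the radical of another — a condition that only bites when a radical-type $3$-space is present, and then merely says its radical line may not itself be a member. Because the allowed dimensions lie in $\{0,1,2,3\}$ and strictly increase along a chain, every chain in $\mathcal L$ has at most $4$ members; and the orthogonality relation limits how many ``siblings'' can be attached to a given chain, so $|\mathcal L|$ is a priori bounded. Concretely I would list, up to $\textup{Sp}(6,2)$, the chains with each admissible sequence of radical-ranks (descending to orbits of flags via the stabiliser data of step one), then for each chain enumerate the ways to adjoin members orthogonal to — rather than nested in — part of it, keeping a candidate only when all pairwise relations hold. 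Summing the orbit counts over the resulting shapes gives the total; I would also verify it by a direct search over the fewer than $2000$ allowed subspaces of $E_3$, recording every compatible subset and quotienting by $\textup{Sp}(6,2)$ (of order $1\,451\,520$). Both approaches should return $43$.

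The main obstacle is the bookkeeping rather than any single hard idea: one must ensure the partition into shapes — the poset of inclusions together with the pattern of orthogonalities among incomparable members — is exhaustive, and that the orbit count within each shape is correct, the latter reducing repeatedly (via Witt's theorem) to counting subspaces in the symplectic or linear quotients attached to a partial flag. This is most delicate when the radicals of distinct members interact, e.g.\ a non-degenerate $2$-space together with a radical-type $3$-space containing it, or two radical-type $3$-spaces sharing a radical; there the orthogonality constraint and the fourth defining condition of a compatible collection must both be tracked. The computer search is the cleanest certification that the answer is exactly $43$, and the hand argument then explains why the list has this length and lets one match the $43$ orbits — the $42$ stable reduction types plus the single exceptional orbit of Rem.~\ref{rmk:intropictuestabletypebijection} — with the correspondence tables.
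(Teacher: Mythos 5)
Your proposal is correct and in substance the same as the paper's: the count of $43$ orbits is established there by a finite enumeration of compatible collections certified computationally (the proofs of Prop.~\ref{prop:principalsubspacesE3} and Prop.~\ref{prop:picturesubspacecorrespondence} both reduce to ``this can be verified computationally''). Your preliminary Witt-theorem observation that the admissible subspaces fall into four $\textup{Sp}(6,2)$-orbits ($\mathbf{0}$, isotropic lines, non-degenerate $2$-spaces, and $3$-spaces with a line radical) is consistent with the list in Tab.~\ref{tab:orthocomp}, and your chain-plus-siblings bookkeeping is a sensible way to organise the search, but the certification of the exact number $43$ still rests on the same exhaustive computation as in the paper.
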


These $43$ orbits count precisely the $42$ stable reduction types, not taking hyperelliptic reduction into account, along with the exceptional orbit of Rem.~\ref{rmk:intropictuestabletypebijection}. One can extend Def.~\ref{def:admissible} to include \textit{hyperelliptic}-compatible subspaces collections (Def.~\ref{def:hyperellipticcompatiblesubspaces}), and their $\textup{Sp}(6,2)$-orbits correspond exactly to the hyperelliptic versions of those stable reduction types that can be both non-hyperelliptic and hyperelliptic (see Prop.~\ref{prop:picturesubspacecorrespondenceHE}) The compatibility conditions amongst subspaces is how Tab.~\ref{tab:twoblockssubspaces} is derived (see Thm.~\ref{thm:introsubspaceoctadpictureagreement}).

The correspondence between a compatible collection of subspaces and stable reduction types can be made explicit with a universal recipe. We now explain this process in detail.

\begin{definition*}[see Def.~\ref{def:inclusiongraph}]
    Let $\mathcal{L}$ be a compatible collection of subspaces. The \textit{inclusion graph} of $\mathcal{L}$ is the following directed graph:
\begin{itemize}
    \item For each $(X, X^\bot) \in \mathcal{L}$, a vertex labelled with the subspace $X$, with the exception of the pair $(\mathbf{0},E_3)$ for which we use the subspace $E_3$.
    \item An edge $X \to Y$ when $Y$ is the smallest subspace strictly containing $X$.
\end{itemize}
Though edges are directed, by convention we do not include the directions when drawing the inclusion graph, rather they are understood to be directed upwards.
\end{definition*}

\begin{example}
    Let $\mathcal{L}$ be as in~\ref{ex:introsubspaceexample}. The inclusion graph for $\mathcal{L}$ is given in Fig.~\ref{fig:introinclusiongraphexample}.
    \begin{figure}[htbp]
        \tikzsetnextfilename{InclusionGraphExample}
        \begin{tikzpicture}[scale=1]
        \node at (0,0.2) {};
        \node[shape=circle,draw=black,scale=0.5] (6) at (0,0) {$E_3$};
        \node[shape=circle,draw=black,scale=0.5] (1A) at (-0.3,-1) {$X_1$};
        \node[shape=circle,draw=black,scale=0.5] (1B) at (0.3,-1) {$X_2$};
        \draw (6) -- (1A);
        \draw (6) -- (1B);
      \end{tikzpicture}
      \caption{Inclusion graph for $\mathcal{L}$}
      \label{fig:introinclusiongraphexample}
    \end{figure}
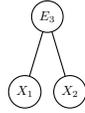
\end{example}

Each vertex of the inclusion graph is assigned a \textit{relation index} (see Def.~\ref{def:orthogonallydependent}), generalising the notion of \"ubereven clusters in a cluster picture. Typically, this relation index is $0$ for all vertices, and the exceptions can be found in Tab.~\ref{tab:smoctadsc02} -~\ref{tab:smhoctadsc56}; these are precisely the \textit{subspace graphs} with blue boxes (see Def.~\ref{def:subspacegraph}).

\begin{definition*}[see Def.~\ref{def:stablegraph},~\ref{def:stable-graph}]
    For the purposes of this introductory summary, suppose that the vertices of the inclusion graph of $\mathcal{L}$ all have relation index $0$. Then the \textit{stable graph}, $\mathcal{G}$, of $\mathcal{L}$ is the following cover of the inclusion graph, $\mathcal{I}$, of $\mathcal{L}$:
    \begin{itemize}
        \item Vertices: The vertices are the vertices of $\mathcal{I}$.
        \item Edges: Each edge $X \to Y$ is covered by one edge if $\dim(X)$ is even, and two edges if $\dim(X)$ is odd.
        \item Labels: The vertex of $\mathcal{I}$ labelled $X$ is replaced with an integer $\gamma(X)$ so that
        $$
        \tfrac{1}{2} \dim(X/X \cap X^\bot) = \sum_{Z \leqslant X} \gamma(Z) + \# \textup{loops occuring below X,}
        $$
        where $Z \leqslant X$ ranges over all edges below and including $X$ in the inclusion graph.
    \end{itemize}
    Then one contracts all vertices which are labelled $0$ and have exactly two edges, to a single edge.
\end{definition*}

\begin{example}
\label{ex:introstablegraph}
    Let $\mathcal{L}$ be as in Ex.~\ref{ex:introsubspaceexample}. As $\dim(X_1) = \dim(X_2) = 1$, both edges of the inclusion graph become two edges in the stable graph. There are no vertices below either $X_1$ or $X_2$, and so the conditions on the labelling reduce to $\gamma(X_1) = 0$ and $\gamma(X_2)= 0$. Both these vertices have exactly two edges, and are thus contracted. The label of the vertex $E_3$ satisfies $3 = \gamma(E_3) + 2$, and so is labelled $1$. The stable graph for $\mathcal{L}$ is thus given in Fig.~\ref{fig:introstablegraphexample}.
    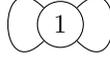
\begin{figure}[htbp]
    \tikzsetnextfilename{CandyGraph}
  \begin{tikzpicture}[scale=0.6]
        \node[shape=circle,draw=black,scale=1] (A) at (0,0) {1};
        \path  (A) edge [loop,min distance = 1.5cm,in=225,out=135] node {} (A);
        \path  (A) edge [loop,min distance = 1.5cm,in=45,out=-45] node {} (A);
      \end{tikzpicture}
      \caption{Stable graph of $\mathcal{L}$}
      \label{fig:introstablegraphexample}
      \end{figure}
\end{example}

\begin{notation}
Octad pictures are called hyperelliptic when they contain a $\mathrm{{\bf TCu}}$-, $\mathrm{{\bf Line}}$-,  or $\bm{\phi}$-block.
We write $\mathcal{P}^\textup{NH}$ for the set of non-hyperelliptic octad pictures which do not belong in the exceptional orbit, and $\mathcal{P}^{\textup{HE}}$ for the hyperelliptic octad pictures not in the exceptional orbit (see Rem.~\ref{rmk:intropictuestabletypebijection}).
\end{notation}

Note that the exceptional orbit of octad pictures themselves induce an exceptional orbit of compatible subspaces via the correspondence of Tab.~\ref{tab:introblocksdefinition}; we again consider this orbit of collections exceptional.

\begin{theorem}
\label{thm:introsubspaceoctadpictureagreement}
    Each octad picture $P$ induces a compatible collection of subspaces, which we denote $\varsigma(P)$ (see Def.~\ref{def:picturesubspacecorrespondence}). Moreover:
    \begin{itemize}
        \item $\varsigma$ gives a one-to-one correspondence
        $$
        \textup{Octad Pictures} \longleftrightarrow \textup{Compatible Sets of Subspaces}
        $$
        \item If $P$ is an octad picture not in the exceptional orbit, then the dual graph of the stable reduction type corresponding to $P$ as given in Tab.~\ref{tab:smoctadsc02}--\ref{tab:smhoctadsc56} is the stable graph of $\varsigma(P)$, where the genus of each component is given by the label of the corresponding vertex. That is, the following diagram commutes.
\begin{displaymath}
\xymatrix{
&\textup{$\mathcal{P}^\textup{NH} \cup \mathcal{P}^\textup{HE}$}
\ar@{<->}[rr]^-{\varsigma}\ar[rrd]_(.4){\textup{Tab.~\ref{tab:smoctadsc02}--\ref{tab:smhoctadsc56}}\ \hspace*{1.5cm}} & & \textup{Compatible Sets of Subspaces} \backslash \textup{Exceptional Orbit} \ar[d]^(.5){\ \textup{Stable Graph}}  \\
& & &\textup{Stable Types}
}
\end{displaymath}
    \end{itemize}
\end{theorem}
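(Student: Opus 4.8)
The plan is to split the statement into its two bulleted claims and reduce each to a finite verification: the first to a direct check over the $19$ building‑block types of Tab.~\ref{tab:introblocksdefinition}, the second to an orbit‑by‑orbit comparison against Tab.~\ref{tab:smoctadsc02}--\ref{tab:smhoctadsc56}, using that all the maps involved are $\textup{Sp}(6,2)$‑equivariant.

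\emph{The bijection $\varsigma$.} First I would show that the assignment (building block $\mapsto$ subspace/subset) of Tab.~\ref{tab:introblocksdefinition} factors through the associated picture: among the $19$ block types (up to labelling), those inducing a common picture — such as $\bm{\alpha}_{1\mathrm a}^{\oA\oB}$ and $\bm{\alpha}_{1\mathrm b}^{\oA\oB}$, the three $\bm{\chi}_{1\star}$, or the pairs $\bm{\alpha}_{2\star}$, $\bm{\chi}_{2\star}$, $\bm{\phi}_{j\star}$ — are assigned the same subspace, whereas distinct pictures receive distinct subspaces/subsets. Next I would identify the image: in a compatible collection (Def.~\ref{def:admissible}) every pair $(X,X^\perp)$ other than $(\mathbf 0,E_3)$ has $\dim X\leqslant 3$, and the isotropy requirement forces $\dim X=1$, or $\dim X=2$ with $X$ a hyperbolic plane, or $\dim X=3$ with $\dim(X\cap X^\perp)=1$; each of these is realised (up to $\textup{Sp}(6,2)$, which is transitive on each type) by an $\bm{\alpha}$‑, $\bm{\chi}$‑ or $\bm{\phi}$‑block respectively, and the two distinguished subsets $\varsigma(\mathbf{TCu})$, $\varsigma(\mathbf{Line})$ by the hyperelliptic blocks. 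Finally — the heart of this part — I would check that compatibility of blocks in the sense of Tab.~\ref{tab:twoblockssubspaces} is equivalent to the conditions of Def.~\ref{def:admissible}: the basepoint condition is automatic, the isotropy condition follows from the previous step, and the two remaining conditions ($X\subset Y$, $Y\subset X$, or $X\subset Y^\perp$; and $X\not\subset Y\cap Y^\perp$) are verified pairwise over the block‑pair types ($\bm{\alpha}$‑$\bm{\alpha}$, $\bm{\alpha}$‑$\bm{\chi}$, $\bm{\chi}$‑$\bm{\chi}$, $\bm{\alpha}$‑$\bm{\phi}$, $\bm{\chi}$‑$\bm{\phi}$, $\bm{\phi}$‑$\bm{\phi}$, and a hyperelliptic block with each), matching the admissible relative labellings displayed in Tab.~\ref{tab:twoblockssubspaces} — this is precisely the computation from which that table was extracted. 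The auxiliary blocks attached to $\bm{\phi}$‑blocks are handled by the stated conventions, and the hyperelliptic case is read through Def.~\ref{def:hyperellipticcompatiblesubspaces} and Prop.~\ref{prop:picturesubspacecorrespondenceHE}. Combining these three steps yields the claimed correspondence.

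\emph{The commutative diagram.} All three maps are $\textup{Sp}(6,2)$‑equivariant: $\varsigma$ by construction (Def.~\ref{def:picturesubspacecorrespondence}), the assignment of Tab.~\ref{tab:smoctadsc02}--\ref{tab:smhoctadsc56} because the tables are indexed by $\textup{Sp}(6,2)$‑orbits, and the stable‑graph construction (Def.~\ref{def:inclusiongraph}, \ref{def:orthogonallydependent}, \ref{def:stable-graph}) because it depends only on the abstract symplectic lattice data. By Prop.~\ref{prop:picturesubspacecorrespondence} and its hyperelliptic counterpart there are finitely many orbits — the $42$ non‑exceptional types plus their hyperelliptic versions — so it suffices, for one representative $P$ per orbit, to compute $\varsigma(P)$, form its inclusion graph, attach relation indices (which vanish outside the tabulated blue‑box cases), double the edges below odd‑dimensional vertices, solve $\tfrac12\dim(X/X\cap X^\perp)=\sum_{Z\leqslant X}\gamma(Z)+(\text{loops below }X)$, contract the degree‑two $0$‑vertices, and check the result against the dual graph of the table with component genera read off the labels. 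To avoid $42$ isolated computations I would argue structurally: $\bm{\alpha}$‑ and $\bm{\phi}$‑vertices are odd‑dimensional (dimensions $1$ and $3$) hence give doubled edges, while $\bm{\chi}$‑vertices are even‑dimensional and give single edges; after contraction a $1$‑dimensional $\bm{\alpha}$‑vertex contributes one node, the two doubled edges above a $3$‑dimensional $\bm{\phi}$‑vertex produce the two intersection points of the two arithmetic‑genus‑$1$ pieces, and a $\bm{\chi}$‑vertex carves off a genus‑$2$ piece meeting a genus‑$1$ piece once; the labelling equation then distributes the total $\tfrac12\dim E_3=3$ among the vertices, with the loop‑counts and relation indices correcting precisely for the übereven anomalies. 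The hyperelliptic orbits are handled identically, with $\varsigma(\mathbf{TCu})$ and $\varsigma(\mathbf{Line})$ playing the role of the top vertex, via Def.~\ref{def:hyperellipticcompatiblesubspaces}.

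\emph{The main obstacle.} The substantive work lies entirely in the second half: verifying that the purely lattice‑theoretic stable‑graph recipe reproduces the $42$ (and hyperelliptic) dual graphs recorded by decree in Tab.~\ref{tab:smoctadsc02}--\ref{tab:smhoctadsc56}, and in particular pinning down the relation indices and loop‑counts so that the labelling equation outputs the correct component genera in every degenerate configuration — the cases $\texttt{(2m)}$ versus $\texttt{(1me)}$ and the blue‑box exceptions being the delicate ones. The first half is a large but essentially mechanical finite verification.
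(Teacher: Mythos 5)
Your proposal is correct and follows essentially the same route as the paper: the bijection is reduced to the pairwise compatibility check underlying Tab.~\ref{tab:twoblockssubspaces} together with a finite enumeration showing every compatible collection is hit (this is exactly the content of Prop.~\ref{prop:picturesubspacecorrespondence} and~\ref{prop:picturesubspacecorrespondenceHE}, which the paper settles computationally), and the commutativity of the diagram is reduced via $\textup{Sp}(6,2)$-equivariance to one stable-graph computation per orbit, which is precisely how the paper constructs and verifies Tab.~\ref{tab:smoctadsc02}--\ref{tab:smhoctadsc56} (cf.\ the remark after Def.~\ref{def:stablegraph} and Thm.~\ref{thm:OctadDiagramGraphIsomorphisms}). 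Your added structural bookkeeping (odd-dimensional vertices giving doubled edges, the genus-distribution equation) is a sound organisation of that same finite check rather than a different argument.
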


\begin{corollary*}[see Conj.~\ref{conj:SpecialFibreOfTheStableModelDetailed}]
    Assume Conj.~\ref{conj:SpecialFibreOfTheStableModel}. Let $C/K$ be a plane quartic over a non-archimedean local field with residue characteristic $p \neq 2$, and let $O$ be a Cayley octad of $C$. Then the stable graph of $\varsigma(d_K(O))$ is the dual graph of the stable reduction type of $C/K$.
\end{corollary*}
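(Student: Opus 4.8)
The plan is to deduce the corollary by chaining the assumed Conjecture~\ref{conj:SpecialFibreOfTheStableModel} with Theorem~\ref{thm:introsubspaceoctadpictureagreement}; almost all the substance is already contained in those statements, so the remaining work is to check that every object appearing in the conclusion is well-defined in the stated generality and then to assemble the implication. Note in particular that Conjecture~\ref{conj:SpecialFibreOfTheStableModelDetailed}, cited in the corollary, is essentially the detailed version of what we are proving, and Theorem~\ref{thm:introsubspaceoctadpictureagreement} is exactly the bridge that turns Conjecture~\ref{conj:SpecialFibreOfTheStableModel} into it.

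First I would check that $d_K(O)$ is well-defined. Since $O$ is a Cayley octad of a smooth plane quartic it is regular, hence $\PGL$-equivalent to a normalised octad; the valuation data of that normalised octad admits a compatible block decomposition by Conjecture~\ref{conj:VDUnieuqOctadDiagram} (which is in any case presupposed by Conjecture~\ref{conj:SpecialFibreOfTheStableModel}, since that conjecture is phrased in terms of octad pictures), this decomposition is essentially unique by Theorem~\ref{thm:UniqueDecomposition}, and by Theorem~\ref{thm:pgl-action-on-block-decomposition} the resulting picture is independent of which five points were normalised. So $d_K(O)$ is unambiguous. Because $K$ has residue characteristic $p \neq 2$, this picture cannot lie in the exceptional orbit of Remark~\ref{rmk:intropictuestabletypebijection} (whose pictures are realised only in residue characteristic $2$), so $d_K(O) \in \mathcal{P}^{\textup{NH}} \cup \mathcal{P}^{\textup{HE}}$.

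Next I would invoke Conjecture~\ref{conj:SpecialFibreOfTheStableModel}: the stable reduction type of $C/K$, including whether or not it is hyperelliptic, is the one attached to the octad picture $d_K(O)$, and by the remark following that conjecture (together with the fact that examples of Cayley octads realise all octad pictures) this is precisely the type listed against $d_K(O)$ in Tab.~\ref{tab:smoctadsc02}--\ref{tab:smhoctadsc56}. Then I would apply the second bullet of Theorem~\ref{thm:introsubspaceoctadpictureagreement}, the commutative triangle: since $d_K(O) \in \mathcal{P}^{\textup{NH}} \cup \mathcal{P}^{\textup{HE}}$ is not in the exceptional orbit, the stable graph of $\varsigma(d_K(O))$ equals the dual graph of the stable reduction type that Tab.~\ref{tab:smoctadsc02}--\ref{tab:smhoctadsc56} attaches to $d_K(O)$, with the genus of each component read off from the corresponding vertex label. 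Comparing the two identifications gives that the stable graph of $\varsigma(d_K(O))$ is the dual graph of the stable reduction type of $C/K$, as claimed.

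The hard part is not in this deduction: under the stated hypotheses it is close to formal, since the genuine content sits upstream in Theorem~\ref{thm:introsubspaceoctadpictureagreement} and in the assumed conjectures. Within the corollary's own proof, the only point demanding care is the well-posedness bookkeeping of the second paragraph --- that the picture is insensitive to the choice of normalisation (Theorem~\ref{thm:pgl-action-on-block-decomposition}), that the compatible block decomposition is essentially unique (Theorem~\ref{thm:UniqueDecomposition}), and that $p \neq 2$ genuinely excludes the exceptional orbit so that Theorem~\ref{thm:introsubspaceoctadpictureagreement} applies --- and, if one prefers not to assume Conjecture~\ref{conj:VDUnieuqOctadDiagram} in isolation, the observation that it is subsumed by Conjecture~\ref{conj:SpecialFibreOfTheStableModel}.
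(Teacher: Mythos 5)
Your proposal is correct and follows essentially the same route the paper takes: the corollary is precisely the content of Conj.~\ref{conj:SpecialFibreOfTheStableModelDetailed}, which the paper obtains from Conj.~\ref{conj:SpecialFibreOfTheStableModel} by composing with the commutative triangle of Thm.~\ref{thm:introsubspaceoctadpictureagreement} (together with the well-definedness of $d_K(O)$ via Conj.~\ref{conj:VDUnieuqOctadDiagram}, Thm.~\ref{thm:UniqueDecomposition} and Thm.~\ref{thm:pgl-action-on-block-decomposition}, and the exclusion of the exceptional orbit when $p\neq 2$). Your bookkeeping of which conjectures are presupposed where matches the paper's own logical structure, so there is nothing to add.
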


\begin{example}
Let $\mathcal{L}$ be as in Ex.~\ref{ex:introsubspaceexample}, and let $C/\Q_5$ and $O$ be as in Ex.~\ref{ex:introexample}. The octad picture $P \in \varsigma^{-1}(\mathcal{L})$ is given in Fig.~\ref{fig:intropicturesubspacecorrespondenceexample}.
    \begin{figure}[htbp]
        \centering
         \tikzsetnextfilename{IntroOctPicD}
         \begin{tikzpicture}[scale=0.25]
             \ptscustomlabel{F}{E}{H}{B}{C}{D}{G}{A}
             \twinbig{7}
             \plane{7}
        \end{tikzpicture}
        \caption{Octad picture, $P$, with $\varsigma(P) = \mathcal{L}$}
        \label{fig:intropicturesubspacecorrespondenceexample}
    \end{figure}
    Observe that $P = d_{\Q_5}(O)$. By Conj.~\ref{conj:SpecialFibreOfTheStableModelDetailed} and Ex.~\ref{ex:introstablegraph}, we predict that the dual graph of the stable reduction type of $C/\Q_5$ is given by the stable graph of $\mathcal{L}$, as in Fig.~\ref{fig:introstablegraphexample}, and so its stable reduction type is a genus 1 curve with two self-intersections.
\end{example}

Per Thm.~\ref{thm:introsubspaceoctadpictureagreement} and the correspondence given in Tab.~\ref{tab:smoctadsc02}--\ref{tab:smhoctadsc56}, the map from $\mathcal{P}^\textup{NH} \cup \mathcal{P}^{\textup{HE}}$ to the set of stable types in genus $3$, given by sending an octad picture $P$ to the stable graph of $\varsigma(P)$, factors through $\textup{Sp}(6,2)$; in fact $(\mathcal{P}^\textup{NH} \cup \mathcal{P}^{\textup{HE}}) / \textup{Sp}(6,2)$ is in bijection with the set of stable types.

For simplicity, consider the 42 stable types without considering whether the reduction is hyperelliptic or not. The bijection above admits even further structure when one considers that these 42 stable types in genus $3$ can be generated by a process of degenerating a smooth genus $3$ curve (see Sec.~\ref{sec:quart-stable-reduct}). The result is a directed graph, which we denote $\textup{SM}_3$ (see Fig.~\ref{fig:stablemodelgraphquartic}).

The set of octad pictures can also be generated by degenerating a trivial octad picture (equally, degenerating the trivial collection of compatible subspaces by either adding a pair, or by swapping $(X,X^\bot)$ for $(Y,Y^\bot)$ with $X = Y \cap Y^\bot$, see Sec.~\ref{subsec:structureoctadpictures}), giving a directed graph $\mathcal{G}_3$ whose vertex set is the set of octad pictures.

\begin{theorem*}[see Thm.~\ref{thm:OctadDiagramGraphIsomorphisms}]
The map sending an octad picture $P$ to the stable graph of $\varsigma(P)$ induces an isomorphism of directed graphs
$$
\mathcal{G}_3\,/\, \textup{Sp}(6,2) \to \textup{SM}_3.
$$
\end{theorem*}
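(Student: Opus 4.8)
The plan is to prove this in two halves, \emph{vertices} and \emph{edges}, the first being essentially a restatement of what has already been established and the second carrying the real content. On vertices I would use that $\varsigma$ is a bijection from octad pictures to compatible collections of subspaces of $E_3$ (Thm.~\ref{thm:introsubspaceoctadpictureagreement}), that for a non-exceptional picture $P$ the stable graph of $\varsigma(P)$ equals the dual graph recorded in Tab.~\ref{tab:smoctadsc02}--\ref{tab:smhoctadsc56} (same theorem), and the $\textup{Sp}(6,2)$-factoring deduced there; together these identify the vertex set of $\mathcal{G}_3/\textup{Sp}(6,2)$ — with the exceptional orbit deleted, cf.\ Rem.~\ref{rmk:intropictuestabletypebijection} — bijectively with the $42$ vertices of $\textup{SM}_3$. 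Well-definedness of the whole map on the quotient is immediate: $\varsigma$ is $\textup{Sp}(6,2)$-equivariant by construction, and the inclusion graph of a collection $\mathcal{L}$ together with all the dimension and orthogonality data needed to run Def.~\ref{def:stablegraph} depends only on the $\textup{Sp}(6,2)$-orbit of $\mathcal{L}$. So the task reduces to showing this vertex bijection is an isomorphism of \emph{directed} graphs, i.e.\ that $[P]\to[P']$ is an edge of $\mathcal{G}_3/\textup{Sp}(6,2)$ exactly when the corresponding stable types are joined in $\textup{SM}_3$ (one also checks both quotient graphs are simple at the codimension-one level, so matching edge-sets suffices).

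For the edges I would translate the two elementary degeneration moves on compatible collections (Sec.~\ref{subsec:structureoctadpictures}) into operations on inclusion graphs and on the labels $\gamma$ of Def.~\ref{def:stablegraph}. Move (a), adjoining a new pair $(X,X^\bot)$, inserts a vertex $X$ into the inclusion graph; from the labelling equation $\tfrac12\dim(X/X\cap X^\bot)=\sum_{Z\leqslant X}\gamma(Z)+\#(\text{loops below }X)$ and the parity-dependent covering and contraction rules one checks that the induced change to the stable graph is, according to the block type, exactly one of: sprouting a loop (a new node), attaching a new genus-one component at one point, or attaching a new genus-one component along two points — which is precisely what the $\bm{\alpha}$-, $\bm{\chi}$- and $\bm{\phi}$-blocks were built to do, and is a single edge of $\textup{SM}_3$. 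Move (b), replacing $(X,X^\bot)$ by $(Y,Y^\bot)$ with $X=Y\cap Y^\bot=\Rad Y$, raises the handle contribution at that vertex and, after re-running the labelling and contraction procedure, pinches one cycle on the corresponding subcurve — i.e.\ the stable degeneration that drops a component's genus by one and creates a loop, again a single edge of $\textup{SM}_3$. This shows the vertex bijection carries edges of $\mathcal{G}_3$ to edges of $\textup{SM}_3$. Conversely, I would classify the elementary stable degenerations $T\to T'$ in genus $3$ (creation of an internal node on a component; splitting of a component into two joined at a node; refinement of one building-block neck into another) and, using the dictionary above together with the explicit list of $43$ orbits of compatible collections (Prop.~\ref{prop:picturesubspacecorrespondence}) and the relation-index bookkeeping of Def.~\ref{def:orthogonallydependent} and Def.~\ref{def:subspacegraph}, realise each $T\to T'$ by a move (a) or (b) on a representative of $\varsigma(T)$. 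Since both graphs are finite this is a bounded exhaustive verification, kept tractable by the inclusion-graph presentation, and Thm.~\ref{thm:introsubspaceoctadpictureagreement} guarantees it is enough to match the underlying stable graphs.

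The main obstacle is the completeness of the edge matching: one must rule out ``edge-skipping'' (a single subspace move inducing a codimension-$\geq 2$ stable degeneration) and ``missing edges'' (a codimension-one stable degeneration not arising from any move), and reconcile the loop/relation-index accounting with the appearance of vanishing cycles. The genuinely delicate point is the exceptional orbit: adjoining a seventh $\bm{\alpha}$-block is a legal move on compatible collections but has no stable-curve counterpart (no genus-$3$ stable curve has seven nodes), which is exactly why that orbit and its incident edges must be excised before one gets an isomorphism onto $\textup{SM}_3$. Verifying that no \emph{other} orbit behaves anomalously under the two moves, and that the excision is clean (the removed edges all have the exceptional orbit as an endpoint), is where the argument needs the most care.
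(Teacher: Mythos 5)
Your plan is sound and, in substance, coincides with the paper's: the paper's entire proof of Thm.~\ref{thm:OctadDiagramGraphIsomorphisms} is the single line ``This can be verified computationally'', i.e.\ precisely the bounded exhaustive check over the finitely many $\textup{Sp}(6,2)$-orbits of compatible collections and the two degeneration moves that your edge-matching step describes. One small simplification: the exceptional orbit needs no excision at the edge-matching stage, since $\mathcal{G}_3$ is defined on $\mathcal{P}^{\textup{NH}}\cup\mathcal{P}^{\phi}$, which excludes that orbit by construction, so a move landing in it simply fails to produce an edge of $\mathcal{G}_3$.
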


The above theorem holds \textit{mutatis mutandis} with
hyperelliptic octad pictures and hyperelliptic stable reduction types (again, see Thm.~\ref{thm:OctadDiagramGraphIsomorphisms}).

\subsection{Evidence}
\label{subsec:evidence}

There is significant evidence in support of Conj.~\ref{conj:SpecialFibreOfTheStableModelDetailed}.
First, we emphasise that we have implemented and tested these reduction
criteria on a large number of integer quartics, in particular amongst all reduction types (based
on the explicit families constructed in Sec.~\ref{sec:expl-constr}) and for about 170,000 of the pairs $(C,p)$ where $C$ is a plane quartic and $p$ an odd prime of bad reduction, out of the 186,502 such pairs that occur in Andrew Sutherland's database of plane quartics \cite{Database3}. For the missing pairs, we did not have a way to independently verify the stable reduction type.
Assuming our conjectures we are now able, in a few seconds on a standard laptop, to determine the stable reduction type of a quartic, and in particular if it has hyperelliptic reduction or not (see~\cite{BGit23}).

The point on which the conjecture seems the most fragile is the residue
characteristic. We have excluded $2$ for several reasons, among which, the
existence of octad pictures of larger size than expected. As we did not find counterexamples in our experiments in
characteristic 3, 5, \textit{etc.}, we have settled for $p \neq 2$.

Beyond that, we have good theoretical reasons to believe in the validity of our conjecture. If $C$ is a plane quartic for which any of its 36 octad pictures are trivial (that is, coming from valuation data consisting of no blocks), then the conjecture holds for $C$, and its reduction type is a smooth (non-hyperelliptic) genus $3$ curve, see Thm.~\ref{thm:regularoctad}.
In fact we have obtained more partial results for our conjectures. Given the multiplicity
of situations however (42 stable types, of which only 32 can occur when the
reduction is hyperelliptic), and our desire to keep this article to a
reasonable length, we prefer to defer this question to future
work~\cite{BDDLL23}. Here we give a Thomae-like formula for the theta constants of a plane quartic in terms of the Pl\"ucker coordinates of the Cayley octad: this might allow via the theory of spin theta constants to prove our conjecture in the remaining cases.

Finally, we stress the compatibility of all known examples with the subspace framework of Sec.~\ref{subsec:introcombinatorialanalysis}. In particular, the complete combinatorial agreement, which includes the exact matching of the action of the symplectic group $\textup{Sp}(6,2)$ and the multiplicities of every octad picture, gives us faith in our conjectures.

\subsection{Structure of the Paper}

In Sec.~\ref{sec:quart-stable-reduct}, we introduce the different stable types that can occur for genus 3 curves (in both the hyperelliptic and non-hyperelliptic cases).

In Sec.~\ref{sec:CayleyOctadsSmoothCurves}, we recall the definition of a Cayley octad associated to a plane quartic, and we define the notion of valuation data of such octads.

In Sec.~\ref{sec:ocdiagrams}, we define building blocks for the different types of degenerations of Cayley octads that can occur, \textit{e.g.} two points colliding, or two complementary sets of four points on a plane. Prop.~\ref{thm:equiv-objects},~\ref{PGLTypes},~\ref{prop:PGLphiblocks} and~\ref{prop:TCuequiv} give the invariance of octad pictures consisting of one block under $\PGL$. Prop.~\ref{CreTransTwins},~\ref{CreTransTypes},~\ref{prop:CreTransCans} and~\ref{prop:CreTransHE} show that the action of $\textup{Sp}(6,2)$ commutes with taking octad pictures in the case of a single block.

In Sec.~\ref{sec:CombiningOctads}, we explain how these degenerations of the Cayley octad lead to octad pictures.

In Sec.~\ref{sec:formalanalysisoctadpictures}, we give a precise conjecture, Conj.~\ref{conj:SpecialFibreOfTheStableModelDetailed}, that predicts how the stable reduction type of a curve can be obtained from the octad picture via the framework of associated subspaces.
We also give a detailed worked example to illustrate how our constructions can be used in practice.

Sec.~\ref{sec:exper-supp-conj} contains examples which have been used to conceive and verify our conjecture. An application of our conjectures gives predictions not known in the literature.

App.~\ref{sec:special-fibres-octad-1} and App.~\ref{sec:special-fibres-octad-2} contain tables giving the full correspondence between octad pictures and stable reduction types.

\subsection{General Notation}

Throughout this paper, we make use of the following notation and terminology:
\begin{itemize}
\item $K$, $R$, $\pi$: a local field, its ring of integers and a uniformising element;
\item $C$: a curve over $K$;
\item $M_g$: the moduli space of smooth curves of genus $g$;
\item $\overline{M_g}$: the Deligne-Mumford compactification of $M_g$;
\item $O = ( O_\oA, O_\oB, \ldots, O_\oH )$: an 8-tuple of points on $\P^3$, also known as an octad;
\item $p_{ijkl}$: the Pl\"ucker coordinate $\det(O_i | O_j | O_k | O_l)$;
\item $v_{\pt}, v_{\ln}, v_\pl, v_{\tc}$: elementary valuation data corresponding to a set of points colliding, lying on a line, plane, or twisted cubic, respectively, see Def.~\ref{def:standard-valuation-data};
\item ${\bm{\alpha}}$-block: an octad building block responsible for a node in the special fibre of the stable model, other than those nodes explained by ${\bm \chi}$- and ${\bm \phi}$-blocks;
\item ${\bm{\chi}}$-block: an octad building block indicating the existence of partition of the special fibre of the stable model into two connected parts of arithmetic genera 1 and 2, intersecting each other in 1 node;
\item ${\bm{\phi}}$-block: an octad building block indicating the existence of a partition of the special fibre of the stable model into two connected parts of arithmetic genus 1, intersecting each other in 2 nodes;
\item $d_K(O)$: an octad picture as defined in Sec.~\ref{sec:CombiningOctads};
\item $E_g$: the $2g$-dimensional symplectic space as defined in Sec.~\ref{sec:sympl-f_2-vect};
\item $X^{\perp}$: the orthogonal complement of the subspace $X$ with respect to the symplectic pairing;
\item $\mathcal{P}^\textup{NH}$, $\mathcal{P}^\textup{HE}$ and $\mathcal{P}^\phi$: the set of non-hyperelliptic octad pictures not in the exceptional orbit, hyperelliptic octad pictures, and octad pictures containing a ${\bm{\phi}}$-block, respectively;
\item $\mathcal{G}_3$ (resp. $\mathcal{G}_3^{\mathrm{HE}}$): the directed graph of non-hyperelliptic (resp. hyperelliptic) octad pictures with $\mathcal{P}^\textup{NH}$ (resp. $\mathcal{P}^\textup{HE}$) as vertex set, and directed arrows for the degenerations of pictures;
\item $\Delta$, $\Delta_v$: polytopes as in \cite{Dokchitser21};
\item $\mathrm{SM}_3$: the graph of stable types of genus 3, see Section
\ref{sec:quart-stable-reduct}.
\end{itemize}\bigskip

\addtocontents{toc}{\SkipTocEntry}
\subsection*{Acknowledgements}

The first author was supported by the Simons Foundation grant 550033.
The second author undertook this research whilst supported by the Engineering and Physical Sciences Research Council [EP/L015234/1], the EPSRC Centre for Doctoral Training in Geometry and Number Theory (The London School of Geometry and Number Theory) at University College London.
The third author was supported by a Royal Society University Research Fellowship.
The fourth author benefits from the France 2030 framework program ``Centre
Henri Lebesgue'', ANR-11-LABX-0020-01.
The research of the fifth author is partially funded by the Melodia
ANR-20-CE40-0013 project and the 2023-08 Germaine de Sta\"el project.

For the purpose of open access, the authors have applied a Creative Commons Attribution (CC BY) licence to any Author Accepted Manuscript version arising.

\section{Background: Quartic Stable Curves}
\label{sec:quart-stable-reduct}

A {\em stable curve} is a fibrewise reduced and connected curve $C$ over some base scheme $S$, such that $C$ is of arithmetic genus at least 2, its geometric fibres have
only nodal singularities (ordinary double points), and the irreducible components of genus 0 of these fibres have at least 3 singular points, counted with multiplicity.

A theorem of Deligne–Mumford~\cite[Corollary 2.7]{DM69} states that given any
curve $X$ over $K$ of genus at least 2, there exists a finite algebraic extension $L$ of
$K$ and a stable curve ${C}$ over the integral closure $R_L$ of $R$
in $L$ with generic fibre ${C}_{\eta} \simeq X\times_K L$.
The \textit{stable reduction} of a curve is the special fibre of such a stable
model, and the \textit{(stable) reduction type} is the stable type of the stable reduction, including whether the reduction is hyperelliptic or not, \textit{i.e.}\ whether it admits a degree two map to a semi-stable curve of arithmetic genus 0.\medskip

We now consider the so-called {\em stable type} that a stable curve $C$ %
can have.
For this purpose, we study the Deligne-Mumford compactification $\overline{M_g}$ over $\mathrm{Spec}(\Z)$ of the moduli stack
$M_g$  of smooth curves of genus $g$, which is a geometric space of dimension $3\,g-3$, classifying stable curves of arithmetic genus $g$.
There is a natural stratification on $\overline{M_g}$ given by the stable
type, \textit{i.e.}\ the intersection multigraph of irreducible components together
with their geometric genera. Strata of codimension $k$ consist of stable curves
having exactly $k$ nodes. In particular, the only stratum of codimension 0 is the
open set $M_g \subset \overline{M_g}$ of smooth curves.\smallskip

It is possible to obtain all stable types recursively by degeneration,
using the following two topological transformations: given an irreducible
component $\mathcal{X}_0$ of genus $g_0$,
\begin{itemize}
\item if $g_0 \geqslant 1$, add a node to $\mathcal{X}_0$ and let its
  geometric genus drop by 1;
\item or given two non-negative integers $g_1$ and $g_2$ such that
  $g_1 + g_2 = g_0$, replace $\mathcal{X}_0$ by two irreducible components
  $\mathcal{X}_1$ and $\mathcal{X}_2$, of genus $g_1$ and $g_2$, which intersect
  at a node, and redistribute the nodes of $\mathcal{X}_0$ between
  $\mathcal{X}_1$ and $\mathcal{X}_2$ (see Fig.~\ref{fig:degeneration}).
\end{itemize}
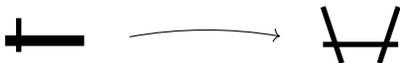
\begin{figure}[htbp]
  \centering
  \tikzsetnextfilename{degeneration}
  \begin{tikzpicture}[scale=1.4,g3lattice]
    \node at (0,0) (2e) {\DeU};
    \node at (3,-0) (1ee) {\UeUeU};
    \draw[->,shorten <=0.5cm,shorten >=0.5cm,line width=0.4pt]
    (2e.-10) to[bend left=10 ] (1ee.190);
  \end{tikzpicture}
  \caption{Degeneration of a genus 2 component into two genus 1 components}
  \label{fig:degeneration}
\end{figure}

So, for a fixed genus $g \geqslant 2$, there are only finitely many
possibilities. They have been enumerated for $g \leqslant 8$,
see~\cite{maggiolo11,oeisA174224}.
\smallskip

\begin{theorem}[{\cite[Prop.~4.1]{maggiolo11} or \cite[Th.~2.2.12]{Chan2012}}]\label{thm:M3nonH}
  There are 42 stable types in genus 3 that make up the graph $\textup{SM}_3$
  defined on Fig.~\ref{fig:stablemodelgraphquartic}. They are ordered by
  codimension from top to bottom according to the stratification of
  $\overline{M_3}$ they induce. There is an edge between two types if one is
  obtained from the other by one degeneration.
\end{theorem}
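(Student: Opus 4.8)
The plan is to reduce the statement to a finite combinatorial enumeration of \emph{stable graphs} and then to organise the result into a poset. Recall the standard dictionary from Deligne--Mumford theory: a stable type of genus $g$ is recorded by the dual graph of the corresponding geometric fibre, namely a finite connected graph $\Gamma$ (loops and multiple edges allowed) together with a genus function $g_\bullet\colon V(\Gamma)\to\Z_{\geqslant 0}$ satisfying $g=\sum_{v}g_v+b_1(\Gamma)$, subject to the stability condition $2g_v-2+n_v>0$ at every vertex, where $n_v$ is the valence of $v$ (loops counted twice). Under this dictionary the codimension of the corresponding stratum of $\overline{M_g}$ equals the number of edges of $\Gamma$, equivalently the number of nodes; for $g=3$ it therefore ranges over $0,1,\dots,3g-3=6$.

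First I would verify that the two moves of the statement generate every stable graph, so that $\textup{SM}_3$ really contains all stable types. Given a stable graph $\Gamma$ with at least one edge $e$: if $e$ is a loop at $v$, then contracting $e$ and replacing $g_v$ by $g_v+1$ produces a stable graph with one fewer edge; if $e$ joins distinct vertices $v,w$, then contracting $e$ and merging $v,w$ into one vertex of genus $g_v+g_w$ (inheriting all other incident edges) does the same. Reversing these two contractions is precisely move (i) (undoing a loop contraction) and move (ii) (undoing a non-loop contraction, where redistributing the incident edges between $\mathcal X_1$ and $\mathcal X_2$ allows former loops at the merged vertex to become edges between the two new vertices). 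Hence starting from the unique codimension-$0$ graph — one vertex of genus $3$, no edges — and closing up under (i)--(ii) yields every stable type, and two types are joined by an edge in $\textup{SM}_3$ exactly when one is a single edge-contraction of the other.

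It then remains to run the enumeration level by level: apply moves (i) and (ii) to each type of codimension $k$, discard any graph that violates the stability inequality, identify isomorphic graphs, and record the new codimension-$(k+1)$ types. The process stabilises at codimension $6$, where the genus formula forces $|V(\Gamma)|=4$ with every vertex of genus $0$, hence (by stability) trivalent, so the deepest stratum consists of the finitely many connected trivalent multigraphs on four vertices. Counting the types at each level gives a total of $42$, and recording the degeneration edges reproduces Fig.~\ref{fig:stablemodelgraphquartic}; this agrees with the independent enumerations in \cite[Prop.~4.1]{maggiolo11} and \cite[Th.~2.2.12]{Chan2012}, which one may alternatively just invoke. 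The main obstacle is the bookkeeping in codimensions $5$ and $6$, where multigraphs with loops and parallel edges proliferate: the delicate point is to decide isomorphism of these multigraphs correctly, so as neither to overcount nor to miss a type — this is exactly the step for which an independent reference (or a short computer check) is worth having.
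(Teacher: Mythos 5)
Your proposal is correct and is essentially the argument underlying the references the paper relies on; the paper itself offers no proof of Theorem~\ref{thm:M3nonH} beyond invoking \cite[Prop.~4.1]{maggiolo11} and \cite[Th.~2.2.12]{Chan2012}, which carry out exactly the dual-graph enumeration you describe. Your reduction of the two degeneration moves to the reverses of loop- and edge-contraction (both of which preserve the stability inequality, by the identity $2(g_v+g_w)-2+(n_v+n_w-2)=(2g_v-2+n_v)+(2g_w-2+n_w)$ and its loop analogue), together with your codimension-$6$ termination argument, is sound, and deferring the final count of $42$ to the cited enumerations or a machine check is legitimate, since the statement is ultimately a finite combinatorial fact whose only delicate step is the multigraph-isomorphism bookkeeping you already flag.
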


In addition to small descriptive drawings that
represent irreducible components and their intersections, it is
convenient to use a compact naming convention where
\begin{itemize}
\item \texttt{0}, \texttt{1}, \texttt{2} or \texttt{3} refer to components of
  that genus,
\item \texttt{n} means that the component has a self-intersection point (``n''ode),
\item \texttt{e} means a genus 1 component (``e''lliptic curve) intersecting the preceding component,
\item \texttt{m} means a genus 0 component with a self-intersection  (``m''ultiplicative
  reduction of elliptic curves) intersecting the preceding component,
\item \texttt{X=Y} means that \texttt{X} and \texttt{Y} intersect at two
  points, \texttt{Z} is shorthand for \texttt{0=0},
\item \texttt{X\text{-}\text{-}\text{-}Y}, or
  \texttt{X\text{-}\text{-}\text{-}\text{-}Y}, means that \texttt{X} and
  \texttt{Y} intersect at 3, or 4 points.
\end{itemize}
For instance, \texttt{(0nnn)} is a genus 0 curve with three self-intersections and
\texttt{(0eee)} looks like the letter $\sha$ where the bottom horizontal component is of genus 0
and the other ones are of genus 1.
The above conventions cover 40 of the 42 types unambiguously. The remaining
two, and borrowing the notation from \cite{BCKLS20}, are \texttt{(CAVE)} (3 lines, one of which crosses each of the other two
twice, the other two crossing each other once) and \texttt{(BRAID)} (4 lines
that intersect two by two).

Note that some of the stable types are automatically hyperelliptic. Indeed, a curve of type \texttt{(1=1)} is always hyperelliptic. Both genus 1 components have an involution that swap the two intersection points, and the quotient under this involution is a stable curve of arithmetic genus 0. This also applies to any type that is degenerated from \texttt{(1=1)} (and no other types).
\medskip

The corresponding graph for genus 3 hyperelliptic curves is in fact not a subgraph of the graph in
Fig.~\ref{fig:stablemodelgraphquartic}. To illustrate this, see for example that the degeneration \texttt{(1=1)} in the hyperelliptic case comes directly from the \texttt{(3)} without passing through \texttt{(2n)} as in the non-hyperelliptic case. This is explained in \cite[sect.~4b]{CornalbaHarris}. The degenerations that occur within
the hyperelliptic locus are best understood on the level of cluster pictures~\cite{m2d2,otherm2d2};
there is an edge $v_1 \to v_2$ precisely when the cluster picture for $v_2$ is
the result of adding one cluster to the cluster picture for $v_1$.

\begin{theorem}{\cite[Table~9.1]{m2d2}}\label{thm:M3H}
  There are 32 hyperelliptic stable types in genus 3 that make up the graph
  defined on Fig.~\ref{fig:stablemodelgraphhyperelliptic}. They are ordered
  by codimension from top to bottom according to the stratification of the
  moduli space $\overline{H_3} \subset \overline{M_3}$ of stable hyperelliptic
  curves they induce. There is an edge between two types if one is obtained
  from the other by one degeneration.
\end{theorem}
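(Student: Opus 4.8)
The plan is to derive the classification from the combinatorics of \emph{cluster pictures}, following \cite{m2d2}. Every genus~$3$ hyperelliptic curve over a non-archimedean local field $K$ (of residue characteristic $\neq 2$) can be written as $y^2=f(x)$ with $\deg f\in\{7,8\}$; homogenising, the eight geometric Weierstra\ss\ points are the roots of $f$, together with the point at infinity when $\deg f=7$. First I would invoke the main result of \cite{m2d2}: after a finite extension the curve acquires semistable reduction, and the dual graph of the special fibre of the minimal semistable model, with the genera of the components and the extra \"ubereven decoration on genus~$0$ components, is read off the cluster picture of these eight points by the dictionary of \cite[Tab.~9.1]{m2d2} — a cluster of size $2$ or $6$ contributes a node, a cluster of size $3$ or $5$ a separating node between a part of arithmetic genus $1$ and one of arithmetic genus $2$, a cluster of size $4$ two nodes between two parts of arithmetic genus $1$, and \"ubereven clusters carry the extra chain/twist data; contracting the unstable $\P^1$'s yields the stable reduction type. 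Conversely, every genus~$3$ hyperelliptic stable curve over an algebraically closed field lies in the closure of $H_3$ and so, by the stable reduction theorem applied to a suitable one-parameter family through it, is realised by some cluster picture. Thus the hyperelliptic stable types in genus~$3$ are exactly the image of the set of cluster pictures on eight points under this dictionary.

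The second step is the finite, mechanical enumeration of cluster pictures on eight marked points: a collection $\Sigma$ of subsets of an $8$-set, each of size $\geqslant 2$, containing the whole set, pairwise nested-or-disjoint, together with the \"ubereven decoration of the relevant principal clusters. I would organise the enumeration by the partition type of the maximal proper subclusters of the top cluster and then recurse; this produces a short list, and applying the dictionary above to each entry outputs a stable type. Collating the distinct outputs yields precisely the $32$ types displayed in Fig.~\ref{fig:stablemodelgraphhyperelliptic}, in agreement with \cite[Tab.~9.1]{m2d2}. (As the excerpt already notes, this list is genuinely not a subgraph of $\textup{SM}_3$ from Fig.~\ref{fig:stablemodelgraphquartic}: e.g. \texttt{(1=1)} is reached directly from \texttt{(3)}, because some boundary strata of $\overline{M_3}$ lie entirely inside $\overline{H_3}$ and hence have smaller codimension there.)

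For the stratification structure, I would put an arrow between two cluster pictures when the second is obtained from the first by adjoining one new cluster (or, for the \"ubereven refinements, by toggling one decoration). Each such move imposes one further $v$-adic incidence and so drops the dimension of the corresponding stratum of $\overline{H_3}$ by exactly $1$; consequently the codimension of a stratum equals the length of a shortest directed path from the trivial picture, which orders the $32$ types ``from top to bottom'' as claimed. It then remains to check that the map ``cluster picture $\mapsto$ stable type'' carries this refinement relation to a \emph{well-defined} relation on stable types — that is, whenever two cluster pictures give the same type, their one-step refinements realise the same set of types — so that the induced graph on stable types is exactly Fig.~\ref{fig:stablemodelgraphhyperelliptic}, with an edge between two types iff one degenerates to the other.

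The main obstacle is the bookkeeping around \"ubereven clusters and the associated relation/index data (the hyperelliptic analogue of the blue-box decorations in Tab.~\ref{tab:smhoctadsc02}--\ref{tab:smhoctadsc56}): a single naïve cluster shape may correspond to several stable types, or to a non-contracted chain in the dual graph, and only the finer part of the \cite{m2d2} dictionary disambiguates this, both for the count and for the edge relation. Equally delicate is the normalisation at infinity when $\deg f = 7$, which must be placed on the same footing as an honest root so that the automatically hyperelliptic types such as \texttt{(1=1)} and its further degenerations land with their correct codimension in $\overline{H_3}$ (one less than in $\overline{M_3}$). Once these points are handled, both the number $32$ and the graph structure follow from direct inspection of the finitely many cluster pictures.
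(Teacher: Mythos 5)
Your proposal is correct and follows essentially the same route as the paper: the theorem is stated there with a direct citation to \cite[Table~9.1]{m2d2}, and the remark immediately after it points to the cluster-picture correspondence of Tab.~\ref{tab:Cluster2Type} for verifying the edges, which is exactly the enumeration-plus-dictionary argument you sketch. The delicate points you rightly flag (the \"ubereven decorations and the treatment of the point at infinity when $\deg f=7$) are precisely the content the paper delegates to \cite{m2d2}, so your outline is an expansion of that citation rather than a different proof.
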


\begin{remark}
The edges in Fig.~\ref{fig:stablemodelgraphhyperelliptic} can be verified through the
correspondence with hyperelliptic reduction types given in Tab.~\ref{tab:Cluster2Type}.
\end{remark}

\def\fs{\kern 0.2em}
\def\bpt{\fs$\cdot$}

\begin{table}[htbp]
  \renewcommand{\arraystretch}{1.0}
  \setlength{\tabcolsep}{0.5mm}
  \centering

    \label{fig:stablemodelgraph}
  }
\end{figure}

\section{Cayley Octads and Valuation Data}
\label{sec:CayleyOctadsSmoothCurves}

In this section, we introduce the objects we need next to characterise the
stable reduction type of quartics: Cayley octads and their valuations. Cayley octads themselves have been studied extensively, and we refer to \cite{DolgachevOrtland} and \cite{psv11} for a more complete treatment.

\subsection{Cayley Octads}
\label{sec:cayley-octads}

Let $\mathbb{K}$ be a field of characteristic $\ne$ 2, supposed to be
algebraically closed for the sake of simplicity.  Curves of genus $3$ defined over
${\mathbb{K}}$ have $64$ theta characteristics, $28$ of these are odd, and
$36$ are even.

Geometrically, the $28$ odd theta characteristics are realised as the
bitangents to a plane quartic, the canonical model of a non-hyperelliptic
curves of genus $3$ \cite{MR2352717}. The $36$ even theta characteristics give $36$ different
ways in which a plane quartic can be written as
\begin{displaymath}
  \det(A x + B y + C z) = 0,
\end{displaymath}
where $A, B, C$ are symmetric $4 \times 4$ matrices, and two such representations are equivalent (that is, come from the same theta characteristic) if they are in the same orbit under simultaneous $\textup{GL}_4$-conjugation, see~\cite[Thm. 3.1]{psv11}. %

Given such a representation, there are three quadrics in $\P^3_{\mathbb{K}}$ given by the zero sets
\begin{displaymath}
  \boldsymbol{u}^{T} A \boldsymbol{u} = 0, \quad %
  \boldsymbol{u}^{T} B \boldsymbol{u} = 0, \quad %
  \boldsymbol{u}^{T} C \boldsymbol{u} = 0.
\end{displaymath}
These three quadrics form the basis of a net, whose base locus consists
(typically) of $8$ points. These $8$ points are a geometric realisation of the
even theta characteristics of a non-hyperelliptic genus $3$
curve. $\textup{GL}_4$-conjugation of a determinantal representation
corresponds to a $\textup{PGL}_3({\mathbb{K}})$  transformation of the resulting 8
points. Any $2$ of these $8$ points determine a bitangent of the original plane quartic. For, the theta-characteristic giving rise to the determinantal representation embeds the quartic in $\P^3_{\mathbb{K}}$, and the chord joining the two points of the Cayley octad intersects this embedded curve twice. The two corresponding points of the quartic (in $\P^2_{\mathbb{K}}$) are the points of intersection of a bitangent.
It turns out that these 8 points form a regular octad in the following sense.

\begin{definition}
  An ordered set $O = ( O_\oA, O_\oB, \ldots, O_\oH )$ of $8$ points in $\P^3_{\mathbb{K}}$ is called an \textit{octad}. An octad $O$ is said to be \textit{regular} if
  \begin{itemize}
  \item no two points of $O$ coincide,
  \item no four points of $O$ are coplanar, and
  \item the 8 points of $O$ do not all lie on a twisted cubic, \textit{i.e.}\ the image of a degree 3 map of $\P^1_{\mathbb{K}}$ into $\P^3_{\mathbb{K}}$.
  \end{itemize}
  An octad is called a \textit{Cayley octad} if its points are the common
  intersection of three quadrics in $\mathbb{P}^3$.
  The Cayley octad of a plane quartic curve with a given even theta characteristic is the 8 intersects points of the quadrics $\boldsymbol{u}^{T} A \boldsymbol{u}$, $\boldsymbol{u}^{T} B \boldsymbol{u}$, and $\boldsymbol{u}^{T} C \boldsymbol{u}$, as defined above, in some order. Note this only defines the Cayley octad up to $\PGL$, and permutation of the $8$ points.
\end{definition}

\begin{definition}
Two octads are said to be \textit{equivalent} if they coincide under the action of $\textup{PGL}_3 \times S_8$.
\end{definition}

Given 4 points $O_i,O_j,O_k,O_l$ in $\mathbb{A}^4_{\mathbb{K}}$, we define the Pl\"ucker coordinate: $p_{ijkl}=\operatorname{det}(O_i\mid O_j\mid O_k\mid O_l)$. By making a choice of affine representatives of points in an octad we extend this definition to quadruples of points in an octad.

\begin{theorem}\label{thm:Cayley-def}
  Let $\mathbb{K}$ be an algebraically closed field with $\mathop\mathrm{char}\mathbb{K} \ne 2$.
  An octad $O= ( O_\oA$, $O_\oB$, $\ldots\, O_\oH )$ defined over $\mathbb{K}$ is a
  Cayley octad if and only if any of the following equivalent conditions hold:
  \begin{itemize}
  \item[i)] ${O}$ is self-dual in the Gale sense, \textit{i.e.}\ ${O}$ and ${O}^*$ are $\textup{PGL}_3(\mathbb{K})$-equivalent where ${O}^*$ is any matrix whose row space equals the kernel of ${O}$ viewed as a $4\times 8$ matrix.
  \item[ii)] $O$ satisfies the 630 equations obtained by taking all permutation of the $8$ points in the equation
    \begin{equation}\label{eq:cayley}
      p_{\oA\oB\oC\oD}\,p_{\oA\oB\oE\oF}\,p_{\oC\oE\oG\oH}\,p_{\oD\oF\oG\oH} = p_{\oE\oF\oG\oH}\,p_{\oC\oD\oG\oH}\,p_{\oA\oB\oD\oF}\,p_{\oA\oB\oC\oE}\,.
    \end{equation}
  \end{itemize}
\end{theorem}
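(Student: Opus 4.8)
The plan is to prove the cycle of implications $O$ a Cayley octad $\Rightarrow$ (i) $\Rightarrow$ (ii) $\Rightarrow$ (i) $\Rightarrow$ $O$ a Cayley octad, restricting throughout to regular octads (the case of interest, Cayley octads being regular). Fix a rank-$4$ matrix $O\in\mathbb{K}^{4\times 8}$ representing the octad, and recall that the Gale dual $O^{*}$ is any $4\times 8$ matrix whose row space is the orthogonal complement, for the standard bilinear form on $\mathbb{K}^{8}$, of the row space of $O$; the eight points of $O^{*}$ in $\mathbb{P}^{3}$ depend only on that complement.

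I would prove ``$O$ Cayley $\Leftrightarrow$ (i)'' by pure linear algebra. Since $\operatorname{char}\mathbb{K}\neq 2$, a quadric in $\mathbb{P}^{3}$ is the same datum as a symmetric $4\times 4$ matrix $Q$, and such a $Q$ vanishes on all eight points if and only if $\langle Q,\,O_{i}O_{i}^{T}\rangle = 0$ for $i=\oA,\dots,\oH$; hence the space of quadrics through $O$ is the annihilator of $\operatorname{span}\{O_{i}O_{i}^{T}\}\subseteq\operatorname{Sym}^{2}\mathbb{K}^{4}$, and it has dimension $\geq 3$ precisely when there is a nonzero relation $\sum_{i}\mu_{i}\,O_{i}O_{i}^{T}=0$. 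For a regular octad this is exactly the statement that $O$ is a Cayley octad, the non-coplanarity and non-twisted-cubic conditions forcing the ensuing net of quadrics to have $0$-dimensional base locus equal to the eight points. Given such a relation, regularity forces all $\mu_{i}\neq 0$ (otherwise seven of the points would impose too few conditions on quadrics and be pushed into special position); extracting square roots in the algebraically closed $\mathbb{K}$ and setting $M=O\cdot\operatorname{diag}(\sqrt{\mu_{i}})$, the relation becomes $MM^{T}=0$, i.e.\ the $4$-dimensional row space of $M$ is totally isotropic for the standard form and so equals its own orthogonal complement $\ker M$. Unwinding the diagonal rescaling, the row space of $O^{*}$ coincides with that of $O\cdot\operatorname{diag}(\mu_{i})$; rescaling columns does not move points of $\mathbb{P}^{3}$, so $O^{*}$ and $O$ are the same ordered octad up to $\textup{PGL}_{3}(\mathbb{K})$, which is (i). Running the argument in reverse gives (i) $\Rightarrow$ $O$ Cayley.

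For ``(i) $\Rightarrow$ (ii)'' I would combine two standard facts: from (i), $O^{*}=g\cdot O\cdot\operatorname{diag}(\mu_{i})$ for some $g\in\textup{GL}_{4}$ and all $\mu_{i}\neq 0$, so $p_{S}(O^{*})=\det(g)\bigl(\prod_{i\in S}\mu_{i}\bigr)p_{S}(O)$; and the maximal minors of a $4\times 8$ matrix and of its Gale dual obey $p_{S}(O^{*})=\varepsilon_{S}\,p_{S^{\mathrm c}}(O)$ up to an overall constant, for signs $\varepsilon_{S}=\pm 1$ depending only on $S$ with $\varepsilon_{S}\varepsilon_{S^{\mathrm c}}$ independent of $S$. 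Together these give $p_{S^{\mathrm c}}(O)/p_{S}(O)=\varepsilon_{S}\,c\,\prod_{i\in S}\mu_{i}$ for a nonzero constant $c$ independent of $S$, and applying this for $S$ and for $S^{\mathrm c}$ yields $c^{2}\prod_{i}\mu_{i}=\pm 1$. Multiplying the identity over any four $4$-subsets $S_{1},\dots,S_{4}$ that cover each of $\oA,\dots,\oH$ exactly twice, the $\mu$-factors on each side collapse to $\prod_{i}\mu_{i}^{2}$, the constants combine as $\bigl(c^{2}\prod_{i}\mu_{i}\bigr)^{2}=1$, and we obtain $\prod_{j}p_{S_{j}}(O)=\bigl(\prod_{j}\varepsilon_{S_{j}}\bigr)\prod_{j}p_{S_{j}^{\mathrm c}}(O)$. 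Equation~\eqref{eq:cayley} is precisely this identity for $\{S_{1},\dots,S_{4}\}=\{\oA\oB\oC\oD,\oA\oB\oE\oF,\oC\oE\oG\oH,\oD\oF\oG\oH\}$ — each index lies in exactly two of these and their complements are $\{\oE\oF\oG\oH,\oC\oD\oG\oH,\oA\oB\oD\oF,\oA\oB\oC\oE\}$ — the relevant sign being $+1$; permuting the indices by $S_{8}$ then yields the $630$ stated equations.

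The main obstacle is the converse ``(ii) $\Rightarrow$ (i)'': that the $630$ equations are sufficient, not merely necessary. The plan there is to recover candidate scalars $\mu_{i}$ from ratios of Pl\"ucker coordinates of $O$ and to use the $630$ relations, together with the Grassmann--Pl\"ucker relations (automatic, since the $p_{ijkl}$ are genuine $4\times 4$ minors), to check that the row space of $O^{*}$ equals that of $O\cdot\operatorname{diag}(\mu_{i})$, i.e.\ (i). This bookkeeping --- along with the sign analysis showing the $S_{8}$-orbit of~\eqref{eq:cayley} consists of exactly $630$ equations and the treatment of boundary loci where many $p_{ijkl}$ vanish (where one first reduces to regular octads) --- is the technical core, and it is carried out in \cite{DolgachevOrtland} (see also \cite{psv11}), from which the theorem could alternatively be quoted verbatim. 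Finally, $\operatorname{char}\mathbb{K}\neq 2$ is essential to the whole approach, since in characteristic $2$ a symmetric matrix $Q$ only records $\sum_{k}Q_{kk}v_{k}^{2}$ and no longer encodes a quadric through the octad, and algebraic closedness is used for the square roots $\sqrt{\mu_{i}}$.
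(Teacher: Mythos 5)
The paper offers no proof of this theorem beyond a citation (\cite[Thm.~3.1]{Coble} or \cite{ep00} for (i), \cite[Prop.~7.2]{psv11} for (ii)), so there is no internal argument to measure you against; your sketch follows exactly the route of those references. The parts you argue are correct: the identification of ``lies on a net of quadrics'' with a nonzero relation $\sum_i\mu_i O_iO_i^T=0$, the observation that $M=O\cdot\operatorname{diag}(\sqrt{\mu_i})$ then has totally isotropic (hence self-orthogonal) row space so that $O^*$ may be taken to be $O\cdot\operatorname{diag}(\mu_i)$, and the derivation of \eqref{eq:cayley} by combining $p_S(O^*)=\det(g)\bigl(\prod_{i\in S}\mu_i\bigr)p_S(O)$ with the complementary-minor identity for Gale duals over a quadruple of $4$-sets covering each index twice. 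Two steps are thinner than they should be: the claim that regularity forces every $\mu_i\neq 0$ is left as a heuristic (``pushed into special position''), whereas one actually needs to show that a vanishing coefficient makes the remaining seven points impose dependent conditions on quadrics and hence triggers one of the excluded degeneracies; and the sign $\prod_j\varepsilon_{S_j}=+1$ for the specific quadruple in \eqref{eq:cayley} is asserted rather than computed. You correctly single out (ii)$\Rightarrow$(i) --- sufficiency of the $630$ equations --- as the genuinely hard direction and defer it to \cite{DolgachevOrtland} and \cite{psv11}; since the paper defers the entire theorem to the same sources, your proposal supplies strictly more argument than the paper does, and what it supplies is sound modulo the two points above.
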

\begin{proof}
see~\cite[Thm. 3.1]{Coble} or~\cite{ep00} for the first characterisation and~\cite[Prop.~7.2]{psv11} for the second one.
\end{proof}

\begin{remark}\label{rmk:7determine8} Any $7$ points, with no four of them coplanar, lie on a unique regular Cayley octad, \textit{i.e.}\ they determine the eighth point. see~\cite[Prop.~7.1]{psv11} for an explicit description.
\end{remark}

\subsubsection*{Action of the Symplectic Group}

Every plane quartic has 36 inequivalent Cayley octads (up to $\textup{PGL}_3\times S_8$), one for each even theta characteristic.

\begin{theorem}[Thm. IX.2.2 in~\cite{DolgachevOrtland}]
\label{thm:regularoctad}
    Let $\mathbb{K}$ be an algebraically closed field with $\mathop\mathrm{char}\mathbb{K} \ne 2$.
    There is a one-to-one correspondence between equivalence classes of regular Cayley octads, and smooth plane quartics with full level 2 structure, \textit{i.e.}\ plane quartic curves together with an isomorphism $\mathrm{Jac}(C)({{\mathbb{K}}})[2] \to E_3$, up to isomorphism.
\end{theorem}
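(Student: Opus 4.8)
The plan is to set up two mutually inverse constructions and then check that every symmetry group in sight is matched correctly. I would first work modulo $S_8$ (i.e. with \emph{unordered} octads). Starting from a regular Cayley octad $O=(O_\oA,\dots,O_\oH)$, the Gale self-duality characterisation of Thm.~\ref{thm:Cayley-def}, equivalently the relations~\eqref{eq:cayley}, says precisely that the eight points impose only seven independent conditions on quadrics in $\P^3$, so that they are the base locus of a \emph{net} $\mathcal N\cong\P^2$ of quadrics rather than merely a pencil. Writing the members of $\mathcal N$ as $\boldsymbol u^{T}(xA+yB+zC)\boldsymbol u$ with $A,B,C$ symmetric $4\times4$ matrices, the discriminant $\det(xA+yB+zC)$ cuts out a plane quartic $C$, and by the classical theory of symmetric determinantal representations (Dixon~\cite{Dixon1902}; \cite[Thm.~3.1]{psv11}) such a representation corresponds to an even theta characteristic $\theta$ on $C$, with the rational map sending a quadric of $\mathcal N$ to its singular point realising the embedding induced by $3\theta$. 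I would then verify that the three regularity conditions on $O$ (no coincident points, no four coplanar, no twisted cubic through the eight) are exactly equivalent to smoothness of $C$. Since $\mathrm{GL}_4$-conjugation of the net is the $\textup{PGL}_3(\mathbb{K})$-action on $O$ and leaves $(C,\theta)$ unchanged, and reordering the points also leaves $(C,\theta)$ unchanged, while conversely the determinantal representation attached to $(C,\theta)$ recovers the octad up to $\textup{PGL}_3\times S_8$, these two constructions are mutually inverse; this already produces a bijection between $\textup{PGL}_3\times S_8$-classes of regular Cayley octads and isomorphism classes of pairs (smooth plane quartic, even theta characteristic).

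Next I would promote the \emph{labelling} of the octad to a full level $2$ structure. The $28$ chords $\overline{O_iO_j}$ are the bitangents of $C$ (\cite[Sec.~6]{GrossHarris}), equivalently its $28$ odd theta characteristics $\theta_{ij}$; put $\varepsilon_{ij}:=\theta_{ij}-\theta\in\Jac(C)[2]$. I would define $\psi_O\colon E_3\to\Jac(C)[2]$ on the generating classes by $\oA\oB\mapsto\varepsilon_{\oA\oB}$, and so on, and extend by the addition law of $E_3$ (symmetric difference). The content is to show that $\psi_O$ is well defined — independent of how a four-element subset is split into two pairs, and invariant under $I\mapsto\Sigma_3\setminus I$ — for which the Cayley relations~\eqref{eq:cayley}, i.e. the syzygetic pattern among the bitangents, are exactly the needed input; then that $\psi_O$ is $\mathbb{F}_2$-linear and carries the pairing $\langle I,J\rangle=|I\cap J|\bmod 2$ to the Weil pairing (whence it is injective by nondegeneracy, hence an isomorphism since both sides have dimension $6$) and the parity form $I\mapsto\bigl(|I|\equiv 2\bmod 4\bigr)$ to $\varepsilon\mapsto h^0(\theta+\varepsilon)\bmod 2$. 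Its inverse is the level structure $\phi_O\colon\Jac(C)[2]\xrightarrow{\sim}E_3$, and a relabelling $\sigma\in S_8$ visibly replaces $\phi_O$ by $\sigma\circ\phi_O$ under the inclusion $S_8\subset\textup{Sp}(6,2)=\operatorname{Aut}(E_3)$.

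Finally I would assemble the correspondence: $O\mapsto(C,\phi_O)$ is $\textup{PGL}_3$-invariant because projective transformations preserve bitangents, theta characteristics and the Weil pairing, and by the previous steps it is a bijection from $\textup{PGL}_3$-classes of ordered regular Cayley octads onto pairs $(C,\phi)$; an isomorphism of quartics respecting the level structures lifts through the $3\theta$-embeddings to a projective isomorphism matching the two octads up to relabelling, and conversely, so passing to isomorphism classes on both sides (equivalently, quotienting the octad side additionally by $S_8$) yields the stated one-to-one correspondence. I expect the decisive obstacle to be the middle step: proving that $\psi_O$ is well defined and bijective is the genuinely non-formal part, since it forces the combinatorics of the Cayley octad relations to line up with the geometry of bitangents and the $\bmod\,2$ theta-characteristic calculus — everything else is bookkeeping with the group actions. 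This is precisely where one invokes \cite[Thm.~IX.2.2]{DolgachevOrtland}, whose argument I would either reproduce or directly cite.
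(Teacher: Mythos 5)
The paper offers no proof of this statement: it is quoted directly from Dolgachev--Ortland (the theorem header is the citation), so there is no internal argument to compare yours against. Your sketch reconstructs the standard route --- eight points imposing only seven conditions on quadrics, hence a net whose symmetric determinantal discriminant is a plane quartic with a distinguished even theta characteristic; regularity of the octad matching smoothness (and non-hyperellipticity) of the quartic; the $28$ chords giving the odd theta characteristics and hence a marking of $\Jac(C)[2]$ --- and you correctly isolate the genuinely non-formal step, namely well-definedness and symplecticity of $\psi_O$, as the point at which one either reproduces or cites \cite[Thm.~IX.2.2]{DolgachevOrtland}. As an outline this is sound and consistent with Rem.~\ref{rmk:construction-cayley} and the discussion surrounding Thm.~\ref{thm:regularoctad}.

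There is, however, one concrete error in your final assembly: the parenthetical claim that passing to isomorphism classes of pairs $(C,\phi)$ is ``equivalently, quotienting the octad side additionally by $S_8$''. It is not. A relabelling $\sigma\in S_8$ replaces $\phi_O$ by $\sigma\circ\phi_O$ (as you yourself note), and for $\sigma\neq\mathrm{id}$ this is a genuinely different level structure; for a quartic with trivial automorphism group the pairs $(C,\phi_O)$ and $(C,\sigma\circ\phi_O)$ are then non-isomorphic. The counting confirms this: a given quartic has $36$ unordered Cayley octads up to $\textup{PGL}_3(\mathbb{K})$ but $36\cdot 8!=|\textup{Sp}(6,2)|$ ordered ones, matching the $|\textup{Sp}(6,2)|$ level structures, whereas quotienting by $S_8$ would leave only $36$ classes. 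The bijection is therefore between $\textup{PGL}_3(\mathbb{K})$-orbits of \emph{ordered} regular Cayley octads and isomorphism classes of $(C,\phi)$ --- which is exactly what your penultimate sentence establishes before the parenthetical undoes it. (In fairness, the paper's own definition of ``equivalent'' octads as $\textup{PGL}_3\times S_8$-orbits sits in the same tension with the theorem as stated; the remarks immediately after it, on the $S_8$-stabiliser of an unordered octad versus the trivial stabiliser of an ordered one, make clear that the ordered reading is the intended one, and your proof should commit to it.)
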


There is a natural action of the group $\textup{Sp}(6,2)$ on plane quartics with full level 2 structure.  In particular, there is an action of $\textup{Sp}(6,2)$ on the set of Cayley octads of $C$.

The stabiliser of any given Cayley octad as an unordered set is a subgroup of $\textup{Sp}(6,2)$ isomorphic to $S_8$, which acts by permuting the points (note, though, that if one considers a Cayley octad as an \textit{ordered} set then its stabiliser is trivial).

\begin{remark}
\label{rmk:cremonatransformations}

Recall the definition of $E_3$ given in Def.~\ref{def:introE3}, and that the group $\textup{Sp}(6,2)$ acts on $E_3$ as the group of linear transformation preserving the symplectic pairing.
Certainly, $S_8$ acts by permutation on the set $\{ \oA, \ldots, \oH \}$, and so by extension there is then an action of $S_8$ on $E_3$. Note that this action is well-defined and preserves the symplectic pairing, and thus fixes $S_8 \subset \textup{Sp}(6,2)$. Given that the action of $S_8$ on Cayley octads is plain (simply permuting the points), we elucidate the action of the cosets $\textup{Sp}(6,2)/S_8$ (though this is not a group action, as $S_8$ is not a normal subgroup of $\textup{Sp}(6,2)$).

Each such coset corresponds to a \textit{Cremona transformation}. Classically, Cremona transformations (of a given dimension $n$ and field $\mathbb{K}$) are the birational transformations of $\P^n_{\mathbb{K}}$. When $n=3$ they act on (equivalence classes of) Cayley octads, and their action can be described as follows.

A Cremona transformation is either the identity transformation or a transformation associated to a partition of an octad $O = ( O_\oA, O_\oB, \ldots, O_\oH )$ into two sets, each containing four points. Without loss
of generality, applying a permutation in $S_8$
if necessary, such a partition is indexed by $\oA\oB\oC\oD \mid \oE\oF\oG\oH$, and, after a suitable $\PGL$-transformation, the corresponding Cremona transformation in the basis $\{\oA,\oB,\oC,\oD\}$ send the coordinates
of the points of $O$ to
\begin{displaymath}
  \arraycolsep=0.4\arraycolsep%
  \mbox{\scriptsize$\begin{bmatrix}
      1 & 0 & 0 & 0 & e_1 & f_1 & g_1 & h_1 \\
      0 & 1 & 0 & 0 & e_2 & f_2 & g_2 & h_2 \\
      0 & 0 & 1 & 0 & e_3 & f_3 & g_3 & h_3 \\
      0 & 0 & 0 & 1 & e_4 & f_4 & g_4 & h_4
    \end{bmatrix}$}
  \ \longrightarrow\ %
  \mbox{\scriptsize$\begin{bmatrix}
      1 & 0 & 0 & 0 & e_1^{-1} & f_1^{-1} & g_1^{-1} & h_1^{-1} \\
      0 & 1 & 0 & 0 & e_2^{-1} & f_2^{-1} & g_2^{-1} & h_2^{-1} \\
      0 & 0 & 1 & 0 & e_3^{-1} & f_3^{-1} & g_3^{-1} & h_3^{-1} \\
      0 & 0 & 0 & 1 & e_4^{-1} & f_4^{-1} & g_4^{-1} & h_4^{-1}
    \end{bmatrix}$}\,.
\end{displaymath}
This is well-defined by Thm.~\ref{thm:regularoctad}.

As written above (with ordering coming from the labelling of points), this Cremona transformation is in fact an element of $\textup{Sp}(6,2)$. To aid the discussion we take the liberty of fixing for each Cremona transformation a representative element $\sigma_H \in H$, where $H$ is the corresponding coset. We achieve this with the following scheme. If $H$ is the coset containing the identity element of $\textup{Sp}(6,2)$ we fix $\sigma_H = \textup{id}$. Now suppose $H$ is the coset corresponding to the Cremona transformation $ijkl \mid pqrs$, where $\{ i,j, \ldots, s\} = \{ \oA, \ldots, \oH \}$. Fix the following symplectic basis of $E_3$,
\begin{displaymath}
  \{\,ij,\ pq,\ kr,\ il,\ ps,\ ijlr\,\}\,.
\end{displaymath}
Then the element $\sigma_H \in \textup{Sp}(6,2)$ that we attach to this coset is, with respect to this basis, given by the matrix
\begin{displaymath}
  \arraycolsep=0.4\arraycolsep%
  \renewcommand{\arraystretch}{0.9}%
  \mbox{\scriptsize$\begin{bmatrix}
    1 & 0 & 0 & 0 & 0 & 0 \\
    0 & 1 & 0 & 0 & 0 & 0 \\
    0 & 0 & 0 & 0 & 0 & 1 \\
    0 & 0 & 0 & 1 & 0 & 0 \\
    0 & 0 & 0 & 0 & 1 & 0 \\
    0 & 0 & 1 & 0 & 0 & 0 \\
  \end{bmatrix}$}\,.
\end{displaymath}
Every element $\tau \in \textup{Sp}(6,2)$ can be realised as a product $\tau = \sigma_H \circ \rho$, where $\rho$ is an $S_8$-permutation and $\sigma_H$ is one of the $36$ elements introduced above. In particular, as the action of $S_8$ upon Cayley octads is transparent, understanding the action of the full symplectic group $\textup{Sp}(6,2)$ on the Cayley octads reduces to considering the action given by these $36$ elements.

\end{remark}

\subsection{Valuation Data}
\label{sec:valuation-data}

Our main motivation being the stable reduction of quartics,
we now let $K$ be a local field and  $R$ its ring of integers. All our methods should still work for a discrete valuation field $K$, as long as one specifies an extension of the valuation to $\overline{K}$ in advance, but for simplicity we restrict to local fields.
Given Thm.~\ref{thm:regularoctad}, one might wish to keep track of the
extent to which a set of $8$ points in $\P^3$ fails to be regular after reduction. To this
end, we introduce the following data.

\begin{definition}\label{def:valuation-data}
Let $O = (O_\oA, \ldots, O_\oH)$ be an ordered $8$-tuple of points in $\P^3$ over $\overline{K}$ with valuation $\nu_K \colon \overline{K} \to \Q$. Fix lifts $(O_i^{(0)}, O_i^{(1)}, O_i^{(2)}, O_i^{(3)}) \in \mathbb{A}^4$ of $O_i$, for $i \in \{ \oA, \ldots, \oH\}$, normalised so that $\min_j(\nu_K(O_i^{(j)})) = 0$. Let $P = \{S \subset \{oA, \ldots, \oH : |S| = 4\}$ and introduce a formal symbol $\dagger$. The \textit{valuation data} associated to $O$ is a function $v \colon P \cup \{\dagger\} \to \Q \colon S \mapsto v_S$ as follows.
\begin{itemize}
    \item For each $S = \{i,j,k,l\} \in P$, the value $v_S$ is the valuation of the Pl\"ucker coordinate $p_{ijkl}$.
    \item The value $v_\dagger$ is a non-negative integer indicating with
      what multiplicity the points in $O$ are lying on a twisted cubic,
      \textit{i.e.}\ the minimum of the gaps between
      \begin{itemize}
      \item the valuations of the 840 polynomials obtained by taking all $S_8$
        permutations of the polynomials described in~\cite[Eq.~(7.4)]{psv11}
        evaluated in the points of $O$, and
      \item the valuations of the same polynomials evaluated at a generic
        octad for which the valuation data agrees on $P$.
      \end{itemize}
\end{itemize}
\end{definition}

\begin{example}
Let $K = \Q_{10007}$, and let $O$ be the ordered 8-tuple (points in columns),
\begin{displaymath}\arraycolsep=0.4\arraycolsep%
  \mbox{\scriptsize$\begin{bmatrix}
  1 &0 &0 &0 &1 &3 \cdot 10007 &3 \cdot 10007 & -2\cdot 10007\\
  0 &1 &0 &0 &-2\cdot 10007 &1 &5\cdot 10007 &10007\\
  0 &0 &1 &0 &-2\cdot 10007 &-2\cdot 10007 &1 &4\cdot 10007\\
  0 &0 &0 &1 &-5\cdot 10007 &3\cdot 10007 &5\cdot 10007 &1
\end{bmatrix}$}
\end{displaymath}
There are four pairs of points colliding modulo $10007$: the pair $O_{\oA}$ and $O_{\oE}$, the pair $O_{\oB}$ and $O_{\oF}$, the pair $O_{\oC}$ and $O_{\oG}$, and the pair $O_{\oD}$ and $O_{\oH}$.
Computing all the 70 Pl\"ucker coordinates and taking their valuations, we get 16 coordinates with valuation 0, 48 coordinates with valuation 1, and 6 coordinates with valuation 2.
This corresponds exactly to the vector $v_{\pt}^{\oA\oE} + v_{\pt}^{\oB\oF} + v_{\pt}^{\oC\oG} + v_{\pt}^{\oD\oH}$.
Out of the 840 relations for that determine whether the 8 points lie on a twisted cubic, 120 of them attain valuation 3, and 720 of them attain valuation 4.
Note that this does not mean that there is a $v_{\tc}$ component in the valuation data of $O$, as any (generic) octad for which the Pl\"ucker part of the valuation data is $v_{\pt}^{\oA\oE} + v_{\pt}^{\oB\oF} + v_{\pt}^{\oC\oG} + v_{\pt}^{\oD\oH}$, has 120 twisted cubic relations of valuation 3 and 720 such relations of valuation 4.
Therefore, the twisted cubic index of this octad is 0, and the valuation data of $O$ is $v_{\pt}^{\oA\oE} + v_{\pt}^{\oB\oF} + v_{\pt}^{\oC\oG} + v_{\pt}^{\oD\oH}$.
\end{example}

Recall that the valuation data $v_{\pt}^T, v_{\ln}^T, v_{\pl}^T, v_{\tc}$,  the sum, and the maximum of two valuation data have all been given in Def.~\ref{def:standard-valuation-data}. Moreover, we define a notion of equivalence on valuation data as follows.

\begin{definition}
Two valuation data $v$ and $v'$ are said to be $\PGL$-equivalent if for any ordered 8-tuple $O$ of points with valuation data $v$ or $v'$, there exists a $\PGL$-equivalent octad $O'$ with valuation data $v'$ or $v$, respectively.
\end{definition}

\begin{remark}\label{rem:valuation-lattice}
  When 8 points form a Cayley octad, the
  70 Pl\"ucker valuations in their valuation data must lie in the subspace defined by the equations in the second part of
  Thm.~\ref{thm:Cayley-def}, considered as an additive equation in the valuations.
  Note that if we drop the non-negativity condition on valuation vectors, we get
  in this way a lattice $\textup{L}$ inside $\Z^P$, of rank 42. There are
  \begin{itemize}
  \item 35 valuation vectors given by having two complementary planes vanishing at
    order 1, and these define a sub-lattice $\textup{L}_\pl$ of rank
    35,
  \item 28 valuation vectors given by having two points colliding at order 1, and
    these define a sub-lattice $\textup{L}_{\pt}$ of rank 28.
  \end{itemize}
  It turns out that $\textup{L}$ is equal to the sum of these two
  sub-lattices, \textit{i.e.}\ %
  $\textup{L}=\textup{L}_\pl + \textup{L}_{\pt}$.
\end{remark}

\smallskip

\subsection{Action of \texorpdfstring{PGL$_3$}{PGL} on Octads}

We study the action of $\PGL$ on octads. We start with an example.

\begin{example}
Let $K = \Q_{13}$, and let $O$ be the ordered octad (points are in columns),
\begin{displaymath}
  \arraycolsep=0.4\arraycolsep%
  \mbox{\scriptsize$\begin{bmatrix}
    1 & 0 & 0 & 0 & 1 &  1 &   1 & 1 \\
    0 & 1 & 0 & 0 & 1 &  9 &  78 & 41831/441 \\
    0 & 0 & 1 & 0 & 1 & 12 & 156 & 79532/891 \\
    0 & 0 & 0 & 1 & 1 & 11 &  13 & -7729/549
  \end{bmatrix}$}\,.
\end{displaymath}
Then the first and seventh points coincide modulo $13$, and there are no other
coincidences modulo $13$. Therefore, the valuation data of this octad is
$v_{\pt}^{\oA\oG}$. %
Now we see what happens with the valuation when we let $\PGL$ act on this octad.
We can force the first and seventh points to not collide anymore by applying
the $\PGL$ transformation given by
the $4 \times 4$ diagonal matrix $M = \diag(13,1,1,1)$\,.
The change would cause the other points to all lie on a plane with valuation 1, so the valuation data of this equivalent octad is $v_\pl^{\oB\oC\oD\oE\oF\oH}$.
We could also do the opposite and instead apply $M^{-1}$\,.
The valuation data for this (non-normalised) equivalent octad is
$v_{\pt}^{\oA\oE\oF\oG\oH} + v_{\pt}^{\oA\oG}$,
as the five points
$O_\oA, O_\oE, O_\oF, O_\oG$, and $O_\oH$ now coincide modulo 13 and the points
$O_\oA$ and $O_\oG$ even coincide modulo $13^2$.
\end{example}

The following technical  propositions is used in the next section in the analysis of the $\PGL$ equivalence classes of the different octad building blocks. In the following, $K$ and $R$ are as before, and $\pi$ is an uniformizer of $R$.

\begin{proposition}\label{prop:tci-invariant}
    Let $O, O'$ be two $\PGL$-equivalent Cayley octads. %
    Then %
    their twisted cubic indices %
    are equal.
\end{proposition}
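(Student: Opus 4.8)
The plan is to exploit the fact that the twisted cubic index $v_\dagger$ has been \emph{defined} (Def.~\ref{def:valuation-data}) in a way that already quotients out the Pl\"ucker part of the valuation data, so I only need to understand how a $\PGL$-transformation affects the two ingredients appearing in that definition: the valuations of the $840$ twisted-cubic polynomials of~\cite[Eq.~(7.4)]{psv11}, and the valuations of these same polynomials at a generic octad with matching Pl\"ucker data. Since Prop.~\ref{prop:tci-invariant} is about $\PGL$-equivalence of Cayley octads, I first record the (already-used) fact that $\PGL$ does not change the underlying plane quartic, and in particular leaves the set of $36$ even theta characteristics and hence the notion of "lying on a twisted cubic" invariant; so an honest statement about multiplicities is at least morally plausible before any computation.

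First I would set up the key covariance property: the twisted-cubic polynomials of~\cite[Eq.~(7.4)]{psv11} are $\textup{GL}_4$-covariant, i.e.\ under $O \mapsto M \cdot O$ for $M \in \textup{GL}_4(\overline K)$ each such polynomial is multiplied by a fixed monomial in $\det M$ (or more precisely scales by a power of $\det M$ together with a rescaling coming from the normalisation of affine representatives of each point). This is the analogue of the elementary fact that the Pl\"ucker coordinates $p_{ijkl}$ transform by $p_{ijkl} \mapsto \det(M)\, p_{ijkl}$ up to the per-point scalars used to normalise $\min_j \nu_K(O_i^{(j)}) = 0$. The point is that, after renormalising the lifts, \emph{every} twisted-cubic polynomial acquires the \emph{same} extra valuation $c \in \Q$ depending only on $M$ and on $O$ (through the renormalisation constants), not on which of the $840$ polynomials we look at. Concretely: if $\widetilde O = M \cdot O$ with normalised lifts obtained by dividing column $i$ by $\pi^{n_i}$, then the valuation of the $k$-th twisted-cubic polynomial at $\widetilde O$ equals its valuation at $O$ plus a global shift $c = c(M, O)$ independent of $k$.

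Next I would apply this to both bullet points in the definition of $v_\dagger$ simultaneously. For the first bullet, replacing $O$ by $O' = M \cdot O$ shifts all $840$ valuations by the common constant $c(M,O)$. For the second bullet, a generic octad $O_{\mathrm{gen}}$ with the \emph{same Pl\"ucker valuation data as $O$} is transformed by the \emph{same} $M$ into a generic octad $O'_{\mathrm{gen}} = M \cdot O_{\mathrm{gen}}$; crucially, since Prop.~\ref{prop:tci-invariant} is really a statement about valuation data and the Pl\"ucker data of $O'$ agrees with that of $O_{\mathrm{gen}}'$ (both obtained by the same transformation from objects with equal Pl\"ucker data — here one uses that the Pl\"ucker vectors transform by the uniform rule above), $O'_{\mathrm{gen}}$ is still a valid "generic octad with matching Pl\"ucker data" for computing $v'_\dagger$, and its $840$ valuations are shifted from those of $O_{\mathrm{gen}}$ by the constant $c(M, O_{\mathrm{gen}})$. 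The subtlety is that $c(M,O)$ and $c(M, O_{\mathrm{gen}})$ might differ; but I claim they are equal because the renormalisation exponents $n_i$ depend only on the per-point valuation profile, which is governed by the Pl\"ucker valuation data (for a generic octad the minimal-valuation entry of $M O_i$ is forced by the valuations of the maximal minors through $O_i$). Thus taking the minimum of the gaps — bullet one minus bullet two — the two common shifts cancel and $v'_\dagger = v_\dagger$.

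The main obstacle I expect is precisely the bookkeeping in the previous paragraph: making rigorous that the per-point renormalisation constant $c(M,\cdot)$ is the same for $O$ and for the auxiliary generic octad $O_{\mathrm{gen}}$, i.e.\ that it is a function of the Pl\"ucker valuation data alone. If that turns out to be false in edge cases (e.g.\ when $O$ is not itself generic for its Pl\"ucker data, which is exactly the interesting case), I would fall back on a cleaner argument: reduce to the case where $M$ is an elementary matrix (a permutation, a unipotent, or a diagonal scaling $\diag(\pi^{a_1}, \dots, \pi^{a_4})$), since these generate $\PGL_3$ up to $R^\times$-scalars that do not affect valuations; handle permutations and unipotents trivially (they preserve all valuations after renormalisation, as they are defined over $R$ with unit determinant), and for the diagonal case compute $c(M, \cdot)$ by hand and check it depends only on the $a_i$ and the valuation profile. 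A further, more serious possibility is that the definition of $v_\dagger$ is only well-posed up to this kind of ambiguity and the honest statement requires a lemma (perhaps already implicit in Def.~\ref{def:valuation-data}) that the "generic octad with matching Pl\"ucker data" gives a well-defined answer; I would cite or prove that first, and then the $\PGL$-invariance follows formally from the $\textup{GL}_4$-covariance of the defining polynomials.
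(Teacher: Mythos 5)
Your overall strategy --- track how a $\PGL$-transformation shifts the valuations of the $840$ polynomials and argue that the shift cancels against the corresponding shift in the reference term --- is the right instinct, but the way you handle the reference term has a genuine gap. You compare $v_\dagger(O)$ with $v_\dagger(M\cdot O)$ by transporting the auxiliary generic octad, i.e.\ by using $M\cdot O_{\mathrm{gen}}$ as the reference for $M\cdot O$, and you justify this by claiming that the per-point renormalisation exponents (hence the global shift $c(M,\cdot)$) depend only on the Pl\"ucker valuation data. That claim is false: the renormalisation of $M\cdot O_i$ depends on the position of $O_i$ relative to the coordinate frame on which $M$ acts, which the Pl\"ucker valuations do not see. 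For instance, if $M=\diag(\pi,1,1,1)$ and $O$ has a point reducing to $(1:0:0:0)$ while $O_{\mathrm{gen}}$ (with identical Pl\"ucker valuation data) does not, then $c(M,O)\neq c(M,O_{\mathrm{gen}})$; worse, $M\cdot O_{\mathrm{gen}}$ then fails to have the same Pl\"ucker valuation data as $M\cdot O$, so it is not even an admissible reference octad in the sense of Def.~\ref{def:valuation-data}. Your fallback (reduce to elementary matrices) does not repair this, because the diagonal case is exactly where the discrepancy occurs. Separately, your claim that all $840$ polynomials acquire the \emph{same} shift $c$ is stronger than needed and not justified: the $840$ relations form an $S_8$-orbit, so the per-index multiplicities can vary from one relation to another, and the scaling factors then differ as soon as the per-point renormalisation constants do.

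The repair --- and this is what the paper's proof does --- is to never transform a generic octad at all. Each of the $840$ relations is a \emph{binomial} $P_1-P_2$ in the Pl\"ucker coordinates in which every index $\oA,\dots,\oH$ occurs the same number of times in $P_1$ as in $P_2$; consequently, under $\PGL$ both monomials evaluated at $O$ are multiplied by one and the same scalar $u_i$ (a power of $\pi$ times a unit), polynomial by polynomial. The reference valuation for the $i$-th relation is simply $\min(\nu_K(P_1),\nu_K(P_2))$, read off from the Pl\"ucker valuations alone, so it too shifts by exactly $\nu_K(u_i)$. Hence either some relation has $\nu_K(P_1)\neq\nu_K(P_2)$ --- a condition preserved by the common rescaling, forcing $v_\dagger=0$ on both sides --- or every relation can be written as $\pi^{s_i}T_i$ with $T_i$ a difference of units; the $T_i$ then change only by units, and $v_\dagger=\min_i\nu_K(T_i)$ is unchanged. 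Each gap is preserved individually, which is all the minimum requires; no uniform shift and no transported generic octad are needed.
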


\begin{proof}
  The twisted cubic index of $O$ is determined by the the 840 polynomials described
  in~\cite[Eq.~(7.4)]{psv11} evaluated at the
  Plücker coordinates of an octad as in Def. \ref{def:valuation-data}. Each of these evaluated polynomials consists of two terms,
  and the valuation of both terms can be immediately deduced from %
  the valuation data of the octad.
  If, for at least one of these 840 polynomials, both terms do not have equal valuation, then we can
  conclude that the twisted cubic index is 0. Otherwise, these evaluated polynomials can be written as
  $\{\,\pi^{s_i}\,T_i\,\}_i$ where $T_i$ is the difference of two products
  of units, and $s_i$ is the common valuation to  both terms of the polynomial.
  In this case, the twisted cubic index is $\min_i( \nu_K(T_i))$.

  Now, notice that the Plücker coordinates of
 an octad $O'$ in the same $\PGL$-orbit as $O$ are equal to those of $O$ up to
  some constant that comes from the determinant of the matrix that acts,
  and some
  other constants
  which depends
  on the projective normalisation on each
  point in $\PP^3$.
  Because each index $\oA$ through $\oH$ occurs the same number of times in both terms for each of the 840 polynomials, the octad $O'$ falls into the same case as $O$ does in the previous paragraph: that is, in at least one polynomial both terms do not have equal valuation, or in all polynomials both terms do have the same valuation. In the last case,  the powers $\pi^{s_i}$ might change, but the $T_i$ only change by a unit. %
  Hence, the twisted cubic index is preserved under the action of $\PGL$.
\end{proof}

\begin{proposition}\label{prop:coord-transform}
  Suppose $O = (O_\oA, \ldots, O_\oH)$ is an octad with valuation data
  $v$. Suppose we have a subset $S \subset \{\oA, \ldots, \oH\}$ such
  that the points $\{O_i\,|\, i\in S\}$ collide with the point $(1:0:0:0)$ modulo
  $\pi$, and suppose none of the points in
  $\{O_i\,|\, i\in T\}$ where $T = \{\oA, \ldots, \oH\} \backslash S$
  collide with $(1:0:0:0)$ modulo $\pi$. Consider the octad $O'$ obtained by
  applying to $O$ the $\PGL$ transformation
\begin{equation}\label{eq:Mtrans}
\arraycolsep=0.4\arraycolsep%
M = \mbox{\scriptsize$\begin{bmatrix}
  \pi &0 &0 &0\\ 0 &1 &0 &0\\ 0 &0 &1 &0\\ 0 &0 &0 &1
\end{bmatrix}$}\,.
\end{equation}

Then $O'$ has valuation data $v - v_\pt^S + v_\pl^T$
\end{proposition}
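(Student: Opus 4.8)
The plan is to track how each Plücker coordinate $p_{ijkl}$ transforms under the scaling matrix $M = \operatorname{diag}(\pi,1,1,1)$, being careful about the re-normalisation of affine representatives that the definition of valuation data requires. First I would fix the normalised lifts $O_i^{(0)}, \ldots, O_i^{(3)}$ of $O$ as in Def.~\ref{def:valuation-data}, so that $\min_j \nu_K(O_i^{(j)}) = 0$ for each $i$. Applying $M$ multiplies the zeroth coordinate of each lift by $\pi$, so the naive image $M O_i$ has coordinates $(\pi O_i^{(0)}, O_i^{(1)}, O_i^{(2)}, O_i^{(3)})$. The key local observation is: if $i \in T$, then the point $O_i$ does not collide with $(1:0:0:0)$ modulo $\pi$, which means that at least one of $O_i^{(1)}, O_i^{(2)}, O_i^{(3)}$ is a unit, so the minimum valuation of the coordinates of $M O_i$ is still $0$ and no renormalisation is needed. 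If instead $i \in S$, then $O_i$ does collide with $(1:0:0:0)$ modulo $\pi$, so $\nu_K(O_i^{(1)}), \nu_K(O_i^{(2)}), \nu_K(O_i^{(3)}) \geq 1$ while $\nu_K(O_i^{(0)}) = 0$; hence the coordinates of $M O_i$ all have valuation $\geq 1$, and to get a normalised representative $O_i'$ of $MO_i$ one must divide through by $\pi$, i.e. $O_i' = (O_i^{(0)}, \pi^{-1}O_i^{(1)}, \pi^{-1}O_i^{(2)}, \pi^{-1}O_i^{(3)})$.

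Next I would compute the Plücker coordinate $p'_{ijkl} = \det(O_i' \mid O_j' \mid O_k' \mid O_l')$ for a quadruple $S' = \{i,j,k,l\} \in P$ in terms of $p_{ijkl} = \det(O_i \mid O_j \mid O_k \mid O_l)$. Write $a = |S' \cap S|$ for the number of indices of the quadruple lying in $S$. Each index in $S' \cap S$ contributes a factor $\pi^{-1}$ (from the renormalisation of that column), and then the column operation "multiply the zeroth row by $\pi$" — which is exactly what accounts for the $M$-action on the non-renormalised columns, and is already built into the renormalised columns of the $S$-indices — contributes a single global factor of $\pi$ to the determinant (the zeroth row gets scaled by $\pi$ exactly once in the $4\times4$ determinant, regardless of how many columns). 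More carefully: $\det(O_i' \mid \cdots) = \pi^{-a} \det(MO_i \mid MO_j \mid MO_k \mid MO_l) = \pi^{-a}\cdot \pi \cdot \det(O_i \mid O_j \mid O_k \mid O_l) = \pi^{1-a} p_{ijkl}$ — here the middle equality uses that $\det(MO_i\mid MO_j \mid MO_k\mid MO_l)$ differs from $\det(O_i\mid \cdots)$ by the determinant of $M$ acting on the left, which is $\det M = \pi$. Therefore $\nu_K(p'_{ijkl}) = \nu_K(p_{ijkl}) + 1 - a = v_{S'} + 1 - |S' \cap S|$.

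Now I reconcile this with the claimed formula $v - v_\pt^S + v_\pl^T$. Recall $v_{\pt,S'}^S = \max(0, |S'\cap S| - 1)$ and $v_{\pl,S'}^T = \max(0, |S'\cap T| - 3) = \max(0, (4 - |S'\cap S|) - 3) = \max(0, 1 - |S'\cap S|)$. Since $1 - |S'\cap S| = (1 - \max(0,|S'\cap S|-1)) + \max(0, 1 - |S' \cap S|) - \big(\text{correction}\big)$... the cleanest route is a three-case check on $a = |S'\cap S| \in \{0,1,2,3,4\}$: for $a = 0$, $1 - a = 1$ and $-v_{\pt}^S + v_{\pl}^T$ contributes $0 + 1 = 1$; for $a=1$, $1-a = 0$ and $-0 + 0 = 0$; for $a \geq 2$, $1 - a = -(a-1)$ and $-v_\pt^S + v_\pl^T$ contributes $-(a-1) + 0 = -(a-1)$. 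In every case the two sides agree, so $w(O') = w(O) - v_\pt^S + v_\pl^T$ on $P$. Finally, for the twisted cubic index: since $O$ and $O'$ are $\PGL$-equivalent, Prop.~\ref{prop:tci-invariant} gives $v'_\dagger = v_\dagger$; and the $\dagger$-components of $v_\pt^S$ and $v_\pl^T$ are both $0$ by Def.~\ref{def:standard-valuation-data}, so the claimed identity holds on all of $P \cup \{\dagger\}$.

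The main obstacle — really the only subtle point — is the bookkeeping of the affine renormalisation: one must not conflate "apply $M$ to the given lifts" with "apply $M$ and then renormalise each column to have minimal valuation $0$", and the hypotheses on $S$ and $T$ (which points collide with $(1:0:0:0)$) are precisely what make this renormalisation uniform ($\pi^{-1}$ on exactly the $S$-columns, nothing on the $T$-columns). Getting the sign and the off-by-one in $\pi^{1-a}$ right is where care is needed; everything else is the routine case analysis sketched above.
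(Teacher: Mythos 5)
Your proof is correct and follows essentially the same route as the paper's: count the factor of $\pi$ gained from $\det M$ and the factor lost for each renormalised column (exactly the columns indexed by $S$, each by exactly $\pi^{-1}$), then match $1-|S'\cap S|$ against $-v_{\pt,S'}^S+v_{\pl,S'}^T$ and invoke Prop.~\ref{prop:tci-invariant} for the twisted cubic index. The only difference is that you spell out the case check on $a=|S'\cap S|$ that the paper leaves as ``carefully counting.''
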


\begin{proof}
  For each four element subset $S \subset \{\oA, \ldots, \oH\}$, we have to
  count how many factors $\pi$ the determinant of the matrix
  $[ O_i ]_{i\in S}$ gains. We gain one factor of
  $\pi$ because all the $x$-coordinates are multiplied by $\pi$, and we lose
  one factor of $\pi$ for each point that has to be renormalised because of
  the $\PGL$ transformation.

For the points that reduce to $(1:0:0:0)$ modulo $\pi$, the affine coordinates are all divisible by $\pi$ after the transformation, and we have to divide them all by $\pi$ to renormalise. For points that do not reduce to $(1:0:0:0)$ the second, third, or fourth affine coordinate must be a unit, so at worst we have to renormalise by a unit, but this does not affect the number of factors $\pi$ in the determinant.

Carefully counting the numbers of factors $\pi$ gained and lost in each of the determinants and noting that the twisted cubic index does not change as of Prop.~\ref{prop:tci-invariant}, we indeed get that $O'$ has the expected valuation data.
\end{proof}

 The following lemma is probably known by the experts. Since we did not find an appropriate reference we include a brief proof.

\begin{lemma}\label{lem:PGL-generated}
Any $\PGL$ transformation is the composite  of elements of
  $\textup{PGL}_3(\mathcal{O}_{\overline{K}})$  and of diagonal matrices
  \begin{equation}\label{eq:Mq}
\arraycolsep=0.4\arraycolsep%
M_q = \mbox{\scriptsize$\begin{bmatrix}
  \pi^q &0 &0 &0\\ 0 &1 &0 &0\\ 0 &0 &1 &0\\ 0 &0 &0 &1
\end{bmatrix}$}\,.
\end{equation}
  for $q \in \Q_{>0}$,
  where $\pi^q \in \overline{K}$ is some element of valuation $q$.
\end{lemma}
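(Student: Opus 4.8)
The plan is to use a Smith-normal-form-type argument for matrices over the valuation ring $\mathcal{O}_{\overline K}$, which is a (non-discrete but still) valuation ring, so that every finitely generated ideal is principal and one has a well-behaved notion of ``elementary divisors''. First I would lift a given class in $\PGL$ to a matrix $A \in \mathrm{GL}_4(\overline K)$. By scaling $A$ by a suitable power of $\pi$ (a scalar, hence trivial in $\PGL$) I may assume $A$ has entries in $\mathcal{O}_{\overline K}$ and that at least one entry is a unit. The goal is then to write $A = U M_q V$ (projectively) with $U, V \in \mathrm{GL}_4(\mathcal{O}_{\overline K})$ and $M_q$ as in~\eqref{eq:Mq}; note that $\mathrm{GL}_4(\mathcal{O}_{\overline K})$ maps onto $\textup{PGL}_3(\mathcal{O}_{\overline K})$, so this suffices.

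The key steps, in order: (1) Observe that $\overline K$ carries a rank-one valuation with value group $\Q$, and $\mathcal{O}_{\overline K}$ is a valuation ring; pick any entry $a_{ij}$ of $A$ of minimal valuation $q_1 \geq 0$ and use row and column permutations (in $\mathrm{GL}_4(\mathcal{O}_{\overline K})$) to move it to the $(1,1)$ position. (2) Since $a_{11}$ divides every other entry in $\mathcal{O}_{\overline K}$ (minimality of valuation), clear the first row and column by elementary operations over $\mathcal{O}_{\overline K}$, which are products of matrices in $\mathrm{GL}_4(\mathcal{O}_{\overline K})$; this reduces $A$ to $\mathrm{diag}(a_{11}, A')$ with $A'$ a $3\times 3$ matrix over $\mathcal{O}_{\overline K}$. (3) Repeat on $A'$; iterating, one obtains $A = U\, \mathrm{diag}(\pi^{q_1}, \pi^{q_2}, \pi^{q_3}, \pi^{q_4})\, V$ with $U, V \in \mathrm{GL}_4(\mathcal{O}_{\overline K})$ and $0 \leq q_1 \leq q_2 \leq q_3 \leq q_4$ (here $\pi^{q}$ denotes a chosen element of valuation $q$; absorbing units into $U$ or $V$). (4) Projectivise: in $\PGL$ one may divide the diagonal matrix by $\pi^{q_1}$, so it becomes $\mathrm{diag}(1, \pi^{q_2-q_1}, \pi^{q_3-q_1}, \pi^{q_4-q_1})$. (5) Finally, factor this diagonal matrix as a product of three matrices each of which, after conjugating by the permutation of coordinates that moves the relevant entry into the first slot (a $\mathrm{GL}_4(\mathcal{O}_{\overline K})$ element), has the shape $M_q$; concretely $\mathrm{diag}(1, \pi^{b}, \pi^{c}, \pi^{d})$ equals a product over the three non-trivial coordinates of conjugates of $M_b$, $M_c$, $M_d$. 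Collecting all the $\mathrm{GL}_4(\mathcal{O}_{\overline K})$-factors and interleaving them with the $M_q$'s gives the desired composite.

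The main obstacle is that $\mathcal{O}_{\overline K}$ is not a PID or even Noetherian, so the classical Smith normal form theorem does not apply verbatim; one must check that the inductive clearing procedure terminates and stays within $\mathrm{GL}_4(\mathcal{O}_{\overline K})$. This is fine because at each stage we only perform \emph{finitely many} elementary operations and the relevant divisibility (``the minimal-valuation entry divides all others'') holds in any valuation ring; termination is immediate since the matrix size strictly decreases. A minor point to be careful about is the distinction between $\mathrm{GL}_4$ and $\PGL$: one should note that every element of $\textup{PGL}_3(\mathcal{O}_{\overline K})$ (in the paper's indexing, the reduction-preserving subgroup) lifts to $\mathrm{GL}_4(\mathcal{O}_{\overline K})$, and that scalar matrices $\pi^{c}\cdot\mathrm{Id}$ are trivial in $\PGL$, so the normalisations performed above are legitimate. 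Once these bookkeeping issues are dispatched, the factorisation is routine.
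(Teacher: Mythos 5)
Your proof is correct, but it takes a different route from the paper. The paper's (very terse) argument generates $\PGL$ by the three types of elementary matrices of Gaussian elimination and expresses each generator as a product of elements of $\textup{PGL}_3(\mathcal{O}_{\overline{K}})$ and the $M_q$ (permutations and unit scalings lie in $\textup{PGL}_3(\mathcal{O}_{\overline{K}})$ directly, while a transvection $E_{ij}(a)$ with $\nu_K(a)<0$ must first be conjugated by a suitable $M_q$ to push its off-diagonal entry into $\mathcal{O}_{\overline{K}}$); the details are omitted there. You instead prove the stronger structural statement that every class admits a Cartan-type decomposition $U\,\mathrm{diag}(\pi^{q_1},\dots,\pi^{q_4})\,V$ with $U,V\in\mathrm{GL}_4(\mathcal{O}_{\overline{K}})$, via Smith normal form over the valuation ring, and then split the diagonal part into permutation-conjugates of the $M_{q_i-q_1}$. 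The two points you flag as potential obstacles are exactly the right ones and are handled correctly: divisibility in $\mathcal{O}_{\overline{K}}$ is total (so the minimal-valuation entry divides everything and the clearing step is legitimate even though the ring is not Noetherian), the recursion terminates because the matrix size drops, and the value group of $\overline{K}$ is $\Q$ so the $q_i$ are rational and elements $\pi^{q}$ exist. Your approach costs a little more setup but yields a cleaner normal form (a genuine $KAK$-type decomposition rather than just a generating set), whereas the paper's route is shorter to state but leaves the case analysis over the three elementary types, including the negative-valuation transvections, to the reader.
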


\begin{proof}
  The group $\PGL$ is generated by the
  three types of elementary matrices describing row operations, those
  typically used in Gaussian elimination to perform row switching,
  multiplication and addition. Each of these can be written as a product of
  elements of $\textup{PGL}_3(\mathcal{O}_{\overline{K}})$ and the $M_{q}$. The details for this are omitted.

\end{proof}

\begin{proposition}\label{prop:equivalence-of-valuation-data}
Let $v$ be a valuation data. Then there exists a valuation data $v^\#$ with the following property. Suppose that $O$ is a regular octad with valuation data $v$, and $O'$ is the octad obtained from $O$ by applying the unique $\PGL$ transformation, putting the points $O'_\oA, \ldots, O'_\oE$ in standard position. Then $O'$ has valuation data $v^\#$.
\end{proposition}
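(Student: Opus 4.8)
The plan is to produce $v^\#$ explicitly as a function of $v$ by decomposing the normalising $\PGL$ transformation into a product of elementary pieces whose effect on valuation data is already controlled, and then to show that each piece's effect depends only on the valuation data and not on the particular octad. By Rem.~\ref{rmk:7determine8} and Thm.~\ref{thm:regularoctad}, a regular octad with $O_\oA, \ldots, O_\oE$ in general position admits a unique $\PGL$ transformation $N$ carrying these five points to the standard frame $(1{:}0{:}0{:}0), \ldots, (0{:}0{:}0{:}1), (1{:}1{:}1{:}1)$; this $N$ is determined by $O_\oA, \ldots, O_\oE$ alone. The first step is to invoke Lem.~\ref{lem:PGL-generated} to write $N$ as a composite of integral transformations in $\textup{PGL}_3(\mathcal{O}_{\overline K})$ and diagonal matrices $M_q$. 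The point is then that integral transformations with integral inverse (i.e. in $\textup{PGL}_3(\mathcal{O}_{\overline K})$ with integral inverse, equivalently reductions that are invertible over the residue field) do not change any Plücker valuation nor, by Prop.~\ref{prop:tci-invariant}, the twisted cubic index; while the diagonal pieces $M_q$, after a suitable coordinate permutation, act on valuation data exactly as in Prop.~\ref{prop:coord-transform} (generalised from $q=1$ to arbitrary $q \in \Q_{>0}$ by scaling, since the argument there only counts factors of $\pi$ and goes through verbatim with $\pi^q$ in place of $\pi$).

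The second step is to make this canonical: the decomposition of $N$ from Lem.~\ref{lem:PGL-generated} is not unique, so I would instead run a fixed normalisation algorithm --- essentially Gaussian elimination on the $4\times 4$ block $[O_\oA\,|\,O_\oB\,|\,O_\oC\,|\,O_\oD]$ followed by the diagonal rescaling that sends $O_\oE$ to $(1{:}1{:}1{:}1)$ --- and track, at each stage, which pivot valuations are forced. The key observation is that at each step the valuation of the pivot (and hence the exponent $q$ of the $M_q$ needed, and which index set $S$ of points has collided with the relevant coordinate vertex) is a function of the current valuation data only: the valuation of any minor of $[O_\oA\,|\,O_\oB\,|\,O_\oC\,|\,O_\oD]$ is read off from the Plücker coordinates $v_S$ for $S \subset \{\oA,\oB,\oC,\oD\}$ together with the subsets $S$ of Def.~\ref{def:standard-valuation-data}, which are themselves recoverable from $v$. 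Thus the sequence of elementary operations, and in particular the sequence of $(q, S)$ data feeding into the Prop.~\ref{prop:coord-transform}-type updates, is entirely determined by $v$. Composing these updates $v \mapsto v - q\cdot v_\pt^S + q\cdot v_\pl^{T}$ (and the trivial updates for the integral pieces) yields a well-defined $v^\#$ depending only on $v$, and by construction any regular octad with valuation data $v$ is carried to an octad with valuation data $v^\#$.

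I expect the main obstacle to be the bookkeeping of the intermediate steps: during Gaussian elimination one passes through octads that are neither regular nor normalised, and one must be careful that Prop.~\ref{prop:coord-transform} (or its rescaled analogue) still applies --- in particular that the hypothesis ``exactly the points indexed by $S$ collide with $(1{:}0{:}0{:}0)$ modulo $\pi$, and none of those in $T$ do'' is met at each application, possibly after an auxiliary integral transformation to separate points that should not be collided. This may require replacing the single transformation $M$ of Prop.~\ref{prop:coord-transform} by a short sequence $M_{q_1}, M_{q_2}, \ldots$ interleaved with integral coordinate changes, chosen greedily so that at each stage exactly one ``layer'' of colliding points is resolved; one then needs that this greedy choice is canonical given $v$. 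A clean way to organise this is to argue that the normalising transformation $N$ can always be taken of the form $N = U \cdot M_{q_4}^{(4)} M_{q_3}^{(3)} M_{q_2}^{(2)} M_{q_1}^{(1)} \cdot U'$ with $U, U'$ integral and the $M_{q_i}^{(i)} = \diag(1,\ldots,\pi^{q_i},\ldots,1)$ scaling distinct coordinates by valuations $q_i$ that are monotone functions of the elementary divisors of $[O_\oA\,|\,O_\oB\,|\,O_\oC\,|\,O_\oD]$ --- a Smith-normal-form style statement --- and those elementary divisors are visibly determined by the Plücker valuations. Granting this structural lemma, the update rule for $v^\#$ is the composite of four instances of Prop.~\ref{prop:coord-transform} (with exponents $q_i$ and colliding sets $S_i$ read from $v$) together with one further diagonal rescaling coming from normalising $O_\oE$, which changes no valuation since $O_\oE$ has all coordinates equal and in general position.
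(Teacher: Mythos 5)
Your first step coincides with the paper's: decompose the normalising transformation via Lem.~\ref{lem:PGL-generated}, note that $\textup{PGL}_3(\mathcal{O}_{\overline K})$ leaves Pl\"ucker valuations untouched, and use Prop.~\ref{prop:coord-transform} (rescaled to $\pi^q$) together with Prop.~\ref{prop:tci-invariant} for the diagonal pieces. But from there you take a route with a genuine gap. Your plan requires that the \emph{sequence} of elementary operations --- the pivot valuations, the exponents $q_i$, the colliding sets $S_i$, equivalently the elementary divisors of $[O_\oA\,|\,O_\oB\,|\,O_\oC\,|\,O_\oD]$ --- be a function of the valuation data alone. That is not available: the valuation data records only the seventy $4\times 4$ Pl\"ucker valuations and the twisted cubic index, whereas the Smith-normal-form data of the first four columns is governed by gcds of $2\times 2$ and $3\times 3$ minors, which are not Pl\"ucker coordinates of the octad and are not determined by $v$. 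Your parenthetical claim that the minors of this block are ``read off from the Pl\"ucker coordinates $v_S$ for $S\subset\{\oA,\oB,\oC,\oD\}$'' fails for the sub-maximal minors (there is only one such $S$ of size $4$). A secondary error: the final diagonal rescaling sending $A^{-1}O_\oE$ to $(1{:}1{:}1{:}1)$ does in general change Pl\"ucker valuations, since the coordinates of $A^{-1}O_\oE$ need not all be units; it is harmless only after its effect is fed back through Prop.~\ref{prop:coord-transform}.

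The paper's proof sidesteps all of this bookkeeping with a linear-algebra observation you are missing. One does not need to know \emph{which} operations occur, only that each one shifts the valuation data by an element of the span $W_8$ of the vectors $v_{\pt}^S - v_{\pl}^T$ (with $S \sqcup T = \{\oA,\ldots,\oH\}$), an $8$-dimensional $\Q$-vector space with basis the $v_{\pl}^T$ for $|T|=7$. Hence $v' - v \in W_8$ no matter how the transformation is factored. The normalised octad $O'$ then imposes constraints on $v'$ that depend only on being normalised: the five Pl\"ucker valuations supported on $\{\oA,\ldots,\oE\}$ vanish, and for each of $O'_\oF, O'_\oG, O'_\oH$ the minimum of the four Pl\"ucker valuations detecting its coordinates is zero. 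The paper checks that these eight conditions cut out exactly one coset representative, i.e.\ there is a unique $w \in W_8$ with $v + w$ admissible, and sets $v^\# = v + w$. Since $w$ is computed from $v$ alone, independence from the choice of octad is automatic. If you want to salvage your write-up, replace the Smith-normal-form ``structural lemma'' by this uniqueness-in-$W_8$ argument; the explicit tracking of pivots is neither needed nor, with the given invariants, possible.
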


\begin{proof}
By Lem.~\ref{lem:PGL-generated}, $\PGL$ is generated by elements of $\textup{PGL}_3(\mathcal{O}_{\overline{K}})$ and the diagonal matrices $M_{q}$ as defined in Eq.~\eqref{eq:Mq}.
The transformation in $\textup{PGL}_3(\mathcal{O}_{\overline{K}})$ do not affect the valuation of any of the Pl\"ucker coordinates.
Therefore, by applying Prop.~\ref{prop:coord-transform} (with $\pi^q$ in place of $\pi$), we see that the difference between the valuation data $v'$ of $O'$ and $v$ of $O$ is contained in the span $W_8$ of the valuation data of the shape $v_{\pt}^S - v_{\pl}^T$, where $S \sqcup T = \{\oA, \ldots, \oH\}$. This is an 8-dimensional $\Q$-vector space, which has basis $(v_{\pl}^T)_T$ where $T$ ranges over the sets containing exactly 7 points.

The valuation data $v'$ must have the property that $v'_{\oA\oB\oC\oD} = v'_{\oA\oB\oC\oE} = v'_{\oA\oB\oD\oE} = v'_{\oA\oC\oD\oE} = v'_{\oB\oC\oD\oE} = 0$. Moreover, it must also satisfy that $\min(\nu_K(O_\oF^{(1)})$, $\nu_K(O_\oF^{(2)})$, $ \nu_K(O_\oF^{(3)})$, $ \nu_K(O_\oF^{(4)})) = \min(v'_{\oB\oC\oD\oH}$, $ v'_{\oA\oC\oD\oH}$, $ v'_{\oA\oB\oD\oH}$, $ v'_{\oA\oB\oC\oH}) = 0$ as otherwise all the coordinates of $O_\oF$ would have positive valuation, contradicting the fact that $O_\oF$ must be properly normalised. Similarly, $\min(v'_{\oA\oB\oC\oG}, v'_{\oA\oB\oD\oG}, v'_{\oA\oC\oD\oG}, v'_{\oB\oC\oD\oG}) = \min(v'_{\oA\oB\oC\oH}, v'_{\oA\oB\oD\oH}, v'_{\oA\oC\oD\oH}, v'_{\oB\oC\oD\oH}) = 0$. As it turns out, there is a unique $w \in W_8$ such that $v + w$ satisfies these conditions.

The first five linear
conditions restrict the coset $v + w$ to a coset $\tilde{v}+\tilde{w}$ with
$\tilde{w}$ in a 3-dimensional subspace $W_3$ of $W_8$. It turns out that the
valuations of $\tilde{w}$ at ${\oA\oB\oC\oF}$, ${\oA\oB\oD\oF}$,
${\oA\oC\oD\oF}$ and ${\oB\oC\oD\oF}$ are all equal, let's say
$\tilde{w}_\oF$.  The minimum of $\tilde{v}+\tilde{w}$ on
these 4 valuations can thus be set to zero by solving the equation
$\tilde{w}_\oF = -\min(\tilde{v}_{\oA\oB\oC\oH},
\tilde{v}_{\oA\oB\oD\oH},\tilde{v}_{\oA\oC\oD\oH},\tilde{v}_{\oB\oC\oD\oH})$,
and similarly for the minima related to $O_\oG$ and $O_\oH$.  Now, the linear
map
$W_3 \to \Q^3:\ \tilde{w} \mapsto (\tilde{w}_\oF, \tilde{w}_\oG,
\tilde{w}_\oF)$ is a bijection. Therefore this $\tilde{w} \in W_3$, and thus
the corresponding $w \in W_8$, exists and is unique.

To conclude, the valuation data $v'$ must be equal to $v^\# = v + w$. Note
that $w$ does not depend at all on the octad $O$, but only on the valuation
data $v$.
\end{proof}

\section{Octad Building Blocks}
\label{sec:ocdiagrams}

In Sec.~\ref{sec:CombiningOctads} we introduce pictures which are
analogues of so-called ``cluster pictures''~\cite{m2d2, otherm2d2} in
the non-hyperelliptic setting. To do this, we first give fundamental instances
of valuation data. These are the building blocks from which our pictures are
built, and are the analogue of clusters of size $2$, $3$ and $4$ for genus 3 hyperelliptic curves. There is
also a new phenomenon in the non-hyperelliptic setting, whereby
non-hyperelliptic curves acquire, upon reduction, a hyperelliptic involution, \textit{i.e.}\ an involution for which the quotient is a curve of genus 0, upon
reduction. Accordingly, we introduce valuation data that recognises this. \medskip

These building blocks have already been introduced in Tab.~\ref{tab:introblocksdefinition} in terms of the elementary vectors of Def.~\ref{def:standard-valuation-data}.
To each equivalence class both a picture and a subspace of
the vector space $E_3 \simeq \F_2^{6}$ have been associated (see Sec.~\ref{subsec:introcombinatorialanalysis} regarding these subspaces, and, for more detail, Sec.~\ref{sec:sympl-f_2-vect}).

More generally, the action of $\textup{Sp}(6,2)$ on a building block is well defined and this leads to consider the action of Cremona transformations.
It follows almost by the definition of a regular Cayley octad that their
Plücker coordinates are non-zero, and that this remains true on orbits under
Cremona action.
We refine this result in terms of the local valuations. We show here that
Cremona transformations simply send an octad whose valuation data is one of these building blocks to an octad whose valuation data is a building block of the same type.

Throughout this entire section $K$ is still a local field, as in the previous section.

\subsection{Alpha Blocks}

\begin{definition}
    An $\bm{\alpha}$-block is any of the blocks given in rows 1-4 of Tab.~\ref{tab:introblocksdefinition}, up to $S_8$-permutation of the indices.
\end{definition}

Given the $\PGL$ equivalences stated in Prop.~\ref{thm:equiv-objects}, we use the labels {$\bm{\alpha}_1$} and {$\bm{\alpha}_2$} in situations where the exact points do not matter. We associate to these octads the respective pictures in Tab.~\ref{tab:introblocksdefinition}, noting in particular that these pictures are independent of $\PGL$ by Prop.~\ref{thm:equiv-objects}. In Tab.~\ref{tab:introblocksdefinition} we also associate to each equivalence class a one-dimensional subspace of $E_3$

\begin{remark}
  As a consequence of the definitions in Tab.~\ref{tab:introblocksdefinition}, the two pictures in
  Fig.~\ref{fig:plsym} are considered to be the same, and one can think of an ${\bm{\alpha}}_{2}$-block as a simply a partition of the octad into two sets of four. For the purposes of drawing diagrams, however, we break the symmetry of the partition, making it easier to read pictures with multiple such rectangles. We may also indicate one side of a partition with four identical symbols such as  \tikzsetnextfilename{GreenNode}\begin{tikzpicture}
\node[draw,circle,scale=0.5,LimeGreen,fill=LimeGreen] at (0,0) {};
\end{tikzpicture} or  \tikzsetnextfilename{GreenStar}\begin{tikzpicture}
\node[draw,star,star point ratio = 2.5, scale=0.25,LimeGreen,fill=LimeGreen] at (0,0) {};
\end{tikzpicture}, as in Tab.~~\ref{tab:smoctadsc02}--\ref{tab:smhoctadsc56}.

  Further, pictures with different labelling are considered different, hence the two
  pictures in Fig.~\ref{fig:twdiag} are different pictures.
  \begin{figure}[htbp]
    \centering
    \begin{subfigure}[c]{0.42\textwidth}
      \tikzsetnextfilename{TypePLa}
      \begin{tikzpicture}[scale=0.22]
        \ptslabel \plane{0}
      \end{tikzpicture}
      \qquad\qquad \tikzsetnextfilename{TypePLb}
      \begin{tikzpicture}[scale=0.22]
        \ptslabel \plane{4}
      \end{tikzpicture}
      \caption{Equivalent ${\bm{\alpha}}_2$-block pictures}
      \label{fig:plsym}
    \end{subfigure}
    \begin{subfigure}[c]{0.42\textwidth}
      \tikzsetnextfilename{TWa}
      \begin{tikzpicture}[scale=0.25]
        \ptslabel
        \twinbig{0}
      \end{tikzpicture}
      \qquad\qquad
      \tikzsetnextfilename{TWb}
      \begin{tikzpicture}[scale=0.25]
        \ptslabel
        \twinbig{1}
      \end{tikzpicture}
      \caption{Inequivalent ${\bm{\alpha}}_1$-block pictures}
      \label{fig:twdiag}
    \end{subfigure}
    \caption{Equivalence of pictures}
    \label{fig:eqdiag}
  \end{figure}
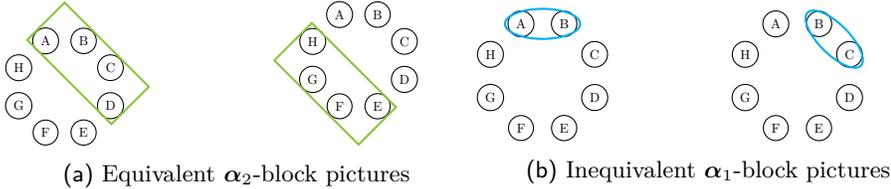
\end{remark}

We prove now that the valuation data for $\bm{\alpha}_{1}$-blocks (resp.\ $\bm{\alpha}_{2}$-blocks) are $\PGL$-equivalent.

\begin{proposition}\label{thm:equiv-objects}
\label{thm:obpgl3}We have the following $\PGL$-equivalences of valuation data.\\[-0.50cm]
\begin{itemize}
\item[(i)] The valuation data ${\bm{\alpha}}_{1{\mathrm{a}}}^{\oA\oB}$ and ${\bm{\alpha}}_{1{\mathrm{b}}}^{\oA\oB}$ are equivalent.
\item[(ii)] The valuation data ${\bm{\alpha}}_{2{\mathrm{a}}}^{\oA\oB\oC\oD}$, ${\bm{\alpha}}_{2{\mathrm{b}}}^{\oA\oB\oC\oD}$, and ${\bm{\alpha}}_{2{\mathrm{b}}}^{\oE\oF\oG\oH}$ are equivalent.
\end{itemize}
Moreover, an octad with these valuation data is $\textup{PGL}_3(\mathcal{O}_{\overline{K}})$-equivalent to an octad with five points in standard position, and restricted to such octads these valuation data exhaust the two equivalence classes.

\end{proposition}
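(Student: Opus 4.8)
The plan is to reduce everything to Prop.~\ref{prop:coord-transform}, which records exactly how the diagonal transformation $M$ of Eq.~\eqref{eq:Mtrans} changes the valuation data of an octad, supplemented by the ``inverse'' move $M^{-1}$ (handled by applying Prop.~\ref{prop:coord-transform} to the already-transformed octad), a little elementary projective geometry over the residue field, and, for the last assertion, Prop.~\ref{prop:equivalence-of-valuation-data}. Observe first that any $8$-tuple whose valuation data is one of the listed $\bm{\alpha}$-blocks is automatically a regular octad, since the data is finite (so no two points coincide and no four are coplanar over $\overline{K}$) and the twisted cubic index is $0$. Writing $v(O)$ for the valuation data of $O$ and $\bar O_i$ for the reduction of $O_i$ modulo $\pi$, one reads the configuration of $\bar O$ off $v(O)$: if $v(O)=v_\pt^{\oA\oB}$ then $v_{\oA\oB\oC\oD}=v_{\oA\oB\oC\oE}=v_{\oA\oB\oD\oE}=1$ together with $v_{\oA\oC\oD\oE}=v_{\oB\oC\oD\oE}=0$ force $\bar O_\oA=\bar O_\oB$ (the three distinct planes spanned by $\bar O_\oA$ with two of $\bar O_\oC,\bar O_\oD,\bar O_\oE$ meet only at $\bar O_\oA$ and each contains $\bar O_\oB$), while vanishing of the other relevant Pl\"ucker valuations forces $\bar O_i\neq\bar O_\oA$ for $i\in\{\oC,\dots,\oH\}$; analogous arguments show that $v(O)=v_\pl^{\oC\oD\oE\oF\oG\oH}$ forces $\bar O_\oC,\dots,\bar O_\oH$ onto a common plane (no three collinear) with $\bar O_\oA,\bar O_\oB$ off it and distinct, that $v(O)=v_\pl^{\oA\oB\oC\oD}+v_\pt^{\oA\oB\oC\oD}$ forces $\bar O_\oA=\bar O_\oB=\bar O_\oC=\bar O_\oD$ with the remaining reductions distinct from it, and that $v(O)=v_\pl^{\oA\oB\oC\oD}+v_\pl^{\oE\oF\oG\oH}$ forces $\bar O_\oE,\dots,\bar O_\oH$ onto a common plane with $\bar O_\oA,\dots,\bar O_\oD$ off it and pairwise distinct.

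For (i): given $O$ with $v(O)=\bm{\alpha}_{1\mathrm a}^{\oA\oB}=v_\pt^{\oA\oB}$, apply an element of $\textup{PGL}_3(\mathcal{O}_{\overline{K}})$ (which leaves the valuation data unchanged, cf.\ the proof of Prop.~\ref{prop:equivalence-of-valuation-data} and Prop.~\ref{prop:tci-invariant}) so that $\bar O_\oA=\bar O_\oB=(1:0:0:0)$ and $\bar O_i\neq(1:0:0:0)$ for $i\in\{\oC,\dots,\oH\}$; Prop.~\ref{prop:coord-transform} with $S=\{\oA,\oB\}$ and $T=\{\oC,\oD,\oE,\oF,\oG,\oH\}$ then shows $M$ sends $O$ to an octad with valuation data $v_\pt^{\oA\oB}-v_\pt^{\oA\oB}+v_\pl^{\oC\oD\oE\oF\oG\oH}=\bm{\alpha}_{1\mathrm b}^{\oA\oB}$. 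Conversely, given $O$ with $v(O)=v_\pl^{\oC\oD\oE\oF\oG\oH}$, normalise integrally so that the common plane of $\bar O_\oC,\dots,\bar O_\oH$ is $\{x_1=0\}$; a short computation on normalised affine representatives shows that $O'=M^{-1}O$ has $\bar O'_\oA=\bar O'_\oB=(1:0:0:0)$ and $\bar O'_i\neq(1:0:0:0)$ for $i\in\{\oC,\dots,\oH\}$, so Prop.~\ref{prop:coord-transform} applied to $O'$ (with $MO'=O$, $S=\{\oA,\oB\}$, $T=\{\oC,\dots,\oH\}$) gives $v(O)=v(O')-v_\pt^{\oA\oB}+v_\pl^{\oC\oD\oE\oF\oG\oH}$, hence $v(O')=v_\pt^{\oA\oB}=\bm{\alpha}_{1\mathrm a}^{\oA\oB}$. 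Since the definition of $\PGL$-equivalence of valuation data is symmetric, these two directions establish (i). For (ii), the identical two moves with quadruples in place of pairs give $\bm{\alpha}_{2\mathrm b}^{\oA\oB\oC\oD}\sim\bm{\alpha}_{2\mathrm a}^{\oA\oB\oC\oD}$ (taking $S=\{\oA,\oB,\oC,\oD\}$, $T=\{\oE,\oF,\oG,\oH\}$), and running them once more with the two quadruples interchanged gives $\bm{\alpha}_{2\mathrm b}^{\oE\oF\oG\oH}\sim\bm{\alpha}_{2\mathrm a}^{\oE\oF\oG\oH}=\bm{\alpha}_{2\mathrm a}^{\oA\oB\oC\oD}$; transitivity of $\PGL$-equivalence finishes (ii).

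For the final assertion: each $\bm{\alpha}$-block octad has five points whose reductions are in general position (for $v_\pt^{\oA\oB}$, the points $O_\oA,O_\oC,O_\oD,O_\oE,O_\oF$, since every $4$-subset of those indices has Pl\"ucker valuation $0$; similarly for the other blocks), and the $\textup{PGL}_3(\mathcal{O}_{\overline{K}})$-transformation carrying them to the standard configuration exhibits the octad as $\textup{PGL}_3(\mathcal{O}_{\overline{K}})$-equivalent to a normalised one. For the ``exhaust'' claim, Prop.~\ref{prop:equivalence-of-valuation-data} gives each valuation datum $v$ a well-defined normal form $v^\#$, and a normalised octad in the $\PGL$-orbit of an $\bm{\alpha}_1$-block must have valuation data $(v_\pt^{ij})^\#$ for some pair $\{i,j\}$; one computes that $(v_\pt^{ij})^\#=v_\pl^{\Sigma_3\setminus\{i,j\}}$ when $\{i,j\}$ lies inside the five normalised indices (this is exactly the $M$-move above, which simultaneously separates $\bar O_i,\bar O_j$ and brings the first five reductions into general position) and $(v_\pt^{ij})^\#=v_\pt^{ij}$ otherwise --- in all cases an $\bm{\alpha}_1$-block --- while the same bookkeeping shows every normal form arising from an $\bm{\alpha}_2$-block is $\bm{\alpha}_{2\mathrm a}$ or $\bm{\alpha}_{2\mathrm b}$ up to $S_8$. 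Hence, among normalised octads, the two $\PGL$-classes contain precisely the $\bm{\alpha}_1$- and the $\bm{\alpha}_2$-blocks.

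The main obstacle is the bookkeeping, concentrated in two places: the ``$M^{-1}$'' step must be checked by hand on normalised affine representatives, confirming which points of $M^{-1}O$ reduce to $(1:0:0:0)$, so that Prop.~\ref{prop:coord-transform} may legitimately be applied to the transformed octad (after which its full valuation data, not merely the reduction type, is automatic); and the finite normal-form computation behind the ``exhaust'' statement, where one must track how the distinguished pair or quadruple of indices meets the five normalised points, with several sub-cases for the $\bm{\alpha}_2$-blocks. The preliminary projective-geometry step is routine but indispensable, since it is what permits the integral normalisations used throughout.
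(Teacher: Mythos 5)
Your argument is correct, and it is in substance the same reduction the paper uses: both rest on tracking how valuation data changes when one changes the quintuple of points in standard position, via Prop.~\ref{prop:coord-transform} and the normal-form procedure of Prop.~\ref{prop:equivalence-of-valuation-data}. The difference is one of execution rather than strategy: the paper's proof simply delegates the verification of the equivalences and of exhaustiveness to a \MAGMA script, whereas you carry the computation out by hand --- reading the residue-field configuration off each $\bm{\alpha}$-block (e.g.\ deducing $\bar O_\oA=\bar O_\oB$ from the three coplanarity conditions meeting only at $\bar O_\oA$), using that to justify the integral normalisations needed before invoking the single diagonal move $M$ of Eq.~\eqref{eq:Mtrans}, and handling the reverse direction by checking directly which points of $M^{-1}O$ reduce to $(1:0:0:0)$ so that Prop.~\ref{prop:coord-transform} applies to the transformed octad. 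What this buys is a human-checkable proof of this particular case; what it costs is the case bookkeeping for the exhaustion claim (how the distinguished pair or quadruple meets the five normalised indices), which you correctly identify as the only remaining finite verification, and whose claimed outcomes ($(v_\pt^{\oA\oB})^\#=v_\pl^{\oC\oD\oE\oF\oG\oH}$ when both $\oA,\oB$ are normalised, $v_\pt^{\oA\oB}$ otherwise, and the analogous $\bm{\alpha}_{2\mathrm a}/\bm{\alpha}_{2\mathrm b}$ dichotomy according to whether the normalised quintuple meets the partition in $3+2$ or $4+1$) check out against the uniqueness of $w$ in Prop.~\ref{prop:equivalence-of-valuation-data}. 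One cosmetic remark: in the $\bm{\alpha}_2$ case the phrase ``up to $S_8$'' should be replaced by the observation that the partition $\oA\oB\oC\oD\mid\oE\oF\oG\oH$ is preserved and only the side carrying the $v_\pt$ summand varies, which yields exactly the three data listed in (ii).
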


\begin{proof}
  The equivalences between the valuation data and the fact that they are exhaustive, are obtained by changing the set of 5 points that are in standard position. The procedure described in the proof of Prop.~\ref{prop:equivalence-of-valuation-data} explains how the valuation data changes under these procedures. This procedure has been implemented, along with the verification of these equivalences and their completeness~\cite[file \texttt{EquivalenceOfValuationData.m}]{BGit23}.
\end{proof}

\subsubsection*{Cremona Transformations on Alpha Blocks}

Cremona transformations act on the $S_8$-equivalence classes of ${\bm{\alpha}}$-blocks in a well defined way.

\begin{table}[htbp]
  \centering
  \renewcommand{\arraystretch}{1.3}
  \newcommand{\DoScale}[1]{\scalebox{0.6}{#1}}%
  \let\MyScale\relax
  \begin{tabular}{
    >{\centering\arraybackslash}m{0.2cm}
    >{\centering\arraybackslash\collectcell\MyScale}m{1.5cm}<{\endcollectcell}
    >{\centering\arraybackslash}m{2cm}
    >{\centering\arraybackslash\collectcell\MyScale}m{1.5cm}<{\endcollectcell}
    p{0.3cm}
    >{\centering\arraybackslash}m{0.2cm}
    >{\centering\arraybackslash\collectcell\MyScale}m{1.5cm}<{\endcollectcell}
    >{\centering\arraybackslash}m{2cm}
    >{\centering\arraybackslash\collectcell\MyScale}m{1.5cm}<{\endcollectcell}
    }
    \#
    & Picture
    & Partition
    & Image
    &
    & \#
    & Picture
    & Partition
    & Image\\
    \noalign{\global\let\MyScale\DoScale}
    \cmidrule{1-4}\cmidrule{6-9}
    \morecmidrules
    \cmidrule{1-4}\cmidrule{6-9}
    $1$
    & \tikzsetnextfilename{CremonaLeft1}
      \begin{tikzpicture}[scale=0.3]
        \ptslabel
        \twinbig{0}
      \end{tikzpicture}
    & $\oA\oB\oC\oD \mid \oE\oF\oG\oH$
    &\tikzsetnextfilename{CremonaRight1v2}
      \begin{tikzpicture}[scale=0.3]
      \ptslabel
        \twinbig{0}
      \end{tikzpicture}
    &
    & $3$
    & \tikzsetnextfilename{CremonaLeft3}
      \begin{tikzpicture}[scale=0.3]
        \ptslabel
        \plane{0}
      \end{tikzpicture}
    & $\oA\oB\oC\oD \mid \oE\oF\oG\oH$
    & \tikzsetnextfilename{CremonaRight3b}
      \begin{tikzpicture}[scale=0.3]
        \ptscustomlabel{A}{B}{C}{D}{E}{F}{G}{H}
        \plane{0}
      \end{tikzpicture}
    \\[0.6cm]
    \cmidrule{1-4}\cmidrule{6-9}
    $2$
    & \tikzsetnextfilename{CremonaLeft2}
      \begin{tikzpicture}[scale=0.3]
        \ptslabel
        \twinbig{0}
      \end{tikzpicture}
    & $\oA\oC\oD\oE \mid \oB\oF\oG\oH$
    & \tikzsetnextfilename{CremonaRight2}
      \begin{tikzpicture}[scale=0.3]
        \ptslabel
        \plane{1}
      \end{tikzpicture}
    &
    & $4$
    & \tikzsetnextfilename{CremonaLeft4}
      \begin{tikzpicture}[scale=0.3]
        \ptslabel
        \plane{0}
      \end{tikzpicture}
    & $\oA\oB\oE\oF \mid \oC\oD\oG\oH$
    & \tikzsetnextfilename{CremonaRight4b}
      \begin{tikzpicture}[scale=0.3]
        \ptslabel
        \plane{0}
      \end{tikzpicture}
  \end{tabular}
  \caption{\label{table:Cremona-objects}Cremona transformations on
    $\bm{\alpha}$-blocks}
\end{table}

\begin{proposition}\label{CreTransTwins}
Suppose $O$ is an octad whose valuation data is an $\bm{\alpha}$-block. The action of any Cremona transformation on $O$ gives an octad whose valuation data is also an $\bm{\alpha}$-block, as described in Tab.~\ref{table:Cremona-objects}.
\end{proposition}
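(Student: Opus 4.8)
The plan is a finite case analysis organised by the combinatorial position of the Cremona partition relative to the support of the $\bm{\alpha}$-block. First I would reduce to the $36$ distinguished elements: by Rem.~\ref{rmk:cremonatransformations} every element of $\textup{Sp}(6,2)$ is of the form $\sigma_H\circ\rho$ with $\rho\in S_8$ and $\sigma_H$ one of the $36$ representatives (one being the identity), and the $S_8$-action merely relabels the eight points, hence sends $\bm{\alpha}$-blocks to $\bm{\alpha}$-blocks and changes the associated picture only by a permutation. So it suffices to treat a single $\sigma_H$, i.e.\ a single Cremona partition, which after a further $S_8$-conjugation we may fix to be $\oA\oB\oC\oD\mid\oE\oF\oG\oH$, letting instead the $\bm{\alpha}$-block vary over $S_8$-orbit representatives compatible with this partition. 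Modulo the subgroup of $S_8$ preserving the partition, this leaves exactly the configurations of Tab.~\ref{table:Cremona-objects}: for an $\bm{\alpha}_1$-block the two collision points lie on the same side of the partition or are separated by it, and for an $\bm{\alpha}_2$-block the two complementary coplanar quadruples are, after a normalisation compatible with the partition, in one of the two tabulated positions.

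In each configuration I would write down an explicit representative octad over $\overline{K}$ realising the given block with five of its points in standard position; here Prop.~\ref{thm:equiv-objects} tells us which normal form ($\bm{\alpha}_{1a}$ versus $\bm{\alpha}_{1b}$, or $\bm{\alpha}_{2a}$ versus $\bm{\alpha}_{2b}$) is forced, namely the one in which the collisions are compatible with points at distinct standard positions. I then apply the Cremona transformation of Rem.~\ref{rmk:cremonatransformations}, which in the basis $\{\oA,\oB,\oC,\oD\}$ replaces the $i$-th coordinate of each of the last four points by its inverse. The image octad is no longer normalised, so by Lem.~\ref{lem:PGL-generated} together with Prop.~\ref{prop:coord-transform} and Prop.~\ref{prop:equivalence-of-valuation-data} I bring five of its points back into standard position, tracking how each of the $70$ Pl\"ucker valuations changes; by Prop.~\ref{prop:tci-invariant} the twisted cubic index stays $0$ throughout, so no twisted-cubic contribution can appear. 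Reading off the resulting valuation data yields in every case an $\bm{\alpha}$-block, matching the ``Image'' column of Tab.~\ref{table:Cremona-objects}. These computations are entirely explicit and, along with the completeness of the enumeration of configurations, have been implemented and verified in \cite{BGit23}.

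The main obstacle is the bookkeeping in the re-normalisation step: after the Cremona inversion the point coordinates acquire negative valuations, and one must carefully count, via Prop.~\ref{prop:coord-transform}, how many factors of the uniformiser each Pl\"ucker determinant gains or loses under the $\PGL$ transformation that returns five points to standard position, and in particular confirm that the outcome is a \emph{single} $\bm{\alpha}$-block rather than, a priori, a sum of several building blocks. A secondary subtlety to dispatch is that for $\bm{\alpha}_2$-blocks the configurations in which the Cremona partition meets the coplanar quadruples in an odd number of points must be shown to reduce, after the canonical normalisation of Prop.~\ref{prop:equivalence-of-valuation-data}, to one of the tabulated cases; this is where the precise interplay between the normal form of the block and the choice of five standardised points does the real work.
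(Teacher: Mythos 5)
Your overall strategy --- reduce to the $36$ representatives via $S_8$, fix the partition, instantiate each $\bm{\alpha}$-block by an explicit normal form with five points in standard position (Prop.~\ref{thm:equiv-objects}), apply the coordinate inversion, renormalise and read off the valuation data --- is exactly the route the paper takes, and your case enumeration matches Tab.~\ref{table:Cremona-objects}. However, there is a genuine gap at the one point that carries the real content. You assert that ``by Prop.~\ref{prop:tci-invariant} the twisted cubic index stays $0$ throughout, so no twisted-cubic contribution can appear.'' Prop.~\ref{prop:tci-invariant} concerns $\PGL$-equivalence only; a Cremona transformation is \emph{not} a $\PGL$ transformation (it changes the theta characteristic), and the twisted cubic index is emphatically not preserved across it --- Prop.~\ref{prop:CreTransHE} shows that a $\mathbf{TCu}$ block maps to a $\mathbf{Line}$ block. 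So that proposition cannot be invoked to control the image octad.

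More importantly, you correctly name the main obstacle --- ruling out that the image valuation data degenerates beyond the single $\bm{\alpha}$-block claimed --- but you do not resolve it, deferring entirely to the computation. The paper's resolution has actual mathematical content: one shows that any additional vanishing of a Pl\"ucker coordinate of the image octad forces, via the Cayley relations~\eqref{eq:cayley} satisfied by both octads together with the valuation constraints on the source, either a vanishing of a source Pl\"ucker coordinate outside the support of the given $\bm{\alpha}$-block, or an explicit system of equations asserting that the source points lie on a twisted cubic modulo $\pi$. Both alternatives contradict the hypothesis that the valuation data of $O$ is exactly an $\bm{\alpha}$-block (in particular has twisted cubic index $0$). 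Without identifying this mechanism your argument establishes only the generic case of the table, and the emptiness of the non-generic loci is precisely what needs proving.
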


\begin{remark}
Since Cremona transformations are involutions, Tab.~\ref{table:Cremona-objects} can be read left-to-right and right-to-left.
\end{remark}

\begin{proof}

  We consider, one after the other, octads that realise valuation data of
  Prop.~\ref{thm:equiv-objects}, with the five points in standard
  position that correspond to the desired Cremona action. It is not difficult
  to see that the octad that results of this Cremona transformation has
  generically one of the valuation data of the table.  The subtle part is to
  prove that if a valuation entry has higher valuation, it contradicts the
  valuation data of the starting octad, \textit{i.e.}\ they imply that some
  Pl\"ucker coordinates outside the support of the starting valuation data
  must be zero modulo $\pi$.

  When the dependence of the octad points in $\pi$ results in a simple scaling
  of the Plücker coordinates by a power of $\pi$, this is immediate.  But in
  general, those verifications are relatively cumbersome. Nevertheless, they
  lend themselves well to automation in an algebraic calculation software (a
  \MAGMA verification script that does this is available at~\cite[file
  \texttt{PaperProof.m}]{BGit23}).

  In the first place, we can assume that in the ${\bm{\alpha}}_1$ case we
  actually are in the ${\bm{\alpha}}_{1{\mathrm{a}}}^{\oA\oB}$ case. Up to
  permutation the two Cremona transformations to consider are
  $\oA\oB\oC\oD \mid \oE\oF\oG\oH$ and $\oA\oC\oD\oE \mid \oB\oF\oG\oH$. In
  the first case, and again by Prop.~\ref{thm:equiv-objects} we can assume
  the points in the octad be given by the columns of the first matrix below
  and to be sent to the columns of the second one:
\begin{displaymath}
  \arraycolsep=0.4\arraycolsep%
  \mbox{\scriptsize$\begin{bmatrix}
    a_0& a_0+\pi b_0 & c_0& 1& 1 & 0 & 0 & 0  \\
    a_1& a_1+\pi b_1 & c_1 & 1 & 0 & 1 & 0 & 0  \\
    a_2&a_2+\pi b_2 & c_2 & 1 & 0 & 0 & 1 & 0  \\
    a_3& a_3+\pi b_3 & c_3 & 1 & 0 & 0 & 0 & 1
  \end{bmatrix}$}\mapsto
  \mbox{\scriptsize$\begin{bmatrix}
    a_0^{-1}& a_0^{-1}+\pi \frac{-b_0}{a_0(a_0+\pi b_0)} & c_0^{-1}& 1& 1 & 0 & 0 & 0  \\
    a_1^{-1}& a_1^{-1}+\pi \frac{-b_1}{a_1(a_1+\pi b_1)} & c_1^{-1} & 1 & 0 & 1 & 0 & 0  \\
    a_2^{-1}&a_2^{-1}+\pi \frac{-b_2}{a_2(a_2+\pi b_2)} & c_2^{-1} & 1 & 0 & 0 & 1 & 0  \\
    a_3^{-1}& a_3^{-1}+\pi \frac{-b_3}{a_3(a_3+\pi b_3)} & c_3^{-1} & 1 & 0 & 0 & 0 & 1
  \end{bmatrix}$}\,.
\end{displaymath}
We clearly see the produced ${\bm{\alpha}}_{1{\mathrm{a}}}^{\oA\oB}$, at least
generically.  It turns out that this is still true, with the notable exception
when the first and third octad points satisfy
\begin{displaymath}
  \renewcommand{\arraystretch}{0.8}
  \mbox{\scriptsize$\left\{\begin{array}{rcl}
  a_0\,a_2\,c_0\,c_1 - a_1\,a_2\,c_0\,c_1 - a_0\,a_1\,c_0\,c_2 + a_1\,a_2\,c_0\,c_2 + a_0\,a_1\,c_1\,c_2 - a_0\,a_2\,c_1\,c_2&=&0,\\
  a_0\,a_3\,c_0\,c_1 - a_1\,a_3\,c_0\,c_1 - a_0\,a_1\,c_0\,c_3 + a_1\,a_3\,c_0\,c_3 + a_0\,a_1\,c_1\,c_3 - a_0\,a_3\,c_1\,c_3&=&0,\\
  a_0\,a_3\,c_0\,c_2 - a_2\,a_3\,c_0\,c_2 - a_0\,a_2\,c_0\,c_3 + a_2\,a_3\,c_0\,c_3 + a_0\,a_2\,c_2\,c_3 - a_0\,a_3\,c_2\,c_3&=&0,\\
  a_1\,a_3\,c_1\,c_2 - a_2\,a_3\,c_1\,c_2 - a_1\,a_2\,c_1\,c_3 + a_2\,a_3\,c_1\,c_3 + a_1\,a_2\,c_2\,c_3 - a_1\,a_3\,c_2\,c_3&=&0\,.
  \end{array}\right.$}
\end{displaymath}
But these conditions impose that the octad points are on a twisted cubic
curve, (in this case the reduction is then hyperelliptic, see Sec.~\ref{SS:hyper}).

For the transformation associated to $\oA\oC\oD\oE \mid \oB\oF\oG\oH$ we have the following matrices description and we proceed in the same fashion,
\begin{displaymath}
  \arraycolsep=0.4\arraycolsep%
  \mbox{\scriptsize$\begin{bmatrix}
     1 & 1+\pi b_0 & 0 & 0 & 0 & 1 & g_0 & h_0  \\
     0 & \pi b_1 & 1 & 0 & 0 & 1 & g_1 & h_1\\
     0 & \pi b_2 & 0 & 1 & 0 & 1 & g_2 & h_2\\
     0 & \pi b_3 & 0 & 0 & 1 & 1 & g_3 & h_3
  \end{bmatrix}$}\mapsto
  \mbox{\scriptsize$\begin{bmatrix}
     1 & \pi \frac{1}{1+\pi b_0} & 0 & 0 & 0 & 1 & g_0^{-1} & h_0^{-1}  \\
     0 &  b_1^{-1} & 1 & 0 & 0 & 1 & g_1^{-1} & h_1^{-1}\\
     0 &  b_2^{-1} & 0 & 1 & 0 & 1 & g_2^{-1} & h_2^{-1}\\
     0 &  b_3^{-1} & 0 & 0 & 1 & 1 & g_3^{-1} & h_3^{-1}
  \end{bmatrix}$},
\end{displaymath}
and we see that an ${\bm{\alpha}}_{2{\mathrm{a}}}^{\oB\oC\oD\oE}$-block appears.  The
verification of this fact uses the relations~\eqref{eq:cayley} that must
satisfy both octads, in addition to the conditions from the valuation
data. There is one exception, given by the equations
\begin{displaymath}
  \renewcommand{\arraystretch}{0.8}
  \mbox{\scriptsize$
    \left\{\begin{array}{rcl}
      g_0\,g_2\,h_0\,h_1 - g_1\,g_2\,h_0\,h_1 - g_0\,g_1\,h_0\,h_2 + g_1\,g_2\,h_0\,h_2 + g_0\,g_1\,h_1\,h_2 - g_0\,g_2\,h_1\,h_2 &=& 0\,,\\
      g_0\,g_3\,h_0\,h_1 - g_1\,g_3\,h_0\,h_1 - g_0\,g_1\,h_0\,h_3 + g_1\,g_3\,h_0\,h_3 + g_0\,g_1\,h_1\,h_3 - g_0\,g_3\,h_1\,h_3 &=& 0\,,\\
      g_0\,g_3\,h_0\,h_2 - g_2\,g_3\,h_0\,h_2 - g_0\,g_2\,h_0\,h_3 + g_2\,g_3\,h_0\,h_3 + g_0\,g_2\,h_2\,h_3 - g_0\,g_3\,h_2\,h_3 &=& 0\,,\\
      g_1\,g_3\,h_1\,h_2 - g_2\,g_3\,h_1\,h_2 - g_1\,g_2\,h_1\,h_3 + g_2\,g_3\,h_1\,h_3 + g_1\,g_2\,h_2\,h_3 - g_1\,g_3\,h_2\,h_3 &=& 0\,,\\
      b_1\,b_3\,h_1\,h_2 - b_2\,b_3\,h_1\,h_2 - b_1\,b_2\,h_1\,h_3 + b_2\,b_3\,h_1\,h_3 + b_1\,b_2\,h_2\,h_3 - b_1\,b_3\,h_2\,h_3 &=& 0\,.
    \end{array}\right.$}
\end{displaymath}
But again, this corresponds to an octad whose points lie on a twisted cubic curve.\smallskip

The ${\bm{\alpha}}_2$ case follows after similar considerations, starting for
instance from an octad of valuation data
${\bm{\alpha}}_{2{\mathrm{b}}}^{\oA\oB\oC\oD}$ given as
\begin{displaymath}
  \arraycolsep=0.4\arraycolsep%
  \mbox{\scriptsize$\begin{bmatrix}
    1+\pi\,a_0& 1+\pi b_0 & 1+\pi\,(\lambda a_0+\lambda' b_0) & 1 & 1 & 0 & 0 & 0  \\
    1+\pi\,a_1& 1+\pi b_1 & 1+\pi\,(\lambda a_1+\lambda' b_1) & 1 & 0 & 1 & 0 & 0  \\
    1+\pi\,a_2& 1+\pi b_2 & 1+\pi\,(\lambda a_2+\lambda' b_2) & 1 & 0 & 0 & 1 & 0  \\
    1+\pi\,a_3& 1+\pi b_3 & 1+\pi\,(\lambda a_3+\lambda' b_3) & 1 & 0 & 0 & 0 & 1
  \end{bmatrix}$}\,.
\end{displaymath}
\end{proof}

\subsection{Chi Blocks}

In this section, we define new building blocks, ${\bm{\chi}}$-blocks.

\begin{definition}
     A $\bm{\chi}$-block is any of the blocks given in rows 5-10 of Tab.~\ref{tab:introblocksdefinition}, up to $S_8$-permutation of the indices.
\end{definition}

Just as we did for the ${\bm{\alpha}}$-blocks, we associate to ${\bm{\chi}}$-blocks both $\textup{PGL}_3$-independent
pictures and subspaces of $E_3$, as in Tab.~\ref{tab:introblocksdefinition}. Note from the definitions that there is no distinction between the left and right triple of the ``needle'', \textit{i.e.}\ they can be exchanged without changing the picture.

\begin{proposition}\label{PGLTypes}We have the following $\PGL$-equivalences of valuation data.\\[-0.20cm]
  \begin{itemize}
  \item[(i)] The valuation data
    ${\bm{\chi}}_{1{\mathrm{a}}}^{\oA\oB|\oC\oD\oE|\oF\oG\oH}$,
    ${\bm{\chi}}_{1{\mathrm{b}}}^{\oA\oB|\oC\oD\oE|\oF\oG\oH}$ and
    ${\bm{\chi}}_{1{\mathrm{c}}}^{\oA\oB|\oC\oD\oE|\oF\oG\oH}$ are equivalent. \\[-0.20cm]
  \item[(ii)] The valuation data ${\bm{\chi}}_{2{\mathrm{a}}}^{\oA\oB\oC}$,
    ${\bm{\chi}}_{2{\mathrm{b}}}^{\oA\oB\oC}$ and ${\bm{\chi}}_{2{\mathrm{c}}}^{\oA\oB\oC}$ are
    equivalent.
  \end{itemize}
  Moreover, an octad with these valuation data is
  $\mathrm{PGL}_3(\mathcal{O}_{\overline{K}})$-equivalent to an octad with five points in standard
  position, and restricted to such octads, up to switching $\oC\oD\oE$ and
  $\oF\oG\oH$ in the case of ${\bm{\chi}}_1$, these valuation data exhaust the
  two equivalence classes.
\end{proposition}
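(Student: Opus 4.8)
The argument is intended to parallel that of Proposition~\ref{thm:equiv-objects}, the role formerly played by ``moving the colliding points off to infinity'' now being played by reorganising the two triples $\oC\oD\oE,\oF\oG\oH$ (for $\bm{\chi}_1$), or the triple $\oA\oB\oC$ (for $\bm{\chi}_2$), relative to the five points put in standard position. The essential input is Proposition~\ref{prop:equivalence-of-valuation-data}: if a valuation data $v$ is realised by a regular octad $O$ with a prescribed five of its points in standard position, then the valuation data $v^\#$ obtained after renormalising so that a \emph{different} set of five of its points is in standard position depends only on $v$, and is computed from $v$ by the explicit finite procedure in that proof (imposing on the coset $v + W_8$ the five vanishing conditions for the new standard quintuple and the minimality conditions at the remaining three points).

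\textbf{Equivalences (i) and (ii).} For (i), I would fix a generic seed octad realising $\bm{\chi}_{1\mathrm{a}}^{\oA\oB|\oC\oD\oE|\oF\oG\oH}$ with the appropriate quintuple normalised, apply the unique $\PGL$-transformation putting a suitable alternative quintuple in standard position, and compute via Proposition~\ref{prop:equivalence-of-valuation-data} that the resulting data is $\bm{\chi}_{1\mathrm{b}}^{\oA\oB|\oC\oD\oE|\oF\oG\oH}$; a further such renormalisation then yields $\bm{\chi}_{1\mathrm{c}}^{\oA\oB|\oC\oD\oE|\oF\oG\oH}$. For (ii) the same scheme, started from a seed realising $\bm{\chi}_{2\mathrm{b}}^{\oA\oB\oC}$, produces $\bm{\chi}_{2\mathrm{a}}^{\oA\oB\oC}$ and $\bm{\chi}_{2\mathrm{c}}^{\oA\oB\oC}$. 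These are finite computations and have been automated in \cite[file \texttt{EquivalenceOfValuationData.m}]{BGit23}.

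\textbf{Normalisation and exhaustiveness.} A $\bm{\chi}$-block octad is regular, so five of its points are in general position and some element of $\PGL$ brings them into standard position; by Lemma~\ref{lem:PGL-generated} this element factors through $\textup{PGL}_3(\mathcal{O}_{\overline{K}})$, which leaves the Pl\"ucker valuations unchanged, and diagonal matrices $M_q$, each of which (Proposition~\ref{prop:coord-transform}) moves the valuation data by a vector of $W_8$; the normalisation conditions then single out a unique representative. It remains to show these representatives exhaust the two $\PGL$-classes, i.e.\ that the only valuation data of normalised regular octads whose octad picture is the $\bm{\chi}_1$-picture (resp.\ $\bm{\chi}_2$-picture) are $\bm{\chi}_{1\mathrm{a}}$, $\bm{\chi}_{1\mathrm{b}}$, $\bm{\chi}_{1\mathrm{c}}$ and the $\oC\oD\oE \leftrightarrow \oF\oG\oH$ variant (resp.\ $\bm{\chi}_{2\mathrm{a}}$, $\bm{\chi}_{2\mathrm{b}}$, $\bm{\chi}_{2\mathrm{c}}$). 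This is a finite enumeration: one lists the admissible vanishing/non-vanishing patterns of the $70$ Pl\"ucker coordinates whose support is contained in that of the relevant building block, subject to the Cayley relations~\eqref{eq:cayley} (read additively on valuations, cf.\ Remark~\ref{rem:valuation-lattice}) and the normalisation conditions, and checks that each one gives one of the listed data. As in the $\bm{\alpha}$-block case the enumeration throws up exceptional sub-loci, which turn out to be exactly the octads whose eight points lie on a twisted cubic; these are not regular and are treated in Section~\ref{SS:hyper}. This step too is discharged by the scripts accompanying \cite{BGit23}.

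\textbf{Main obstacle.} The conceptual weight is in the exhaustiveness step. One must be sure the enumeration of Pl\"ucker vanishing patterns compatible with the Cayley relations is complete, and that the ``deeper'' degenerations within a $\bm{\chi}$-block---for instance $\oC,\oD,\oE$ and $\oF,\oG,\oH$ both becoming collinear, which forces all of $\oC\oD\oE\oF\oG\oH$ onto a plane---produce no new valuation data. This closure is precisely what the $\max$ in the definitions in Table~\ref{tab:introblocksdefinition} encodes, and verifying it, together with isolating the spurious twisted-cubic locus, is the delicate combinatorial point, which we discharge by the automated check of \cite{BGit23}.
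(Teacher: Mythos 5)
Your proposal follows essentially the same route as the paper: the paper's proof of this proposition simply defers to the proof of Prop.~\ref{thm:equiv-objects}, which likewise obtains both the equivalences and their exhaustiveness by changing which five points are placed in standard position, computing the resulting change of valuation data via the procedure of Prop.~\ref{prop:equivalence-of-valuation-data}, and discharging the finite verification with the script \texttt{EquivalenceOfValuationData.m} in \cite{BGit23}. Your write-up is a more explicit unpacking of that same argument (your aside about the twisted-cubic locus is extra caution borrowed from the Cremona-transformation proofs rather than something the paper invokes here), so no substantive difference.
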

\begin{proof}
The proof is the same as for Prop.~\ref{thm:equiv-objects}.
\end{proof}

\subsubsection*{Cremona Transformation on Chi Blocks}

Similarly to $\bm{\alpha}$-blocks, Cremona transformations act on the
$S_8$-equivalence classes of octad pictures which are ${\bm{\chi}}$-blocks in a well
defined way.

\begin{table}[htbp]
  \centering
  \renewcommand{\arraystretch}{1.3}
  \newcommand{\DoScale}[1]{\scalebox{0.6}{#1}}%
  \let\MyScale\relax
  \begin{tabular}{
    >{\centering\arraybackslash}m{0.2cm}
    >{\centering\arraybackslash\collectcell\MyScale}m{1.5cm}<{\endcollectcell}
    >{\centering\arraybackslash}m{2cm}
    >{\centering\arraybackslash\collectcell\MyScale}m{1.5cm}<{\endcollectcell}
    p{0.3cm}
    >{\centering\arraybackslash}m{0.2cm}
    >{\centering\arraybackslash\collectcell\MyScale}m{1.5cm}<{\endcollectcell}
    >{\centering\arraybackslash}m{2cm}
    >{\centering\arraybackslash\collectcell\MyScale}m{1.5cm}<{\endcollectcell}
    }
    \#
    & Picture
    & Partition
    & Image
    &
    & \#
    & Picture
    & Partition
    & Image\\
    \noalign{\global\let\MyScale\DoScale}
    \cmidrule{1-4}\cmidrule{6-9}
    \morecmidrules
    \cmidrule{1-4}\cmidrule{6-9}
    $1$
    & \tikzsetnextfilename{CremonaLeft5}
      \begin{tikzpicture}[scale=0.3]
        \ptslabel
        \TB{0}
      \end{tikzpicture}
    & $\oA\oB\oC\oD \mid \oE\oF\oG\oH$
    & \tikzsetnextfilename{CremonaRight5c}
      \begin{tikzpicture}[scale=0.3]
        \ptscustomlabel{A}{B}{C}{D}{E}{F}{G}{H}
        \TB{0}
      \end{tikzpicture}
    &
    & $4$
    & \tikzsetnextfilename{CremonaLeft7}
      \begin{tikzpicture}[scale=0.3]
        \ptslabel
        \TA{0}
      \end{tikzpicture}
    & $\oB\oC\oD\oE \mid \oA\oF\oG\oH$
    & \tikzsetnextfilename{CremonaRight7}
      \begin{tikzpicture}[scale=0.3]
        \ptslabel
        \TA{0}
      \end{tikzpicture} \\[0.6cm]
    \cmidrule{1-4}\cmidrule{6-9}
    $2$
   & \tikzsetnextfilename{CremonaLeft5}
      \begin{tikzpicture}[scale=0.3]
        \ptslabel
        \TB{0}
      \end{tikzpicture}
    & $\oA\oB\oD\oE \mid \oC\oF\oG\oH$
    & \tikzsetnextfilename{CremonaRight5b}
      \begin{tikzpicture}[scale=0.3]
       \ptslabel
        \TA{0}
      \end{tikzpicture}
    &
    & $5$
    & \tikzsetnextfilename{CremonaLeft8}
      \begin{tikzpicture}[scale=0.3]
        \ptslabel
        \TA{0}
      \end{tikzpicture}
    & $\oA\oC\oD\oF \mid \oB\oE\oG\oH$
    & \tikzsetnextfilename{CremonaRight8b}
      \begin{tikzpicture}[scale=0.3]
        \ptslabel
        \TA{4}
      \end{tikzpicture}
      \\[0.6cm]
    \cmidrule{1-4}\cmidrule{6-9}
    $3$
    & \tikzsetnextfilename{CremonaLeft6}
      \begin{tikzpicture}[scale=0.3]
        \ptslabel
        \TA{0}
      \end{tikzpicture}
    & $\oA\oB\oC\oH \mid \oD\oE\oF\oG$
    & \tikzsetnextfilename{CremonaRight6b}
      \begin{tikzpicture}[scale=0.3]
        \ptslabel
        \TA{0}
      \end{tikzpicture}
    &
    &
    &
    &
    &
  \end{tabular}
  \caption{\label{table:Cremona-types}Cremona transformations on $\bm{\chi}$-blocks}
\end{table}

\begin{proposition}\label{CreTransTypes}
Suppose $O$ is an octad whose valuation data is a $\bm{\chi}$-block. The action of any Cremona transformation on $O$ gives an octad whose valuation data is also a $\bm{\chi}$-block, as described in Tab.~\ref{table:Cremona-types}.
\end{proposition}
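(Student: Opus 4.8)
The plan is to follow the same strategy as in the proof of Prop.~\ref{CreTransTwins}, now using Prop.~\ref{PGLTypes} in place of Prop.~\ref{thm:equiv-objects}. First I would reduce to a convenient normal form: by Prop.~\ref{PGLTypes} every octad whose valuation data is a $\bm{\chi}$-block is $\textup{PGL}_3(\mathcal{O}_{\overline{K}})$-equivalent to one with five points in standard position, and, crucially, the equivalences there allow one to choose which five points are put in standard position. So for each Cremona partition appearing in Tab.~\ref{table:Cremona-types} I can arrange that the four-element set defining that partition is split against the standard simplex in the way dictated by the corresponding row. For each such case I would write the octad as an explicit $4\times 8$ matrix whose entries are $\pi$-adic expressions in free parameters (mimicking the displays in the proof of Prop.~\ref{CreTransTwins}, with rows built from quantities like $a_i+\pi b_i$, $c_i$, $1$ and the standard basis vectors for a $\bm{\chi}_1$-block, and an analogous shape for a $\bm{\chi}_2$-block), apply the Cremona transformation — which inverts the coordinates of the four points not in the standard simplex — and then compute all $70$ Plücker coordinates of the image octad together with its twisted cubic index (which is $0$ by Prop.~\ref{prop:tci-invariant}).

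Generically this yields exactly the valuation data recorded in Tab.~\ref{table:Cremona-types}, and confirming that is a direct if lengthy determinant computation in each case. The substantive point, precisely as in the $\bm{\alpha}$-block case, is to rule out the possibility that some Plücker coordinate of the image octad acquires strictly larger valuation than expected. One shows that any such extra coincidence forces a Plücker coordinate of the \emph{original} octad, lying outside the support of its $\bm{\chi}$-block valuation data, to vanish modulo $\pi$, contradicting the hypothesis — the only exception being the locus where the defining parameters satisfy the quadratic relations that place the eight octad points on a twisted cubic curve, in which case the reduction is hyperelliptic (see Sec.~\ref{SS:hyper}) and lies outside the regime of $\bm{\chi}$-block valuation data. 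Here one uses both the valuation constraints and the Cayley relations~\eqref{eq:cayley}, which the image octad must also satisfy, to close the argument. Since Cremona transformations are involutions, it suffices to verify each row of the table in a single direction.

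The main obstacle will be bookkeeping. Unlike the $\bm{\alpha}$-block case, where the $\pi$-dependence is frequently just a global scaling of the Plücker coordinates, $\bm{\chi}$-blocks carry the extra ``needle'' structure — a collinear triple together with a colliding pair, or one of its $\textup{PGL}$-equivalent avatars from Prop.~\ref{PGLTypes} — so after inversion the valuations mix less transparently and there are more exceptional subloci to isolate and dismiss. I would therefore carry out the determinant and Cayley-relation computations symbolically in \MAGMA, exactly as for the other block types, and cite the verification script~\cite[file \texttt{PaperProof.m}]{BGit23}; the human content of the proof then reduces to selecting the correct normal form for each entry of Tab.~\ref{table:Cremona-types} and checking that the exceptional cases produced by the computation are precisely the twisted-cubic loci.
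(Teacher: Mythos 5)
Your proposal is correct and follows essentially the same route as the paper's proof: reduce to the normal forms of Prop.~\ref{PGLTypes} with five points in standard position chosen compatibly with each Cremona partition, write explicit parametrised matrices, check the generic image valuation data, and rule out extra Pl\"ucker coincidences (up to the twisted-cubic exception) via a symbolic \MAGMA{} verification against the Cayley relations, using the involution property to check each row of Tab.~\ref{table:Cremona-types} once. The paper does exactly this, instantiating ${\bm{\chi}}_{2\mathrm{a}}^{\oA\oB\oC}$ and ${\bm{\chi}}_{1\mathrm{b}}^{\oA\oB|\oC\oD\oE|\oF\oG\oH}$ explicitly and deferring the remaining determinant checks to the same script.
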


\begin{proof} Given the symmetries between some labellings we only need to
  consider the cases listed in Tab.~\ref{table:Cremona-types}.  Now we follow
  the same strategy as in Prop.~\ref{CreTransTwins} to study each of
  them. Thanks to Prop.~\ref{PGLTypes}, we can start with
  ${\bm{\chi}}_{2\mathrm{a}}^{\oA\oB\oC}$ in case $\#$1 from
  Tab.~\ref{table:Cremona-types}, \textit{i.e.}\ %

  \begin{displaymath}
    \arraycolsep=0.4\arraycolsep%
    \mbox{\scriptsize$\begin{bmatrix}
      1 & 0 & 0 & 0 & 1 & f_0          & f_0             & f_0            \\
      0 & 1 & 0 & 0 & 1 & f_0 +\pi f_1 & f_0 +\pi \lambda f_1 & f_0 +\pi
      \lambda' f_1 \\
      0 & 0 & 1 & 0 & 1 & f_0 +\pi f_2 & f_0 +\pi \lambda f_2 & f_0 +\pi \lambda' f_2\\
      0 & 0 & 0 & 1 & 1 & f_3          & g_3             & h_3
    \end{bmatrix}$}\,.
  \end{displaymath}
Recall the conventions we have made regarding Cremona transformations in Rem.~\ref{rmk:cremonatransformations}. It is immediate to see that the Cremona transformation
$\oA\oB\oC\oD \mid \oE\oF\oG\oH$ maps it to another
${\bm{\chi}}_{2\mathrm{a}}^{\oA\oB\oC}$, and it remains to prove that no other
relations appears, except when this octad lies on a twisted cubic curve
(see~\cite[file \texttt{PaperProof.m}]{BGit23}).
In case $\#$2, the Cremona transformation is given by the permutation $\oA\oB\oD\oE \mid \oC\oF\oG\oH$, and it maps
${\bm{\chi}}_{2\mathrm{a}}^{\oA\oB\oC}$ to
${\bm{\chi}}_{1\mathrm{c}}^{\oA\oB|\oC\oD\oE|\oF\oG\oH}$,
  \begin{displaymath}
    \arraycolsep=0.4\arraycolsep%
    \mbox{\scriptsize$\begin{bmatrix}
      1 & 0 & 1 & 0 & 0 & f_0\,\pi & \lambda\, f_0\,\pi & \lambda'\,f_0\,\pi \\
      0 & 1 & 1 & 0 & 0 & f_1\,\pi & \lambda\,f_1\,\pi & \lambda'\,f_1\,\pi \\
      0 & 0 & 1 & 1 & 0 & f_2 + f_0\,\pi  & g_2 + \lambda\,f_0\,\pi & h_2 + \lambda'\,f_0\,\pi \\
      0 & 0 & 1 & 0 & 1 & 1+f_0\,\pi & 1+\lambda\,f_0\,\pi &
      1+\lambda'\,f_0\,\pi
    \end{bmatrix}$}
    \mapsto
    \mbox{\scriptsize$\begin{bmatrix}
      1 & 0 & 1 & 0 & 0 & f_0^{-1} & \lambda^{-1}\, f_0^{-1} & \lambda'^{-1}\,f_0^{-1} \\
      0 & 1 & 1 & 0 & 0 & f_1^{-1} & \lambda^{-1}\,f_1^{-1} & \lambda'^{-1}\,f_1^{-1} \\
      0 & 0 & 1 & 1 & 0 & \pi\,\frac{1}{\,f_2 + f_0\,\pi}  & \pi\,\frac{1}{g_2 + \lambda\,f_0\,\pi} & \pi\,\frac{1}{h_2 + \lambda'\,f_0\,\pi} \\
      0 & 0 & 1 & 0 & 1 & \pi\,\frac{1}{1+f_0\,\pi} &
      \pi\,\frac{1}{1+\lambda\,f_0\,\pi} & \pi\,\frac{1}{1+\lambda'\,f_0\,\pi}
    \end{bmatrix}$}\,.
  \end{displaymath}
We proceed in a similar fashion with the other cases, starting for instance from
${\bm{\chi}}_{1\mathrm{b}}^{\oA\oB|\oC\oD\oE|\oF\oG\oH}$,
\begin{displaymath}
  \arraycolsep=0.4\arraycolsep%
  \mbox{\scriptsize$\begin{bmatrix}
       1 & 1+\pi\,b_0   & 1 & 1 & 0 & 0 & 0 & 1  \\
      a_1& a_1+ \pi\,b_1& 1 & 0 & 1 & 0 & 0 & a_1\\
     a_2 & a_2+\pi\,b_2 & 1 & 0 & 0 & 1 & 0 & h_2\\
     a_2 & a_2+\pi\,b_3 & 1 & 0 & 0 & 0 & 1 & h_3
   \end{bmatrix}$}\,.
\end{displaymath}

\end{proof}

\subsection{Hyperelliptic Blocks}\label{SS:hyper}

We introduce the next two building blocks, the \textit{hyperelliptic blocks}.

\begin{definition}
A hyperelliptic block is any of the blocks given in rows 18-19 of Tab.~\ref{tab:introblocksdefinition}, up to $S_8$-permutation of the indices.
\end{definition}

Uniquely to hyperelliptic blocks, we associate a subset of $E_3$ (instead of a subspace), and a picture, as in Tab.~\ref{tab:introblocksdefinition}. The reason why hyperelliptic blocks have associated subsets instead of subspaces is to get the exact right combinations of compatible blocks, as in Tab.~\ref{tab:twoblockssubspaces}; this will become clear when we define the combinatorial structures in Sec.~\ref{sec:formalanalysisoctadpictures}.

\begin{proposition}\label{prop:TCuequiv}
  Let $O$ be a octad whose valuation data consists of a {\bf TCu} block together with one other building block. Then, the only other building blocks that can occur are
  $\bm{\alpha}_1$, $\bm{\chi}_2$ and $\bm{\phi}_3$-blocks.  Moreover, the {\bf
    TCu} block is preserved under $\PGL$-equivalence.
\end{proposition}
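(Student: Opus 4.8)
The strategy is to mirror the approach of Propositions~\ref{CreTransTwins} and~\ref{CreTransTypes}, but now working with the twisted cubic relations rather than just the Pl\"ucker coordinates. First I would fix normal coordinates: by Prop.~\ref{thm:equiv-objects} (or rather its analogue for the \textbf{TCu} block), an octad whose valuation data contains $v_{\tc}$ with five points in standard position can be taken to have its eight points lying $\pi$-adically close to a standard parametrised twisted cubic, say $t \mapsto (1 : t : t^2 : t^3)$ evaluated at eight parameters $t_\oA, \ldots, t_\oH$, perturbed by terms of positive valuation. The constraint is that the twisted cubic index is $v_\dagger = 1$ and \emph{no} other building block of higher ``weight'' than a single block appears. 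Then I would enumerate, by a direct valuation computation on the Pl\"ucker coordinates $p_{ijkl}$ of such a parametrised family, exactly which extra coincidences (points colliding, triples becoming collinear, complementary quadruples becoming coplanar, etc.) are forced to be trivial and which can occur as a genuinely independent degeneration. The point is that points on a twisted cubic are in ``general position'' in the strongest possible sense --- no four are coplanar --- so the only way to superimpose a second block is via the remaining low-valuation degenerations, and one checks that these are precisely $\bm{\alpha}_1$ (two of the $t_i$ colliding), $\bm{\chi}_2$ (three of the $t_i$ colliding, giving three points on a line --- recall a secant/tangent line of the twisted cubic meets it with the right multiplicity) and $\bm{\phi}_3$ (four of the $t_i$ colliding). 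Any attempt at an $\bm{\alpha}_2$, $\bm{\chi}_1$, or $\bm{\phi}_1$/$\bm{\phi}_2$ block would require a coplanarity among points that remain distinct on the twisted cubic, which is impossible to first order, and the computation shows the relevant Pl\"ucker valuations cannot all be positive without forcing a further, unwanted coincidence --- exactly the kind of contradiction that appeared in the proofs of Props.~\ref{CreTransTwins} and~\ref{CreTransTypes}.

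For the second assertion, that the \textbf{TCu} block is preserved under $\PGL$-equivalence, I would argue as follows. By Lem.~\ref{lem:PGL-generated} it suffices to check invariance under $\textup{PGL}_3(\mathcal{O}_{\overline K})$-transformations and under the diagonal matrices $M_q$. Transformations in $\textup{PGL}_3(\mathcal{O}_{\overline K})$ reduce to honest projective transformations modulo $\pi$ and send twisted cubics to twisted cubics, so they leave both the valuation data on $P$ and the twisted cubic index $v_\dagger$ unchanged (directly from Def.~\ref{def:valuation-data}, since the 840 defining polynomials of~\cite[Eq.~(7.4)]{psv11} are $\textup{GL}_4$-covariant up to scalars). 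For the $M_q$, Prop.~\ref{prop:tci-invariant} already tells us the twisted cubic \emph{index} is $\PGL$-invariant; what needs checking is that the \textbf{TCu} block, as a \emph{summand} of the valuation data, survives --- i.e.\ that after applying $M_q$ and renormalising (which changes the Pl\"ucker part by an element of the lattice $W_8$ as in Prop.~\ref{prop:coord-transform}), the resulting valuation data still decomposes as $v_{\tc}$ plus (the transformed version of) whatever $\bm{\alpha}_1$, $\bm{\chi}_2$ or $\bm{\phi}_3$ block was there. Since the transformations on $\bm{\alpha}_1$, $\bm{\chi}_2$, $\bm{\phi}_3$-blocks are already understood (Props.~\ref{CreTransTwins}, \ref{CreTransTypes}, and the $\bm{\phi}$-block analysis of Sec.~\ref{sec:phi-blocks}), and Prop.~\ref{prop:coord-transform} pins down the change on the Pl\"ucker part exactly, one needs only verify that the $v_\dagger = 1$ contribution is neither absorbed into nor destroyed by the $W_8$-shift; this follows because $W_8$ affects only the Pl\"ucker coordinates and not $v_\dagger$, while Prop.~\ref{prop:tci-invariant} guarantees $v_\dagger$ stays equal to $1$.

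The main obstacle I anticipate is the casework in the first part: one must verify, for \emph{each} of the other building block types, that superimposing it on a \textbf{TCu} block over-determines the octad, and the cleanest proof of the impossibility will again be a \MAGMA-assisted check along the lines of~\cite[file \texttt{PaperProof.m}]{BGit23}. The subtlety, as in the earlier Cremona propositions, is separating the ``generic'' behaviour (where the answer is visibly one of the three allowed blocks) from the degenerate sub-loci where some Pl\"ucker coordinate vanishes to higher order --- one must show every such sub-locus forces either a larger collection of points onto the twisted cubic (still a \textbf{TCu} situation) or a coincidence already accounted for by the allowed blocks, rather than producing a forbidden block. Writing out the twisted-cubic-parametrised family explicitly and running the valuation bookkeeping is the concrete task; the conceptual content is simply that eight points on a conic-like rational normal curve in $\mathbb P^3$ are as ``spread out'' as possible.
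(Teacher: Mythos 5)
Your treatment of the second assertion coincides with the paper's: both reduce it to Prop.~\ref{prop:tci-invariant} (your extra worry about the $W_8$-shift is harmless, since that shift only touches the Pl\"ucker part and not $v_\dagger$). For the first assertion, however, you take a genuinely different and much heavier route than the paper, and your route as written has a gap. The paper's argument is purely combinatorial and runs through the \emph{definition} of the twisted cubic index: each of the 840 polynomials of~\cite[Eq.~(7.4)]{psv11} is a difference of two products of Pl\"ucker coordinates, and for the valuation data of an $\bm{\alpha}_2$-, $\bm{\chi}_1$-, $\bm{\phi}_1$- or $\bm{\phi}_2$-block one checks that at least one of these polynomials has its two terms with \emph{unequal} valuations. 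This is the ``first case'' of the proof of Prop.~\ref{prop:tci-invariant}, which forces $v_\dagger = 0$ for \emph{any} octad with that Pl\"ucker data, so a \textbf{TCu} summand is excluded without ever realising the octad geometrically. This buys a short, finite check on valuation data alone; the surviving blocks $\bm{\alpha}_1$, $\bm{\chi}_2$, $\bm{\phi}_3$ are exactly those for which all 840 polynomials keep balanced term valuations.

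The gap in your approach is the very first step: you invoke ``Prop.~\ref{thm:equiv-objects} (or rather its analogue for the \textbf{TCu} block)'' to put the octad into a perturbed parametrised form $t \mapsto (1:t:t^2:t^3)$. No such normal-form statement exists in the paper, and it is not a formality --- the proposition is a statement about valuation data, and your argument only establishes the conclusion for octads that you have already placed on (a perturbation of) a standard twisted cubic. Without proving that every octad with $v_\dagger \geqslant 1$ admits such a presentation compatibly with the normalisation of the other five points, your enumeration of degenerations (parameters colliding in pairs, triples, quadruples) does not logically exclude the forbidden blocks; it only shows they do not arise from \emph{this} family. If you insist on the geometric route you must either prove that reduction lemma or, better, switch to the contrapositive as the paper does: assume the extra block is of a forbidden type and derive $v_\dagger = 0$ directly from the Pl\"ucker valuations. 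Your geometric intuition (no four distinct points of a twisted cubic are coplanar, no three are collinear) is exactly what underlies the imbalance of the two terms in the relevant polynomial, but the paper converts it into a valuation identity rather than a statement about point configurations.
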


\begin{proof}
    The fact that the {\bf TCu} block is preserved under $\PGL$-equivalence immediately follows from Prop.~\ref{prop:tci-invariant}.
    Moreover, for the blocks $\bm{\alpha}_2$, $\bm{\chi}_1$ and $\bm{\phi}_1$, $\bm{\phi}_2$, we get into the first case of the proof of Prop.~\ref{prop:tci-invariant}, and the {\bf TCu} block cannot occur.
    Therefore, $\bm{\alpha}_1$, $\bm{\chi}_2$ and $\bm{\phi}_3$ are the only blocks compatible with {\bf TCu}.
\end{proof}

In the same way, hyperelliptic lines are preserved under $\PGL$-equivalence.

\begin{proposition}
The valuation data $\mathrm{{\bf Line}}^{\oA\oB\oC\oD}$ and $\mathrm{{\bf Line}}^{\oE\oF\oG\oH}$ are $\PGL$-equivalent. Moreover, an octad with these valuation data is $\mathrm{PGL}(\O_{\overline{K}})$-equivalent to an octad with five points in standard position, and restricted to such octads, these valuation data exhaust the equivalence class.
\end{proposition}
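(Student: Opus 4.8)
The plan is to follow the template of Prop.~\ref{thm:equiv-objects} and Prop.~\ref{PGLTypes}: control the $\textup{PGL}_3(\O_{\overline{K}})$ part of $\PGL$ using the explicit description of how valuation data transforms under re-normalisation (Lem.~\ref{lem:PGL-generated}, Prop.~\ref{prop:coord-transform}, Prop.~\ref{prop:equivalence-of-valuation-data}), and then to exhibit a single diagonal transformation that exchanges the collinear quadruple $\oA\oB\oC\oD$ with its complement $\oE\oF\oG\oH$. Throughout, $v_\ln^{\oA\oB\oC\oD}$ and $v_\ln^{\oE\oF\oG\oH}$ denote the valuation data of the two \textbf{Line} blocks.

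For the normalisation claim, given a regular octad $O$ with valuation data $v_\ln^{\oA\oB\oC\oD}$, I would observe that every five of its points are in general position over $\overline{K}$, so some $\PGL$ transformation puts five chosen points in standard position; if the five are chosen to contain at most two of $\{\oA,\oB,\oC,\oD\}$ (for instance $\{\oA,\oB,\oE,\oF,\oG\}$) then every four-element subset of these five has valuation $0$ in $v_\ln^{\oA\oB\oC\oD}$, hence is a residue-field basis, so the normalising transformation and its inverse are integral, i.e.\ lie in $\textup{PGL}_3(\O_{\overline{K}})$ and leave all Pl\"ucker valuations unchanged. A short direct check (as for $\bm{\alpha}_2$-blocks) confirms that $v_\ln^{\oA\oB\oC\oD}$ is consistent with such a normalisation, the valuation $2$ at $p_{\oA\oB\oC\oD}$ recording that $O_\oC$ and $O_\oD$ drop onto the line spanned by $O_\oA$ and $O_\oB$ modulo $\pi$.

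For the $\PGL$-equivalence, after a further transformation in $\textup{PGL}_3(\O_{\overline{K}})$ I would arrange that the line onto which $O_\oA,\ldots,O_\oD$ degenerate is $\ell=\{x_3=x_4=0\}$ and --- using that the reductions of $O_\oE,\ldots,O_\oH$ are four points in general position (as the valuation data is exactly $v_\ln^{\oA\oB\oC\oD}$) and that the residue field of $\overline{K}$ is infinite --- that the skew line $\ell'=\{x_1=x_2=0\}$ avoids all of $O_\oE,\ldots,O_\oH$ modulo $\pi$. Applying $M=\diag(\pi,\pi,1,1)$ and renormalising, the points $O_\oA,\ldots,O_\oD$, whose last two coordinates have positive valuation, acquire a common factor $\pi$, so their reductions spread off $\ell$; meanwhile $O_\oE,\ldots,O_\oH$ (already normalised and off $\ell'$) have their first two coordinates multiplied by $\pi$, so their reductions collapse onto $\ell'$, at depth exactly $1$ by the choice of $\ell'$. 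Since both the Cayley property and $\overline{K}$-regularity are $\PGL$-invariant, $M\cdot O$ is again a regular Cayley octad, now with valuation data $v_\ln^{\oE\oF\oG\oH}$ generically; as $M^{-1}$ (up to scaling, $\diag(1,1,\pi,\pi)$) reverses the two lines, this yields the asserted equivalence.

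The main obstacle is the one already met in Prop.~\ref{CreTransTwins}: checking that no Pl\"ucker valuation \emph{outside} the support of $v_\ln^{\oE\oF\oG\oH}$ secretly increases after applying $M$, and dually that $v_\ln^{\oA\oB\oC\oD}$ and $v_\ln^{\oE\oF\oG\oH}$ are the only valuation data attained by normalised octads in this $\PGL$-orbit. By Lem.~\ref{lem:PGL-generated} and Prop.~\ref{prop:coord-transform} any such renormalised octad differs from $v_\ln^{\oA\oB\oC\oD}$ only by an element of the span of the vectors $v_\pt^S-v_\pl^T$ with $S\sqcup T=\{\oA,\ldots,\oH\}$, subject to the non-negativity and normalisation constraints of Prop.~\ref{prop:equivalence-of-valuation-data}; enumerating the finitely many admissible outcomes and verifying, as in Prop.~\ref{CreTransTwins}, that the spurious higher-valuation cases force the octad onto a twisted cubic (so into the setting of Sec.~\ref{SS:hyper}) rather than contradicting anything, leaves precisely $v_\ln^{\oA\oB\oC\oD}$ and $v_\ln^{\oE\oF\oG\oH}$. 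This enumeration is cumbersome, so --- as with Prop.~\ref{thm:equiv-objects} --- I would delegate it to a computer algebra verification~\cite[file \texttt{EquivalenceOfValuationData.m}]{BGit23}.
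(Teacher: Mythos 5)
Your proposal is correct, and at its core it is the same argument the paper uses: the paper's proof of this proposition is a one-line deferral to the proof of Prop.~\ref{thm:equiv-objects}, i.e.\ track how the valuation data changes when a different $5$-point subset is put into standard position (via Lem.~\ref{lem:PGL-generated}, Prop.~\ref{prop:coord-transform} and Prop.~\ref{prop:equivalence-of-valuation-data}) and delegate the finite enumeration and the exhaustiveness check to the script \texttt{EquivalenceOfValuationData.m}. Where you genuinely add something is the middle paragraph: the explicit skew-line swap by $\diag(\pi,\pi,1,1)$, sending the line $\{x_3=x_4=0\}$ carrying $\bar O_\oA,\ldots,\bar O_\oD$ to the complementary line $\{x_1=x_2=0\}$ onto which $\bar O_\oE,\ldots,\bar O_\oH$ collapse, makes the geometric reason for the duality $\mathbf{Line}^{\oA\oB\oC\oD}\leftrightarrow\mathbf{Line}^{\oE\oF\oG\oH}$ transparent in a way the paper leaves implicit (it is the $\mathbf{Line}$ analogue of the explicit matrices written out in the proof of Prop.~\ref{CreTransTwins}); your observation that any $5$-point subset meeting $\{\oA,\oB,\oC,\oD\}$ in at most two indices is normalised by an \emph{integral} transformation, so that only the remaining choices can change the valuation data, also cleanly isolates where the two representatives come from. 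The one caveat is that your geometric construction still does not by itself rule out Pl\"ucker valuations outside the support of $v_\ln^{\oE\oF\oG\oH}$ jumping for special octads, and you correctly fall back on the same computational verification the paper relies on for that step; so the approaches buy the same rigour, with yours offering more conceptual motivation at no extra cost.
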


\begin{proof}
The proof is the same as for Prop.~\ref{thm:equiv-objects}.
\end{proof}

We end this section with the following result on the action of Cremona transformations on hyperelliptic blocks:

\begin{proposition}\label{prop:CreTransHE}
    Suppose $O$ is an octad whose valuation data is a hyperelliptic-block. The action of any Cremona transformation on $O$ gives an octad whose valuation data is also a hyperelliptic-block, as described in Tab.~\ref{table:Cremona-HE}.
\end{proposition}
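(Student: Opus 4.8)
The plan is to follow verbatim the strategy already used for Prop.~\ref{CreTransTwins} and Prop.~\ref{CreTransTypes}. First I would invoke the $\PGL$-equivalences established immediately above — Prop.~\ref{prop:TCuequiv} for the $\mathrm{{\bf TCu}}$-block, and the $\PGL$-equivalence of $\mathrm{{\bf Line}}$-blocks just proved — to reduce to octads in normalised form, i.e.\ with five points in standard position, chosen compatibly with the Cremona transformation to be applied. Since a $\mathrm{{\bf TCu}}$-block is invariant under the full $S_8$-action and Cremona transformations are indexed, up to $S_8$, by partitions of type $4\mid 4$, essentially a single Cremona transformation needs to be treated in that case; for a $\mathrm{{\bf Line}}^{\oA\oB\oC\oD}$-block the relevant symmetry group is $S_4\times S_4$, so the cases to consider are organised by the size of the intersection of $\{\oA,\oB,\oC,\oD\}$ with a part of the Cremona partition, leaving the three sub-cases of sizes $4$, $3$ and $2$.

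Next, for each such case I would write the octad explicitly as a $4\times 8$ matrix over $R$: the five standard points, together with parameters encoding the constraint defining the block (collinearity of $O_\oA,O_\oB,O_\oC,O_\oD$, respectively a twisted-cubic parametrisation of the shape $O_i=(1:t_i:t_i^2:t_i^3)$) and the $\pi$-perturbations recording that the degeneration is of order $1$, exactly as in the matrices appearing in the proofs of Prop.~\ref{CreTransTwins} and~\ref{CreTransTypes}. Applying the Cremona transformation amounts to inverting the appropriate block of affine coordinates and renormalising; one then reads off the valuations of the $70$ Plücker coordinates of the image and computes its twisted cubic index via the $840$ polynomials of~\cite[Eq.~(7.4)]{psv11}. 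Generically this yields exactly the hyperelliptic block recorded in Tab.~\ref{table:Cremona-HE}.

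The main obstacle, as in the earlier propositions, is to rule out the possibility that some Plücker coordinate of the image acquires valuation strictly larger than predicted — equivalently, to show that the ``generic'' computation is the correct one for \emph{every} octad with the prescribed starting valuation data. The argument is the familiar one: whenever an entry is forced higher, one shows that either this contradicts the support of the starting valuation data (some Plücker coordinate outside that support would have to vanish modulo $\pi$), or it forces the starting octad onto a twisted cubic, in which case one is already in the $\mathrm{{\bf TCu}}$ situation and the reduction is hyperelliptic (cf.\ Sec.~\ref{SS:hyper}), so the conclusion still holds. These case distinctions are cumbersome but purely mechanical and lend themselves to automation in an algebraic computation system; they are carried out in the \MAGMA script~\cite[file \texttt{PaperProof.m}]{BGit23}, using in addition the relations~\eqref{eq:cayley} that both the source and the target octad must satisfy.

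Finally I would check that the twisted cubic index of the image equals the predicted value — and in particular is unchanged whenever the image block is again $\mathrm{{\bf TCu}}$. This follows from the same bookkeeping as in the proof of Prop.~\ref{prop:tci-invariant}: each of the $840$ polynomials is, term by term, a monomial in the Plücker coordinates times a difference of products of units, each index $\oA$ through $\oH$ occurs the same number of times in both terms, and the Cremona transformation changes each Plücker coordinate only by a power of $\pi$ and a unit; hence either both terms of some polynomial fail to have equal valuation (forcing index $0$ on both sides) or the index is the minimal valuation of those unit-differences, which is preserved. I expect this last step, and the twisted-cubic exceptional locus appearing in step three, to be where all the real content lies — the rest is the now-standard explicit matrix manipulation.
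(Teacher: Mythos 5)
Your proposal is correct in spirit but takes a genuinely different route from the paper. The paper's proof is essentially two lines: case 1 of Tab.~\ref{table:Cremona-HE} (the $\mathbf{TCu}$-block mapping to a $\mathbf{Line}$-block) is quoted as a classical fact from Sec.~IX.3 of \cite{DolgachevOrtland} on octads lying on a twisted cubic, and cases 2 and 3 are then deduced by ``applying case 1 twice'': a $\mathbf{Line}^{\oA\oB\oC\oD}$-block is itself the Cremona image of a $\mathbf{TCu}$-block, the composite of two Cremona transformations is again an element of $\textup{Sp}(6,2)$ of the form $\sigma_H\circ\rho$ with $\rho\in S_8$, and the $\mathbf{TCu}$-block is $S_8$-invariant, so case 1 applies once more. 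This buys a proof with no matrix computation at all, whereas your route reproduces the explicit normalised-matrix analysis of Prop.~\ref{CreTransTwins} and~\ref{CreTransTypes}; the trade-off is that your version would have to parametrise octads lying on a twisted cubic modulo $\pi$ --- precisely the exceptional locus that the proofs of those propositions deliberately set aside --- which is noticeably messier than the Pl\"ucker-only blocks.

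There is also one step in your argument that does not go through as written: the transfer of the bookkeeping from Prop.~\ref{prop:tci-invariant} to control the twisted cubic index of the image. That proposition rests on the fact that a $\PGL$ transformation multiplies each Pl\"ucker coordinate $p_{ijkl}$ by $\det(M)$ times one renormalisation constant per point, so that the two terms of each of the $840$ polynomials pick up the same common factor. A Cremona transformation does \emph{not} change each Pl\"ucker coordinate ``only by a power of $\pi$ and a unit'': the image coordinates are genuinely new polynomial expressions in the entries (this is exactly why Prop.~\ref{CreTransTwins} needs its case analysis and the relations~\eqref{eq:cayley}), so the asserted preservation of the index is unjustified as stated. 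Computing the twisted cubic index of the image, and in particular showing that it vanishes for the $\mathbf{Line}$-blocks produced in all three cases, would require a separate argument; the paper's citation-plus-composition proof sidesteps this entirely.
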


    \begin{table}[htbp]
  \centering
  \renewcommand{\arraystretch}{1.3}
  \newcommand{\DoScale}[1]{\scalebox{0.6}{#1}}%
  \let\MyScale\relax
  \begin{tabular}{
    >{\centering\arraybackslash}m{0.2cm}
    >{\centering\arraybackslash\collectcell\MyScale}m{1.5cm}<{\endcollectcell}
    >{\centering\arraybackslash}m{2cm}
    >{\centering\arraybackslash\collectcell\MyScale}m{1.5cm}<{\endcollectcell}
    }
    \#
    & Picture
    & Partition
    & Image
    \\
    \noalign{\global\let\MyScale\DoScale}
    \cmidrule{1-4}
    \morecmidrules
    \cmidrule{1-4}
    $1$
    & \tikzsetnextfilename{CremonaLeftHE1}
      \begin{tikzpicture}[scale=0.3]
        \ptslabel
        \TCu
      \end{tikzpicture}
    & $\oA\oB\oC\oD \mid \oE\oF\oG\oH$
    &\tikzsetnextfilename{CremonaRightHE1}
      \begin{tikzpicture}[scale=0.3]
      \ptslabel
        \HE{0}
      \end{tikzpicture}
    \\[0.6cm]
    \cmidrule{1-4}
    $2$
    & \tikzsetnextfilename{CremonaLeftHE2}
      \begin{tikzpicture}[scale=0.3]
        \ptslabel
        \HE{0}
      \end{tikzpicture}
    & $\oA\oB\oE\oF \mid \oC\oD\oG\oH$
    & \tikzsetnextfilename{CremonaRightHE2}
      \begin{tikzpicture}[scale=0.3]
        \ptslabel
        \HE{0}
      \end{tikzpicture}
    \\[0.6cm]
    \cmidrule{1-4}
    $3$
    & \tikzsetnextfilename{CremonaLeftHE3}
      \begin{tikzpicture}[scale=0.3]
        \ptslabel
        \HE{0}
      \end{tikzpicture}
    & $\oA\oB\oC\oE \mid \oD\oF\oG\oH$
    & \tikzsetnextfilename{CremonaRightHE3}
      \begin{tikzpicture}[scale=0.3]
        \ptslabel
        \HE{2}
      \end{tikzpicture}
  \end{tabular}
  \caption{\label{table:Cremona-HE}Cremona transformations on
    hyperelliptic-blocks}
\end{table}

\begin{proof}
    Case 1 is in fact already known in the literature, see Sec.~IX.3 in \cite{DolgachevOrtland}. Cases 2 and 3 are then a consequence of applying case 1 twice.
\end{proof}

\subsection{Phi Blocks}
\label{sec:phi-blocks}
We introduce the last class of building blocks, a special type of hyperelliptic blocks, ${\bm \phi}$-blocks.

\begin{definition}
     A $\bm{\phi}$-block is any of the blocks given in rows 11-17 of Tab.~\ref{tab:introblocksdefinition}, up to $S_8$-permutation of the indices.
\end{definition}

As before, and under Prop.~\ref{prop:PGLphiblocks}, we use the $\PGL$-independent labels ${\bm{\phi}}_1$, ${\bm{\phi}}_2$ and ${\bm{\phi}}_3$. We associate to the above valuation data pictures and subgroups as in Tab.~\ref{tab:introblocksdefinition}. For technical reasons, we also attach to each ${\bm{\phi}}$-block an auxiliary  ${\bm{\alpha}}$-block; the technical necessity of these blocks has been given in Sec.~\ref{subsec:mainresults}, but see also Def.~\ref{def:blockdecomposition} and Conj.~\ref{conj:VDUnieuqOctadDiagram}.

\begin{remark}
  The subspaces associated to a ${\bm{\phi}}$-block are both 3-dimensional and equivalent
  under the action of $\textup{Sp}(6,2)$ (see Tab.~\ref{tab:introblocksdefinition}). Thus there is a choice to be made
  about which of the subspaces one associates to a given ${\bm{\phi}}$-block. The
  constructions in this section depend on this choice, but ultimately
  Conj.~\ref{conj:VDUnieuqOctadDiagram} is being independent of such a
  choice.
\end{remark}

\begin{remark}
It seems that a $\bm{\phi}$-block is literally the sum of its auxiliary blocks: one $\bm{\alpha}$-block and two hyperelliptic blocks. Indeed, ${\bm{\phi}}_{1\mathrm{a}}^{\CAApts}$ is the sum of $\textbf{Line}^{\oA\oB\oE\oF}$, $\textbf{Line}^{\oA\oB\oG\oH}$, and ${\bm{\alpha}}_{2\mathrm{a}}^{\oA\oB\oC\oD}$. Similarly, ${\bm{\phi}}_{2\mathrm{b}}^{\oA\oB\oC|\oD\oE\oF}$ is the sum of ${\bm{\alpha}}_{1\mathrm{a}}^{\oA\oB}$, $\textbf{Line}^{\oA\oB\oC\oD}$, and $\textbf{Line}^{\oA\oB\oC\oE}$. Finally, ${\bm{\phi}}_{3\mathrm{a}}^{\oA\oB\oC\oD}$ seems to be only the sum of $\textbf{Line}^{\oA\oB\oC\oD}$ and $\alpha_{2\mathrm{a}}^{\oA\oB\oC\oD}$, but the eight points also automatically lie on a twisted cubic for this valuation data. This seems to be in line with the fact that a \texttt{(1=1)} curve is automatically hyperelliptic.
\end{remark}

\begin{proposition}\label{prop:PGLphiblocks} We have the following $\PGL$-equivalences of valuation data.\\[-0.50cm]
  \begin{itemize}
  \item[(i)] ${\bm{\phi}}_{1\mathrm{a}}^{\CAApts}$ and ${\bm{\phi}}_{1\mathrm{b}}^{\CAApts}$ are equivalent.
  \item[(ii)] $\bm{\phi}_{2\mathrm{a}}^{\oA\oB\oC|\oF\oG\oH}$,
    $\bm{\phi}_{2\mathrm{b}}^{\oA\oB\oC|\oF\oG\oH}$ and $\bm{\phi}_{2\mathrm{c}}^{\oA\oB\oC|\oF\oG\oH}$
    are equivalent.
  \item[(iii)] $\bm{\phi}_{3\mathrm{a}}^{\oA\oB\oC\oD}$ and $\bm{\phi}_{3\mathrm{b}}^{\oA\oB\oC\oD}$ are
    equivalent.
  \end{itemize}
  Moreover, an octad with these valuation data is
  $\mathrm{PGL}_3(\mathcal{O}_{\overline{K}})$-equivalent to an octad with five points in standard
  position, and restricted to such octads, up to symmetry of the partitions of
  the indices, these valuation data exhaust the two equivalence classes.
\end{proposition}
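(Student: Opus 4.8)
The plan is to argue exactly as in the proof of Prop.~\ref{thm:equiv-objects}, and as indicated for Prop.~\ref{PGLTypes} and for the $\mathbf{Line}$-blocks. The starting point is Lem.~\ref{lem:PGL-generated}: any $\PGL$ transformation factors as a product of elements of $\textup{PGL}_3(\mathcal{O}_{\overline{K}})$ --- which leave every one of the $70$ Pl\"ucker valuations unchanged, and, by Prop.~\ref{prop:tci-invariant}, also fix the twisted cubic index --- and diagonal matrices of the form $M_q$. Consequently, to determine the $\PGL$-orbit of the valuation data of a $\bm{\phi}$-block, restricted as in the statement to normalised octads, it is enough to record how that data changes when one re-normalises with respect to a different set of five points. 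Prop.~\ref{prop:coord-transform} and the argument of Prop.~\ref{prop:equivalence-of-valuation-data} make this a purely combinatorial operation: the re-normalised valuation data $v^{\#}$ is obtained from $v$ by adding the unique vector of the $8$-dimensional space $W_8$ spanned by the $v_{\pt}^S - v_{\pl}^T$ that restores the normalisation conditions at the chosen five points, with the $\dagger$-entry simply carried along by Prop.~\ref{prop:tci-invariant}.

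Concretely, I would fix for each family a representative realising one of the listed valuation data with the corresponding five points in standard position --- say ${\bm{\phi}}_{1\mathrm{b}}^{\CAApts}$, $\bm{\phi}_{2\mathrm{a}}^{\oA\oB\oC|\oF\oG\oH}$, and $\bm{\phi}_{3\mathrm{a}}^{\oA\oB\oC\oD}$ --- and then run over the finitely many admissible re-normalisations, reading off $v^{\#}$ from the rule above each time. This produces, for the $\bm{\phi}_1$ family, precisely $\{{\bm{\phi}}_{1\mathrm{a}},\,{\bm{\phi}}_{1\mathrm{b}}\}$ up to the symmetry of the partition $\CAApts$; for the $\bm{\phi}_2$ family, precisely $\{\bm{\phi}_{2\mathrm{a}},\,\bm{\phi}_{2\mathrm{b}},\,\bm{\phi}_{2\mathrm{c}}\}$ up to the analogous symmetry; and for the $\bm{\phi}_3$ family, precisely $\{\bm{\phi}_{3\mathrm{a}},\,\bm{\phi}_{3\mathrm{b}}\}$. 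That this list is closed under all further re-normalisations, and that it is exhaustive among normalised octads with the given valuation data, is a finite check; as in the earlier propositions I would carry it out via the implemented routine in~\cite[file \texttt{EquivalenceOfValuationData.m}]{BGit23}.

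The step I expect to be the main obstacle is the treatment of the $\dagger$-entry and of the auxiliary blocks, which is where $\bm{\phi}$-blocks differ from $\bm{\alpha}$- and $\bm{\chi}$-blocks. For $\bm{\phi}_3$ the eight points automatically lie on a twisted cubic, so the valuation data genuinely has $v_\dagger > 0$, and one must verify that the value assigned by the combinatorial rule matches the intrinsic definition in Def.~\ref{def:valuation-data} after each re-normalisation; Prop.~\ref{prop:tci-invariant} supplies this, but one has to be careful that the comparison in Def.~\ref{def:valuation-data} with a generic octad of the same Pl\"ucker data behaves well under the transformation. One should likewise confirm that the transformed valuation data remains inside the Cayley lattice $\textup{L}$ of Rem.~\ref{rem:valuation-lattice}, equivalently that the transformed octad still satisfies the relations of Thm.~\ref{thm:Cayley-def}, so that no spurious valuation data appears, and that the auxiliary $\bm{\alpha}$- and hyperelliptic blocks attached to each $\bm{\phi}$-block transform compatibly. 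With these consistency checks in hand, the enumeration closes and yields the equivalences (i)--(iii) together with the claimed exhaustiveness; the $\textup{PGL}_3(\mathcal{O}_{\overline{K}})$-normalisation statement is part of the same procedure, exactly as in Prop.~\ref{prop:equivalence-of-valuation-data}.
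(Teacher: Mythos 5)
Your proposal is correct and follows essentially the same route as the paper: the published proof simply invokes the argument of Prop.~\ref{thm:equiv-objects}, namely changing the set of five points in standard position, tracking the valuation data via the procedure of Prop.~\ref{prop:equivalence-of-valuation-data}, and verifying the equivalences and their completeness with the implemented routine in \cite[file \texttt{EquivalenceOfValuationData.m}]{BGit23}. Your additional attention to the twisted cubic index and the auxiliary blocks is a sensible elaboration of the same argument rather than a departure from it.
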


\begin{proof}
The proof goes in the same lines as for Prop.~\ref{thm:equiv-objects}.
\end{proof}

\subsubsection*{Cremona Transformation on Phi Blocks}

Similarly to $\bm{\alpha}$- and $\bm{\chi}$-blocks,  Cremona transformations
act on the $S_8$-equivalence classes of octad pictures of $\bm{\phi}$-blocks in a well defined way.

\begin{table}[htbp]
  \centering
  \renewcommand{\arraystretch}{1.3}
  \newcommand{\DoScale}[1]{\scalebox{0.6}{#1}}%
  \let\MyScale\relax
  \begin{tabular}{
    >{\centering\arraybackslash}m{0.2cm}
    >{\centering\arraybackslash\collectcell\MyScale}m{1.5cm}<{\endcollectcell}
    >{\centering\arraybackslash}m{2cm}
    >{\centering\arraybackslash\collectcell\MyScale}m{1.5cm}<{\endcollectcell}
    p{0.3cm}
    >{\centering\arraybackslash}m{0.2cm}
    >{\centering\arraybackslash\collectcell\MyScale}m{1.5cm}<{\endcollectcell}
    >{\centering\arraybackslash}m{2cm}
    >{\centering\arraybackslash\collectcell\MyScale}m{1.5cm}<{\endcollectcell}
    }
    \#
    & Picture
    & Partition
    & Image
    &
    & \#
    & Picture
    & Partition
    & Image\\
    \noalign{\global\let\MyScale\DoScale}
    \cmidrule{1-4}\cmidrule{6-9}
    \morecmidrules
    \cmidrule{1-4}\cmidrule{6-9}
    $1$
    & \tikzsetnextfilename{CremonaLeft9}
    \begin{tikzpicture}[scale=0.3]
    \ptslabelcross
    \CA
  \end{tikzpicture}
    & $\oA\oB\oE\oF \mid \oC\oD\oG\oH$
    & \tikzsetnextfilename{CremonaRight9b}
      \begin{tikzpicture}[scale=0.3]
      \ptslabel
        \CC{0}
      \end{tikzpicture}
    &
    & $6$
    & \tikzsetnextfilename{CremonaLeftC}
      \begin{tikzpicture}[scale=0.3]
        \ptslabel
        \CB{0}
      \end{tikzpicture}
    & $\oA\oC\oD\oE \mid \oB\oF\oG\oH$
    & \tikzsetnextfilename{CremonaRightD}
      \begin{tikzpicture}[scale=0.3]
        \ptslabel
        \CC{1}
      \end{tikzpicture}
      \\[0.6cm]
    \cmidrule{1-4}\cmidrule{6-9}
    $2$
    & \tikzsetnextfilename{CremonaLeft10}
      \begin{tikzpicture}[scale=0.3]
        \ptslabelcross
        \CA
      \end{tikzpicture}
    & $\oA\oB\oC\oD \mid \oE\oF\oG\oH$
    & \tikzsetnextfilename{CremonaRight10b}
    \begin{tikzpicture}[scale=0.3]
      \ptslabelcross
      \CA
    \end{tikzpicture}
    &
    & $7$
    & \tikzsetnextfilename{CremonaLeftE}
      \begin{tikzpicture}[scale=0.3]
        \ptslabel
        \CB{0}
      \end{tikzpicture}
    & $\oA\oB\oC\oD \mid \oE\oF\oG\oH$
    & \tikzsetnextfilename{CremonaRightE2}
      \begin{tikzpicture}[scale=0.3]
        \ptscustomlabel{A}{B}{C}{D}{E}{F}{G}{H}
        \CB{0}
      \end{tikzpicture}
      \\[0.6cm]
    \cmidrule{1-4}\cmidrule{6-9}
    $3$
    & \tikzsetnextfilename{CremonaLeftA}
      \begin{tikzpicture}[scale=0.3]
        \ptslabelcross
        \CA
      \end{tikzpicture}
    & $\oA\oB\oE\oH \mid \oC\oD\oF\oG$
    & \tikzsetnextfilename{CremonaRightA3}
      \begin{tikzpicture}[scale=0.3]
        \ptslabelcross
        \CA
      \end{tikzpicture}
    &
    & $8$
    & \tikzsetnextfilename{CremonaLeftF}
      \begin{tikzpicture}[scale=0.3]
        \ptslabel
        \CB{0}
      \end{tikzpicture}
    & $\oA\oB\oC\oG \mid \oD\oE\oF\oH$
    & \tikzsetnextfilename{CremonaRightF2}
      \begin{tikzpicture}[scale=0.3]
        \ptscustomlabel{A}{B}{D}{E}{G}{C}{H}{F}
        \CB{0}
      \end{tikzpicture}
      \\[0.6cm]
    \cmidrule{1-4}\cmidrule{6-9}
    $4$
    & \tikzsetnextfilename{CremonaLeftB}
      \begin{tikzpicture}[scale=0.3]
        \ptslabelcross
        \CA
      \end{tikzpicture}
    & $\oA\oC\oE\oG \mid \oB\oD\oF\oH$
    & \tikzsetnextfilename{CremonaRightB2}
      \begin{tikzpicture}[scale=0.3]
        \ptscustomlabel{D}{E}{H}{B}{C}{F}{G}{A}
        \CA
      \end{tikzpicture}
    &
    & $9$
    & \tikzsetnextfilename{CremonaLeftG}
      \begin{tikzpicture}[scale=0.3]
        \ptslabel
        \CC{0}
      \end{tikzpicture}
    & $\oA\oB\oC\oD \mid \oE\oF\oG\oH$
    & \tikzsetnextfilename{CremonaRightG}
      \begin{tikzpicture}[scale=0.3]
        \ptslabel
        \CC{0}
      \end{tikzpicture}
      \\ [0.6cm]
    \cmidrule{1-4}\cmidrule{6-9}
    $5$
    &  \tikzsetnextfilename{CremonaLeftH}
    \begin{tikzpicture}[scale=0.3]
        \ptslabelcross
        \CA
      \end{tikzpicture}
    & $\oA\oB\oC\oE \mid \oD\oF\oG\oH$
    & \tikzsetnextfilename{CremonaRightH}
    \begin{tikzpicture}[scale=0.3]
       \ptscustomlabel{D}{E}{A}{B}{F}{C}{G}{H}
        \CB{0}
      \end{tikzpicture}
    &
    &
    &
    &
    &
  \end{tabular}
  \caption{\label{table:Cremona-candies}Cremona transformations on ${\bm{\phi}}$-blocks
  }
\end{table}

\begin{proposition}\label{prop:CreTransCans}
 Suppose $O$ is an octad whose valuation data is a $\bm{\phi}$-block. The action of any Cremona transformation on $O$ gives an octad whose valuation data is also a $\bm{\phi}$-block, as described in Tab.~\ref{table:Cremona-candies}.
\end{proposition}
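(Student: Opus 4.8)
The plan is to follow exactly the template of the proofs of Prop.~\ref{CreTransTwins} and Prop.~\ref{CreTransTypes}. First I would use the $S_8$-symmetry --- which by Rem.~\ref{rmk:cremonatransformations} acts transparently on valuation data --- to reduce the statement to the finite list of cases recorded in Tab.~\ref{table:Cremona-candies}: the three partition-orbits relative to a $\bm{\phi}_1$-block, the two relative to a $\bm{\phi}_2$-block, and likewise for $\bm{\phi}_3$. In each case, Prop.~\ref{prop:PGLphiblocks} lets me assume the octad has five points in standard position and valuation data equal to one of the explicit representatives ${\bm{\phi}}_{1\mathrm{a}}$, ${\bm{\phi}}_{1\mathrm{b}}$, ${\bm{\phi}}_{2\mathrm{a/b/c}}$, ${\bm{\phi}}_{3\mathrm{a/b}}$, choosing the representative so that the four-point block of the partition being inverted is spanned by standard basis vectors --- this is precisely what makes the Cremona transformation easy to write down.

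Next I would write the chosen representative as an explicit $4\times 8$ matrix over a polynomial ring in the free parameters, displaying the $\pi$-adic structure of a $\bm{\phi}$-block (two near-coincident pairs, or a near-coplanar pair of quadruples, according to the $\mathrm{a}/\mathrm{b}/\mathrm{c}$ shape), apply the Cremona transformation by inverting the non-standard columns entry-by-entry and renormalising each point --- as in the displayed matrices in the proof of Prop.~\ref{CreTransTypes} --- and compute the valuations of the $70$ Plücker coordinates of the image, using Prop.~\ref{prop:tci-invariant} to know that the twisted-cubic index is unchanged. Generically this reproduces the $\bm{\phi}$-block recorded in the matching row of Tab.~\ref{table:Cremona-candies}.

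The hard part, exactly as in the two preceding propositions, will be showing that the image valuation data is \emph{precisely} that $\bm{\phi}$-block and not something strictly larger. I expect the argument to run: if some Plücker coordinate of the image lying outside the support of the predicted block acquired positive valuation, then --- tracing this back through the invertible coordinate change --- it would force a Plücker coordinate of the \emph{original} octad outside the support of the starting $\bm{\phi}$-block to vanish modulo $\pi$, contradicting the hypothesis that the valuation data of $O$ is a single $\bm{\phi}$-block; for the $\bm{\phi}_3$ case, where the eight points already lie on a twisted cubic, the analogous obstruction is an extra twisted-cubic relation of~\cite[Eq.~(7.4)]{psv11} dropping in valuation. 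Making this rigorous requires systematically combining the linear relations coming from the valuation data with the quartic Cayley relations~\eqref{eq:cayley}; these verifications are cumbersome but mechanical, so I would offload them to the \MAGMA\ verification script~\cite[file \texttt{PaperProof.m}]{BGit23}, just as was done for $\bm{\alpha}$- and $\bm{\chi}$-blocks.

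Finally, since Cremona transformations are involutions, each row of Tab.~\ref{table:Cremona-candies} may be read in both directions, which together with the $S_8$-symmetry accounts for all remaining partitions; I would also check en route that the auxiliary $\bm{\alpha}$- and hyperelliptic blocks attached to a $\bm{\phi}$-block (Fig.~\ref{fig:introaux}) are carried to the auxiliary blocks of the image $\bm{\phi}$-block, so that the conclusion holds at the level of the full valuation data and not only its visible part.
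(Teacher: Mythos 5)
Your proposal is correct and follows essentially the same route as the paper: the paper's proof likewise reduces to the cases of Tab.~\ref{table:Cremona-candies}, instantiates explicit representatives ${\bm{\phi}}_{1\mathrm{a}}$, ${\bm{\phi}}_{2\mathrm{a}}$, ${\bm{\phi}}_{3\mathrm{a}}$ with five points in standard position, and invokes the strategy of Prop.~\ref{CreTransTwins} (generic computation of the image valuation data, ruling out extra degenerations by contradiction with the starting data, with the cumbersome verifications delegated to the \MAGMA\ script). The only difference is that you spell out the template in more detail than the paper does, including the treatment of auxiliary blocks, which the paper leaves implicit.
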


\begin{proof}

  Again, we only need to consider the cases listed in
  Tab.~\ref{table:Cremona-candies} and we follow the same strategy as in
  Prop.~\ref{CreTransTwins} to study each of them.
  We leave aside the details (see~\cite[file \texttt{PaperProof.m}]{BGit23}),
  except that we respectively instantiate
  ${\bm{\phi}}_{1\mathrm{a}}^{\CAApts}$,
  $\bm{\phi}_{2\mathrm{a}}^{\oA\oB\oC|\oF\oG\oH}$ and
  $\bm{\phi}_{3\mathrm{a}}^{\oA\oB\oC\oD}$ as
  \begin{displaymath}
    \arraycolsep=0.4\arraycolsep%
    \mbox{\scriptsize$\begin{bmatrix}
        1 & 0 & 0 & 0 & 1 & 1          & \pi g_1 & \pi(g_1 + h_1)\\
        0 & 1 & 0 & 0 & 1 & 1+\pi f_1 & \pi g_2 & \pi(g_2 + h_1)\\
        0 & 0 & 1 & 0 & 1 & 1+f_2      & 1 & 1 \\
        0 & 0 & 0 & 1 & 1 & 1+f_2 g_3 & g_3 & g_3+\pi h_3
      \end{bmatrix}$} ,
    \
    \mbox{\scriptsize$\begin{bmatrix}
        0 & 0 & \pi c_1 & 1 & e_1        & 0 & 1 & 1          \\
        1 & 0 & 1        & 0 & 1+\pi e_1 & 0 & 1 & 1+\pi h_1 \\
        0 & 0 & \pi c_2 & 0 & \pi e_2   & 1 & 1 & h_2        \\
        0 & 1 & c_3      & 0 & 1+\pi e_3 & 0 & 1 & 1+\pi h_3
      \end{bmatrix}$}
    \text{\footnotesize and }
    \mbox{\scriptsize$\begin{bmatrix}
        1 &    0 &    c_0 &    d_0 &    0 &    0 &    1 &    1\\
        0 &    1 &    c_1 &    d_1 &    0 &    0 &    1 &    1+\pi h_1\\
        0 &    0 &    \pi c_2 &    \pi \lambda c_2 &    1 &    0 &    1 &    h_2\\
        0 & 0 & \pi c_3 & \pi \lambda c_3 & 0 & 1 & 1 & h_3
      \end{bmatrix}$}.
  \end{displaymath}

\end{proof}

\section{Correspondence between Octad Pictures and Reduction Types}
\label{sec:CombiningOctads}

Let $C$ be a plane quartic over the ring of integers $R$ of a local field $K$, and let $O$ be a Cayley octad of $C$. The purpose of this section is to attach an \textit{octad picture}, $d_K(O)$, to $O/K$, which we conjecture to recover the dual graph of the special fibre of the stable model of $C$.

There are two challenges here. First, we must explain how to pass from the valuation data, $v$, of $O$ to the picture $d_K(O)$. This is achieved via decomposing $v$ into the building blocks of Sec.~\ref{sec:CayleyOctadsSmoothCurves}. Second, we must ensure that such a passage specifies a unique decomposition. This is the purpose of Conj.~\ref{conj:VDUnieuqOctadDiagram}.\medskip

Let us recall that a building block is (up to relabelling) one of the $19$
valuation data introduced in Sec.~\ref{sec:ocdiagrams}.

\begin{definition}
\label{def:blockdecomposition}
Given a valuation data $v$, a \textit{block decomposition} of $v$ is defined as a set $\mathcal{B} = \{B_1, \ldots, B_n, b_1, \ldots, b_k \}$, where each $B_i$ is a building block, each $b_i$ is an auxiliary block, and, for some positive $m_1, \ldots, m_n \in \Q$ and non-negative $l_1, \ldots, l_k \in \Q$,
$$
v = m_1 B_1 + \ldots + m_n B_n \, + \,  (l_1 b_{1} + \ldots + l_k b_{k})
$$
The auxiliary blocks $b_i$ all have the same octad picture.
\end{definition}

A priori, there could be several ways to decompose a valuation data as a sum of blocks.

\begin{definition}
\label{def:compatibleblocks}
Let $\mathcal{B}$ be a set of building blocks. We say that $\mathcal{B}$ is \textit{compatible} if, excluding auxiliary blocks, for all pairs $B,B' \in \mathcal{B}$, the picture induced by $B + B'$ belongs to Tab.~\ref{tab:twoblockssubspaces} (up to choice of labelling).
Two blocks inducing the exact same picture are considered compatible.
\end{definition}

Def.~\ref{def:compatibleblocks} may appear \textit{ad hoc}, but has a very natural interpretation in the setting of Sec.~\ref{sec:formalanalysisoctadpictures}.

\begin{definition}
\label{def:blockdecompositiondiagram}
Let $\mathcal{B} = \{ B_1, \ldots, B_n, b_1, \ldots, b_k \}$ be a set of compatible building blocks, with the $b_i$ auxiliary blocks. Each block induces a picture according to Tab.~\ref{tab:introblocksdefinition}. We define $d_K(\mathcal{B})$ as the picture achieved by overlaying the picture of each $B_i$. (Recall that by convention we omit auxiliary blocks from our pictures).

Such pictures are called \textit{octad pictures}. If $\mathcal{B}$ contains a hyperelliptic block or a $\bm{\phi}$-block, then $d_K(\mathcal{B})$ is called a \textit{hyperelliptic octad picture}, and if not, then it is a \textit{non-hyperelliptic octad picture}.
\end{definition}

\begin{conjecture}
  \label{conj:VDUnieuqOctadDiagram}
  Let $O$ be a Cayley octad with valuation data $v$ over a non-archimedean local
  field $K$ of residue characteristic $p \neq 2$. Then $v$ admits a
  unique compatible block decomposition up to $\PGL$-equivalence.
\end{conjecture}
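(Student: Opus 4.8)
The statement has two halves: \emph{existence} of a compatible block decomposition of $v$, and its \emph{uniqueness} up to $\PGL$-equivalence. The uniqueness half is unconditional and is the part I would prove first; the existence half is, in effect, the whole content of the conjecture, and I will indicate where the genuine difficulty lies.

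For uniqueness, the plan is to work inside the rank-$42$ lattice $\textup{L} = \textup{L}_\pl + \textup{L}_\pt \subseteq \Z^P$ of Rem.~\ref{rem:valuation-lattice} (together with the extra twisted-cubic coordinate), in which every building block is one of finitely many explicit vectors. Given two compatible block decompositions of the same $v$, one argues by an ``innermost block first'' induction mirroring the uniqueness of cluster pictures: the compatibility condition of Def.~\ref{def:compatibleblocks}, read through the nesting/orthogonality conditions on the associated subspaces of $E_3$ (Def.~\ref{def:admissible}, cf.~Tab.~\ref{tab:twoblockssubspaces}), is precisely the analogue of ``two clusters are nested or disjoint'', and it forces the set of blocks in any compatible family to carry a partial order under which the maximal elements can be read off directly from the coordinates of $v$ where $v_S$ is locally largest. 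One extracts such a block, subtracts its (unique up to the $\PGL$-equivalences of Prop.~\ref{thm:equiv-objects},~\ref{PGLTypes} and~\ref{prop:PGLphiblocks}) contribution, and recurses. The auxiliary blocks attached to $\bm{\phi}$-blocks are handled separately: by Fig.~\ref{fig:introaux} they all share a single octad picture and occur only alongside a $\bm{\phi}$-block, so they contribute a bounded, explicitly describable slack that is invisible in the induced picture and does not interact with the extraction of the non-auxiliary blocks. Since the number of compatible subspace configurations is finite ($43$ of them, by Prop.~\ref{prop:picturesubspacecorrespondence}), this reduces to a finite check, by hand or by machine (cf.~\cite{BGit23}); this is the route of Thm.~\ref{thm:UniqueDecomposition}.

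For existence, after applying a $\PGL$-transformation to put five points of $O$ in standard position (legitimate by Prop.~\ref{prop:equivalence-of-valuation-data}, which moreover makes the resulting valuation data $v^\#$ depend only on $v$), one must show that $v^\#$ lies in the union, over compatible families $\mathcal{B}$, of the rational cones spanned by $\mathcal{B}$. The constraints available are the linearised Cayley relations of Thm.~\ref{thm:Cayley-def}(ii), the normalisation inequalities of Prop.~\ref{prop:equivalence-of-valuation-data}, and the compatibility of the twisted-cubic index of Def.~\ref{def:valuation-data} with the Plücker part. The natural strategy is to identify the set of admissible $v^\#$ with (a piece of) the tropicalisation of the Cayley-octad variety and to classify its cells modulo the symmetry group $S_8$, checking that each cell is spanned by compatible blocks; this is a finite problem once the symmetry is quotiented out.

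The main obstacle is exactly this existence step, which is why the statement is posed as a conjecture rather than a theorem. Two features make it delicate. First, the twisted-cubic index is not a function of the Plücker valuations (see the example after Def.~\ref{def:valuation-data}), so the tropical picture must be enriched with this extra datum in a controlled way. Second, and more seriously, the hypothesis $p \neq 2$ must enter essentially: in residue characteristic $2$ there genuinely are valuation data — the exceptional orbit of Rem.~\ref{rmk:intropictuestabletypebijection}, realised e.g.\ by the eight vertices of a cube — whose would-be decomposition into seven $\bm{\alpha}$-blocks is \emph{not} compatible, so any proof must rule out such ``exotic'' tropical Cayley octads, presumably by feeding in the fact that the theta-characteristic/Cayley-octad dictionary (Thm.~\ref{thm:regularoctad}) and the defining relations~\eqref{eq:cayley} behave differently at $p = 2$. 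Absent such an input, one can still record the unconditional partial result — uniqueness given existence — together with the extensive numerical verification of Sec.~\ref{subsec:evidence} and the perfect combinatorial matching with the $43$ subspace configurations as the evidence for the full statement.
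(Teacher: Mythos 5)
First, a framing point: the statement you are asked about is a \emph{conjecture}, and the paper does not prove it. What the paper actually establishes is only the uniqueness half (Thm.~\ref{thm:UniqueDecomposition}: \emph{if} a compatible block decomposition exists, it is unique up to $\PGL$-equivalence); existence is left entirely open, and the paper says explicitly that proving it would amount to showing that the cone of valuation data of Cayley octads is covered by the cones spanned by compatible families. You correctly identify this split, and your diagnosis of where the difficulty lies (existence, the role of $p\neq 2$, the exceptional orbit) matches the paper's own discussion. So far so good.

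The problem is with your proposed route to uniqueness, which is genuinely different from the paper's and has a gap. You propose an ``innermost/maximal block first'' induction, extracting a maximal block by inspecting the coordinates where $v_S$ is locally largest, subtracting it, and recursing, and you claim the whole thing reduces to a finite check over the $43$ orbits of compatible subspace collections. Neither step is justified. The supports of $\PGL$-equivalent blocks differ wildly (e.g.\ ${\bm{\alpha}}_{2\mathrm{a}}$ has $2$ positive Pl\"ucker valuations while ${\bm{\alpha}}_{2\mathrm{b}}$ has $53$), distinct blocks have heavily overlapping supports, and --- crucially --- there exist genuine $\Q$-linear relations between blocks drawn from \emph{different} compatible families, such as $\bm{\chi}_2^{\oA\oB\oC} + \bm{\chi}_2^{\oD\oE\oF} + \bm{\alpha}_1^{\oG\oH} - \bm{\phi}_2^{\oA\oB\oC|\oD\oE\oF} = \bm{\chi}_1^{\oG\oH|\oA\oB\oC|\oD\oE\oF}$ (see the remark following Thm.~\ref{thm:UniqueDecomposition}). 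The only thing that prevents these relations from producing two distinct decompositions of the same $v$ is the global non-negativity of the coefficients, a sign condition that a greedy local extraction does not see. The paper's actual proof is a direct verification, in the quotient $\Q^P/W_8$, that for every pair $(R_1,R_2)$ of compatible sets there is no linear relation among the blocks of $R_1\cup R_2$ with non-negative coefficients on $R_1\setminus R_2$ (at least one positive) and non-positive on $R_2\setminus R_1$ (at least one negative); this is a computation over the $2\,626\,648$ compatible sets of labelled building blocks (reduced by $S_8$-symmetry to $507\times 2\,626\,648$ pairs), not over $43$ orbits. Your figure of $43$ conflates $\textup{Sp}(6,2)$-orbits of subspace collections with the labelled compatible families the check must range over. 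To repair your argument you would either have to justify the extraction step against the overlapping supports and the signed-relation phenomenon, or fall back on the paper's exhaustive pairwise check.

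Your existence discussion (tropicalisation of the Cayley octad locus, enriched by the twisted cubic index, with the characteristic-$2$ exceptional orbit as the obstruction to a naive argument) is a reasonable programme but is not carried out in the paper either; it should be presented as a strategy, not as a proof.
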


In order to prove that such a decomposition always exists, one needs to prove that the cone of all possible valuation data for Cayley octads is covered by the different cones of valuation data for the different compatible sets $\mathcal{B}$. We can actually prove the uniqueness part of this conjecture.

\begin{theorem}
  \label{thm:UniqueDecomposition}
  Let $O$ be a Cayley octad with valuation data $v$ over a non-archimedean local field $K$ of residue characteristic $p \neq 2$. If $v$ admits a block decomposition, then this block decomposition is unique up to $\PGL$-equivalence.
\end{theorem}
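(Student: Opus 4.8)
The plan is to reduce the uniqueness statement to a purely combinatorial/linear-algebraic fact about the valuation data, exploiting the lattice structure recorded in Remark \ref{rem:valuation-lattice}. First I would normalise: by Prop.~\ref{prop:equivalence-of-valuation-data} every Cayley octad is $\PGL$-equivalent to one with five points in standard position, and this normalisation is canonical, so it suffices to prove uniqueness of the compatible block decomposition for \emph{normalised} valuation data $v^\#$. Thus fix $v = v^\#$ and suppose $v = \sum_i m_i B_i + \sum_j l_j b_j = \sum_i m_i' B_i' + \sum_j l_j' b_j'$ are two compatible block decompositions. The auxiliary blocks $b_j$ attached to $\bm\phi$-blocks are determined (as a set, and up to labelling) by the $\bm\phi$-blocks present via Fig.~\ref{fig:auxdefinition}, so it is enough to recover the principal blocks $\{B_i\}$ together with their multiplicities $m_i$.

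The heart of the argument is an induction on a partial order that mimics the nesting order of clusters. Each building block $B$ listed in Tab.~\ref{tab:introblocksdefinition} has a \emph{support}, namely the set of quadruples $S\in P$ (together with the symbol $\dagger$ in the hyperelliptic case) on which its valuation vector is nonzero, and, crucially, a distinguished ``minimal'' entry or collection of entries: for an $\bm\alpha_1$-block the unique Pl\"ucker entry of valuation $2$ (the collided pair together with two others is of valuation $2$ in the $v_\pt$ normalisation, or one uses the $v_\pl$ normalisation and reads off the complementary $6$-tuple); for an $\bm\alpha_2$-block the pair of complementary quadruples of valuation $2$; for $\bm\chi$- and $\bm\phi$-blocks the analogous higher-valuation extremal entries; and for hyperelliptic blocks the entry $v_\dagger$ or the distinguished collinear quadruple. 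The compatibility condition in Tab.~\ref{tab:twoblockssubspaces} is precisely what guarantees (this is the step to check carefully, case by case on the pairs of pictures, and is what the accompanying \MAGMA\ script in~\cite{BGit23} is meant to corroborate) that in a compatible family no two blocks share such an extremal entry unless one block's support is contained in the other's; equivalently, the extremal entries of the blocks form an ``inclusion forest''. Hence the ``outermost'' blocks — those whose support is not contained in any other block's support — are read off directly from $v$ as the locations and values of the strictly largest entries of $v$, and their \emph{type} (which of the $19$) is then forced by comparing $v$ restricted to that support against the finitely many possibilities, using that distinct block types have distinguishable support patterns after the standard normalisation (Def.~\ref{def:standard-valuation-data}). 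Their multiplicities $m_i$ are then the ratios of the observed extremal values to the $1$'s appearing in the model vectors of Def.~\ref{def:standard-valuation-data}.

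Having pinned down the outermost blocks $B_{i_1},\dots,B_{i_r}$ with their multiplicities $m_{i_k}$, I would subtract $v' := v - \sum_k m_{i_k} B_{i_k}$ and observe that $v'$ is again valuation data (nonnegativity is preserved because the subtracted blocks were the extremal ones, using that $\max$ versus $+$ only occurs within a single block and not across the forest) whose compatible block decomposition is exactly $\{B_i, b_j\}\setminus\{B_{i_1},\dots,B_{i_r}\}$; both hypothetical decompositions of $v$ restrict to compatible decompositions of $v'$, and by induction on the number of blocks (or on $\sum_i m_i$, which strictly decreases) they coincide. The base case is $v' = 0$, whose only compatible decomposition is empty. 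The main obstacle, and where the real work lies, is the case analysis establishing the ``inclusion forest'' property from the pairwise compatibility table, i.e.\ showing that for every compatible pair $(B,B')$ one of the two has support containing the extremal locus of the other — in particular ruling out configurations where the supports overlap partially but neither is nested, which is the analogue of two clusters that are neither disjoint nor nested. This is a finite check over the entries of Tab.~\ref{tab:twoblockssubspaces}, cleanly phrased (as remarked in Def.~\ref{def:admissible}) as inclusion conditions $X\subset Y$, $Y\subset X$, or $X\subset Y^\perp$ among the associated subspaces of $E_3$; translating ``$X\subset Y^\perp$'' into a statement about disjoint-enough supports of the corresponding valuation vectors is the one genuinely delicate point, and where invoking Prop.~\ref{prop:tci-invariant} (to handle the twisted-cubic index under the normalisation) is needed to keep the hyperelliptic blocks under control.
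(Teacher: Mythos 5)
Your overall strategy --- normalise, then greedily identify an ``outermost'' block from extremal entries of $v$, subtract it, and induct --- breaks down at its key step, and the failure is not merely a matter of a tedious case check. The paper's remark following Thm.~\ref{thm:UniqueDecomposition} exhibits explicit linear relations among the building-block vectors themselves: for instance $\bm{\phi}_{3}^{\oA\oB\oC\oD} = \mathbf{Line}^{\oA\oB\oC\oD} + \bm{\alpha}_{2}^{\oA\oB\oC\oD}$ holds as an \emph{equality of valuation data}, and likewise $\bm{\chi}_1^{\oG\oH|\oA\oB\oC|\oD\oE\oF} + \bm{\phi}_2^{\oA\oB\oC|\oD\oE\oF} = \bm{\chi}_2^{\oA\oB\oC} + \bm{\chi}_2^{\oD\oE\oF} + \bm{\alpha}_1^{\oG\oH}$. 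In the first example the vector $v=\bm{\phi}_{3a}^{\oA\oB\oC\oD}$ admits two distinct expressions as non-negative combinations of building blocks with identical support and identical extremal entries; no amount of inspecting ``the locations and values of the strictly largest entries of $v$'' can tell you whether the outermost block is $\bm{\phi}_3$ or $\mathbf{Line}$. Uniqueness is rescued only because $\mathbf{Line}^{\oA\oB\oC\oD}$ and $\bm{\alpha}_2^{\oA\oB\oC\oD}$ are declared \emph{incompatible} --- a global constraint on the whole decomposition, not a local property of supports. Your proposed ``inclusion forest'' property, on which the induction rests, is therefore false as stated, and the subtraction step cannot even begin. (The claim that $v' = v - \sum_k m_{i_k}B_{i_k}$ remains valid valuation data with the expected residual decomposition is likewise unjustified.)

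The paper's actual proof takes a different and essentially computational route: working in $\Q^P/W_8$ (where $W_8$ is spanned by the $v_{\pl}^T$ with $|T|=7$, which absorbs $\PGL$-equivalence), it enumerates all $2\,626\,648$ compatible sets of equivalence classes of blocks and verifies, for every pair $(R_1,R_2)$ of such sets (reduced to $507\times 2\,626\,648$ pairs via the $S_8$-action), that there is no linear relation among the associated vectors with non-negative coefficients (at least one positive) on $R_1\setminus R_2$ and non-positive coefficients (at least one negative) on $R_2\setminus R_1$. This sign-constrained non-relation is exactly the statement that a vector decomposable over both $R_1$ and $R_2$ must be supported on $R_1\cap R_2$; it accommodates the genuine linear relations above precisely because those relations force a forbidden sign or involve an incompatible pair. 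If you want to salvage your approach, you would need to replace the support/extremal-entry argument with some certified statement of this kind, and at that point you are doing the paper's computation.
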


\begin{proof}
We prove that the block decomposition into equivalence classes of building blocks is unique, if it exists. If one considers a {\bf $\phi$} block and its auxiliary block to be compatible and we ignore the {\bf TCu} building block completely (as it is completely independent of the rest anyway), then there are 2\,626\,648 compatible sets of equivalence classes of building blocks. If $R_1$ and $R_2$ are two such compatible sets, our goal is prove that the only valuation data which can be independently decomposed into blocks in $R_1$, and into block in $R_2$, are the ones consisting of blocks in $R_1 \cap R_2$.

To prove this, we look at the vector spaces generated by these blocks inside
$\Q^P / W_8$, where $P$ is the set consisting of the 70 subsets of
$\{\oA, \ldots, \oH\}$ of cardinality 4,
and $W_8$ is the 8-dimension subspace generated by the 8 vectors $v_{\pl}^S$,
where $\# S = 7$. Each equivalence class of a building block corresponds to a
unique vector in $\Q^P / W_8$. As noted in the proof of
Prop.~\ref{prop:coord-transform}, the difference between valuation data
of equivalent octads lies in $W_8$, and it suffices to prove unique
decomposition inside $\Q^P / W_8$.

For each pair $(R_1,R_2)$, we checked that there is no vector that can be
decomposed in two different ways, one with the blocks in $R_1$, and one with
the blocks in $R_2$.  More precisely, we checked that there is no linear
relation between the vectors associated to the blocks in $R_{1} \cup R_{2}$ with
only non-negative and at least one positive
sign for the vectors associated to $R_1 \backslash R_2$, or with
only non-positive and at last one negative sign for the vectors associated to
$R_2 \backslash R_1$.

This has been checked by the means of a large computation~\cite[file
\texttt{Unique\-Decomposi\-tion.m}]{BGit23} (it took a little bit under 16 CPU
hours and about 2 GB of memory). In the process, we leveraged the action of
$S_8$ to reduce our computation from an enumeration of $2\,626\,648^2$ pairs of possible $(R_1,R_2)$ to an enumeration of only $507 \times 2\,626\,648$ such pairs.

As a consequence of the non-existence of such a linear relation, we see that there does not exist a valuation data with a non-unique decomposition.
\end{proof}

\begin{remark}
In the proof of Thm.~\ref{thm:UniqueDecomposition}, the requirement on the signs in the linear relation is necessary, as there are some unexpected linear relations between some compatible sets of building blocks. For example, $\textbf{Line}^{\oA\oB\oC\oD} = \bm{\phi}_3^{\oA\oB\oC\oD} - \bm{\alpha}_2^{\oA\oB\oC\oD}$ and $\bm{\chi}_2^{\oA\oB\oC} + \bm{\chi}_2^{\oD\oE\oF} + \bm{\alpha}_1^{\oG\oH} - \bm{\phi}_2^{\oA\oB\oC|\oD\oE\oF} = \bm{\chi}_1^{\oG\oH|\oA\oB\oC|\oD\oE\oF}$. In the first example $\bm{\phi}_3^{\oA\oB\oC\oD}$ and $\bm{\alpha}_2^{\oA\oB\oC\oD}$ are compatible, but the latter occurs with a negative coefficient, so this is not a valid block decomposition. Note that $\textbf{Line}^{\oA\oB\oC\oD}$ and $\bm{\alpha}_2^{\oA\oB\oC\oD}$ are not compatible. Similarly, in the second example $\bm{\phi}_2^{\oA\oB\oC|\oD\oE\oF}$ and $\bm{\chi}_1^{\oG\oH|\oA\oB\oC|\oD\oE\oF}$ are not compatible.
\end{remark}
\medskip

\begin{definition}
\label{def:octadpictures}
Under Conj.~\ref{conj:VDUnieuqOctadDiagram}, given a Cayley octad $O$ over a non-archimedean local field with residue characteristic $p \neq 2$, define the octad picture of $O/K$ as $d_K(O) = d_K(\mathcal{B})$, where $\mathcal{B}$ is the unique compatible block decomposition of the valuation data of $O$.
\end{definition}

\begin{theorem}
\label{thm:pgl-action-on-block-decomposition}
Let $O$ and $O'$ be two normalised Cayley octads over a non-archimedean
local field $K$ of residue characteristic $p \ne 2$ which coincide under the
action of $\PGL$. Then $d_K(O) = d_K(O')$.
\end{theorem}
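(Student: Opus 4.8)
The plan is to reduce the statement to two facts already available in the excerpt: that $\PGL$-equivalent Cayley octads have valuation data agreeing modulo the lattice $W_8$ spanned by the vectors $v_\pt^S - v_\pl^T$ (for $S \sqcup T = \{\oA,\ldots,\oH\}$) and having equal twisted cubic index; and that the compatible block decomposition of Conj.~\ref{conj:VDUnieuqOctadDiagram}, hence the octad picture, is determined by the class of the valuation data in $\Q^P/W_8$ together with the twisted cubic index — which is essentially what the proof of Thm.~\ref{thm:UniqueDecomposition} establishes.

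First I would show that $v$ and $v'$, the valuation data of $O$ and $O'$, agree modulo $W_8$ and have the same twisted cubic index. Writing $O' = gO$ with $g \in \PGL$, I would push both $O$ and $O'$ to the \emph{same} octad $\widetilde O$: let $\widetilde O = g_1 O$ and $\widetilde O' = g_2 O'$ be obtained by the unique $\PGL$-transformations sending $O_\oA,\ldots,O_\oE$, respectively $O'_\oA,\ldots,O'_\oE$, to the standard frame $(1:0:0:0),\ldots,(0:0:0:1),(1:1:1:1)$. Then $\widetilde O' = (g_2 g g_1^{-1})\widetilde O$, and since $g_2 g g_1^{-1}$ fixes a projective frame of $\P^3$ it is the identity, so $\widetilde O = \widetilde O'$. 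By Prop.~\ref{prop:equivalence-of-valuation-data} the valuation data $v^\#$ of $\widetilde O$ satisfies $v^\# - v \in W_8$ and $v^\# - v' \in W_8$, hence $v - v' \in W_8$; by Prop.~\ref{prop:tci-invariant} the twisted cubic indices of $v$, $v'$, $v^\#$ coincide. (Alternatively, one can bypass $\widetilde O$ and argue directly from Lem.~\ref{lem:PGL-generated}: $g$ is a product of elements of $\textup{PGL}_3(\mathcal{O}_{\overline{K}})$, which do not change the valuation data, and diagonal matrices $M_q$, each changing it by a vector $-v_\pt^S + v_\pl^T \in W_8$ on the Pl\"ucker part and not at all in the twisted cubic index, per Prop.~\ref{prop:coord-transform} and Prop.~\ref{prop:tci-invariant}.)

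Next I would invoke Thm.~\ref{thm:UniqueDecomposition}. By Conj.~\ref{conj:VDUnieuqOctadDiagram}, $v$ and $v'$ admit compatible block decompositions into equivalence classes of building blocks, say $\mathcal{B}$ and $\mathcal{B}'$, unique by Thm.~\ref{thm:UniqueDecomposition}. The proof of that theorem is carried out in $\Q^P/W_8$, treating the \textbf{TCu} block separately (it contributes nothing to the Pl\"ucker part and is governed solely by the twisted cubic index): it shows that for any two compatible sets $R_1, R_2$ there is no sign-definite linear relation among their blocks in $\Q^P/W_8$ beyond those supported on $R_1 \cap R_2$. Feeding in the relation $\sum_i m_i B_i - \sum_j m'_j B'_j \in W_8$ from the previous step — with $\{B_i\} = \mathcal{B}$, $\{B'_j\} = \mathcal{B}'$ and the $m_i, m'_j$ positive on the genuine blocks — the blocks of $\mathcal{B}\setminus\mathcal{B}'$ appear only on the left with positive coefficient and those of $\mathcal{B}'\setminus\mathcal{B}$ only on the right with negative coefficient, so both differences are empty and $\mathcal{B} = \mathcal{B}'$; since the twisted cubic indices agree, this also pins down whether a \textbf{TCu} block occurs. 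Finally, because $\PGL$-equivalent building blocks carry the same picture (Prop.~\ref{thm:equiv-objects},~\ref{PGLTypes},~\ref{prop:PGLphiblocks},~\ref{prop:TCuequiv}) and auxiliary blocks are suppressed from pictures, $d_K(\mathcal{B})$ of Def.~\ref{def:blockdecompositiondiagram} depends only on the equivalence classes occurring in $\mathcal{B}$, whence $d_K(O) = d_K(\mathcal{B}) = d_K(\mathcal{B}') = d_K(O')$ by Def.~\ref{def:octadpictures}.

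I expect no deep obstacle, since the two hard inputs are already done: Prop.~\ref{prop:equivalence-of-valuation-data} (whose delicate point is that a normalising $\PGL$-transformation need not merely rescale Pl\"ucker coordinates by powers of $\pi$) and the combinatorial non-degeneracy computation underpinning Thm.~\ref{thm:UniqueDecomposition}. The steps that will require the most care are the reduction to a common normalisation above — in particular checking that two \emph{normalised} $\PGL$-equivalent octads share their $\oA\!\ldots\!\oE$-normalisation, which rests on the rigidity of a coordinate frame together with its unit point — and keeping the twisted cubic component on a consistent separate footing throughout, as it is the one piece of valuation data not visible modulo $W_8$.
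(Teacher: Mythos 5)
Your argument is sound, but it is a genuinely different proof from the one in the paper, and it is worth being clear about what each buys. The paper's proof works block by block: it takes the compatible decomposition of $v$, replaces each block by its $\PGL$-equivalent normalised form (Prop.~\ref{thm:equiv-objects},~\ref{PGLTypes},~\ref{prop:PGLphiblocks},~\ref{prop:TCuequiv}), and then must verify that the \emph{sum} of the individually renormalised blocks really is the valuation data of $O'$ --- the normalisation procedure of Prop.~\ref{prop:equivalence-of-valuation-data} is not additive in general, so this requires a separate computational check (over all $2\,626\,648$ compatible sets) that the three relevant minima never all become positive. You sidestep that entirely: you only need $v - v' \in W_8$ and equality of twisted cubic indices (your reduction to a common frame is correct, and the alternative via Lem.~\ref{lem:PGL-generated} and Prop.~\ref{prop:coord-transform} is exactly how the paper itself proves Prop.~\ref{prop:equivalence-of-valuation-data}), after which the sign-definite non-relation underlying Thm.~\ref{thm:UniqueDecomposition}, which is already stated in $\Q^P/W_8$, forces the two decompositions to use the same equivalence classes of blocks and hence the same picture.

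The one substantive caveat is that your proof establishes strictly less than the paper's. You must \emph{assume} that $v'$ admits a compatible block decomposition, which you do by invoking Conj.~\ref{conj:VDUnieuqOctadDiagram}; the paper's proof instead \emph{constructs} the decomposition of $v'$ from that of $v$, so it shows that existence of a decomposition transfers across the $\PGL$-orbit and the theorem holds whenever either side is defined, independently of the existence half of the conjecture. Since $d_K(O')$ is only defined once such a decomposition exists, your argument proves the stated equality under the same hypothesis that makes the statement meaningful, so it is not wrong --- but it silently trades the paper's computational check on sums of renormalised blocks for a conditional dependence on the conjecture, and that trade should be made explicit.
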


\begin{proof}
In Prop.~\ref{thm:equiv-objects},~\ref{PGLTypes},~\ref{prop:PGLphiblocks} and~\ref{prop:TCuequiv}, we saw that this statement is true for single blocks. For simplicity, we assume that $O'_\oA$, $O'_\oB$, $O'_\oC$, $O'_\oD$, $O'_\oE$ are in standard position. Suppose that the valuation data of $O$ is the sum of some compatible blocks, then we can apply Prop.~\ref{thm:equiv-objects},~\ref{PGLTypes},~\ref{prop:PGLphiblocks} and~\ref{prop:TCuequiv} to these individual blocks to get equivalent blocks, now normalised so that $O'_\oA$, $O'_\oB$, $O'_\oC$, $O'_\oD$, $O'_\oE$ are in standard position.
Taking the sum of these equivalent blocks and following the proof of Prop.~\ref{prop:equivalence-of-valuation-data}, it follows that the valuation data of $O'$ is the sum $v$ of these equivalent blocks, as long as not all of the three minima $\min(v_{\oA\oB\oC\oF}, v_{\oA\oB\oD\oF}, v_{\oA\oC\oD\oF}, v_{\oB\oC\oD\oF})$, $\min(v_{\oA\oB\oC\oG}, v_{\oA\oB\oD\oG}, v_{\oA\oC\oD\oG}, v_{\oB\oC\oD\oG})$ and $\min(v_{\oA\oB\oC\oH}, v_{\oA\oB\oD\oH}, v_{\oA\oC\oD\oH}, v_{\oB\oC\oD\oH})$ have become positive. We can check computationally that this never happens, by checking all 2\,626\,648 compatible sets of building blocks (not including $\mathbf{TCu}$). This is done in \cite[file \texttt{CheckMinimaCompatibleSums.m}]{BGit23}.
\end{proof}

\begin{remark}
In general, the procedure in the proof of Prop.~\ref{prop:equivalence-of-valuation-data} describing the change of valuation data under $\PGL$-transformations changing the set of points in standard position, is not additive with respect to addition of blocks. For example, ${\bm{\phi}}_{1\textrm{a}}^{\oA\oB|\oC\oD||\oE\oF|\oG\oH}$ is the sum of ${\bm{\alpha}}_{2\textrm{a}}^{\oA\oB\oC\oD}$, $\textbf{Line}^{\oA\oB\oE\oF}$, and $\textbf{Line}^{\oA\oB\oG\oH}$, and, after a $\PGL$-transformation putting $O_{\oD}, O_{\oE}, O_{\oF}, O_{\oG}$, and $O_{\oH}$ in standard position, we obtain the valuation data ${\bm{\phi}}_{1\textrm{b}}^{\oA\oB|\oC\oD||\oE\oF|\oG\oH}$.
This latter building block, however, is $\PGL$-equivalent, but not equal, to the sum ${\bm{\alpha}}_{2\textrm{b}}^{\oA\oB\oC\oD} +  \textbf{Line}^{\oA\oB\oE\oF} + \textbf{Line}^{\oA\oB\oG\oH}$.
The reason is that for this sum, all three minima have become positive. Geometrically, this can be seen to not be realisable as there is no point lying on both non-intersection lines $O_{\oE}O_{\oF}$ and $O_{\oG}O_{\oH}$.
\end{remark}

\begin{remark}
\label{rmk:exceptionalpictures}
     There is an exceptional set of octad pictures that come from compatible block decompositions containing seven $\bm{\alpha}$-blocks. Geometrically, these blocks can only come from the valuation data of a Cayley octad over a field of characteristic $2$, \textit{e.g.}\ taking the 8 vertices of a cube, and so we do not consider them further in the present work. In this case, we expect that the stable reduction might not be determined from the valuation data alone, similar to the situation of cluster pictures in characteristic 2.
\end{remark}

\begin{definition}
     Write $\mathcal{P}^\textup{NH}$ for the set of non-hyperelliptic octad pictures not amongst the exceptional pictures of Rem.~\ref{rmk:exceptionalpictures}, $\mathcal{P}^{\textup{HE}}$ for the set of octad pictures containing a $\mathbf{Line}$, $\mathbf{TCu}$, or $\bm{\phi}$-block, and $\mathcal{P}^\phi$ for the set of octad pictures containing a $\bm{\phi}$-block.
\end{definition}

There is an obvious action of the permutation group $S_8$ on octad pictures which simply permutes the points $\{ \oA, \ldots, \oH\}$. One can ignore the ordering of the points of a Cayley octad, in which case $d_K$ descends to a map (which we denote $\bar{d}_K$) whose range is $(\mathcal{P}^\textup{NH} \cup \mathcal{P}^\textup{HE}) / S_8$ and we say the image of $\bar{d}_K$ consists of \textit{unlabelled} octad pictures.

Conj.~\ref{conj:SpecialFibreOfTheStableModel} is stated in this setting,
since in practice it is easier.

\begin{conjecture}[cf.\ Conj.~\ref{conj:SpecialFibreOfTheStableModelDetailed}]\label{conj:SpecialFibreOfTheStableModel}
Let $C/K$ be a plane quartic over a non-archimedean local field $K$ of residue characteristic $p \neq 2$,  with some Cayley octad $O$. Then $\bar{d}_K(O)$ determines the dual graph of the special fibre of the stable model of $C$. Moreover, the special fibre is hyperelliptic precisely when $\bar{d}_K(O)$ is a hyperelliptic octad picture.
\end{conjecture}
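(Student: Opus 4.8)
\noindent\textbf{Proof strategy for Conjecture~\ref{conj:SpecialFibreOfTheStableModel}.}
The plan is to reduce the statement to a comparison between the tropicalisation of a Cayley octad and the metrised dual graph of the stable model, mediated by the even theta constants of $C$. First I would reduce to the semistable case: after a finite extension $L/K$ the curve $C$ acquires semistable reduction, and under such an extension the valuation data of any Cayley octad is scaled uniformly, so its compatible block decomposition — hence its octad picture — is unchanged, as is the stable reduction type of $C$. By Theorem~\ref{thm:pgl-action-on-block-decomposition} (with Proposition~\ref{prop:equivalence-of-valuation-data}) the octad picture is independent of the chosen normalisation, and by Conjecture~\ref{conj:symplecticgroupactioncommutespictures} — which by Propositions~\ref{CreTransTwins},~\ref{CreTransTypes},~\ref{prop:CreTransCans} and~\ref{prop:CreTransHE} is known whenever the picture consists of a single block, and which one would need in general — it is independent of the choice among the $36$ Cayley octads. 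Thus $C \mapsto \bar d_K(O)$ descends to a well-defined map from semistable plane quartics (up to isomorphism) to unlabelled octad pictures, and the conjecture becomes the assertion that this map factors through the stable-type map; equivalently, that two semistable plane quartics with the same octad picture have the same stable reduction type, with the hyperelliptic cases matching.

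The heart of the argument is a passage through theta constants. Using the Thomae-type formula expressing the even theta constants of $C$ in terms of the Pl\"ucker coordinates of $O$ (the formula announced in~\cite{BDDLL23}; establishing it rigorously for a fixed spin structure, and controlling its behaviour under degeneration, is exactly the hard input below), one reads off from the valuation data — equivalently from the compatible block decomposition of Theorem~\ref{thm:UniqueDecomposition} — the valuations of all $36$ even theta constants. By the theory of degenerations of principally polarised abelian varieties (Mumford's construction, Faltings--Chai), these valuations form an explicit piecewise-linear function of the metrised dual graph of the stable model of $\Jac(C)$; in genus $3$ this function is understood well enough (tropical Torelli, cf. the work of Chan) that one can invert it to recover the metrised graph, and in particular its underlying dual graph, up to the one genuine ambiguity in genus $3$ — whether the reduction is a plane quartic or the $2:1$ image of a hyperelliptic curve. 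This ambiguity is resolved by the last clause of the conjecture, which I would prove separately: the reduction is hyperelliptic exactly when the octad picture contains a $\mathrm{{\bf TCu}}$-, $\mathrm{{\bf Line}}$- or ${\bm{\phi}}$-block. The $\mathrm{{\bf TCu}}$ case is classical — the eight points specialise to the eight Weierstrass points of the reduced curve, see~\cite[Sec.~IX.3]{DolgachevOrtland} — and the $\mathrm{{\bf Line}}$ and ${\bm{\phi}}$ cases reduce to it via Cremona transformations (Proposition~\ref{prop:CreTransHE}), since both hyperellipticity of the reduction and the type of a single block are Cremona-invariant. For octad pictures built from several blocks one would run an induction along the degeneration graph $\mathcal{G}_3$: the base case (the trivial picture corresponding to good, non-hyperelliptic reduction) is Theorem~\ref{thm:regularoctad}, and the inductive step combines the explicit families of Section~\ref{sec:expl-constr}, which realise every octad picture as a seed, with a rigidity argument — the valuation data is constant on the relative interior of each cone of the fan it defines and the stable type is constant on each stratum of $\overline{M_3}$, so it suffices to match these two stratifications, which is the combinatorial content of Theorem~\ref{thm:introsubspaceoctadpictureagreement} once the seeds are pinned down.

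The main obstacle is the second step, and specifically the passage between Pl\"ucker valuations and theta-constant valuations. This requires, first, the Thomae-type formula of~\cite{BDDLL23} with enough control of the spin (square-root) ambiguities that they survive reduction, and second, a proof that the valuations of the degenerate theta constants of $\Jac(C)$ determine the dual graph of $C$ itself, not merely that of $\Jac(C)$ — a point where genus $3$ is delicate near the hyperelliptic locus, and where one must also exclude the exceptional orbit (the picture with seven ${\bm{\alpha}}$-blocks, realisable only in residue characteristic $2$) and, more broadly, account for the hypothesis $p \neq 2$, which enters both through the Thomae formula and through the level-$2$ structure underlying Theorem~\ref{thm:regularoctad}. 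A subsidiary obstacle is the existence half of Conjecture~\ref{conj:VDUnieuqOctadDiagram}: Theorem~\ref{thm:UniqueDecomposition} yields uniqueness of a compatible block decomposition but not its existence, which amounts to showing that the cone of all valuation data of Cayley octads is covered by the cones attached to compatible block sets; I would attack this by a case analysis organised along the stratification of $\overline{M_3}$, checked where necessary by the same kind of computation used for Theorem~\ref{thm:UniqueDecomposition}.
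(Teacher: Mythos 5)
The statement you are addressing is Conjecture~\ref{conj:SpecialFibreOfTheStableModel}, and the paper does not prove it: it is stated as a conjecture, supported by extensive numerical verification, by the combinatorial matching of Section~\ref{sec:formalanalysisoctadpictures}, and by the partial results for single-block pictures (Theorem~\ref{thm:pgl-action-on-block-decomposition}, Propositions~\ref{CreTransTwins}--\ref{prop:CreTransHE}, Theorem~\ref{thm:regularoctad} for the trivial picture), with a full proof explicitly deferred to~\cite{BDDLL23}. So there is no proof in the paper to compare against. Your proposal is a strategy sketch, and its backbone --- passing through a Thomae-type formula expressing the even theta constants in terms of the Pl\"ucker coordinates --- is exactly the route the authors themselves announce in Subsection~\ref{subsec:evidence}. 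Your identification of the open existence half of Conjecture~\ref{conj:VDUnieuqOctadDiagram}, and of the full Sp$(6,2)$-equivariance (Conjecture~\ref{conj:symplecticgroupactioncommutespictures}) beyond single blocks, as necessary inputs is also accurate.

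That said, two steps of your sketch would fail as written. First, the inversion step is not ``tropical Torelli'': the tropical Jacobian, and more generally the degeneration data of $\Jac(C)$ as a semiabelian scheme, is blind to separating edges of the dual graph, so compact-type degenerations such as \texttt{(2e)}, \texttt{(1ee)} and \texttt{(0eee)} all have Jacobians with abelian reduction and identical ``metrised graph'' data. Recovering the dual graph of the \emph{curve} from the valuations of the $36$ even theta constants requires the finer information of which theta constants vanish on the special fibre and to what order (detecting decomposability and hyperellipticity of the reduced ppav), and relating that back to the curve; this is the genuinely hard content of the conjecture, not a formal inversion. Second, your claim that the ${\bm{\phi}}$-block case of the hyperellipticity criterion reduces to the $\mathrm{{\bf TCu}}$ case via Cremona transformations is wrong: by Proposition~\ref{prop:CreTransCans} Cremona transformations send ${\bm{\phi}}$-blocks to ${\bm{\phi}}$-blocks, never to $\mathrm{{\bf TCu}}$. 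The paper's reason a ${\bm{\phi}}$-picture forces hyperelliptic reduction is geometric (the \texttt{(1=1)}-type involution swapping the two intersection points, cf.\ Section~\ref{sec:quart-stable-reduct}, together with the auxiliary $\mathbf{Line}$-blocks in the decomposition of a ${\bm{\phi}}$-block). Finally, the ``rigidity'' inductive step along $\mathcal{G}_3$ is circular as stated: asserting that the valuation data is constant on the cones it defines says nothing about why two quartics with the same octad picture lie in the same stratum of $\overline{M_3}$, which is precisely what is to be proved.
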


    The full correspondence of Conj.~\ref{conj:SpecialFibreOfTheStableModel} is given in Tab.~\ref{tab:smoctadsc02},~\ref{tab:smoctadsc34},, and~\ref{tab:smoctadsc56} in App.~\ref{sec:special-fibres-octad-1} in the non-hyperelliptic case (resp. Tab.~\ref{tab:smhoctadsc02},~\ref{tab:smhoctadsc3},~\ref{tab:smhoctadsc4} and~\ref{tab:smhoctadsc56} in App.~\ref{sec:special-fibres-octad-2} in the hyperelliptic case). That is, we give a list of all $42$ types of stable reduction ($32$ in the hyperelliptic case) and their corresponding octad pictures.

\section{Combinatorial Analysis of Octad Pictures}
\label{sec:formalanalysisoctadpictures}

The aim of this section is to analyse the octad pictures of Sec.~\ref{sec:CombiningOctads} in a purely combinatorial manner. We give a framework whereby an octad picture corresponds to a collection of subspaces of $E_3$ (Def.~\ref{def:introE3}). In this setting the notion of compatibility (Def.~\ref{def:compatibleblocks}) is much more natural (see Def.~\ref{def:admissible}), and we can also give the space of octad pictures a structure which recovers the graph $\textup{SM}_3$. Crucially, this framework exhibits the correspondence between octad pictures and stable models more transparently than Conj.~\ref{conj:SpecialFibreOfTheStableModel}; Def.~\ref{def:stablegraph} gives a one-size-fits-all recipe for constructing the stable reduction type.

\begin{remark} \label{rem:general} For simplicity, we restrict ourselves to genus $g=3$. But all
  the results in this section %
  extend %
  to $g=2$, reobtaining known results,
  and possibly $g \geqslant 4$, see Rem.~\ref{rmk:stablegraphgeneralgenus}.
\end{remark}

\subsection{Symplectic \texorpdfstring{$\F_2$}{F2}-Vector Spaces}
\label{sec:sympl-f_2-vect}

The construction of the vector space $E_3$ (Def.~\ref{def:introE3}) generalises to the vector space $E_g$ by replacing $\Sigma_3$ with $\Sigma_g$, an alphabet of size $2g+2$.

Recall that a subspace $X$ is called isotropic if the symplectic pairing vanishes on $X \times X$. The subspaces
of $E_3$ that we associate in Sec.~\ref{sec:ocdiagrams} with fundamental
instances of valuation data are either one-dimensional or not
isotropic. This leads to the following definition.

\begin{definition}
A subspace of $E_g$ is \subsnamechoice{} if it is not isotropic of dimension greater than~1.
\end{definition}

\begin{proposition}
\label{prop:principalsubspacesE3}
Up to both the action of $S_8$ and taking orthogonal complements, there are $8$ \subsnamechoice{} subspaces of $E_3$: the subspaces $0$, $\langle \oA\oB\rangle$, $\langle \oA\oB\oC\oD \rangle$, $\langle \oA\oB, \oA\oC \rangle$, $\langle \oA\oB, \oA\oC\oD\oE \rangle$, $\langle \oA\oB, \oA\oC, \oA\oD \rangle$, $\langle \oA\oB, \oA\oC, \oD\oE \rangle$, and $\langle \oA\oB, \oC\oD, \oA\oC\oE\oF \rangle$.
\end{proposition}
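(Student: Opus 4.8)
The plan is to reduce the statement to a finite, essentially combinatorial classification problem over $\F_2$. Recall that $E_3$ is a $6$-dimensional symplectic $\F_2$-vector space with the nondegenerate alternating pairing $\langle I,J\rangle = |I\cap J| \bmod 2$, and $S_8$ acts on $E_3$ as a subgroup of $\mathrm{Sp}(6,2)$ via its action on the alphabet $\Sigma_3$. A subspace $X$ is \emph{\subsnamechoice{}} exactly when either $\dim X \le 1$, or $X$ is not isotropic (i.e.\ the restriction of the pairing to $X$ is not identically zero). Since the set of \subsnamechoice{} subspaces and its $S_8$-action are both manifestly closed under taking orthogonal complements, I would organise the enumeration dimension by dimension, exploiting $X \leftrightarrow X^\perp$ to cut the dimension range in half: as $\dim E_3 = 6$, it suffices to treat $\dim X \in \{0,1,2,3\}$.

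First I would dispose of the easy dimensions. In dimension $0$ there is the single subspace $\{0\}$, and in dimension $1$ all lines are spanned by a single nonzero vector, i.e.\ by a nonempty even subset of $\Sigma_3$ modulo complementation; these are parametrised by their weight, which can be taken to be $2$ after complementing (weights $2,4$; weight $6$ is complementary to weight $0$ which is excluded). The $S_8$-action is transitive on pairs of elements of $\Sigma_3$, so there is a single orbit, represented by $\langle\oA\oB\rangle$; this also covers $\langle\oA\oB\oC\oD\rangle$ via complementation within its own orbit, but since the statement lists both I would note that $\langle\oA\oB\rangle^\perp$ has dimension $5$ and so $\langle\oA\oB\oC\oD\rangle$ is genuinely a separate $S_8$-orbit of $1$-dimensional subspaces — here I would be careful: $\langle\oA\oB\rangle$ and $\langle\oA\oB\oC\oD\rangle$ are \emph{not} $S_8$-equivalent because $S_8$ preserves weight-modulo-complementation, and $2 \ne 4$ and $2 \ne 8-4$. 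So dimension $1$ contributes exactly the two listed lines $\langle\oA\oB\rangle$, $\langle\oA\oB\oC\oD\rangle$.

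The heart of the argument is dimensions $2$ and $3$. For dimension $2$: a non-isotropic plane in a symplectic $\F_2$-space is a hyperbolic plane, spanned by $e,f$ with $\langle e,f\rangle=1$; writing $e,f$ as even subsets, $\langle e,f\rangle=1$ means $|e\cap f|$ is odd. I would classify the $S_8$-orbits of such unordered spanning configurations $\{e,f,e+f\}$ (the three nonzero vectors of the plane) by the multiset of their weights-mod-complementation and the intersection pattern; the claim is there are exactly two orbits, $\langle\oA\oB,\oA\oC\rangle$ (vectors of type $\oA\oB,\oA\oC,\oB\oC$, all weight $2$, pairwise meeting in one point) and $\langle\oA\oB,\oA\oC\oD\oE\rangle$ (weights $2,4,4$). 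For dimension $3$: a \subsnamechoice{} $3$-space is one that is not isotropic, equivalently its radical $X\cap X^\perp$ has dimension $\le 1$ (it cannot be a Lagrangian $3$-space, nor can it have $2$-dimensional radical). The possibilities for $\dim(X\cap X^\perp)$ are $1$ (a non-degenerate $2$-dim symplectic part plus a $1$-dim radical) or... actually $\dim(X\cap X^\perp)$ must have the same parity as $\dim X$, so for $\dim X = 3$ the radical has odd dimension, hence dimension $1$ (dimension $3$ is the Lagrangian/isotropic case, excluded). So every \subsnamechoice{} $3$-space $X$ decomposes as $H \perp \langle r\rangle$ with $H$ a hyperbolic plane and $r$ a nonzero vector in the radical. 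I would then run through the $S_8$-orbits of such data, organised by the weights of the seven nonzero vectors; the assertion is that there are exactly three orbits, represented by $\langle\oA\oB,\oA\oC,\oA\oD\rangle$, $\langle\oA\oB,\oA\oC,\oD\oE\rangle$, and $\langle\oA\oB,\oC\oD,\oA\oC\oE\oF\rangle$. A clean way to organise this: the radical line $\langle r\rangle$ is, within $X$, the set of vectors orthogonal to all of $X$; its $S_8$-weight-type ($r$ of weight $2$ versus weight $4$) is an invariant, and together with the weight-type of a chosen complementary hyperbolic plane this should pin down the orbit, with the three listed subspaces corresponding to $(r\text{ weight }2, H\text{ of type }\oA\oB/\oA\oC)$, $(r\text{ weight }4, H\text{ of type }\oA\oB/\oA\oC)$, and $(r\text{ weight }4, H\text{ of type }\oA\oB/\oC\oD\text{-disjoint})$ respectively.

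The cleanest and safest way to actually carry this out — and the one I would use to avoid error — is a direct finite computation: enumerate all $2$- and $3$-dimensional subspaces of $E_3$ (there are $651$ of each, since $\mathrm{Sp}(6,2)$ is transitive on... more simply, a manageable finite number), discard the isotropic ones of dimension $\ge 2$, and compute $S_8$-orbits, then pair up with orthogonal complements. This is exactly the kind of verification the paper already delegates to \MAGMA scripts elsewhere (cf.\ the files in \cite{BGit23}), so I would cite such a computation; indeed the passage remarks that the associated subspaces ``are either one-dimensional or not isotropic'', so the eight subspaces listed are precisely the ones arising from the building blocks in Tab.~\ref{tab:introblocksdefinition}, and one checks directly from that table that all eight occur. \textbf{The main obstacle} I anticipate is purely bookkeeping: making sure that the weight-mod-complementation invariant plus the intersection/radical data genuinely separates the orbits (so that I have neither merged two distinct orbits nor split one), and in particular double-checking the borderline dimension-$1$ case that $\langle\oA\oB\rangle$ and $\langle\oA\oB\oC\oD\rangle$ are distinct $S_8$-orbits but are swapped-up-to-$\perp$ with $5$- and... no, $\langle\oA\oB\oC\oD\rangle^\perp$ also has dimension $5$, so neither is self-complementary and both must be listed — this asymmetry between ``$S_8$-orbit'' and ``$S_8$-orbit up to $\perp$'' is the one genuinely subtle point and I would verify it explicitly by a dimension count, since $\dim\langle\oA\oB\rangle^\perp = 5 \ne 1$ confirms $\langle\oA\oB\rangle \not\sim \langle\oA\oB\rangle^\perp$.
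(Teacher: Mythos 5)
Your proposal is, in its operative form, exactly the paper's proof: the paper disposes of this proposition with the single line ``This can be verified computationally,'' and your fallback to a direct finite enumeration of subspaces (discarding isotropic ones of dimension $\geq 2$ and computing $S_8$-orbits paired up under $\perp$) is precisely that check. Your hand-classification outline is structurally sound and lands on the correct list -- the dimension-by-dimension reduction via $X\leftrightarrow X^\perp$, the two orbits of lines distinguished by weight, the two orbits of hyperbolic planes (and your implicit observation that three weight-$4$ vectors cannot pairwise pair to $1$ rules out a third), and the decomposition of a non-isotropic $3$-space as $H\perp\langle r\rangle$ with $1$-dimensional radical are all correct. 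However, the bookkeeping in your final dimension-$3$ paragraph is garbled: the radical of $\langle\oA\oB,\oA\oC,\oA\oD\rangle$ is $\langle\oA\oB\oC\oD\rangle$ (weight $4$, not $2$), the radical of $\langle\oA\oB,\oA\oC,\oD\oE\rangle$ is $\langle\oD\oE\rangle$ (weight $2$, not $4$) -- you have these two swapped -- and $\oA\oB,\oC\oD$ are orthogonal (disjoint, so $\langle\oA\oB,\oC\oD\rangle=0$), hence do not span a hyperbolic plane; the correct separating invariant for the two weight-$4$-radical cases is the type of a (any) complementary hyperbolic plane, namely all-weight-$2$ for $\langle\oA\oB,\oA\oC,\oA\oD\rangle$ versus weights $2,4,4$ for $\langle\oA\oB,\oC\oD,\oA\oC\oE\oF\rangle$, or more simply the weight multiset of the seven nonzero vectors ($2^6 4^1$ versus $2^2 4^5$). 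Also, there are $651$ planes but $1395$ three-dimensional subspaces. None of this affects the validity of the computational verification, which is all the paper itself supplies, but if you want the hand argument to stand on its own these slips need correcting.
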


\begin{proof}
This can be verified computationally.%
\end{proof}
\noindent%
For future use we include here the orthogonal complements of the previous subspaces as well as their intersections:
\begin{table}[h]
\begin{center}
\begin{tabular}{|c|c|c|}\hline
$X$ &$X^\bot$   &$X \cap X^\bot$    \\ \hline
$\mathbf{0}$    &$E_3$  &$\mathbf{0}$ \\
$\langle \oA\oB\rangle$ &$\langle \oA\oB, \oC\oD, \oC\oE,\oC\oF,\oC\oG\rangle$  &$\langle \oA\oB\rangle$  \\
$\langle \oA\oB\oC\oD \rangle$  &$\langle \oA\oB, \oA\oC, \oA\oD,\oE\oF,\oE\oG\rangle$  &$\langle \oA\oB\oC\oD\rangle$   \\
$\langle\oA\oB, \oA\oC\rangle$  &$\langle \oA\oB\oC\oD, \oD\oE,\oD\oF,\oD\oG\rangle$    &$\mathbf{0}$ \\
$\langle\oA\oB, \oA\oC\oD\oE\rangle$    &$\langle \oC\oD, \oC\oE,\oF\oG,\oF\oH\rangle$  &$\mathbf{0}$   \\
$\langle\oA\oB, \oA\oC, \oA\oD\rangle$  &$\langle\oE\oF, \oE\oG, \oE\oH\rangle$ &$\langle \oA\oB\oC\oD\rangle$   \\
$\langle\oA\oB, \oA\oC, \oD\oE\rangle$  &$\langle\oD\oE, \oF\oG, \oF\oH\rangle$ &$\langle \oD\oE\rangle$  \\
$\langle\oA\oB, \oC\oD, \oA\oC\oE\oF\rangle$       &$\langle\oE\oF, \oG\oH, \oA\oB\oE\oG\rangle$    &$\langle \oA\oB\oC\oD\rangle$  \\  \hline
\end{tabular}
\end{center}
\caption{Merotropic subspaces up to the action of $S_8$. }\label{tab:orthocomp}
\end{table}

These $7$ non-trivial subspaces are precisely the subspaces associated to $\bm{\alpha}$-blocks, $\bm{\chi}$-blocks, and $\bm{\phi}$-blocks defined in Tab.~\ref{tab:introblocksdefinition}.

\begin{definition}\label{def:admissible}
Suppose $\mathcal{L}$ is a set whose elements are pairs $(X, X^\bot)$ of orthogonally complement subspaces of $E_3$ such that $X$ is merotropic, ordered so that $\dim(X) \leqslant \dim(X^\bot)$. Then, $\mathcal{L}$ is said to be \textit{compatible} when
\begin{itemize}
    \item $(\mathbf{0}, E_3) \in \mathcal{L}$,
    \item for all distinct pairs $(X, X^\bot), (Y, Y^\bot) \in \mathcal{L}$, we have $X \subset Y, \, Y \subset X$ or $X \subset Y^\bot$, and
    \item for all distinct pairs $(X, X^\bot), (Y, Y^\bot) \in \mathcal{L} \backslash \{ (\mathbf{0}, E_3) \}$, neither $X \subset Y \cap Y^\bot$, nor $Y \subset X \cap X^\bot$.
\end{itemize}

Write $\mathfrak{L}_3$ for the set of all compatible collections of subspaces of $E_3$ not containing a pair of 3-dimensional subspaces. Write $\mathfrak{L}_3^{\bm{\phi}}$ for the set of all compatible collections of subspaces of $E_3$ containing a pair of 3-dimension subspaces.
\end{definition}

Recall that $\textup{Sp}(6,2)$ has been introduced as the group of linear transformations of $E_3$ respecting the symplectic pairing. Consequently, this action respects inclusion of subspaces and taking orthogonal complements. Hence there is an action of $\textup{Sp}(6,2)$ on $\mathfrak{L}_3$ and $\mathfrak{L}_3^{\bm{\phi}}$.

\begin{definition}
\label{def:picturesubspacecorrespondence}
Any octad picture not containing a $\mathbf{Line}$ or $\mathbf{TCu}$, induces a collection of orthogonally complement subspaces of $E_3$ as follows.

Let $P$ be an octad picture, coming from the block decomposition $\mathcal{B} = \{ B_1, \ldots, B_n\}$. Each $B_i$ induces a subspace $X_i \subset E_3$ according to Tab.~\ref{tab:introblocksdefinition}. The picture $P$ induces the collection of subspaces
$$
\varsigma(P) = \{ (X_1, X_1^\bot) , \ldots, (X_n, X_n^\bot), (\mathbf{0}, E_3) \}
$$
\end{definition}

\begin{proposition}
\label{prop:picturesubspacecorrespondence}
There is a one-to-one correspondence between octad pictures not containing a $\mathbf{Line}$ or $\mathbf{TCu}$ and compatible collections of subspaces of $E_3$, given by $\varsigma$.
\end{proposition}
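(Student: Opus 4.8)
The plan is to exhibit $\varsigma$ as a bijection by constructing an explicit inverse and verifying that both composites are the identity. First I would observe that by Proposition~\ref{thm:equiv-objects}, \ref{PGLTypes} and \ref{prop:PGLphiblocks}, an octad picture not containing a $\mathbf{Line}$ or $\mathbf{TCu}$ is, up to relabelling, precisely a set of $\bm{\alpha}$-, $\bm{\chi}$- and $\bm{\phi}$-blocks (the auxiliary blocks being omitted by convention), and by Proposition~\ref{prop:principalsubspacesE3} together with Table~\ref{tab:orthocomp} these blocks are in bijection — compatibly with the $S_8$-action — with the seven non-zero merotropic subspaces of $E_3$ up to the $S_8$-action: $\langle \oA\oB\rangle$ for $\bm{\alpha}_1$, $\langle\oA\oB\oC\oD\rangle$ for $\bm{\alpha}_2$, $\langle\oA\oB,\oA\oC\rangle$ for $\bm{\chi}_2$, $\langle\oA\oB,\oA\oC\oD\oE\rangle$ for $\bm{\chi}_1$, $\langle\oA\oB,\oA\oC,\oA\oD\rangle$ for $\bm{\phi}_3$, $\langle\oA\oB,\oA\oC,\oD\oE\rangle$ for $\bm{\phi}_2$, and $\langle\oA\oB,\oC\oD,\oA\oC\oE\oF\rangle$ for $\bm{\phi}_1$. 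So at the level of \emph{individual} blocks, $\varsigma$ is a bijection onto $\{(X,X^\bot) : X$ merotropic$\}$ essentially by inspection of the two classifications.

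Next I would show the two notions of compatibility agree: a set $\mathcal{B}$ of building blocks is compatible in the sense of Definition~\ref{def:compatibleblocks} (i.e.\ each pair induces a picture appearing in Table~\ref{tab:twoblockssubspaces}) if and only if the associated set $\{(X_i,X_i^\bot)\}\cup\{(\mathbf 0,E_3)\}$ is compatible in the sense of Definition~\ref{def:admissible}. Since compatibility is a pairwise condition in both settings, this reduces to a finite check: for each of the (finitely many, up to $S_8$) unordered pairs of blocks $B,B'$, verify that the picture $B+B'$ lies in Table~\ref{tab:twoblockssubspaces} exactly when the pair of subspaces $(X,X^\bot),(Y,Y^\bot)$ satisfies the three bullet conditions of Definition~\ref{def:admissible} — namely $X\subset Y$, $Y\subset X$ or $X\subset Y^\bot$, together with neither $X\subset Y\cap Y^\bot$ nor $Y\subset X\cap X^\bot$. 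The third bullet is automatically inherited from the merotropic hypothesis on single blocks except in the degenerate inclusion cases, which is exactly why Table~\ref{tab:twoblockssubspaces} excludes certain configurations (e.g.\ the incompatible picture of Fig.~\ref{fig:intrononcompatibleexample} corresponds to $\langle\oA\oB\rangle \subset \langle\oA\oB\oC\oD\rangle\cap\langle\oA\oB\oC\oD\rangle^\bot$). This correspondence of compatibility conditions, row by row of Table~\ref{tab:twoblockssubspaces}, is the core bookkeeping step.

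Finally, I would check injectivity and surjectivity of the global map. Surjectivity: given a compatible collection $\mathcal{L}$ of pairs of orthogonally complementary merotropic subspaces, each non-zero $X\in\mathcal{L}$ is $\textup{Sp}(6,2)$-equivalent, hence (using that $S_8$ acts transitively on the relevant configurations, or directly from Table~\ref{tab:orthocomp}) realised by a labelled merotropic subspace of one of the seven standard shapes, and therefore by a unique building block up to relabelling; overlaying these blocks yields a picture $P$ with $\varsigma(P)=\mathcal{L}$, and by the previous paragraph $P$ is a genuine (compatible) octad picture. Injectivity: if $\varsigma(P)=\varsigma(P')$ then $P$ and $P'$ have the same multiset of subspaces, hence the same multiset of blocks (by the block$\leftrightarrow$subspace bijection), hence the same overlaid picture $P=P'$; here one uses that an octad picture is determined by its set of blocks, which is the content of Theorem~\ref{thm:UniqueDecomposition} (unique block decomposition). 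The main obstacle I anticipate is not any single deep step but the exhaustive pairwise verification in the middle paragraph — matching every entry of Table~\ref{tab:twoblockssubspaces} with the subspace inclusion conditions of Definition~\ref{def:admissible}, keeping careful track of labellings and of the role of the merotropic (non-isotropic-or-one-dimensional) hypothesis in ruling out the spurious configurations; this is best organised as a finite case analysis, and indeed much of it can be, and has been, delegated to a machine computation as in the cited \texttt{BGit23} scripts.
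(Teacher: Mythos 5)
Your proposal is correct and follows essentially the same route as the paper, which likewise reduces the statement to a finite verification: checking that each pair in Tab.~\ref{tab:twoblockssubspaces} induces a compatible pair of subspaces, and enumerating all compatible collections to confirm each is hit by some octad picture (the paper simply states ``this can be done computationally'' where you spell out the block-by-block bookkeeping). One tiny remark: your injectivity step does not really need Thm.~\ref{thm:UniqueDecomposition} (which concerns decompositions of \emph{valuation data}, not pictures) --- it follows directly from the fact that an octad picture is by definition the overlay of its block pictures and that blocks inducing the same subspace induce the same picture.
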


\begin{proof}
    That each induced collection of subspaces is compatible amounts to checking that each picture of Tab.~\ref{tab:twoblockssubspaces} induces a compatible pair. To show that the correspondence is one-to-one, it suffices to enumerate all collections of compatible subspaces, and verify each is achieved by some octad picture. This can be done computationally.
\end{proof}

\begin{remark}
    The exceptional orbit of pictures in the statement of Prop.~\ref{prop:principalsubspacesE3} can be characterised as any compatible collection of subspaces $\mathcal{L}$ consisting of exactly $7$ pairs $(X_1,X_1^\bot)$, $\ldots$, $(X_7,X_7^\bot)$ of subspaces with $\dim(X_i) = 1$. On account of Prop.~\ref{prop:picturesubspacecorrespondence} and Conj.~\ref{conj:SpecialFibreOfTheStableModel}, we expect collections of compatible subspaces to contain information about stable models in residue characteristic $\neq 2$. We anticipate that this exceptional orbit of subspaces has meaning in the characteristic $2$ setting. Note also that this exceptional orbit is exactly the image of the exceptional octad diagrams of Rem.~\ref{rmk:exceptionalpictures} under $\varsigma$.
\end{remark}

\subsubsection*{Hyperelliptic Octad Pictures}

We can extend the map $\varsigma$ to octad pictures containing a $\mathbf{Line}$ or $\mathbf{TCu}$.

\begin{definition}
A \textit{hyperelliptic subset} $X \subset E_3$ is any subset, up to permutation of the indices by $S_8$, appearing in the third column of the bottom two rows of Tab.~\ref{tab:introblocksdefinition}.
\end{definition}

\begin{proposition}
For any $\sigma \in \textup{Sp}(6,2)$ and any hyperelliptic subset $X$, the subset $\sigma(X)$ is hyperelliptic. Moreover, up to $S_8$, there are two hyperelliptic subsets of $E_3$.
\end{proposition}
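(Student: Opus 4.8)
The statement has two parts: (i) the $\textup{Sp}(6,2)$-action preserves the set of hyperelliptic subsets, and (ii) up to the $S_8$-action there are exactly two hyperelliptic subsets of $E_3$. For part (ii), the plan is a direct enumeration. The two families in the last two rows of Tab.~\ref{tab:introblocksdefinition} are the $\mathbf{TCu}$-subset $\{\,0\,\}\cup\{\,S\subset\Sigma_3:|S|=2\,\}$ and the $\mathbf{Line}^{\oA\oB\oC\oD}$-subset $\{\,0\,\}\cup\{\,S\cup T:S\subset\{\oA,\oB,\oC,\oD\},\,T\subset\{\oE,\oF,\oG,\oH\},\,|S|-|T|=\pm2\,\}$; one checks that $S_8$ acts transitively within each family (all one-element subsets $\{\{i,j\}\}$ of size $2$ are $S_8$-conjugate, giving one orbit of $\mathbf{TCu}$-type; all $4\mid4$ partitions of $\Sigma_3$ are $S_8$-conjugate, giving one orbit of $\mathbf{Line}$-type), and that the two families are not $S_8$-conjugate (e.g.\ by counting cardinalities in $E_3$: the $\mathbf{TCu}$-subset has $1+28=29$ elements after collapsing $S\sim\Sigma_3\setminus S$, wait — more carefully, count the elements as subsets of even cardinality modulo complementation; the $\mathbf{TCu}$ family has $\binom 82/2 + 1$ representatives while the $\mathbf{Line}$ family has a different count, so they are distinct). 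This part is routine combinatorics and I would simply verify it, possibly citing a computation as the paper does elsewhere.

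For part (i) — the genuinely structural claim — the plan is to use the factorisation $\textup{Sp}(6,2)=S_8\cdot\{\sigma_H\}$ from Rem.~\ref{rmk:cremonatransformations}: every $\tau\in\textup{Sp}(6,2)$ is $\sigma_H\circ\rho$ with $\rho\in S_8$ and $\sigma_H$ one of the $36$ Cremona elements. Since $S_8$ visibly permutes both families among themselves (it just relabels the indices in the subset descriptions), it suffices to check that each Cremona element $\sigma_H$ sends a hyperelliptic subset to a hyperelliptic subset. This is exactly the content of Prop.~\ref{prop:CreTransHE} and Tab.~\ref{table:Cremona-HE}, reinterpreted on the level of subspaces rather than pictures/valuation data: Tab.~\ref{table:Cremona-HE} shows that a $\mathbf{TCu}$-block maps to a $\mathbf{Line}$-block under the partition-type Cremona transformation and that $\mathbf{Line}$-blocks map to $\mathbf{Line}$-blocks (with appropriate relabelling). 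So the key step is to transport that geometric statement into a statement about the subsets of $E_3$ attached by Tab.~\ref{tab:introblocksdefinition}, which is a formal consequence of the correspondence between blocks and subspaces/subsets already set up in the paper.

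Concretely I would argue: given a hyperelliptic subset $X$ and $\tau\in\textup{Sp}(6,2)$, write $\tau=\sigma_H\circ\rho$. Then $\rho(X)$ is again of $\mathbf{TCu}$- or $\mathbf{Line}$-type by inspection of the subset descriptions (they are defined purely in terms of the combinatorics of $\Sigma_3$, on which $\rho$ acts by permutation). For $\sigma_H$, realise $\rho(X)$ as the subset associated to a hyperelliptic block $B$ of an octad, apply the Cremona transformation to that octad, and invoke Prop.~\ref{prop:CreTransHE} to conclude that the image is the subset associated to another hyperelliptic block; since that block's associated subset is again $\mathbf{TCu}$- or $\mathbf{Line}$-type, $\sigma_H(\rho(X))=\tau(X)$ is hyperelliptic. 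The main obstacle — really the only place requiring care — is making precise that "$\sigma_H$ acts on the subset attached to a block the same way it acts on the block," i.e.\ that the association block $\mapsto$ subset of Tab.~\ref{tab:introblocksdefinition} is $\textup{Sp}(6,2)$-equivariant; this follows from the identification $\Jac C[2]\simeq E_3$ and the fact that the subsets in Tab.~\ref{tab:introblocksdefinition} are precisely the level-2 structure data (Weierstrass/theta combinatorics) that a Cremona transformation permutes — but it must be stated rather than left implicit. Once that equivariance is in hand, both parts follow; I would, as the paper does for its companion claims, also point to the explicit verification in the accompanying code for the finite check that the two $S_8$-orbits are distinct and exhaustive.
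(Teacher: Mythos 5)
The paper's own proof of this proposition is a one-line appeal to a direct computation: the claim is a purely combinatorial statement about the linear action of $\textup{Sp}(6,2)$ on subsets of $E_3$, and since every element factors as $\sigma_H\circ\rho$ with $\rho\in S_8$ and $\sigma_H$ one of the $36$ explicit matrices of Rem.~\ref{rmk:cremonatransformations}, one simply applies each $\sigma_H$ to the elements of each hyperelliptic subset and checks that the image is again on the list. Your proposal replaces this finite check by an appeal to the geometry of Cremona transformations of octads, and that is where the gap lies.

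For part (i), you correctly flag that everything hinges on the equivariance of the association block $\mapsto$ subset under the two actions (geometric Cremona on octads, linear $\sigma_H$ on $E_3$), but this equivariance is not ``a formal consequence of the correspondence already set up in the paper'' --- it is logically downstream of the very proposition you are proving. The $\textup{Sp}(6,2)$-action on octad pictures is \emph{defined} by pulling back the linear action on subspaces/subsets through $\varsigma$, and the remark following Conj.~\ref{conj:symplecticgroupactioncommutespictures} obtains single-block equivariance precisely by \emph{combining} Prop.~\ref{prop:CreTransHE} with the combinatorial facts of the present proposition. Deducing the present proposition from Prop.~\ref{prop:CreTransHE} plus equivariance therefore runs in a circle; breaking it would require an independent proof that the geometric Cremona action and the chosen matrix $\sigma_H$ induce the same map on the $E_3$-data (say, by tracking how the $28$ odd theta characteristics, i.e.\ the elements of $E_3$ represented by $2$-element subsets, transform), which is a substantially larger undertaking than the direct matrix computation the paper performs.

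For part (ii), your discriminator between the two $S_8$-orbits fails: both hyperelliptic subsets contain exactly $29$ elements of $E_3$. For $\mathrm{\bf Line}^{\oA\oB\oC\oD}$ these are the zero, the $12$ classes of pairs lying inside one block of the partition, and the $16$ classes of $3{+}1$ splits (the remaining representatives in the defining list are complements of these), matching the $1+28$ count for $\mathrm{\bf TCu}$. The correct $S_8$-invariant is not cardinality but the distribution by minimal representative size: the $\mathrm{\bf TCu}$-subset contains no element represented by a $4$-element subset, whereas the $\mathrm{\bf Line}$-subset contains $16$ such, and $S_8$ preserves this stratification. With that fix part (ii) is fine; part (i) should simply be carried out as the direct computation with the matrices of Rem.~\ref{rmk:cremonatransformations}, as the paper does.
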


\begin{proof}
This can be verified computationally.
\end{proof}

\begin{definition}
\label{def:hyperellipticcompatiblesubspaces}
A pair $(\mathcal{L},H)$ is said to be a \textit{hyperelliptic-compatible} collection of subspaces if $\mathcal{L}$ is a compatible collection of subspaces, $H$ is a hyperelliptic subset of $E_3$, and for all pairs $(X,X^\bot) \in \mathcal{L}$ we have $X \subset H$.

We write $\mathcal{L}_3^\textup{HC}$ for the set of all hyperelliptic-compatible collections of subspaces of $E_3$.
\end{definition}

\begin{definition}
\label{def:picturesubspacescorrespondenceHE}
    Let $P$ be an octad picture containing a $\mathbf{Line}$ or $\mathbf{TCu}$. Then $P$ induces a pair $(\mathcal{L},H)$, where $\mathcal{L}$ is a compatible collection of subspaces of $E_3$ and $H$ is a hyperelliptic subset of $E_3$, as follows.

    Let $\mathcal{L}$ be the collection of subspaces induced by $\{ B_1, \ldots, B_n \}$, as in Def.~\ref{def:picturesubspacecorrespondence}. Let $H$ be the hyperelliptic subset corresponding to $B_\textup{HE}$, as in Tab.~\ref{tab:introblocksdefinition}.
    We write $\varsigma(P) = (\mathcal{L},H)$ (extending the domain of definition of $\varsigma$).
\end{definition}

\noindent%

\begin{proposition}
\label{prop:picturesubspacecorrespondenceHE}
There is a one-to-one correspondence between octad pictures containing a $\mathbf{Line}$ or $\mathbf{TCu}$ and hyperelliptic-compatible collections of subspaces of $E_3$. That is, $\varsigma$ induces a bijection
$$
\varsigma \, \colon \, \mathcal{P}^\textup{HE} \backslash \mathcal{P}^\phi \xrightarrow{\sim} \mathcal{L}_3^\textup{HC}
$$
\end{proposition}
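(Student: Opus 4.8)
The plan is to mirror the argument already used for Proposition~\ref{prop:picturesubspacecorrespondence}, adjusting only for the presence of the hyperelliptic datum $H$. First I would check that $\varsigma$ is well-defined: given an octad picture $P$ containing a $\mathbf{Line}$ or $\mathbf{TCu}$ block, it comes from a unique compatible block decomposition $\mathcal B = \{B_1,\ldots,B_n, B_{\mathrm{HE}}\}$ by Thm.~\ref{thm:UniqueDecomposition} (and the conventions on auxiliary blocks), where exactly one $B_{\mathrm{HE}}$ is a hyperelliptic block --- note that a picture cannot contain two hyperelliptic blocks, since the compatibility conditions in Tab.~\ref{tab:twoblockssubspaces} list no compatible configuration of two hyperelliptic blocks (and a $\bm{\phi}$-block, which carries its own hyperelliptic lines as auxiliary blocks, is excluded since $P \in \mathcal{P}^{\mathrm{HE}}\setminus\mathcal{P}^\phi$). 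Then $\mathcal L = \{(X_i, X_i^\perp)\}_i \cup \{(\mathbf 0, E_3)\}$ is the collection attached to the non-hyperelliptic blocks as in Def.~\ref{def:picturesubspacecorrespondence}, and $H$ is the hyperelliptic subset attached to $B_{\mathrm{HE}}$ by Tab.~\ref{tab:introblocksdefinition}. I must verify that $(\mathcal L, H)$ is indeed hyperelliptic-compatible, i.e.\ that $\mathcal L$ is compatible (this is exactly Prop.~\ref{prop:picturesubspacecorrespondence} applied to the sub-picture obtained by deleting $B_{\mathrm{HE}}$) and that $X_i \subset H$ for every $(X_i, X_i^\perp) \in \mathcal L$. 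This last inclusion is the content of the pairwise compatibility of each $B_i$ with $B_{\mathrm{HE}}$ recorded in the bottom two blocks of Tab.~\ref{tab:twoblockssubspaces} (``HE block and $\bm{\alpha}$-block'', ``HE block and $\bm{\chi}$-block''); on the level of subspaces, each of those pictures translates, via Tab.~\ref{tab:introblocksdefinition} and the fact that the hyperelliptic subset is closed under the relevant subspace containments, into $X_i \subset H$, and this is a finite check.

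Next I would show $\varsigma$ is surjective onto $\mathcal L_3^{\mathrm{HC}}$. Given a hyperelliptic-compatible pair $(\mathcal L, H)$, choose the hyperelliptic block $B_{\mathrm{HE}}$ (a $\mathbf{TCu}$- or $\mathbf{Line}$-block) whose associated subset is $H$; this exists because, up to $S_8$, there are exactly two hyperelliptic subsets and exactly two families of hyperelliptic blocks, and the correspondence between them is the one in Tab.~\ref{tab:introblocksdefinition}. For each $(X, X^\perp) \in \mathcal L \setminus \{(\mathbf 0, E_3)\}$ choose a building block $B$ with associated subspace $X$ (a single $\bm{\alpha}$-, $\bm{\chi}$- or $\bm{\phi}$-block, by Prop.~\ref{prop:principalsubspacesE3}; but note a $\bm{\phi}$-block has a $3$-dimensional subspace and is excluded here since we are landing in $\mathcal P^{\mathrm{HE}}\setminus\mathcal P^\phi$, so in fact the $X$'s have dimension $\le 2$ when $\dim X \le \dim X^\perp$, forcing $\bm{\alpha}$- or $\bm{\chi}$-blocks). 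The set $\mathcal B = \{B_i\} \cup \{B_{\mathrm{HE}}\}$ is compatible: pairs $B_i, B_j$ are compatible because $\mathcal L$ is compatible (again Prop.~\ref{prop:picturesubspacecorrespondence}), and each pair $B_i, B_{\mathrm{HE}}$ is compatible because $X_i \subset H$ translates back to one of the allowed pictures in Tab.~\ref{tab:twoblockssubspaces}. Overlaying the pictures of the $B_i$ and $B_{\mathrm{HE}}$ produces an octad picture $P$ with $\varsigma(P) = (\mathcal L, H)$.

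Finally I would argue injectivity. If $\varsigma(P) = \varsigma(P') = (\mathcal L, H)$, then $P$ and $P'$ have the same hyperelliptic block (determined by $H$ up to the labelling conventions), and the same collection of $\bm{\alpha}$-/$\bm{\chi}$-blocks (determined by $\mathcal L$, using injectivity in Prop.~\ref{prop:picturesubspacecorrespondence}); hence $P = P'$. The one subtlety is matching up the labelling: a hyperelliptic subset $H$ may be stabilised by permutations that act nontrivially on the associated $\mathbf{TCu}$- or $\mathbf{Line}$-block picture, and similarly a subspace $X$ may admit several block labellings, so strictly speaking injectivity holds ``up to choice of labelling'' in precisely the same sense as in Prop.~\ref{prop:picturesubspacecorrespondence}. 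The main obstacle is therefore not conceptual but bookkeeping: one must carefully confirm that the finite table of HE-compatible pairs in Tab.~\ref{tab:twoblockssubspaces} really is the exhaustive list of pictures $B + B_{\mathrm{HE}}$ that arise, and that this list coincides with the condition $X \subset H$ on subspaces --- this is the verification ``this can be done computationally'' that also underlies the propositions about hyperelliptic subsets above. Granting that, the bijection follows formally from the non-hyperelliptic case together with the additional containment condition.
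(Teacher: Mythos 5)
Your proposal is correct and follows essentially the same route as the paper: well-definedness is reduced to Prop.~\ref{prop:picturesubspacecorrespondence} plus a finite check that the HE rows of Tab.~\ref{tab:twoblockssubspaces} translate into the containment $X \subset H$, and bijectivity comes down to a finite enumeration of $\mathfrak{L}_3^{\textup{HC}}$ matched against octad pictures. The paper simply compresses your explicit surjectivity/injectivity construction into the phrase ``this is achieved computationally,'' so the only difference is that you spell out the bookkeeping the paper delegates to the machine.
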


\begin{proof}
    Let $P \in \mathcal{P}^\textup{HE}  \backslash \mathcal{P}^\phi$, and let $\varsigma(P) = (\mathcal{L},H)$. By Prop.~\ref{prop:picturesubspacecorrespondence}, $\mathcal{L}$ is a compatible collection of subspaces. If $(X, X^\bot) \in \mathcal{L}$, then verifying $(X, X^\bot)$ and $H$ are mutually compatible amounts to verifying that each of the pictures in the final three rows of Tab.~\ref{tab:twoblockssubspaces} induce a pair satisfying Def.~\ref{def:hyperellipticcompatiblesubspaces}.

    To ensure that $\varsigma$ is a bijection, we need only enumerate $\mathfrak{L}_3^\textup{HC}$ and verify that all its elements are the image of some hyperelliptic octad picture under $\varsigma$. This is achieved computationally.
\end{proof}

\subsubsection*{Action of the Symplectic Group}

By virtue of Prop.~\ref{prop:picturesubspacecorrespondence} and~\ref{prop:picturesubspacecorrespondenceHE}, one can define an action of $\textup{Sp}(6,2)$ on $\mathcal{P}^\textup{NH} \cup \mathcal{P}^\textup{HE}$, pulling back by $\varsigma$. Note that this extends the obvious action of $S_8$.

\begin{conjecture}
\label{conj:symplecticgroupactioncommutespictures}
Let $K$ be a non-archimedean local field with residue characteristic $\neq 2$. The map $d_K$ sending an octad to its octad picture commutes with the action of the group $\textup{Sp}(6,2)$.
\end{conjecture}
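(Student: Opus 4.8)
The plan is to reduce the statement to the action of Cremona transformations and then to the single-block case already established. By Remark~\ref{rmk:cremonatransformations}, every $\tau \in \textup{Sp}(6,2)$ factors as $\tau = \sigma_H \circ \rho$ with $\rho \in S_8$ and $\sigma_H$ one of the $36$ Cremona elements. The map $d_K$ is manifestly $S_8$-equivariant (permuting the points of $O$ permutes the points of every building block occurring in its decomposition, and $\varsigma$ was built to be $S_8$-equivariant), so it suffices to prove $d_K(\sigma_H \cdot O) = \sigma_H \cdot d_K(O)$ for a single Cremona element, say the one attached to the partition $\oA\oB\oC\oD \mid \oE\oF\oG\oH$; all others are $S_8$-conjugate to this one.

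First I would record the single-block case. Propositions~\ref{CreTransTwins},~\ref{CreTransTypes},~\ref{prop:CreTransCans} and~\ref{prop:CreTransHE} show that a Cremona transformation sends an octad whose valuation data is a building block to an octad whose valuation data is a building block of the same family; one then checks, block by block against Tables~\ref{table:Cremona-objects}--\ref{table:Cremona-HE} and the dictionary of Table~\ref{tab:introblocksdefinition}, that the induced picture transformation agrees with the action of $\sigma_H$ on the associated (sub)space of $E_3$ recorded by $\varsigma$. This is a finite verification, and it establishes the conjecture whenever $d_K(O)$ comes from a single block.

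The core of the proof is the passage to a general compatible block decomposition $v = \sum_i m_i B_i$ (ignoring auxiliary blocks, which by convention do not enter the picture). The obstruction is that the Cremona transformation, together with the renormalisation of affine representatives, is \emph{not} additive on valuation data: coordinate inversion $e \mapsto e^{-1}$ is non-linear and renormalisation is only piecewise-linear, exactly the phenomenon responsible for the exceptional cases (octads on a twisted cubic) appearing in the proofs of Propositions~\ref{CreTransTwins}--\ref{prop:CreTransCans}. What remains true is that, for a generic octad realising a fixed compatible decomposition type, all the relevant Pl\"ucker and twisted-cubic valuations are linear functions of the $m_i$, so on the relatively open cone of such valuation data the Cremona-plus-renormalisation map is affine; hence it is determined there by its values on the extreme rays, which are precisely the single building blocks, already handled. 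The plan is therefore: (1) for each of the finitely many compatible decomposition types, instantiate a generic octad with formal parameters realising that valuation data, apply the Cremona transformation, and verify that the image octad has valuation data $\sum_i m_i\,\sigma_H(B_i)$ --- this can be automated in the spirit of~\cite{BGit23}; (2) check that $\{\sigma_H(B_i)\}$ is again a compatible set, so that by the uniqueness of Theorem~\ref{thm:UniqueDecomposition} it is \emph{the} block decomposition of $\sigma_H \cdot O$, giving $d_K(\sigma_H \cdot O) = \sigma_H \cdot d_K(O)$; and (3) treat the degenerate boundary strata (twisted cubics, coincident auxiliary lines) separately, exactly as in the single-block arguments, showing that the anomalies there either do not affect the picture or force the reduction into a case already covered.

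The main obstacle is step (1) in the multi-block range: one must rule out, for every compatible decomposition type and its Cremona image, the possibility that some Pl\"ucker valuation of the transformed octad is strictly larger than predicted, which would amount to a spurious vanishing outside the support of the expected valuation data. As in the single-block proofs, such a strict inequality should be shown to force the Cayley relations~\eqref{eq:cayley} to impose a further degeneration of the octad --- typically onto a twisted cubic --- contradicting the assumed decomposition. Carrying this out uniformly across all decomposition types, rather than one block at a time and with the non-additivity of the renormalisation procedure under control, is the substantive computational and bookkeeping task, and is why the statement is recorded as a conjecture rather than a theorem.
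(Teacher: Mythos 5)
The statement you are addressing is recorded in the paper as a conjecture: the paper itself establishes it only when the octad picture consists of a single building block (Prop.~\ref{CreTransTwins}, \ref{CreTransTypes}, \ref{prop:CreTransCans} and \ref{prop:CreTransHE}) and explicitly leaves the general case open. Your reduction to a single Cremona coset via the factorisation $\tau = \sigma_H \circ \rho$, the $S_8$-equivariance of $d_K$, and your diagnosis of the obstruction --- that coordinate inversion plus renormalisation is not additive on valuation data --- all match the paper's framework and are correct. But your proposal does not close the gap, and the one step where it purports to is not sound as stated.

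Concretely, the claim that the ``Cremona-plus-renormalisation map is affine on the relatively open cone'' of a fixed decomposition type, ``hence determined by its values on the extreme rays,'' conflates a map on octads with a map on valuation data. The Cremona transformation acts on octads, not on valuation vectors; the single-block propositions assert that every octad realising valuation data $B_i$ is sent to an octad realising $\sigma_H(B_i)$ (away from twisted-cubic degenerations), but they say nothing about octads realising $\sum_i m_i B_i$, which are not obtained by any linear operation from octads realising the individual $B_i$. Even granting that the relevant valuations are linear in the $m_i$ on each stratum, an affine map is not pinned down by its values on the extreme rays of a cone unless one also controls where the piecewise-linear renormalisation changes slope, and those walls can pass through the interior of the cone: this is precisely the non-additivity phenomenon the paper records in the remark following Thm.~\ref{thm:pgl-action-on-block-decomposition}, where $\bm{\phi}_{1\mathrm{b}}^{\CAApts}$ is $\PGL$-equivalent to, but not equal to, the sum of the normalised images of its constituent blocks because all three minima become positive simultaneously. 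So step (1) of your plan is not a corollary of the single-block case plus convexity; it is a genuinely new finite-but-large verification over all $2\,626\,648$ compatible decomposition types (in the spirit of the computation behind Thm.~\ref{thm:UniqueDecomposition} and Thm.~\ref{thm:pgl-action-on-block-decomposition}), together with a uniform treatment of the spurious-vanishing exceptions forced by the Cayley relations~\eqref{eq:cayley}. Until that is carried out, the statement remains, as the paper presents it, a conjecture; your text is an accurate strategy outline rather than a proof.
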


\begin{remark}
    By Prop.~\ref{CreTransTwins},~\ref{CreTransTypes},~\ref{prop:CreTransCans} and~\ref{prop:CreTransHE}, Conj.~\ref{conj:symplecticgroupactioncommutespictures} is true when the octad picture comes from a single block.
\end{remark}

\begin{remark}
\label{rmk:octadpictureindices}
We remark that there is no action of $\textup{Sp}(6,2)$ on
unlabelled octad pictures, as $S_8$ is not a normal subgroup of
$\textup{Sp}(6,2)$. However, we can identify the $S_8$-cosets with Cremona
transformations. The $36$ Cremona transformations applied to an unlabelled octad picture $D$ gives an `orbit' of octad pictures, but it may not consist of $36$ distinct pictures. We define the \textit{multiplicity} of an unlabelled octad picture $P$ to be the number of Cremona transformations that fix it. See the Tables in  App.~\ref{sec:special-fibres-octad-1}, and in App.~\ref{sec:special-fibres-octad-2} for the multiplicities of the different octad pictures.
\end{remark}

\subsection{Stable Graphs of Octad Pictures}
\label{sec:stable-graphs-octad}

The aim of this section is to give a general-purpose algorithm that makes the correspondence of Conj.~\ref{conj:SpecialFibreOfTheStableModel} explicit. That is, given a compatible collection of subspaces, $\mathcal{L}$, we construct a graph (the \textit{stable graph} of $\mathcal{L}$) in a purely combinatorial fashion. Conj.~\ref{conj:SpecialFibreOfTheStableModel} relates this stable graph to any plane quartic which has $P$ as one of its octad pictures, where $\varsigma(P) = \mathcal{L}$ or $(\mathcal{L}, \star)$.

This construction, given in Def.~\ref{def:stablegraph}, is somewhat intricate, and so we start with some preliminary definitions. In many cases, these definitions mimic the nomenclature surrounding cluster pictures; we aim to make these connections evident.

\begin{definition}
\label{def:inclusiongraph}
Let $\mathcal{L}$ be a collection of compatible subspaces of $E_3$. The \textit{inclusion graph} of $\mathcal{L}$ is the following directed graph:
\begin{itemize}
    \item For each $(X, X^\bot) \in \mathcal{L}$, a vertex labelled with the subspace $X$, with the exception of the pair $(\mathbf{0},E_3)$ for which we use the subspace $E_3$.
    \item An edge $X \to Y$ when $Y$ is the smallest subspace strictly containing $X$. $Y$ is said to be the \textit{parent} of $X$, and $X$ a \textit{child} of $Y$.
\end{itemize}
Note that parents must be unique. For, if $Y, Z$ are both parents of $X$, then either $Y \subset Z, Z \subset Y$ (so $Y = Z$ by minimality), or $Y \subset Z^\bot$. The third case implies $Y \subset Z \cap Z^\bot$, contradicting Def.~\ref{def:admissible}.
\end{definition}

\begin{remark}
    The purpose of the inclusion graph is to give a partial order to a compatible collection of subspaces. Note that in the case of cluster pictures there is an obvious partial order given by inclusion of clusters.
\end{remark}

 In the setting of cluster pictures, each cluster typically contributes $1$ vertex to the dual graph of the special fibre of the stable model. The exceptions to this are clusters of size two (which contribute $0$ vertices), and so-called \textit{\"{u}bereven} clusters, which contribute $2$ vertices. We define now a relation index $\delta$ that appears to be the appropriate generalisation of this phenomenon (with the non-\"{u}bereven and \"{u}bereven cases corresponding to this index being $0$ or $1$ respectively) for the octad picture.

\begin{definition}\label{def:orthogonallydependent}
    Let $\mathcal{L}$ be an admissible collection of subspaces of $E_3$, $X$ a vertex of the inclusion graph of $\mathcal{L}$, and $S = \{ Y_1 \cap {Y_1}^\bot, \ldots, Y_n \cap {Y_n}^\bot \}$, where each $Y_i$ is a child of $X$. We say that $S$ is \textit{orthogonally dependent under $X$} if:
    \begin{itemize}
        \item For each $i$, $Y_i \cap {Y_i}^\bot = \langle y_i \rangle \neq \{ 0 \}$, and
        \item $\sum_{i} y_i \in X \cap X^\bot$.
    \end{itemize}
    We define $\textup{rel}(X)$ to be the $\F_2$-vector space of all orthogonally dependent sets under $X$, with addition given by symmetric difference.  The \textit{relation index} of $X$ is $\delta_X = \dim\textup{rel}(X)$.
 \end{definition}

 \begin{remark}
    In the construction of the stable graph (Def.~\ref{def:stablegraph} and~\ref{def:stable-graph}), the integer $\delta_X$ dictates the number of vertices the subspace pair $(X,X^\bot)$ contributes.
\end{remark}

\begin{definition}
\label{def:subspacegraph}
    Let $\mathcal{L}$ be a collection of compatible subspaces of $E_3$. The \textit{subspace graph} of $\mathcal{L}$ is the graph achieved by augmenting the inclusion graph via the following process.
    For each vertex $X$, if $\delta_X > 0$, we draw a blue box around all children of $X$ which belong to some orthogonally dependent subset of $X$.
    Then, for each $X$ in the inclusion graph of $\mathcal{L}$ we replace the label $X$ with $\dim(X)$. If $(\mathcal{L},H)$ is a hyperelliptic-compatible collection of subspaces of $E_3$, then its subspace graph is the subspace graph of $\mathcal{L}$, where, instead of replacing the vertex labelled $E_3$ with $6$, we replace it with the label $6^{*}$.
\end{definition}

\begin{example}\label{ex:stable-graph}
  Let
  $\mathcal{L} = \{ (0,E_3), (X_1, X_1^\bot), (X_2, X_2^\bot), (X_3, X_3^\bot)
  \}$, where $X_1 = \langle \oA\oB \rangle, X_2 = \langle \oC\oD \rangle$ and
  $X_3 = \langle \oA\oB\oC\oD \rangle$. Then $\mathcal{L}$ is a compatible
  collection of subspaces, and its inclusion graph is given in
  Fig.~\ref{fig:inclusiongraphexample} and its subspace graph is given in
  Fig.~\ref{fig:subspacegraphexample}.

\begin{figure}[htbp]
  \begin{subfigure}[b]{0.35\linewidth}
    \centering
    \tikzsetnextfilename{inclusiongraphexample}
     \begin{tikzpicture}[scale=1]
        \node at (0,0.2) {};
        \node[shape=circle,draw=black,scale=0.5] (6) at (0,0) {$E_3$};
        \node[shape=circle,draw=black,scale=0.5] (1A) at (-0.5,-1) {$X_1$};
        \node[shape=circle,draw=black,scale=0.5] (1B) at (0,-1) {$X_2$};
        \node[shape=circle,draw=black,scale=0.5] (1C) at (0.5,-1) {$X_3$};
        \draw (6) -- (1A);
        \draw (6) -- (1B);
        \draw (6) -- (1C);
      \end{tikzpicture}
      \caption{Inclusion graph of $\mathcal{L}$}
      \label{fig:inclusiongraphexample}
  \end{subfigure}
  \vspace{\floatsep}
  \begin{subfigure}[b]{0.35\linewidth}
    \centering
    \tikzsetnextfilename{subspacegraphexample}
    \begin{tikzpicture}[scale=1]
      \node at (0,0.2) {};
      \node[shape=circle,draw=black,scale=0.5] (6) at (0,0) {6};
      \node[shape=circle,draw=black,scale=0.5] (1A) at (-0.5,-1) {1};
      \node[shape=circle,draw=black,scale=0.5] (1B) at (0,-1) {1};
      \node[shape=circle,draw=black,scale=0.5] (1C) at (0.5,-1) {1};
      \draw (6) -- (1A);
      \draw (6) -- (1B);
      \draw (6) -- (1C);
      \draw [draw = blue] (0.7,-0.8) rectangle (-0.7,-1.2);
    \end{tikzpicture}
    \caption{Subspace graph of $\mathcal{L}$}
    \label{fig:subspacegraphexample}
  \end{subfigure}
  \vspace*{-0.5cm}
  \caption{Inclusion/subspace graphs defined by Ex.~\ref{ex:stable-graph}}
\end{figure}
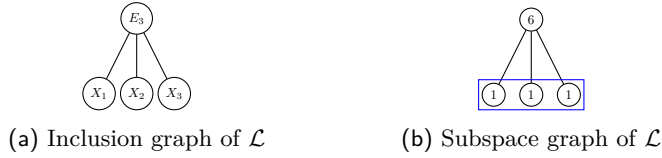

\end{example}

\begin{remark}
\label{rmk:subspacegraphdeterminesSM}
    The subspace graph appears to be weaker information than the inclusion graph. Nonetheless, the subspace graph determines the inclusion graph (up to the action of $\textup{Sp}(6,2)$). This is a coincidence as the number of possible cases for the subspace graphs is relatively small in genus 3. If one were to generalise the method to higher genus curves, it is not true anymore that the subspace graph uniquely determines the inclusion graph, see \cite[Rem.~5.1.20]{JordanPHD}.

    On account of Prop.~\ref{prop:picturesubspacecorrespondence} and~\ref{prop:picturesubspacecorrespondenceHE}, there is a correspondence between subspace graphs and octad pictures. These are given in full in Tab.~\ref{tab:smoctadsc02} -~\ref{tab:smhoctadsc56}.
\end{remark}

Now we introduce useful notation to give the recipe to obtain the dual graph of the special fibre of the stable model from the subspace graph. This complicated recipe is the equivalent to the one in \cite[Sec. 4.6]{otherm2d2} in the context of cluster pictures for hyperelliptic curves.

In the next definition, by a \textit{directed, weighted multi-graph} we mean a $4$-tuple $G =
(V,E,\gamma,\lambda)$, where $(V,E)$ is a directed multi-graph (\textit{i.e.}\ %
we allow multiple edges between the same pair of vertices), $\gamma \colon V
\to I$ is a weighting of the vertices, and $\lambda \colon E \to I'$ is a
weighting of the edges (where $I,I'$ are some label sets).
For $v \in V$ we write $\varepsilon^{-}(v)$ for the set of edges into $v$, and $\varepsilon(v)$ for the set of edges which are either into or out of $v$ (or both).

\begin{definition}
\label{def:stablegraph}
Let $\mathcal{L}$ be an octad picture. Then the \textit{inclusion multi-graph} of $\mathcal{L}$ is a directed, labelled multi-graph $G = (V_\mathcal{L}, E_\mathcal{L}, \gamma, \lambda)$, where:\medskip
\begin{description}

    \item[Vertices] For each $X$ in the inclusion graph of $\mathcal{L}$,
      there are $\delta_X + 1$ vertices $\Gamma^1_{X}, \ldots \Gamma^{\delta_X
        +1}_{X}$, \textit{i.e.}\ %
    $$
    V_\mathcal{L} = \{ \Gamma^i_X \, \colon \, X \textup{ is in the inclusion graph of $\mathcal{L}$}, \textup{ and } i = 1, \ldots, \delta_X + 1 \}\,;\medskip
    $$

    \item[Edges and their Weights] For each vertex $X$ ($\neq E_3$) in the
      inclusion graph of $\mathcal{L}$, a total of $d_X +1 = \dim(X \cap
      X^\bot)+1$ edges $e_X^{1}, \ldots, e_X^{d_X+1}$, such that each $e_X^i$
      is from some vertex $\Gamma_X^j$to some vertex $\Gamma_Y^k$, where $Y$
      is the parent of $X$, \textit{i.e.}\ %
      \begin{displaymath}
        E_\mathcal{L} = \{ e_X^{i} \, \colon \,  X \textup{ is in the inclusion
          graph of $\mathcal{L}$}, \textup{ and } i = 1, \ldots, d_X + 1 \}\,;
      \end{displaymath}\smallskip
      The function $\lambda$ is such that it has codomain $E_3$,
    \begin{itemize}
        \item  $\lambda(e^i_X)$ is the non-trivial element of $X \cap X^\bot$ (if it exists), or $0$ otherwise,
        \item for each $v \in V_\mathcal{L}$,
          \begin{math}\displaystyle
            \sum_{e \in \varepsilon(v)} \lambda(e) = 0\,,
          \end{math}
        \item for each vertex $v$, we define the collection of subspaces
        $$
          \lambda^{-}(v) = \{ \langle n_e \lambda(e) \rangle \, \colon \, e \in \varepsilon^{-}(v) \textup{, $n_e$ the number of edges in $\varepsilon^{-}(v)$ with weight $\lambda(e)$} \: \} \backslash \{ \bf{0} \}.
        $$

        Then for each $X$ in the inclusion graph and $1 \leqslant i \leqslant \delta_X + 1$, we have $\lambda^{-}(\Gamma_X^i) \in \textup{rel}(X)$
        \item For each $X$ in the inclusion graph, $\{ \lambda^{-}(\Gamma_X^1), \ldots, \lambda^{-}(\Gamma_X^{\delta_X + 1}) \}$ spans $\textup{rel}(X)$\,;
        \end{itemize}\medskip

    \item[Genera of Vertices] For each $X$ in the inclusion graph, $G$ can be partitioned into two sub-graphs, $G_X$ and $G_X^\bot$, where
    \begin{itemize}
    \item the vertices of $G_X$ are the $\Gamma_Z^{i}$ where $Z$ is below $X$ in the inclusion graph,
    \item the vertices of $G_X^{\bot}$ are the $\Gamma_Z^{i}$ where $Z$ is not below $X$ in the inclusion graph,
    \item $G_X$ and $G_X^{\bot}$ are joined by $d_X + 1$ edges\,;
    \end{itemize}\smallskip
    The function $\gamma \colon V_\mathcal{L} \to \Z^{\geqslant 0}$ satisfies
    $$
    \sum_{v \in G_X} \gamma(v) + \dim ( H_1(G_X) ) = \frac{1}{2}\dim(X / X \cap X^\bot)\,,
    $$
    $$
    \sum_{v \in G_X^{\bot}} \gamma(v) + \dim ( H_1(G_X^{\bot}) ) = \frac{1}{2}\dim(X^\bot / X \cap X^\bot)\,.
    $$
\end{description}

\end{definition}
\medskip\noindent%
In this definition, $H_1(G_X)$, resp. $H_1(G_X^{\bot})$, denotes the first homology
group of $G_X$, resp. $G_X^{\bot}$.
The idea in defining $\gamma$ is to assign each vertex a genus. For every pair
$(X, X^\bot)$ you should be able to split the graph into two bits: one
bit has total genus $1/2 \dim(X / X \cap X^\bot)$, the other one has genus
$1/2 \dim(X^\bot / X \cap X^\bot)$, and they are linked by $d_X + 1$ edges. But
to give each vertex a genus you have to subtract the genus of all its children
and all the ``extra'' genus you get from the links.

It is not \textit{a priori} clear that there is always a unique graph satisfying the conditions of Def.~\ref{def:stablegraph}. Nonetheless, it can be verified that, up to isomorphism, such a unique graph exists for every octad picture. This boils down to checking each individual case; note that Def.~\ref{def:stablegraph} is independent of the action of $\textup{Sp}(6,2)$ so in fact there are only $42$ cases to check, one from each orbit. \medskip

Given an octad picture $\mathcal{L}$ , we can recover from its
inclusion multi-graph (the dual graph of) a stable type by contracting any
vertex of genus 0 with exactly two edges into a single edge.

\begin{definition}\label{def:stable-graph}
  The \textit{stable graph} of $\mathcal{L}$ is the graph formed by removing
  all vertices $v$ from the inclusion multi-graph for which $\gamma(v) = 0$ and $|\epsilon(v)| = 2$, and replacing them with a single edge the two neighbours of $v$. Additionally we forget the direction of each edge (that is, the stable graph is not a directed graph).
\end{definition}

\begin{example}[Stable reduction type \texttt{(1=1)}]
Let $\mathcal{L} = \{ (\mathbf{0},E_3), (X,X^\bot) \}$, where $X = \langle
\oA\oB,$ $\oA\oC,$ $\oA\oD \rangle$ (\textit{i.e.}\ coming from a $\bm{\phi}_3$-block, see Tab.~\ref{tab:introblocksdefinition}). Then $\mathcal{L}$ is an octad picture, and we find its stable graph.

As both $\textup{rel}(E_3)$ and $\textup{rel}(X)$ are trivial, each contributes one vertex to the graph: $\Gamma_{E_3}^{1}$ and $\Gamma_{X}^1$. Note $X \cap X^\bot = \langle \oA\oB\oC\oD \rangle$, so there are $2$ edges in total in the graph. As $\delta_{E_3} = \delta_X = 0$, the only option is that both edges are from $\Gamma_X^{1}$ to $\Gamma_{E_3}^1$. Both edges are labelled by $\oA\oB\oC\oD$, and so $\sum_{e \in \varepsilon(\Gamma_X^{1})} \lambda(e) = 2 \cdot \oA\oB\oC\oD = 0$ (and similarly for $\Gamma_{E_3}^1$). Moreover,
\begin{displaymath}
  \lambda^{-}(\Gamma_X^1) = \lambda^{-}(\Gamma_{E_3}^1) = \{ \langle 2 \cdot \oA\oB\oC\oD \rangle \} \backslash \{ \bf{0} \} = \emptyset.
\end{displaymath}
Finally, each vertex must be given a genus. Partitioning according to $X$, we
see that $G_X$ consists of one vertex and no edges, so we must have
$\gamma(\Gamma_X^1) = \frac{1}{2}\dim(X/X\cap X^\bot) = 1$. Similarly, $G_X^\bot$
consists of one vertex, and so $\gamma(\Gamma_{E_3}^1) =
\frac{1}{2}\dim(X^\bot/X\cap X^\bot) = 1$. No vertices are required to be
contracted for the stable graph, and so we have described it completely. It is
given in Fig.~\ref{fig:stablegraphexample}.

\begin{figure}[htbp]
    \centering
    \tikzsetnextfilename{stablegraphexample}
    \begin{tikzpicture}[scale=0.7]
        \node[shape=circle,draw=black,scale=0.7] (A) at (-1,0) {1};
        \node[shape=circle,draw=black,scale=0.7] (B) at (1,0) {1};
        \draw (A) to[out=45,in=135] (B)  node [midway, above = 13pt] {$\oA\oB\oC\oD$};
        \draw (A) to[out=-45,in=-135] (B) node [midway, below = 13pt] {$\oA\oB\oC\oD$};
        \end{tikzpicture}
    \caption{Stable graph of $\mathcal{L}$}
    \label{fig:stablegraphexample}
\end{figure}
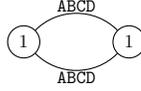
\end{example}

\begin{example}[Stable reduction type \texttt{(CAVE)}]
\label{ex:cavesubspaces}
  Let now $\mathcal{L}$ be the compatible collection
  $\{ (\mathbf{0},E_3),$ $(X_1,X_1^\bot),$ $\ldots, (X_5,X_5^\bot)\}$, where
  $X_1 = \langle \oA\oB \rangle$, $X_2 = \langle \oA\oB\oC\oD \rangle$,
  $X_3 = \langle \oC\oD \rangle$, $X_4 = \langle \oA\oB\oE\oF \rangle$,
  $X_5 = \langle \oE\oF \rangle$ (\textit{i.e.}\ coming from five ${\bm {\alpha}}$-blocks, see Tab.~\ref{tab:introblocksdefinition}).  %
The inclusion graph of $\mathcal{L}$ is given in Fig.~\ref{fig:inclusiongraphex2}.

We must have that $\textup{rel}( X_1 )$ is trivial, as $X_1$ has no children. The same goes for $X_2, X_3, X_4$ and $X_5$, and therefore each contributes exactly one vertex, $\Gamma^1_{X_i}$
However $\textup{rel}(E_3)$ is non-trivial. Indeed, it is $2$-dimensional and generated by $\{ X_1, X_2, X_3 \}$ and $\{ X_1, X_4, X_5 \}$. Thus $\delta_{E_3} = 2$, and $E_3$ contributes three vertices $\Gamma^1_{E_3}, \Gamma^2_{E_3}, \Gamma^3_{E_3}$.

Observe that $\dim( X_i \cap X_i^\bot) + 1 = 2$, for $i = 1, \ldots, 5$, so each $X_i$-vertex has two edges, $e^1_{X_i}, e^2_{X_i}$ (both labelled with the non-trivial element of $X_i \cap X_i^\bot$), with $e^j_{X_i}$ between $\Gamma^1_{X_i}$ and $\Gamma^k_{E_3}$ for some $k$ (depending on $i$).
Without loss of generality $e^1_{X_1}$ joins $\Gamma^1_{X_1}$ to $\Gamma^1_{E_3}$. But then $e^2_{X_1}$ cannot join  $\Gamma^1_{X_1}$ to $\Gamma^1_{E_3}$, as then no $\lambda^-(\Gamma^{i}_{E_3})$ can contain $\oA\oB$, and hence $\{ \lambda^-(\Gamma^{1}_{E_3}),\, \lambda^-(\Gamma^{2}_{E_3}),\, \lambda^-(\Gamma^{3}_{E_3}) \}$ cannot span $\textup{rel}(E_3)$. So without loss of generality, $e^2_{X_1}$ joins $\Gamma^1_{X_1}$ to $\Gamma^2_{E_3}$.

We must ensure that the sum of the labels into $\Gamma_{E_3}^1$ and $\Gamma_{E_3}^2$ is $0$. However given $\oA\oB$ is one such label, and there are only two relations in which $\oA\oB$ appears, our hand is forced (up to swapping $\Gamma_{E_3}^1$ and $\Gamma_{E_3}^2$). We must have that $e^1_{X_2}$ (resp. $e^1_{X_3}$) joins $\Gamma_{X_2}^1$ (resp. $\Gamma_{X_3}^1$) to $\Gamma^1_{E_3}$, and that $e^1_{X_4}$ (resp. $e^1_{X_5}$) joins $\Gamma_{X_4}^1$ (resp. $\Gamma_{X_5}^1$) to $\Gamma_{E_3}^2$. The remaining four edges $e^2_{X_i}$, $i = 2,3,4,5$ must then join $\Gamma^1_{X_i}$ to $\Gamma^3_{E_3}$. Note that already we have a graph with $3$-dimensional homology (see Fig.~\ref{fig:stablegmultiraphex2}).

To find the genera of each label, we note that $\dim(X_i/ X_i \cap X_i^\bot) = 0$, $i = 1, \ldots, 5$, so by the partitioning condition at each $X_i$ we must have $\gamma(X_i) = 0$. As we already have a graph with $3$-dimensional homology, $\gamma(\Gamma^i_{E_3}) = 0$ for $i = 1, 2, 3$.

Lastly, we contract the vertices $\Gamma^1_{X_i}$, $i = 1, \ldots, 5$. The stable graph of $\mathcal{L}$ (minus edge labels) is given in Fig.~\ref{fig:stablegraphex2}.

\begin{figure}[htbp]
  \begin{subfigure}[b]{0.3\linewidth}
    \centering\tikzsetnextfilename{inclusiongraphex2}
    \begin{tikzpicture}[scale=1.4]
      \node[shape=circle,draw=black,scale=0.6] (6) at (0,0) {$E_3$};
      \node[shape=circle,draw=black,scale=0.6] (1A) at (-1,-0.73) {$X_1$};
      \node[shape=circle,draw=black,scale=0.6] (1B) at (-0.5,-0.73) {$X_2$};
      \node[shape=circle,draw=black,scale=0.6] (1C) at (0,-0.73) {$X_3$};
      \node[shape=circle,draw=black,scale=0.6] (1D) at (0.5,-0.73) {$X_4$};
      \node[shape=circle,draw=black,scale=0.6] (1E) at (1,-0.73) {$X_5$};%
      \draw (6) -- (1A); \draw (6) -- (1B); \draw (6) -- (1C);%
      \draw (6) -- (1D); \draw (6) -- (1E);
    \end{tikzpicture}
    \caption{Inclusion graph}
    \label{fig:inclusiongraphex2}
  \end{subfigure}
  \begin{subfigure}[c]{0.3\linewidth}
    \centering\tikzsetnextfilename{stablemultigraphex2}
    \begin{tikzpicture}[scale=2.8]
      \node (A) at (-0.5,0) {$\Gamma^1_{E_3}$};
      \node (B) at (0,0.73) {$\Gamma^3_{E_3}$};
      \node (C) at (0.5,0) {$\Gamma^2_{E_3}$};
      \draw (A) to[out=80,in=220] node[fill=white,midway] {\scriptsize$\Gamma^1_{X_2}$} (B) ;%
      \draw (A) to[out=40,in=-100] node[fill=white,midway] {\scriptsize$\ \Gamma^1_{X_3}$} (B) ;%
      \draw (C) to[out=110,in=-40] node[fill=white,midway] {\ \ \scriptsize$\Gamma^1_{X_5}$} (B) ;%
      \draw (C) to[out=140,in=-80] node[fill=white,midway] {\scriptsize$\Gamma^1_{X_4}$} (B) ;%
      \draw (A) -- (C) node[fill=white,midway] {\scriptsize$\Gamma^1_{X_1}$};
    \end{tikzpicture}
    \caption{Inclusion multi-graph}
    \label{fig:stablegmultiraphex2}
  \end{subfigure}
  \begin{subfigure}[b]{0.3\linewidth}
    \centering\tikzsetnextfilename{stablegraphex2}
    \begin{tikzpicture}[scale=1.5]

      \node[shape=circle,draw=black,scale=0.7] (A) at (-0.5,0) {0};
      \node[shape=circle,draw=black,scale=0.7] (B) at (0,0.73) {0};
      \node[shape=circle,draw=black,scale=0.7] (C) at (0.5,0) {0};
      \draw (A) to[out=80,in=220] (B);
      \draw (A) to[out=40,in=-100] (B);
      \draw (C) to[out=110,in=-40] (B);
      \draw (C) to[out=140,in=-80] (B);
      \draw (A) -- (C);
    \end{tikzpicture}
    \caption{Stable graph}
    \label{fig:stablegraphex2}
  \end{subfigure}

  \caption{Graphs attached to
    $\mathcal{L}$ }
  \label{fig:cavegraph}
\end{figure}
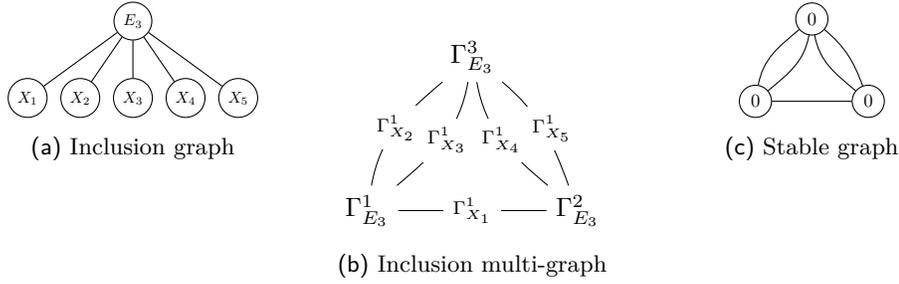

\end{example}

\begin{conjecture}\label{conj:SpecialFibreOfTheStableModelDetailed}
Let $C/K$ be a plane quartic over a non-archimedean local field $K$ of residue characteristic $p \neq 2$, with some Cayley octad $O$. Let $\bar{\mathcal{L}} = \varsigma(\bar{d}_K(O))$.

Then the stable graph of $\bar{\mathcal{L}}$ is the dual graph of the special fibre of the stable model of $C/K$, and $C/K$ has hyperelliptic reduction precisely when $\bar{d}_K(O) \in \mathcal{P}^{\textup{HE}} / S_8$.

Moreover, if $\textup{Ocs}$ is the set of normalized Cayley octads of $C$ (considered as unordered sets), then $\bar{d}_K(\textup{Ocs})$ is the image of an $\textup{Sp}(6,2)$-orbit of octad pictures under quotient by $S_8$, and the multiplicity of each octad picture $P$ in this orbit is equal to $\# \bar{d}_K^{-1}(P)$.
\end{conjecture}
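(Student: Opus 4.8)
The plan is to deduce the statement from the structural results of Sec.~\ref{sec:formalanalysisoctadpictures} together with Conj.~\ref{conj:SpecialFibreOfTheStableModel}, Conj.~\ref{conj:symplecticgroupactioncommutespictures} and Conj.~\ref{conj:VDUnieuqOctadDiagram}, and to make explicit the geometric input that would be needed to render the whole package unconditional. \emph{Step 1 (reduction to the combinatorics).} First one must check that $\bar d_K(O)$ is well defined, which is exactly Conj.~\ref{conj:VDUnieuqOctadDiagram}: the valuation data of $O$ has a unique compatible block decomposition up to $\PGL$-equivalence. Uniqueness is Thm.~\ref{thm:UniqueDecomposition}; existence amounts to showing that the cone of valuation data realised by Cayley octads is covered by the cones attached to the compatible sets of building blocks, which one would settle by a finite polyhedral computation in the spirit of the proof of Thm.~\ref{thm:UniqueDecomposition}. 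Granting this, put $\bar{\mathcal L}=\varsigma(\bar d_K(O))$. By Prop.~\ref{prop:picturesubspacecorrespondence} and~\ref{prop:picturesubspacecorrespondenceHE}, $\varsigma$ is a bijection onto compatible (resp.\ hyperelliptic-compatible) collections of subspaces, and by Thm.~\ref{thm:introsubspaceoctadpictureagreement} the stable graph of $\varsigma(P)$ is exactly the dual graph assigned to $P$ in Tab.~\ref{tab:smoctadsc02}--\ref{tab:smhoctadsc56}, with hyperellipticity recorded by the presence of a $\mathbf{Line}$-, $\mathbf{TCu}$- or $\bm{\phi}$-block, i.e.\ by $P\in\mathcal{P}^{\textup{HE}}$; the exceptional orbit does not arise for $p\neq 2$ by Rem.~\ref{rmk:exceptionalpictures}. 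Thus the first two assertions of the statement are equivalent to Conj.~\ref{conj:SpecialFibreOfTheStableModel}.

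\emph{Step 2 (the octad--stable-reduction dictionary).} To prove Conj.~\ref{conj:SpecialFibreOfTheStableModel} I would use the Thomae-type formula announced in the paper (cf.\ \cite{BDDLL23} and the discussion in Sec.~\ref{subsec:structureoctadpictures}) expressing the valuations of the $36$ even theta constants of $C/K$ in terms of the Pl\"ucker coordinates of $O$. Together with the bitangent/odd-theta dictionary of Rem.~\ref{rmk:construction-cayley} this converts the valuation data of $O$ into the valuations of all theta constants of $\Jac C$; one then argues that, in residue characteristic $\neq 2$, this theta-null data determines the toroidal degeneration of $\Jac C$ and hence, via the Torelli map and the $42$-fold stratification of $\overline{M_3}$ recalled in Sec.~\ref{sec:quart-stable-reduct}, the dual graph of the stable reduction of $C$ (hyperelliptic reduction being detected separately through the vanishing of an appropriate theta-null). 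Matching these valuation patterns stratum by stratum against the $\varsigma$-images of the octad pictures finishes the identification; the explicit families of Sec.~\ref{sec:expl-constr} provide a concrete representative of each stratum to anchor the computation, and Thm.~\ref{thm:pgl-action-on-block-decomposition} takes care of independence of the normalisation.

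\emph{Step 3 (the $\textup{Sp}(6,2)$-orbit and the multiplicities).} Recall from Thm.~\ref{thm:regularoctad} and the ensuing discussion that the set of ordered normalised Cayley octads of $C$ (up to $\PGL$) is a single orbit under $\textup{Sp}(6,2)$ on which $S_8$ acts freely, with quotient $\textup{Ocs}$ of size $36$. By Conj.~\ref{conj:symplecticgroupactioncommutespictures} --- known for single-block pictures by Prop.~\ref{CreTransTwins},~\ref{CreTransTypes},~\ref{prop:CreTransCans} and~\ref{prop:CreTransHE} --- the octad-picture map on ordered octads is $\textup{Sp}(6,2)$-equivariant, so the labelled pictures of the $36$ octads form a single $\textup{Sp}(6,2)$-orbit $\mathcal O$ and $\bar d_K(\textup{Ocs})=\mathcal O/S_8$. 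Choosing a base octad and using the torsor structure, the fibre of the labelled map over a picture $P'$ is a coset of its $\textup{Sp}(6,2)$-stabiliser; pushing this down along the two quotients by $S_8$ and bookkeeping cosets identifies $\#\bar d_K^{-1}(P)$ with the number of $S_8$-cosets in $\textup{Sp}(6,2)$, i.e.\ Cremona transformations, fixing the unlabelled picture $P$, which is precisely the multiplicity of Rem.~\ref{rmk:octadpictureindices}.

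\emph{Main obstacle.} The real difficulty is Step 2: proving, rather than verifying on examples, that the valuation data of a Cayley octad pins down the stable reduction in all $42$ (resp.\ $32$) cases --- equivalently, establishing the full strength of Conj.~\ref{conj:symplecticgroupactioncommutespictures} and Conj.~\ref{conj:VDUnieuqOctadDiagram}, and carrying out the theta-constant analysis uniformly, handling the $\bm{\phi}$-block/auxiliary-block subtleties, distinguishing hyperelliptic from non-hyperelliptic reduction within a single stable type, and explaining the failure at $p=2$. This is exactly the geometric input the present paper defers to \cite{BDDLL23}.
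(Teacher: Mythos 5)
The statement you are asked to prove is a \emph{conjecture} in the paper: the authors give no proof of it, only the remark that it is equivalent to Conj.~\ref{conj:SpecialFibreOfTheStableModel} together with the combinatorial machinery of Sec.~\ref{sec:formalanalysisoctadpictures} (Prop.~\ref{prop:picturesubspacecorrespondence}, \ref{prop:picturesubspacecorrespondenceHE} and Thm.~\ref{thm:introsubspaceoctadpictureagreement}). Your Step~1 reproduces this equivalence accurately, and that part is fine as far as it goes; but it establishes nothing beyond what the paper already states, since it merely trades one conjecture for another. Note also that the ``Moreover'' clause about the $\textup{Sp}(6,2)$-orbit and multiplicities is not a formal consequence of Conj.~\ref{conj:SpecialFibreOfTheStableModel} alone: your Step~3 correctly identifies that it needs the full strength of Conj.~\ref{conj:symplecticgroupactioncommutespictures}, which the paper proves only for single-block pictures (Prop.~\ref{CreTransTwins}, \ref{CreTransTypes}, \ref{prop:CreTransCans}, \ref{prop:CreTransHE}), and it additionally presupposes Conj.~\ref{conj:VDUnieuqOctadDiagram} (existence of a compatible block decomposition), of which only the uniqueness half (Thm.~\ref{thm:UniqueDecomposition}) is proved. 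So every step of your argument that goes beyond bookkeeping rests on an unproven input.

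The genuine gap is Step~2, and you have in effect conceded it yourself under ``Main obstacle''. The claim that a Thomae-type formula converts the valuation data of $O$ into the valuations of the theta constants, and that these in turn determine the toroidal degeneration of $\Jac C$ and hence (via Torelli) the stable type of $C$, is not established anywhere in the paper; it is precisely the content deferred to the future work \cite{BDDLL23}, and even there it is described only as something that ``might allow'' a proof. In particular the passage from degeneration data of the Jacobian back to the dual graph of the curve is delicate for non-principally-polarised strata and for distinguishing hyperelliptic from non-hyperelliptic reduction within a single stable type, and matching ``stratum by stratum against the $\varsigma$-images'' is not a proof but a restatement of the conjecture. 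As written, your proposal is a correct map of the logical dependencies among the paper's conjectures, plus a plausible but unsubstantiated research programme; it is not a proof, and it could not be one, because the statement remains open in the paper itself.
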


Conj.~\ref{conj:SpecialFibreOfTheStableModelDetailed} is equivalent to Conj.~\ref{conj:SpecialFibreOfTheStableModel}. The methods in this section give an algorithmic way of constructing the correspondences in Tab.~~\ref{tab:smoctadsc02},~\ref{tab:smoctadsc34},~\ref{tab:smoctadsc56},~\ref{tab:smhoctadsc02},~\ref{tab:smhoctadsc3},~\ref{tab:smhoctadsc4} and~\ref{tab:smhoctadsc56} in the appendices.

\begin{remark}
\label{rmk:stablegraphgeneralgenus}
We expand briefly upon Rem.~\ref{rem:general}, in particular how Def.~\ref{def:admissible} and~\ref{def:stablegraph} may be tailored to the case $g \neq 3$.

First consider the case $g = 2$. We write $\mathfrak{L}_2$ for the set of compatible subspaces of $E_2$. In this case, there is a one-to-one correspondence between $\mathfrak{L}_2$ and cluster pictures of hyperelliptic genus $2$ curves (up to change of variable). Each $\mathcal{L} \in \mathfrak{L}_2$ produces a unique graph satisfying Def.~\ref{def:stablegraph}, and moreover this graph is precisely the dual graph of the special fibre of the stable model specified by the corresponding cluster picture.

In fact, the preceding paragraph applies \textit{mutatis mutandis} for any
cluster picture in arbitrary genus. That is, given a cluster picture $P$ for a
hyperelliptic curve $C$ of genus $g > 1$, there is a natural collection of
compatible subspaces of $E_g$ such that the graph satisfying
Def.~\ref{def:stablegraph} is the dual graph of the special fibre of the
stable model of $C$. See~\cite{JordanPHD} for details (although we caution the reader against slight changes in notation).
\end{remark}

\subsection{The Structure of Octad Pictures}
\label{subsec:structureoctadpictures}

Similarly to stable model types (see Sec.~\ref{sec:quart-stable-reduct}), it is
possible to obtain all (non-hyperelliptic) octad pictures recursively by degeneration. It
yields a directed graph $\mathcal{G}_3$  whose vertex set is $\mathcal{P}^\textup{NH} \cup \mathcal{P}^\phi$ and whose edges come in two flavours:
\begin{itemize}
\item There is an edge $D_1 \to D_2$ whenever $\varsigma(D_2) = \varsigma(D_1) \cup \{ (X, X^\bot) \}$, with $\dim(X) \leqslant 2$ and $(X, X^\bot) \notin \varsigma(D_1)$.
\item There is an edge $D_1 \to D_2$ whenever $\varsigma(D_2)$ is the result of swapping $(X, X) \in \varsigma(D_1)$ with $(Y,Y^\bot)$, where $Y \cap Y^\bot = X$.
\end{itemize}

We may define an analogous graph, $\mathcal{G}_3^\textup{HE}$ for hyperelliptic octad pictures. The vertex set of $\mathcal{G}_3^\textup{HE}$ is $\mathcal{P}^\textup{HE}$, and there is an edge $D_1 \to D_2$ whenever $\varsigma_1(D_2) = \varsigma_1(D_1) \cup \{ (X, X^\bot) \}$ with $ (X, X^\bot ) \notin \varsigma(D_1)$. Here $\varsigma_1(D)$ denotes the first component of $\varsigma(D)$ if $D \in \mathcal{P}^\textup{HE} \backslash \mathcal{P}^\phi$, and just $\varsigma(D)$ otherwise (as there is no second component in that case).
\medskip

From $\mathcal{G}_3$ (resp. $\mathcal{G}_3^{\textup{HE}}$) one can recover the structure of the Deligne--Mumford compactification of $M_3$ (resp, $H_3$).

\begin{theorem}
\label{thm:OctadDiagramGraphIsomorphisms}
Let $D$ be an octad picture (resp. hyperelliptic octad picture). Then the stable
graph of $\varsigma(D)$ is the dual graph of a stable model
(resp. hyperelliptic stable model) in genus $3$. Moreover, this gives an isomorphism of directed graphs
\begin{displaymath}
\mathcal{G}_3\, /\, \textup{Sp}(6,2) \longrightarrow \textup{SM}_3\hspace*{1cm} (\textit{resp.}\ \ \mathcal{G}^{\textup{HE}}_3 \,/\, \textup{Sp}(6,2) \longrightarrow \textup{SM}^{\textup{HE}}_3)\,.
\end{displaymath}
\end{theorem}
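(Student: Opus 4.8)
The plan is to prove Theorem~\ref{thm:OctadDiagramGraphIsomorphisms} by establishing three things: that the map $D \mapsto (\text{stable graph of } \varsigma(D))$ is well-defined on $\textup{Sp}(6,2)$-orbits and lands in $\textup{SM}_3$ (resp.\ $\textup{SM}_3^{\textup{HE}}$); that it induces a bijection of vertex sets $\mathcal{G}_3/\textup{Sp}(6,2) \to \textup{SM}_3$; and that it is a graph morphism respecting (and reflecting) the degeneration edges. The first point is essentially already in hand: by Prop.~\ref{prop:picturesubspacecorrespondence} and~\ref{prop:picturesubspacecorrespondenceHE}, $\varsigma$ is a bijection from octad pictures to compatible collections of subspaces commuting with the $\textup{Sp}(6,2)$-actions, and Def.~\ref{def:stablegraph} is manifestly $\textup{Sp}(6,2)$-invariant, so the composite descends to $\mathcal{G}_3/\textup{Sp}(6,2)$; that each stable graph of an octad picture is genuinely a stable graph in genus $3$ (connectedness, total genus $3$, stability at genus $0$ vertices) is a finite check over the $42$ orbits, which is exactly what underlies Tab.~\ref{tab:smoctadsc02}--\ref{tab:smhoctadsc56} and Thm.~\ref{thm:introsubspaceoctadpictureagreement}.

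For the bijection of vertex sets, I would first invoke Prop.~\ref{prop:picturesubspacecorrespondence} (resp.~\ref{prop:picturesubspacecorrespondenceHE}) to replace $\mathcal{G}_3$ by the set $\mathfrak{L}_3 \cup \mathfrak{L}_3^{\bm\phi}$ of compatible subspace collections, so the claim becomes that the stable graph map gives a bijection $(\mathfrak{L}_3 \cup \mathfrak{L}_3^{\bm\phi})/\textup{Sp}(6,2) \to \textup{SM}_3$. Surjectivity and injectivity both reduce to a finite enumeration: by Prop.~\ref{prop:picturesubspacecorrespondence} there are $43$ $\textup{Sp}(6,2)$-orbits of compatible collections, one of which is the exceptional orbit with seven one-dimensional isotropic subspaces, leaving exactly $42$; one then checks that the $42$ stable graphs produced are pairwise non-isomorphic and exhaust the $42$ types of Thm.~\ref{thm:M3nonH}. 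This is precisely the content of Thm.~\ref{thm:introsubspaceoctadpictureagreement} (the commuting triangle there identifies the stable graph of $\varsigma(P)$ with the entry of Tab.~\ref{tab:smoctadsc02}--\ref{tab:smhoctadsc56}), so at this point I would simply cite that theorem and the tables. The hyperelliptic statement is identical, using $\mathcal{L}_3^{\textup{HC}}$, Prop.~\ref{prop:picturesubspacecorrespondenceHE}, Thm.~\ref{thm:M3H}, and the observation that the $6^*$-labelling of the top vertex records the hyperelliptic decree.

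The remaining, and genuinely interesting, step is compatibility with edges. An edge $D_1 \to D_2$ in $\mathcal{G}_3$ is either (i) adding a pair $(X,X^\bot)$ with $\dim X \le 2$, or (ii) swapping $(X,X) \in \varsigma(D_1)$ for $(Y,Y^\bot)$ with $Y \cap Y^\bot = X$. I would show directly that in case (i) the inclusion graph of $\varsigma(D_2)$ is obtained from that of $\varsigma(D_1)$ by inserting one new vertex $X$ below its unique parent, which on the level of stable graphs (via Def.~\ref{def:stablegraph}--\ref{def:stable-graph}) effects exactly one of the two topological transformations of Sec.~\ref{sec:quart-stable-reduct}: if $\dim X = 1$ then $X \cap X^\bot = X$ contributes $d_X+1 = 2$ edges and (after contracting the genus-$0$ two-valent vertex $X$) either adds a loop at a component or splits a component, depending on whether the new vertex sits inside an existing part; if $\dim X = 2$ with $X$ non-isotropic then $d_X = 0$, one new edge, one new genus-$1$ vertex splits off — again a degeneration move. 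Case (ii) replaces an isotropic line by a non-isotropic $\dim Y$ subspace whose radical is that line; tracking the genus equations $\sum_{v \in G_Y}\gamma(v) + \dim H_1(G_Y) = \tfrac12 \dim(Y/Y\cap Y^\bot)$ shows this drops the genus of a vertex by $1$ and adds a loop, the first transformation. Conversely every edge of $\textup{SM}_3$ is realised this way: I would argue that each degeneration move on a stable graph lifts, because the $43$-element orbit enumeration already exhibits all covering relations, and a direct comparison of the Hasse diagrams of $(\mathfrak{L}_3\cup\mathfrak{L}_3^{\bm\phi})/\textup{Sp}(6,2)$ and $\textup{SM}_3$ (Fig.~\ref{fig:stablemodelgraphquartic}) closes the loop. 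The main obstacle is precisely this last bookkeeping: showing no spurious edges appear and none are missed requires care because the contraction step in Def.~\ref{def:stable-graph} can make two combinatorially different insertions yield isomorphic stable graphs, and because the relation index $\delta_X$ (and the blue-box structure of the subspace graph) can jump when a new subspace is added — these jumps are exactly where the \"ubereven phenomenon lives, and verifying they match the edges of $\textup{SM}_3$ is the crux; as in the rest of the paper this is ultimately a finite computation over the $43$ orbits, which I would carry out and cite rather than reproduce.
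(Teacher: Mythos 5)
Your proposal is correct and ends up in the same place as the paper, whose entire proof of Thm.~\ref{thm:OctadDiagramGraphIsomorphisms} is the single line ``This can be verified computationally'': after reducing via Prop.~\ref{prop:picturesubspacecorrespondence}, Prop.~\ref{prop:picturesubspacecorrespondenceHE} and the $\textup{Sp}(6,2)$-equivariance of the stable-graph construction, you too defer the vertex bijection and the edge-by-edge compatibility to a finite enumeration over the $43$ orbits. Your decomposition into well-definedness, vertex bijection, and edge compatibility (including the two edge flavours and the behaviour of the relation index under insertion) is a faithful and more explicit account of exactly what that computation must check, so this is the same approach, just spelled out.
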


\begin{proof}
  This can be verified computationally.
\end{proof}

\subsection{Worked Example}
\label{sec:worked-example}

Let $C/\Q_3$ be the quartic
$$
x^3y + x^2y^2 + xy^3 + x^3z - y^3z - 2x^2z^2 + xyz^2 - 2y^2z^2 +
    2xz^3 - 2yz^3 - z^4 = 0\,.
$$
We compute a Cayley octad $O$ for $C$ via the algorithm described
in~\cite[\S\,2]{psv11}. The exact coordinates of $O$ are ungainly, and not
insightful, so we omit them here. Computation of Pl\"ucker coordinates yields
valuations
$v_{\oA\oF\oG\oH}=2$, $v_{\oE\oF\oG\oH}=6$, $v_{\oB\oE\oG\oH}=2$,
$v_{\oD\oF\oG\oH}=2$, $\ldots$ %
and the twisted cubic index is $0$. The block decomposition of this valuation data is
$$
2 {\bm \alpha}_1^{\oA\oB} + 2 {\bm \alpha}_1^{\oE\oF} + {\bm \phi}_1^{\oA\oB | \oE\oF || \oG \oH | \oC \oD}
$$
and so the octad picture $P = d_{\Q_3}(O)$ is given by Fig.~\ref{fig:octadpictureworkedexample1}.
According to Tab.~\ref{tab:smhoctadsc3} (row~5) the stable reduction type of $C/\Q_3$ is \texttt{(Z=1)$_{\mathtt{H}}$}.

\begin{figure}[htbp]
    \centering
    \tikzsetnextfilename{CandyTwinTwin}
    \begin{tikzpicture}[scale=0.25]
        \ptscustomlabel{B}{G}{H}{E}{F}{C}{D}{A}
        \CA
        \twinbig{7}
        \twinbig{3}
    \end{tikzpicture}
    \caption{Octad picture of $O/\bar{\Q}_3$}
    \label{fig:octadpictureworkedexample1}
\end{figure}
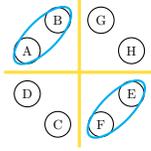

We now show how to obtain this prediction via Sec.~\ref{sec:formalanalysisoctadpictures}. The collection of compatible subspaces $\mathcal{L} = \varsigma(P)$ is given by $\mathcal{L} = \{ (\mathbf{0}, E_3), (X_1, X_1^\bot), (X_2, X_2^\bot), (X_3,X_3^\bot) \}$, where $X_1 = \langle \oA\oB \rangle, \, X_2 = \langle \oE\oF \rangle$ and $X_3 = \langle \oC\oD, \oG\oH, \oA\oB\oC\oG \rangle$. The inclusion graph of $\mathcal{L}$ is given in Fig.~\ref{fig:inclusiongraphworkedexample1}.

To find the subspace graph we must first find the relation index of each vertex in the inclusion graph. Note that $X_1, X_2, X_3$ have no children and so their relation indices are all $0$. However, we note the relation $\oA\oB + \oE\oF + \oC\oD\oG\oH = 0$, where $\oA\oB \in X_1 \cap X_1^\bot, \oE\oF \in X_2 \cap X_2^\bot$ and $\oC\oD\oG\oH \in X_3 \cap X_3^\bot$. Thus $\textup{rel}(E_3) = \{ \emptyset, \{ X_1, X_2, X_3 \} \}$. This is a $1$-dimensional $\F_2$-vector space, \textit{i.e.}\ $\delta_{E_3} = 1$. The subspace graph is thus as given in Fig.~\ref{fig:subspacegraphworkedexample1}.

We now find the stable graph $G$ of $\mathcal{L}$. The vertices $X_1, X_2$ and $X_3$ all contribute one vertex to $G$ (denoted $\Gamma_{X_1},\Gamma_{X_3},\Gamma_{X_3}$ respectively), whereas $E_3$ contributes $2 (= \delta_{E_3} + 1)$ vertices (denoted $\Gamma^1_{E_3}, \Gamma^2_{E_3}$).

Observe that $\dim(X_i \cap X_i^\bot) = 1, i = 1,2,3$, so each $X_i$ has two edges total (weighted $\oA\oB, \oE\oF$ and $\oC\oD\oG\oH$ respectively), and each joins to either $\Gamma^1_{E_3}$ or $\Gamma^2_{E_3}$.
Consider the edges into  $\Gamma^1_{E_3}$. If two $\oA\oB$ edges come into this vertex, then $\langle \oA\oB \rangle \notin \lambda^-(\Gamma^1_{E_3})$ or $\lambda^-(\Gamma^2_{E_3})$, and so in particular $\textup{rel}(E_3)$ is not spanned. Thus one $\oA\oB$ edge joins to $\Gamma^1_{E_3}$, the other to $\Gamma^2_{E_3}$. The same argument shows that the $\oE\oF$ and $\oC\oD\oG\oH$ edges are one into $\Gamma^1_{E_3}$ and one into $\Gamma^2_{E_3}$.

Now consider the genus assigned to each vertex. The $X_1$ and $X_2$ vertices have no child vertices, and so have genus $\gamma(X_i) = \frac{1}{2}\dim(X_i / X_i \cap X_i^\bot) = 0$. The $X_3$ vertex similarly satisfies $\gamma(X_3) = \frac{1}{2}\dim(X_3 / X_3 \cap X_3^\bot) = 1$. The homology of $G$ is already two-dimensional, and so the edges $\Gamma^i_{E_3}$ are assigned genus $0$. After contracting the the two vertices with genus $0$ and two edges, we are left with $G$ as in Fig.~\ref{fig:stablegraphworkedexample1}. Note that this is the dual graph of the stable reduction type \texttt{(Z=1)$_{\mathtt{H}}$}, in agreement with Tab.~\ref{tab:smhoctadsc3}.

\begin{figure}[htbp]
  \begin{subfigure}[b]{0.3\linewidth}
    \centering
    \tikzsetnextfilename{CandyTwinTwinInclusionGraph}
    \begin{tikzpicture}[scale=1.3]
     \node at (0,0.2) {};
        \node[shape=circle,draw=black,scale=0.6] (6) at (0,0) {$E_3$};
        \node[shape=circle,draw=black,scale=0.6] (1A) at (-0.5,-1) {$X_1$};
        \node[shape=circle,draw=black,scale=0.6] (1B) at (0,-1) {$X_2$};
        \node[shape=circle,draw=black,scale=0.6] (1C) at (0.5,-1) {$X_3$};
        \draw (6) -- (1A);
        \draw (6) -- (1B);
        \draw (6) -- (1C);
    \end{tikzpicture}
    \caption{Inclusion graph of $\mathcal{L}$}
    \label{fig:inclusiongraphworkedexample1}
  \end{subfigure}
  \begin{subfigure}[b]{0.3\linewidth}
    \centering
    \tikzsetnextfilename{CandyTwinTwinSubspaceGraph}
    \begin{tikzpicture}[scale=1.3]
        \node at (0,0.2) {};
        \node[shape=circle,draw=black,scale=0.8] (6) at (0,0) {6};
        \node[shape=circle,draw=black,scale=0.8] (3) at (-0.5,-1) {3};
        \node[shape=circle,draw=black,scale=0.8] (1B) at (0,-1) {1};
        \node[shape=circle,draw=black,scale=0.8] (1C) at (0.5,-1) {1};
        \draw (6) -- (3);
        \draw (6) -- (1B);
        \draw (6) -- (1C);
        \draw [draw = blue] (0.7,-0.8) rectangle (-0.7,-1.2);
      \end{tikzpicture}
    \caption{Subspace graph for $\mathcal{L}$}
    \label{fig:subspacegraphworkedexample1}
  \end{subfigure}
  \begin{subfigure}[b]{0.3\linewidth}
    \centering
    \tikzsetnextfilename{StabGraphWE1}
    \begin{tikzpicture}[scale=1]
      \node[shape=circle,draw=black,scale=0.6] (A) at (-1,-0.7) {0};
      \node[shape=circle,draw=black,scale=0.6] (B) at (-1,0.7) {0};
      \node[shape=circle,draw=black,scale=0.6] (C) at (0.2,0) {1};
      \draw (A) to[out=115,in=-115] (B);
      \draw (A) to[out=65,in=-65] (B);
      \draw (A) -- (C);
      \draw (B) -- (C);
    \end{tikzpicture}
    \caption{Stable graph for $\mathcal{L}$}
    \label{fig:stablegraphworkedexample1}
  \end{subfigure}
  \caption{Graphs attached to
    $\mathcal{L}$ }
  \label{fig:graphworkedexample}
\end{figure}
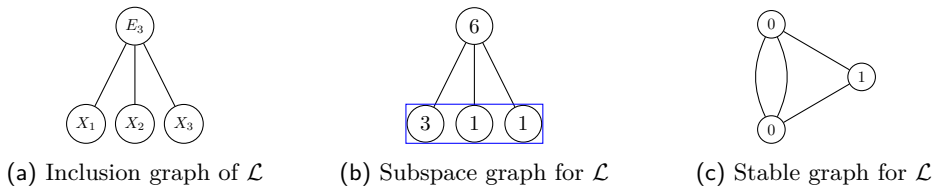

\section{Computational Aspects}
\label{sec:exper-supp-conj}

Although the calculations are of a higher order of technical difficulty than the ones for determining cluster pictures of hyperelliptic curves, it turns out that, similarly to the hyperelliptic case,
the bottleneck for determining an octad picture is having to work over a big field extension: the field over which we can define the octad picture.
\begin{itemize}
\item Determining an octad of a plane quartic curve comes down to the
  computation of bitangents, which can done as in~\cite{psv11}. This requires
  especially to compute a splitting field of a polynomial of degree at most
  28, but the degree of this extension remains reasonable (about a hundred)
  when one performs the computations on a $p$-adic completion of $\Q$.
  \smallskip
\item The second step is to look at the building blocks whose valuation data
  agree with the one given by an octad, \textit{i.e.}\ we have a list of 4949 elementary
  valuation data (one for each of the $\bm{\alpha}$-blocks, $\bm{\chi}$-blocks
  and $\bm{\phi}$-blocks), and we restrict this list to vectors whose
  support is contained in the support of the valuation data of the octad.
  It remains then to determine a linear combination with non-negative coefficients
  of vectors in this list that is equal to the octad valuation data, which
  amounts to solving a linear system of small dimension over the integers.
\end{itemize}\smallskip

In these computations, we have found it convenient to normalise an octad by
some matrix in $\PGL$ so that the first five points are in the standard
position $(1:0:0:0),\ldots, (0:0:0:1), (1:1:1:1)$\,.
The orbit of an octad under $\PGL\times S_8$ restricted to this
representation becomes then finite, only 56 octads corresponding to the choice
of 5 points in the 8 points $O_\oA$, $O_\oB$, \ldots, $O_\oH$.
Moreover, the Cremona transformation of a normalised octad leads to 35
possible images, the number of ways to cut
$\{\oA,\oB,\oC,\oD,\oE,\oF,\oG,\oH\}$ into two sets of size 4.
Thus, we are reduced to consider 36 $\PGL\times S_8$-orbits, each consisting
of 56 normalised octads (see Fig.~\ref{fig:crepgl}).\smallskip

The calculation of octad pictures for different Cremona orbits allows to
easily distinguish related types of reduction, \textit{e.g.} types
\texttt{(1ne)} and \texttt{(2m)} which both have octad pictures composed of
one $\bm{\alpha}$-block and one $\bm{\chi}$-block, or types \texttt{(0nnn)}
and \texttt{(1\text{-}\text{-}\text{-}0)} which both have octad pictures
composed of three $\bm{\alpha}$-blocks.
Similarly, the calculation of octad pictures in the same $\PGL$-orbit allows
to speed up in certain cases the calculations, \textit{e.g.} it is immediate
to detect that a valuation data is of type
${\bm{\alpha}}_{2{\mathrm{a}}}^{\oA\oB\oC\oD}$ since it is the only one to
have only two positive valuations, whereas that of
${\bm{\alpha}}_{2{\mathrm{b}}}^{\oA\oB\oC\oD}$ has 53 positive valuations, and
therefore potentially corresponds to a greater number of block
decompositions.\smallskip

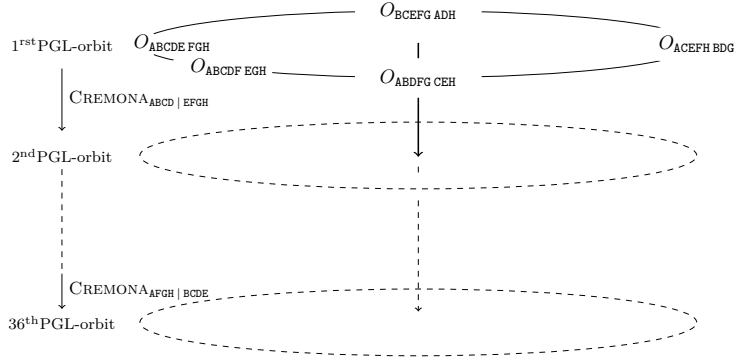
\begin{figure}[htbp]
  \resizebox{0.70\linewidth}{!}{%
    \tikzsetnextfilename{OctadOrbits}
    \begin{tikzpicture}[xscale=0.8]
      \node[ellipse, draw = black, minimum width = 10cm, minimum height = 1.2cm] (e) at (0,0) {};

      \node[ellipse,fill=white,text height=0.2cm, text width=1.1cm, align=center] at (e.south west) (O2) {$O_{\oA\oB\oC\oD\oF\,\oE\oG\oH}$}; %
      \node[ellipse,fill=white] at (e.south) (O3) {$O_{\oA\oB\oD\oF\oG\,\oC\oE\oH}$}; %
      \node[ellipse,fill=white] at (e.east) (O4) {$O_{\oA\oC\oE\oF\oH\,\oB\oD\oG }$}; %
      \node[ellipse,fill=white] at (e.north) (O5) {$O_{\oB\oC\oE\oF\oG\,\oA\oD\oH}$}; %

      \node[ellipse,fill=white,text height=0.2cm, text width=0.2cm] at (e.west) (O1) {$O_{\oA\oB\oC\oD\oE\,\oF\oG\oH}$}; %

      \node[dashed, ellipse, draw = black, minimum width = 10cm, minimum height = 1.2cm] (e1) at (0,-2cm) {};%
      \draw[lin,-to, thick] (O3) to (e1.center); \draw[lin, thick] (e.center) to (O3);

      \node[dashed, ellipse, draw = black, minimum width = 10cm, minimum height = 1.2cm] (e35) at (0,-5cm) {};%
      \draw[shorten <=0.2cm,shorten >=0.2cm,-to, dashed] (e1.south) to (e35.center);%
      \draw[shorten <=0.2cm,shorten >=0.2cm, dashed] (e1.center) to (e1.south);

      \node at (-8cm,0) (pgl1)     {\footnotesize  1$^{\text{rst}} \textup{PGL}$-orbit}; %
      \node at (-8cm,-2cm) (pgl2)  {\footnotesize  2$^{\text{nd}} \textup{PGL}$-orbit}; %
      \node at (-8cm,-4cm) (pgl35) {}; %
      \node at (-8cm,-5cm) (pgl36) {\footnotesize 36$^{\text{th}} \textup{PGL}$-orbit};

      \draw[-to,shorten <=0.2cm,shorten >=0.2cm] (pgl1) to node[midway,right]{\small \textsc{Cremona}$_{\oA\oB\oC\oD\,\mid\,\oE\oF\oG\oH}$} (pgl2);%
      \draw[-to] (pgl35) to node[midway,right]{\small \textsc{Cremona}$_{\oA\oF\oG\oH\,\mid\,\oB\oC\oD\oE}$} (pgl36);%
      \draw[dashed] (pgl2) to (pgl35.south);
    \end{tikzpicture}
  }
  \caption{$\textup{PGL}$ and Cremona orbits of normalised octads}
  \label{fig:crepgl}
\end{figure}

Our \MAGMA implementation is publicly available~\cite{BGit23}.
We have performed many tests, starting with quartics whose stable reduction type is
already known by other argument, particularly quartics with automorphisms such
as Picard curves~\cite{BKSW20} or Ciani curves~\cite{BCKLS20}.
But they are far from covering all possible cases. Thus, we first give in
Sec.~\ref{sec:expl-constr} constructions for all types of stable reduction.
In Sec.~\ref{sec:some-applications}, we then applied
Conj.~\ref{conj:SpecialFibreOfTheStableModel} to other quartics,
\textit{i.e.}\ quartics with large automorphism groups or complex
multiplication.
In Sec.~\ref{sec:worked-example}, we detail in a worked example the
combinatorial analysis of a non-trivial octad picture.

\subsection{Explicit Families of Plane Quartics for each Reduction Type}
\label{sec:expl-constr}

To give substance to the combinatorial viewpoint of the special fibre of the
stable models given in Fig.~\ref{fig:stablemodelgraph}, and be able to
corroborate by experiments some of our results, we now set out to give
examples of quartics that realise each of the 42 possible reduction types.

Here, we adopt Tim Dokchitser's machinery~\cite{Dokchitser21}, which in many cases
provides an easy way to obtain a regular model with normal crossings.
A central object in this approach is the 2-dimensional Newton polytope formed
by the monomials of a quartic, possibly labelled with the valuation of their
respective coefficients, the so-called $\Delta$ and $\Delta_v$ polytopes in
Dokchitser terminology.

\begin{figure}[htbp]
  \centering
  \begin{subfigure}{.4\textwidth}
    \centering
\tikzsetnextfilename{polydelta}
    \begin{tikzpicture}[xscale=0.6,yscale=0.5]
      \node[lrg] at (0,0) (1) {$1$};
      \node[lrg] at (1/2,1) (2) {$y$};
      \node[lrg] at (1,2) (3) {$y^2$};
      \node[lrg] at (3/2,3) (4) {$y^3$};
      \node[lrg] at (2,4) (5) {$y^4$};

      \node[lrg] at (1,0) (6) {$x$};
      \node[lrg] at (3/2,1) (7) {$xy$};
      \node[lrg] at (2,2) (8) {$xy^2$};
      \node[lrg] at (5/2,3) (9) {$xy^3$};
      \node[lrg] at (2,0) (10) {$x^2$};
      \node[lrg] at (5/2,1) (11) {$x^2y$};
      \node[lrg] at (3,2) (12) {$x^2y^2$};
      \node[lrg] at (3,0) (13) {$x^3$};
      \node[lrg] at (7/2,1) (14) {$x^3y$};
      \node[lrg] at (4,0) (15) {$x^4$};
    \end{tikzpicture}
    \caption{Two-dimensional polytope $\Delta$}\label{fig:polydelta}
  \end{subfigure}
  \quad\quad
  \begin{subfigure}{.4\textwidth}
    \centering
\tikzsetnextfilename{polydeltav}
    \begin{tikzpicture}[xscale=0.6,yscale=0.5]
      \node[fname] at (1.8,1.5) {$F_1$};
      \node[fname] at (2.8,0.50) {$F_2$};
      \node[lrg] at (2,4) (top) {};
      \node[lrg] at (1/2,1) (1) {0};
      \node[sml] at (3/2,1) (2) {0};
      \node[sml] at (2,2) (3) {0};
      \node[lrg] at (5/2,3) (4) {0};
      \node[lrg] at (2,0) (5) {0};
      \node[sml] at (5/2,1) (6) {1};
      \node[sml] at (3,2) (7) {2};
      \node[sml] at (3,0) (8) {3};
      \node[sml] at (7/2,1) (9) {4};
      \node[lrg] at (4,0) (10) {6};
      \draw[lin]
      (4) edge (5)
      (1) edge (4)
      (4) edge (7) (7) edge (9) (9) edge (10)
      (5) edge (8) (8) edge (10)
      (1) edge (5)
      ;
    \end{tikzpicture}
    \caption{Lower convex hull $\Delta_v$}\label{fig:polydeltav}
  \end{subfigure}%
  \caption{Newton polytopes}
  \label{fig:newton}
\end{figure}
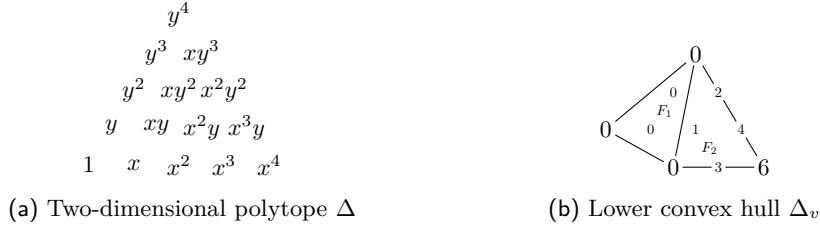

To be more precise, let $\pi$ be a uniformizer of $R$. Given a quartic $f=\sum a_{ij}x^iy^jz^{4-i-j}$, it is classical to
represent $\Delta$ as an equilateral triangle, with at each vertex the
monomials $x^4$, $y^4$ and $z^4$ (with possibly $z=1$, see
Fig.~\ref{fig:polydelta}).  We can
define $\Delta_v$, the lower convex hull that covers the three-dimensional
polytope $\{ (i, j, \nu_K(a_{ij}))\ |\ a_{ij} \neq 0\}$.
Dokchitser associates to the edges and faces of $\Delta_v$ zero and
one-dimensional schemes over $\mathcal{O}_K$ which, under certain conditions, almost completely define
the stable model (see~\cite[Th. 1.1]{Dokchitser21}). One says that $f$ is
$\Delta_v$-regular essentially if these zero- and one-dimensional schemes are smooth.
In turn, the subsets of monomials spanned by each face give equations for the
charts of the model. For instance, the  faces $F_1$ and $F_2$ defined by the
$\Delta_v$ polytope of the curve $x\,y^3+y+\pi^6\,x^4+x^2=0$ respectively
correspond to the genus 2 curve $x^2 + x\,y^3 + y = 0$ and the genus 1 curve
$x^3 + x + y^3 = 0$ (see Fig.~\ref{fig:polydeltav}), and as such the
reduction type of this curve is \texttt{(2e)}.

\begin{figure}[htbp]
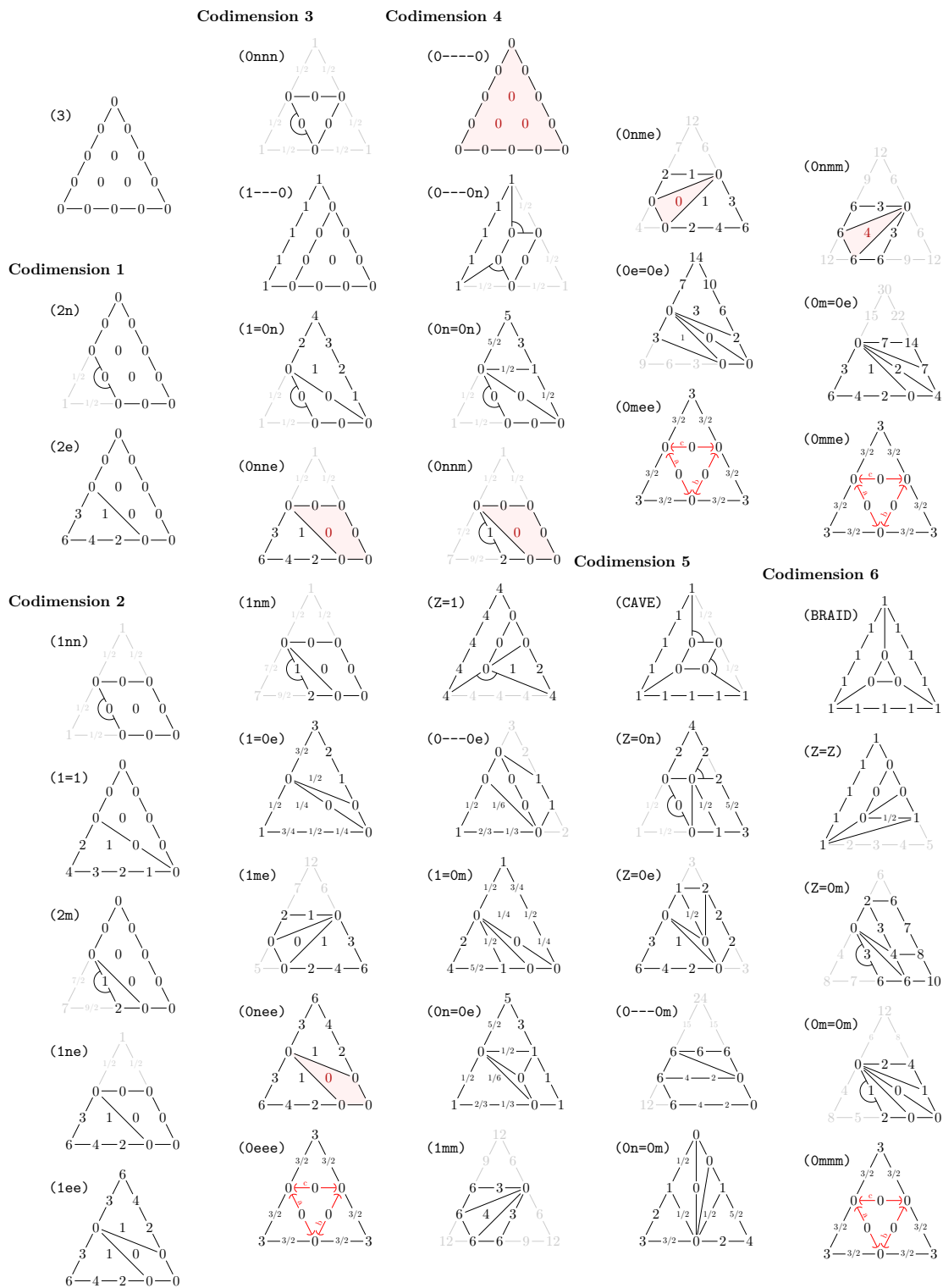

  \begin{subfigure}{0.17\linewidth}
    {\scalebox{0.7}{
\tikzsetnextfilename{stbtyp_3}
}
    \\
  \end{subfigure}
  \caption{$\Delta_v$ polytopes for each of the 42 stable reduction types (singular faces/edges are in red, see~\cite{Dokchitser21})
  }
  \label{fig:DeltaRedTypes}
\end{figure}

We apply the following \textit{modus operandi} in our situation. We try to
choose the valuations of the coefficients in such a way that the resulting
$\Delta_v$ polytope has non-contractible faces of some desired genus. Doing
so, we arrive at large families of quartics which generically realise most of
the 42 types of reduction. We show them on Fig.~\ref{fig:DeltaRedTypes}, via
their $\Delta_v$ polytopes (the interested reader can
consult~\cite[file \texttt{models.m}]{BGit23} for a version of these
polytopes in computational form).

The 32 polytopes which are generically $\Delta_v$ regular (\textit{i.e.}\ %
without any red printing)
allow to obtain quartics for which we only have to check the regularity
criteria of~\cite[Th. 1.1]{Dokchitser21}. The first ones are,
\begin{small}
  \begin{displaymath}
    \setlength{\arraycolsep}{0.5mm}
    \begin{array}{rl}
      \text{Type}&\mathtt{(2n)}:\ a_{00}\,\pi \,+\,
                   (a_{10}\,\pi\,x \,+\, a_{01}\,\pi\,y) \,+\,
                   (a_{20}\,x^2 \,+\, a_{11}\,x\,y \,+\, a_{02}\,y^2) \,+\,\\
                 &(a_{03}\,y^3 \,+\, a_{12}\,x\,y^2 \,+\, a_{21}\,x^2\,y \,+\, a_{30}\,x^3) \,+\,
                   a_{40}\,x^4 \,+\, a_{31}\,x^3\,y \,+\, a_{22}\,x^2\,y^2 \,+\, a_{13}\,x\,y^3 \,+\,
                   a_{04}\,y^4 = 0\,;\\[0.2cm]
      \text{Type}&\mathtt{(2e)}:\ a_{00}\,\pi^6 \,+\,
                   (a_{10}\,\pi^4\,x \,+\, a_{01}\,\pi^3\,y) \,+\,
                   (a_{20}\,\pi^2\,x^2 \,+\, a_{11}\,\pi\,x\,y \,+\, a_{02}\,y^2) \,+\,\\
                 &(a_{03}\,y^3 \,+\, a_{12}\,x\,y^2 \,+\, a_{21}\,x^2\,y \,+\, a_{30}\,x^3) \,+\,
                   a_{40}\,x^4 \,+\, a_{31}\,x^3\,y \,+\, a_{22}\,x^2\,y^2 \,+\, a_{13}\,x\,y^3 \,+\,
                   a_{04}\,y^4 = 0\,;\\[0.2cm]
      \text{Type}&\mathtt{(1nn)}:\ a_{00}\,\pi \,+\,
                   (a_{10}\,\pi\,x \,+\, a_{01}\,\pi\,y) \,+\,
                   (a_{20}\,x^2 \,+\, a_{11}\,x\,y \,+\, a_{02}\,y^2) \,+\,\\
                 &(a_{03}\,\pi\,y^3 \,+\, a_{12}\,x\,y^2 \,+\, a_{21}\,x^2\,y \,+\, a_{30}\,x^3) \,+\,
                   a_{40}\,x^4 \,+\, a_{31}\,x^3\,y \,+\, a_{22}\,x^2\,y^2 \,+\,
                   a_{13}\,\pi\,x\,y^3 \,+\, a_{04}\,\pi\,y^4\,;\\[0.2cm ]
      \text{Type}&\text{\texttt{(1=1)}}:\ a_{00}\,\pi^4 \,+\,
                   (a_{10}\,\pi^3\,x \,+\, a_{01}\,\pi^2\,y) \,+\,
                   (a_{20}\,\pi^2\,x^2 \,+\, \pi\,a_{11}\,x\,y \,+\, a_{02}\,y^2)\,+\,\\
                 &(a_{03}\,y^3 \,+\, a_{12}\,x\,y^2 \,+\, a_{21}\,x^2\,y \,+\, a_{30}\,\pi\,x^3)\,+\,
                   a_{40}\,x^4 \,+\, a_{31}\,x^3\,y \,+\, a_{22}\,x^2\,y^2 \,+\,
                   a_{13}\,x\,y^3 \,+\, a_{04}\,y^4.
    \end{array}
  \end{displaymath}
\end{small}

The remaining ten reduction types require special treatment.
The easiest one of these special cases is certainly the case of
\texttt{(0\text{-}\text{-}\text{-}\text{-}0)},
which can be obtained by arbitrarily lifting over $K$ the product modulo $\pi$ of
two non-degenerate conics.\smallskip

Five other cases can be obtained from a curve whose $\Delta_v$ model is
regular by making one of the components degenerate. For instance, quartics of
reduction type \texttt{(0nne)} can be obtained from quartics of type
\texttt{(1ne)} where the genus 1 component with a self-intersection becomes even more
singular. Developing the idea, we start from the $\Delta_v$ regular model
given for this type in Fig.~\ref{fig:DeltaRedTypes}, \textit{i.e.}\ %
\begin{small}
  \begin{multline*}
    a_{00}\,\pi^6 \,+\,
    (a_{10}\,\pi^4\,x \,+\, a_{01}\,\pi^3\,y) \,+\,
    (a_{20}\,\pi^2\,x^2 \,+\, a_{11}\,\pi\,x\,y \,+\, a_{02}\,y^2) \,+\,\\
    \ \ \ \ \ \ \ \ (a_{03}\,\pi\,y^3 \,+\, a_{12}\,x\,y^2 \,+\, a_{21}\,x^2\,y \,+\, a_{30}\,x^3) \,+\,
    a_{40}\,x^4 \,+\, a_{31}\,x^3\,y \,+\, a_{22}\,x^2\,y^2 \,+\, a_{13}\,\pi\,x\,y^3 \,+\, a_{04}\,\pi\,y^4 = 0\,.
  \end{multline*}
\end{small}
We have that the two genus 1 components of the special fibre are
\begin{small}
  \begin{eqnarray*}
    a_{30}\,x^3 + a_{20}\,x^2 + a_{11}\,x\,y + a_{10}\,x + a_{02}\,y^2 + a_{01}\,y + a_{00} &=& 0\,,\\
    a_{40}\,x^4 + a_{31}\,x^3\,y + a_{30}\,x^3 + a_{22}\,x^2\,y^2 + a_{21}\,x^2\,y + a_{12}\,x\,y^2 + a_{02}\,y^2 &=& 0\,.
  \end{eqnarray*}
\end{small}
The second is the one with a self-intersection, at point $(0:1:0)$. In order to make this
curve degenerate into a curve of genus 0 with 2 self-intersections and obtain finally a
curve of reduction type~\texttt{(0nne)}, it amounts generically to making its
discriminant vanish. This can be done, for example, by choosing $a_{02}$ modulo
$\pi$ as a root of a polynomial of degree 3 with coefficients in the other
$a_{ij}$'s.

Similarly, we can find curves of type \texttt{(0nee)} (resp. \texttt{(0nnm)},
\texttt{(0nme)} and \texttt{(0nmm)}) by degenerating curves of type
\texttt{(1ee)} (resp. \texttt{(1nm)}, \texttt{(1me)} and \texttt{(1mm)}).\smallskip

The most difficult case is the reduction type \texttt{(0eee)}, since  there is not enough space in an equilateral triangle to realise
it in a $\Delta_v$-regular way.
The key idea in this situation is to consider curves whose special fibre has
three cusps, without loss of generality at points $(0:0:1)$, $(0:1:0)$ and
$(1:0:0)$, and more precisely as in Fig.~\ref{fig:DeltaRedTypes} as quartics of
the form
\begin{small}
  \begin{multline*}
        a_{00}\,\pi^3 \,+\,
        (a_{10}\,\pi^2\,x \,+\, a_{01}\,\pi^2\,y) \,+\,\\
        (a_{20}\,x^2 \,+\, a_{11}\,x\,y \,+\, a_{02}\,y^2)  \,+\,
        (a_{30}\,\pi^2\,x^3 \,+\, a_{21}\,x^2\,y \,+\, a_{12}\,x\,y^2 \,+\, a_{03}\,\pi^2\,y^3)  \,+\,\\
        a_{40}\,\pi^3\,x^4 \,+\, a_{31}\,\pi^2\,x^3\,y \,+\, a_{22}\,x^2\,y^2 \,+\, a_{13}\,\pi^2\,x\,y^3 \,+\,
        a_{04}\,\pi^3\,y^4 = 0\,,
  \end{multline*}
\end{small}
with, modulo $\pi$, non-zero coefficients $a_{20}$, $a_{11}$, $a_{02}$,
$a_{12}$, $a_{22}$, $a_{21}$ and
\begin{displaymath}
  a_{11}^2-4\,a_{02}\,a_{20} = a_{21}^2-4\,a_{20}\,a_{22} =
  a_{12}^2-4\,a_{02}\,a_{22} = 0 \bmod \pi\,.
\end{displaymath}
After some Gr\"obner basis computations, we notice that the last condition
splits in two irreducible components,
\begin{multline}
  4\,a_{02}\,a_{20} - a_{11}^2 =
  2\,a_{02}\,a_{21} + a_{11}\,a_{12} =
  4\,a_{02}\,a_{22} - a_{12}^2 =\\
  a_{11}\,a_{21} + 2\,a_{12}\,a_{20} =
  2\,a_{11}\,a_{22} + a_{12}\,a_{21} =
  4\,a_{20}\,a_{22} - a_{21}^2 = 0\bmod\pi
 \label{eq:threecusps}
\end{multline}
and
\begin{multline*}
  4\,a_{02}\,a_{20} - a_{11}^2 =
  2\,a_{02}\,a_{21} - a_{11}\,a_{12} =
  4\,a_{02}\,a_{22} - a_{12}^2 =\\
  a_{11}\,a_{21} - 2\,a_{12}\,a_{20} =
  2\,a_{11}\,a_{22} - a_{12}\,a_{21} =
  4\,a_{20}\,a_{22} - a_{21}^2 = 0\bmod \pi\,.
\end{multline*}
Quartics whose coefficients satisfy the latter can be rewritten as
$(2\,a_{22}\,x\,y + a_{12}\,y\,z + a_{21}\,x\,z)^2$. Their special fibre turns
out to be the square of a conic, and has (generically) genus 3
hyperelliptic stable reduction.

It is thus the component given by Eq.~\eqref{eq:threecusps} that is
interesting. We skip details, but here, after using a method involving
blow-ups and base change~\cite[Chap. 3]{HM98}, we actually verified that we
generically have a curve of reduction type \texttt{(0eee)}.

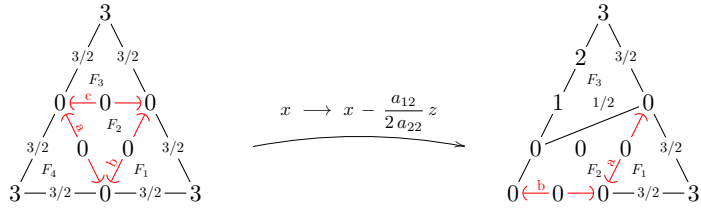
\begin{figure}[htbp]
  \centering
\tikzsetnextfilename{typ0eeea}
  \begin{tikzpicture}[xscale=0.6,yscale=0.6]
    \node[fname] at (2.8,0.50) {$F_1$};
    \node[fname] at (2.2,1.5) {$F_2$};
    \node[fname] at (1.8,2.5) {$F_3$};
    \node[fname] at (0.75,0.50) {$F_4$};
    \node[lrg] at (0,0) (1) {3};
    \node[sml] at (1/2,1) (2) {3/2};
    \node[lrg] at (1,2) (3) {0};
    \node[sml] at (3/2,3) (4) {3/2};
    \node[lrg] at (2,4) (5) {3};
    \node[sml] at (1,0) (6) {3/2};
    \node[lrg] at (3/2,1) (7) {0};
    \node[lrg] at (2,2) (8) {0};
    \node[sml] at (5/2,3) (9) {3/2};
    \node[lrg] at (2,0) (10) {0};
    \node[lrg] at (5/2,1) (11) {0};
    \node[lrg] at (3,2) (12) {0};
    \node[sml] at (3,0) (13) {3/2};
    \node[sml] at (7/2,1) (14) {3/2};
    \node[lrg] at (4,0) (15) {3};
    \draw[lin]
    (3) edge[red,(-] node[lname] {a} (7) (7) edge[red,-)] (10)
    (10) edge[red,(-] node[lname] {b} (11) (11) edge[red,-)] (12)
    (1) edge (6) (6) edge (10)
    (3) edge (4) (4) edge (5)
    (1) edge (2) (2) edge (3)
    (10) edge (13) (13) edge (15)
    (12) edge (14) (14) edge (15)
    (3) edge[red,(-] node[lname] {c} (8) (8) edge[red,-)] (12)
    (5) edge (9) (9) edge (12)
    ;
  \end{tikzpicture}
  \hspace*{-0.5cm}\scalebox{0.7}{\begin{tabular}{c}
    \xymatrix{ & \ar@/^/[rrrr]^{\displaystyle x\ \longrightarrow\ x\, -\, \frac{a_{12}}{2\,a_{22}}\,z} &&&&}\\[3cm]
  \end{tabular}}
\tikzsetnextfilename{typ0eeeb}
  \begin{tikzpicture}[xscale=0.6,yscale=0.6]
    \node[fname] at (2.8,0.50) {$F_1$};
    \node[fname] at (1.8,0.50) {$F_2$};
    \node[fname] at (1.8,2.5) {$F_3$};
    \node[lrg] at (0,0) (1) {0};
    \node[lrg] at (1/2,1) (2) {0};
    \node[lrg] at (1,2) (3) {1};
    \node[lrg] at (3/2,3) (4) {2};
    \node[lrg] at (2,4) (5) {3};
    \node[lrg] at (1,0) (6) {0};
    \node[lrg] at (3/2,1) (7) {0};
    \node[sml] at (2,2) (8) {1/2};
    \node[sml] at (5/2,3) (9) {3/2};
    \node[lrg] at (2,0) (10) {0};
    \node[lrg] at (5/2,1) (11) {0};
    \node[lrg] at (3,2) (12) {0};
    \node[sml] at (3,0) (13) {3/2};
    \node[sml] at (7/2,1) (14) {3/2};
    \node[lrg] at (4,0) (15) {3};
    \draw[lin]
    (10) edge[red,(-] node[lname] {a} (11) (11) edge[red,-)] (12)
    (1) edge[red,(-] node[lname] {b} (6) (6) edge[red,-)] (10)
    (2) edge (3) (3) edge (4) (4) edge (5)
    (1) edge (2)
    (10) edge (13) (13) edge (15)
    (12) edge (14) (14) edge (15)
    (2) edge (12)
    (5) edge (9) (9) edge (12)
    ;
  \end{tikzpicture}\vspace*{-1.5cm}
  \caption{$\Delta_v$-polytopes for type \texttt{(0eee)}}
  \label{fig:0eee}
\end{figure}

Additionally, we also remark that due to Relations~\eqref{eq:threecusps},
the change of variable $x\ \rightarrow\ x\, -\, ({a_{12}}/{2\,a_{22}})\,z$
cancels modulo $\pi$ the coefficients of $xy^2z$ and $y^2z^2$, which results
in a face of genus 1 in the $\Delta_v$ polytope (see deformation of the face
$F_3$ in Fig.~\ref{fig:0eee}). The monomials spanned by this face yields the
equation of one of the genus 1 component. Similarly, the changes of
variables $y\ \rightarrow\ y\, -\, ({a_{11}}/{2\,a_{02}})\,x$ and
$z\ \rightarrow\ z\, -\, ({a_{21}}/{2\,a_{20}})\,y$ give equations for the
other two genus 1 component. The genus 0 component is simply given by the face
at the ground (\textit{i.e.}\ face $F_2$ in the left hand side polytope in
Fig.~\ref{fig:0eee}).

Finally, it is relatively easy to choose the coefficients of the curve such
that the discriminant of these elliptic curves cancels. And this also makes it
possible to generate quartics of type \texttt{(0mee)}, \texttt{(0mme)} or
\texttt{(0mmm)}.

\subsection{Applications to Curves with Extra Automorphisms}
\label{sec:some-applications}

\subsubsection*{Large automorphism groups}

In characteristic 0, the possible automorphism groups of a quartic are well
known (see for instance~\cite{Dolgachev2012}).
The ``largest'' of them, meaning realised up to geometric isomorphism by a
single quartic, are $\CG_9$, $\Gg_{48}$, $\Gg_{96}$ and $\Gg_{168}$ where,
following~\cite{LRRS14} with the classical notation $\CG_n$ for the cyclic
group of order $n$ and $\AG_n$ for the alternating group of order $n!/2$,
$\Gg_{16}$ is a group of $16$ elements that is a direct product
$\CG_4 \times \CG_2 \times \CG_2$, $\Gg_{48}$ is a group of $48$ elements that
is a central extension of $\AG_4$ by $\CG_4$, $\Gg_{96}$ is a group of $96$
elements that is a semi-direct product $(\CG_4 \times \CG_4) \rtimes \SG_3$
and $\Gg_{168}$, is a group of $168$ elements isomorphic to
$\textup{PSL}_2 (\F_7)$.
More particularly,
\begin{itemize}
\item $ \CG_9$ can be represented by the quartic $X_{\CG_9}:~x^3 y + y^3 z + z^4 = 0$;
\item $ \Gg_{48}$ can be represented by the quartic $X_{\Gg_{48}}:~x^4 + (y^3 - z^3) z = 0$;
\item $ \Gg_{96}$ can be represented by the Fermat quartic $X_{\Gg_{96}}:~x^4 + y^4 + z^4 = 0$;
\item $ \Gg_{168}$ can be represented by the Klein quartic $X_{\Gg_{168}}:~x^3 y + y^3 z + z^3 x = 0$.
\end{itemize}

\begin{table}[htbp]
  \centering \begin{small}
    \renewcommand{\arraystretch}{0.8}
    \begin{tabular}{l|c|c|c|c|p{0.1\linewidth}}
      & $2$
      & $3$
      & $5$
      & $7$
      & \multicolumn{1}{c}{Other primes} \\
      \hline\hline
      $X_{\CG_9}$
      & \texttt{-} %
      & \texttt{(0eee)} %
      & \texttt{-} %
      & \texttt{-} %
      & \texttt{-} %
      \\
      $X_{\Gg_{48}}$
      & \texttt{(0eee)}$^\ddagger$ %
      & \texttt{(3)} %
      & \texttt{-} %
      & \texttt{-} %
      & \texttt{-} %
      \\
      $X_{\Gg_{96}}$
      & \texttt{(0eee)}$^\ddagger$ %
      & \texttt{-} %
      & \texttt{-} %
      & \texttt{-} %
      & \texttt{-} %
      \\
      $X_{\Gg_{168}}$
      & \texttt{-} %
      & \texttt{-} %
      & \texttt{-} %
      & \texttt{(3)}$_{_{\mathtt{H}}}$ %
      & \texttt{-} %
      \\
      \hline
    \end{tabular}
  \end{small}
  \caption{Reduction type of plane quartics over $\Q$ with many automorphisms}
  \label{tab:auto}
\end{table}

We give in Tab.~\ref{tab:auto} the results obtained with our reduction
criteria (where Type \texttt{(3)}$_{_{\mathtt{H}}}$ means good hyperelliptic reduction).
We indicate with a $\ddagger$
the characteristic 2 cases, which are outside the
scope of this paper, but whose reduction types can be found by other means.
In fact, our interest in these curves comes from the fact that they have
already been extensively studied.
The semistable reduction of the Klein curve is given in Elkies paper on
it~\cite{Elkies1999}. The one for the Fermat curve is given in Liu's
book~\cite[Ex. 10.4.39]{liu02}. And finally the one for the curve
$X_{\Gg_{48}}$ is a special case of Picard curves whose reduction is studied
in~\cite{BKSW20}, and can be computed with the \texttt{MCLF} package in
\texttt{SageMath}~\cite{RW2017}.
Note that all these reductions correspond to complex multiplication curves
(see Rem.~\ref{rem:cmred}).

\subsubsection*{Complex multiplication}

Let us now consider the 19 plane quartics defined over $\Q$ with complex
multiplication by a maximal order, defined and numbered as
in~\cite[Section~5]{KLLRSS18}.
At the time of writing, reduction types of these curves modulo prime divisors
of the minimal discriminant is only partially understood.
By using the criteria based on the invariants of~\cite{lllr21}, we know for
example which are the primes greater than 7 for which the reduction is still a
smooth quartic, Type \texttt{(3)}, or a smooth hyperelliptic curve, Type
\texttt{(3)}$_{_{\mathtt{H}}}$. Considerations on their CM order also allows in
some cases to determine if the reduction is a smooth hyperelliptic curve for
$p < 7$ (see~\cite[Prop.~4.1]{KLLRSS18} and~\cite[Prop. 6.1]{IKLLMV22}).

\begin{table}[H]
  \centering \begin{small}
    \renewcommand{\arraystretch}{0.8}
    \begin{tabular}{c|c|c|c|c|p{0.58\linewidth}}
      & $2$
      & $3$
      & $5$
      & $7$
      & \multicolumn{1}{c}{Other primes} \\
      \hline\hline
      $X_{1}$
      & \texttt{(3)}$_{_{\mathtt{H}}}^\ddagger$
      & \texttt{-}
      & \texttt{(2e)}
      & \texttt{(0eee)}
      & $13$~\texttt{(3)}, %
        $37$~\texttt{(3)}$_{_{\mathtt{H}}}$, %
        $15187$~\texttt{(3)}$_{_{\mathtt{H}}}$
      \\
      $X_{2}$
      & \texttt{(3)}$_{_{\mathtt{H}}}^\ddagger$
      & \texttt{(1ee)}
      & \texttt{-}
      & \texttt{(0eee)}
      & $701$~\texttt{(3)}$_{_{\mathtt{H}}}$ %
      \\
      $X_{3}$
      & \texttt{(3)}$_{_{\mathtt{H}}}^\ddagger$
      & \texttt{(0eee)}
      & \texttt{(1ee)}
      & \texttt{(3)}$_{_{\mathtt{H}}}$
      & $31$~\texttt{(3)}, %
        $233$~\texttt{(3)}$_{_{\mathtt{H}}}$, %
        $356399$~\texttt{(3)}$_{_{\mathtt{H}}}$\\
      $X_{5}$
      & \texttt{(3)}$_{_{\mathtt{H}}}^\ddagger$
      & \texttt{(1ee)}
      & \texttt{-}
      & \texttt{(3)}$_{_{\mathtt{H}}}$
      & $13$~\texttt{(3)}, %
        $37$~\texttt{(3)}$_{_{\mathtt{H}}}$, %
        $127$~\texttt{(3)}$_{_{\mathtt{H}}}$\\
      $X_{6}$
      & \texttt{(3)}$_{_{\mathtt{H}}}^\ddagger$
      & \texttt{(1ee)}
      & \texttt{-}
      & \texttt{(3)}$_{_{\mathtt{H}}}$
      & $17$~\texttt{(2e)}, %
        $19$~\texttt{(3)}, %
        $127$~\texttt{(3)}$_{_{\mathtt{H}}}$, %
        $211$~\texttt{(3)}$_{_{\mathtt{H}}}$, %
        $20707$~\texttt{(3)}$_{_{\mathtt{H}}}$\\
      $X_{7}$
      & \texttt{(3)}$_{_{\mathtt{H}}}^\ddagger$
      & \texttt{(0eee)}
      & \texttt{-}
      & \texttt{(3)}$_{_{\mathtt{H}}}$
      & $71$~\texttt{(3)}$_{_{\mathtt{H}}}$, %
        $73$~\texttt{(3)}, %
        $83$~\texttt{(2e)}, %
        $17665559$~\texttt{(3)}$_{_{\mathtt{H}}}$\\
      $X_{8}$
      & \texttt{(3)}$_{_{\mathtt{H}}}^\ddagger$
      & \texttt{-}
      & \texttt{-}
      &  \texttt{(0eee)}
      & $19$~\texttt{(3)}, %
        $499$~\texttt{(3)}$_{_{\mathtt{H}}}$\\
      $X_{9}$
      & \texttt{?}
      & \texttt{-}
      & \texttt{(2e)}
      & \texttt{(3)}$_{_{\mathtt{H}}}$
      & $13$~\texttt{(3)}, %
        $79$~\texttt{(3)}$_{_{\mathtt{H}}}$, %
        $233$~\texttt{(3)}$_{_{\mathtt{H}}}$, %
        $857$~\texttt{(3)}$_{_{\mathtt{H}}}$\\
      $X_{10}$
      & \texttt{?}
      & \texttt{-}
      & \texttt{-}
      & \texttt{\texttt{}(3)}$_{_{\mathtt{H}}}$
      & $41$~\texttt{\texttt{}(3)}$_{_{\mathtt{H}}}$, %
        $71$~\texttt{\texttt{}(3)}$_{_{\mathtt{H}}}$\\
      $X_{11}$
      & \texttt{?}
      & \texttt{-}
      & \texttt{-}
      & \texttt{(3)}$_{_{\mathtt{H}}}$
      & $23$~\texttt{(3)}$_{_{\mathtt{H}}}$, %
        $31$~\texttt{(3)}, %
        $47$~\texttt{(3)}$_{_{\mathtt{H}}}$, %
        $27527$~\texttt{(3)}$_{_{\mathtt{H}}}$\\
      $X_{12}$
      & \texttt{-}
      & \texttt{-}
      & \texttt{-}
      & \texttt{(3)}$_{_{\mathtt{H}}}$
      & $11$~\texttt{(0eee)}, %
        $5711$~\texttt{(3)}$_{_{\mathtt{H}}}$, %
        $73064203493$~\texttt{(3)}$_{_{\mathtt{H}}}$\\
      $X_{13}$
      & \texttt{?}
      & \texttt{-}
      & \texttt{-}
      & \texttt{-}
      & $11$~\texttt{(0eee)}, %
        $43$~\texttt{(3)}, %
        $547$~\texttt{(3)}$_{_{\mathtt{H}}}$, %
        $11827$~\texttt{(3)}$_{_{\mathtt{H}}}$, %
        $189169$~\texttt{(3)}$_{_{\mathtt{H}}}$\\
      $X_{14}$
      & \texttt{-}
      & \texttt{-}
      & \texttt{-}
      & \texttt{-}
      & $11$~\texttt{(2e)}$_{_{\mathtt{H}}}$, %
        $19$~\texttt{(3)}, %
        $101$~\texttt{(3)}$_{_{\mathtt{H}}}$, %
        $107$~\texttt{(3)}$_{_{\mathtt{H}}}$, %
        $8378707$~\texttt{(3)}$_{_{\mathtt{H}}}$\\
      $X_{15}$
      & \texttt{-}
      & \texttt{-}
      & \texttt{-}
      & \texttt{-}
      & $19$~\texttt{(3)}$_{_{\mathtt{H}}}$\\
      $X_{16}$
      & \texttt{-}
      & \texttt{(1ee)}
      & \texttt{-}
      & \texttt{-}
      & $19$~\texttt{(3)}$_{_{\mathtt{H}}}$, %
        $37$~\texttt{(3)}$_{_{\mathtt{H}}}$, %
        $79$~\texttt{(3)}$_{_{\mathtt{H}}}$, %
        $13373064392147$~\texttt{(3)}$_{_{\mathtt{H}}}$\\
      $X_{17}$
      & \texttt{?}
      & \texttt{(1ee)}$_{_{\mathtt{H}}}$
      & \texttt{-}
      & \texttt{-}
      & $19$~\texttt{(3)}$_{_{\mathtt{H}}}$, %
        $1229$~\texttt{(3)}$_{_{\mathtt{H}}}$, %
        $3913841117$~\texttt{(3)}$_{_{\mathtt{H}}}$\\
      $X_{18}$
      & \texttt{?}
      & \texttt{-}
      & \texttt{-}
      & \texttt{-}
      & $13$~\texttt{(3)}, %
        $19$~\texttt{(3)}$_{_{\mathtt{H}}}$, %
        $101$~\texttt{(3)}$_{_{\mathtt{H}}}$, %
        $251$~\texttt{(3)}$_{_{\mathtt{H}}}$, %
        $7468843725186901$~\texttt{(3)}$_{_{\mathtt{H}}}$\\
      $X_{19}$
      & \texttt{?}
      & \texttt{-}
      & \texttt{-}
      & \texttt{-}
      & $11$~\texttt{(3)}$_{_{\mathtt{H}}}$, %
        $43$~\texttt{(3)}$_{_{\mathtt{H}}}$\\
      $X_{20}$
      & \texttt{-}
      & \texttt{-}
      & \texttt{-}
      & \texttt{-}
      & $67$~\texttt{(3)}$_{_{\mathtt{H}}}$, %
        $1439$~\texttt{(3)}$_{_{\mathtt{H}}}$, %
        $2739021126001$~\texttt{(3)}$_{_{\mathtt{H}}}$\\
      \hline
    \end{tabular}
  \end{small}
  \caption{Reduction type of quartics over $\Q$ with CM by a maximal order}
  \label{tab:genus3cm}
\end{table}

We give in Tab.~\ref{tab:genus3cm} the results of our reduction criteria. We
notice that they are consistent with what we already knew (we indicate with a
$\ddagger$ where CM order considerations can be used).

\begin{remark}\label{rem:cmred}
  The expected type is very
  special, \textit{i.e.}\ \texttt{(3)} or \texttt{(3)}$_{_{\mathtt{H}}}$, \texttt{(2e)} or \texttt{(2e)}$_{_{\mathtt{H}}}$,
  \texttt{(1ee)} or \texttt{(1ee)}$_{_{\mathtt{H}}}$ and \texttt{(0eee)}.
  This is consistent with what is expected for curves with CM
  (see~\cite{BCLLMNO2015}).
\end{remark}

\vspace*{2cm}

\appendix

\section{Special fibres and octad pictures correspondence}

This appendix gives a complete account of several constructions given in the paper, along with the correspondences between them. In particular, catalogued are:
\begin{itemize}
    \item All stable reduction types (Thm.~\ref{thm:M3nonH}, \ref{thm:M3H}), under the heading \textit{stable model}, split into the cases non-hyperelliptic (App.~\ref{sec:special-fibres-octad-1}) and hyperelliptic (App.~\ref{sec:special-fibres-octad-2}), and ordered by codimension.
    \item All octad pictures (Def.~\ref{def:octadpictures}) outside the exceptional orbit (see Rmk.~\ref{rmk:exceptionalpictures}).
    \item All subspace graphs (Def.~\ref{def:subspacegraph}) besides that of the exceptional orbit.
\end{itemize}
The rows of Tab.~\ref{tab:smoctadsc02}--\ref{tab:smhoctadsc56} give the correspondences between:
\begin{itemize}
    \item Octad pictures and subspace graphs (see Rmk.~\ref{rmk:subspacegraphdeterminesSM}).
    \item Octad pictures and stable reduction types (and their dual graphs), as detailed in Def.~\ref{def:stable-graph} (see also Def.~\ref{def:inclusiongraph}--\ref{def:stablegraph}, and Prop.~\ref{prop:picturesubspacecorrespondence},~\ref{prop:picturesubspacecorrespondenceHE}).
\end{itemize}
In Tab.~\ref{tab:smoctadsc02}--\ref{tab:smhoctadsc56}, the column \textit{dual graph} is understood to be the dual graph of the special fibre of the corresponding stable model, and the number under each octad picture is its multiplicity in the $\textup{Sp}(6,2)$-orbit, up to $S_8$ (Rmk.~\ref{rmk:octadpictureindices}). 

Assuming Conj.~\ref{conj:VDUnieuqOctadDiagram} and \ref{conj:SpecialFibreOfTheStableModelDetailed}, these tables can be used in the following manner. Let $C/K$ be a plane quartic curve over a non-archimedean local field with residue characteristic $p \neq 2$, and let $O$ be a Cayley octad of $C$. Then $\bar{d}_K(O)$ belongs to one of the tables, and the corresponding stable model is the stable reduction type of $C/K$. Moreover, the $36$ octad pictures of $C/K$ are given by the orbit to which $\bar{d}_K(O)$ belongs.

\newpage

\subsection{Non-hyperelliptic reduction}
\label{sec:special-fibres-octad-1}
\mbox{}

\newcommand\Codimension[2]{
  \multicolumn{4}{c}{

  \end{tabular}
  \caption{Stable models (hyperelliptic), their dual graphs and corresponding
    subspace graphs and octad pictures (codim. 4 part 2 and 5)
    }
  \label{tab:smhoctadsc56}
\end{table}

\printbibliography

\end{document}